\numberwithin{figure}{subsection}
\newtheorem{theorem}{Theorem}[subsection]
\newtheorem{proposition}[theorem]{Proposition}
\newtheorem{corollary}[theorem]{Corollary}
\newtheorem{lemma}[theorem]{Lemma}
\theoremstyle{definition}
\newtheorem{definition}[theorem]{Definition}
\newtheorem{remark}[theorem]{Remark}
\title[Hausdorff Dimension and Lebesgue Measure of Codiagonal]{Hausdorff Dimension and Lebesgue Measure of Codiagonal of Embedded Vector Bundles over Submanifolds \\ in EUCLIDEAN Space}
\author{Hanwen Liu}
\begin{document}
\maketitle
\begin{abstract}
    In this paper we study measure theoretical size of the image of naturally embedded vector bundles in $\mathbb{R}^{n} \times \mathbb{R}^{n}$ under the codiagonal morphism, i.e. $\Delta_{*}$ in the category of finite dimensional $\mathbb{R}$-vector spaces. Under very weak smoothness condition we show that codiagonal of normal bundles always contain an open subset of the ambient space, and we give corresponding criterions for the tangent bundles. For any differentiable hypersurface we show that the codiagonal of its tangent bundle has non-empty interior, unless the hypersurface is contained in a hyperplane. Assuming further smoothness (e.g. twice differentiable) we show that union of any family of hyperplanes that covers the hypersurface has maximal possible Hausdorff dimension. We also define and study a notion of degeneracy of embedded $C^{1}$ vector bundles over a $C^{1}$ submanifold and show as a corollary that if the base manifold has at least one non-inflection point then codiagonal of any $C^{1}$ line bundle over it has positive Lebesgue measure. Finally we show that codiagonal of any line bundle over an $n$-dimensional ellipsoid or a convex curve has non-empty interior, and the same assertion also holds for any non-tangent line bundle over a hyperplane.
\end{abstract}

\section{Introduction and preparation lemmas}
\label{sec1}
In this paper all base fields are assumed to be the field of real numbers $\mathbb{R}$, and all vector spaces $\mathbb{R}^{n}:=\left(\mathbb{R}^{n},\langle\cdot, \cdot\rangle\right)$ are assumed to be equipped with the standard Euclidean structure.

For any subset $X \subseteq \mathbb{R}^{n}$ we denote by $i_{X}: X \longrightarrow \mathbb{R}^{n}$ the canonical inclusion and by $\operatorname{int} X, \bar{X}, \partial X$ its interior, closure, boundary (in the induced topology) resp. We denote $\mathbb{R}_{+}:=\{x \in \mathbb{R} \mid x>0\}$, and $I$ will always be the open interval $]-\pi, \pi[$. As usual, $\mathbb{D}^{n}:=\left\{x \in \mathbb{R}^{n} \mid\|x\| \leqslant 1\right\}$ is the $n$-dimensional unit disk, $\mathbb{S}^{n-1}:=\left\{x \in \mathbb{R}^{n} \mid\|x\|=1\right\}$ is the $(n-1)$ - dimensional unit sphere, and $B_{r}^{n}(a):=\left\{x \in \mathbb{R}^{n} \mid\|x-a\|<r\right\}$ is the $n$-ball with center $a \in \mathbb{R}^{n}$ and radius $r$. For simplicity we write $B_{r}^{n}:=B_{r}^{n}(0)$.

For any map $f$ denote by $\Gamma_{f}$ its graph, and for any differentiable $f$ denote by $J_{x}f$ its Jacobian at $x$. Linear functions and linear mappings on an open subset of an Euclidean space are defined to be the 
functions and mappings with constant differential.

Moreover we denote by $\operatorname{dim}$ and  $\operatorname{dim}_{\mathcal{H}}$ the topological and Hausdorff dimension resp. and by $\mathcal{L}^{n}$ the $n$-dimensional Lebesgue measure.

\subsection{Manifolds in Euclidean Space}
In this paper, a topological $d$-submanifold (with boundary) $M$ of $\mathbb{R}^{n}$ is a topological subspace of $\mathbb{R}^{n}$ s.t. $M$ is a topological $d$-manifold (with boundary) in the induced topology. Moreover, topological $d$-submanifold $M$ is said to be a $C^{r}$ submanifold iff its transition functions are of class $C^{r}$. All manifolds are assumed to be connected. 

In this paper we also consider the following two notions of smoothness.
\begin{definition}\label{defin1_1}
    A topological submanifold $M$ is said to be $k$-th differentiable iff for any $p \in M$, at the vicinity of $p, M$ coincides with the graph of a $k$-th differentiable map $f$. For simplicity once differentiable submanifolds are abbreviated as differentiable submanifolds.
\end{definition}
\begin{definition}\label{defin1_2}
    A $C^{1}$ submanifold is said to be Lipschitz-continuously differentiable iff for any $p \in M$, at the vicinity of $p, M$ coincides with the graph of a $C^{1}$ map $f$ with locally Lipschitz differential.
\end{definition}

Since $C^{1}$ maps are locally Lipschitz, by definition we have that any $C^{2}$ submanifold is Lipschitz-continuously differentiable.

As usual, a 1-submanifold is called a regular curve, a 2-submanifod is called a regular surface, a submanifold of codimension 1 is called a hypersurface.

Let $M$ be a $k$-submanifold of $\mathbb{R}^{m}$ and $N$ be a $\ell$-submanifold of $\mathbb{R}^{n}$ then the product manifold $M \times N:=\left\{(p, q) \in \mathbb{R}^{n+m} \mid p \in M, q \in N\right\}$ is a $(k+\ell)$-submanifold of $\mathbb{R}^{n+m}$. For any two maps $f: A \rightarrow C$ and $g: B \rightarrow D$ we define $f \times g: A \times B \rightarrow C \times D$ by $f \times g(a, b):=(f(a), g(b))$, then $\Gamma_{f \times g}=\Gamma_{f} \times \Gamma_g$. For example, if $f, g$ are differentiable maps (from open subets in Euclidean spaces to Euclidean spaces), then $\Gamma_f, \Gamma_g$ are differentiable submanifolds and $\Gamma_{f \times g}$ is their product submanifold.

\begin{remark}\label{remark1_3}
    Notice that the implicit and inverse function theorem hold for differentiable maps (Theorem 2 and 3 in \cite{6}), and differentiable maps with differentiable local inverses form a pseudo group $\mathcal{G}$ and hence defines a manifold structure. The differentiable submanifold in Definition \ref{defin1_1} clearly has local parametrization $\mathbf{f}(x)=(x, f(x))$ and admits a differentiable chart, and hence a $\mathcal{G}$-structure.
    
    The similar assertion holds also for Lipschitz-continuously differentiable submanifolds in Definition \ref{defin1_2}.
\end{remark}

\subsection{Substantial and Nonlinear Submanifolds}
Recall that a submanifold is said to be substantial iff it is not contained in any hyperplane of the ambient space.

An affine plane in $\mathbb{R}^{3}$ is not substantial, and the twisted quartic $\mathscr{X}: t \mapsto\left(t, t^{2}, t^{3}, t^{4}\right)$ is a well-known example of a substantial $C^{\omega}$ curve in $\mathbb{R}^{4}$.

Notice that the curvature of $\mathscr{X}$ is non-vanishing. Another nontrivial example can be constructed as follows.

Recall that for any $C^{1}$ regular curve $g: I \rightarrow \mathbb{R}^{n}$ the tangent derelopable $\Sigma$ of $g$ is parametrized by $\mathbf{x}(u, v)=g(u)+v g^{\prime}(u), u \in I, v \in \mathbb{R}$, and $g$ is a cuspidal edge of $\Sigma$.

We denote by $\Sigma_{+}$the one-sided tangent developable of $g$ parametrized by $\mathbf{x}(u, v)=g(u)+v g^{\prime}(u), u \in I, v \in \mathbb{R}_{+}$.

\begin{proposition}\label{prop1_4}
    Let $r \geqslant 1$ and $g: I \rightarrow \mathbb{R}^{n}$ be a substantial $C^{r+1}$ regular curve with non-vanishing curvature, then the one-sided tangent developable of $\Sigma_{+}$ of $g$ is a substantial $C^{r}$ regular surface.
\end{proposition}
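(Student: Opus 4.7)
The plan is to prove two independent claims: that $\Sigma_+$ carries a $C^r$ $2$-submanifold structure, and that it is not contained in any affine hyperplane. I would handle the local smooth structure first via the inverse function theorem applied to the natural parametrization, then dispatch substantiality via a limiting argument using the cuspidal edge $g$.

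For the smooth structure, consider the parametrization $\mathbf{x}: I \times \mathbb{R}_+ \to \mathbb{R}^n$ given by $\mathbf{x}(u,v) = g(u) + v g'(u)$. Since $g$ is $C^{r+1}$, the derivative $g'$ is $C^r$, so $\mathbf{x}$ is a $C^r$ map. Its partial derivatives are
\[
\partial_u \mathbf{x}(u,v) = g'(u) + v g''(u), \qquad \partial_v \mathbf{x}(u,v) = g'(u),
\]
and the Jacobian has rank $2$ iff $v g''(u)$ and $g'(u)$ are linearly independent. Since $v > 0$, this reduces to the linear independence of $g'(u)$ and $g''(u)$, which is exactly the non-vanishing curvature hypothesis: the curvature of a regular $C^2$ curve vanishes iff $g''$ is parallel to $g'$. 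By the inverse/implicit function theorem (Remark \ref{remark1_3}) applied to a suitable coordinate projection, the image is locally the graph of a $C^r$ map, so $\Sigma_+$ acquires a $C^r$ $2$-submanifold structure around each of its points.

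Substantiality is the cleanest step: suppose $\Sigma_+ \subseteq H$ for some affine hyperplane $H = \{x \in \mathbb{R}^n : \langle x, \nu\rangle = c\}$ with unit normal $\nu$. Then, for all $u \in I$ and $v > 0$,
\[
\langle g(u), \nu\rangle + v \langle g'(u), \nu\rangle = c.
\]
Letting $v \to 0^+$ gives $\langle g(u), \nu\rangle = c$ for every $u \in I$, so $g(I) \subseteq H$, contradicting the substantiality of $g$.

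The subtlest step I anticipate is upgrading the local submanifold picture to a globally embedded one, since the hypotheses do not a priori rule out two distinct tangent lines of $g$ meeting at a common point, which would produce a self-intersection of $\Sigma_+$. I would circumvent this either by restricting $I$ to a shorter interval on which $\mathbf{x}$ is injective (which exists around any interior parameter by the immersion property and the transversality of nearby tangent lines guaranteed by non-vanishing curvature), or by noting that the self-intersection locus is a closed set of strictly lower dimension, so that $\Sigma_+$ is a $2$-submanifold in the induced topology away from a negligible set; either resolution is adequate for the downstream use of the proposition.
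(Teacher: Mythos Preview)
Your proof is correct and follows essentially the same route as the paper: compute the partial derivatives of $\mathbf{x}(u,v)=g(u)+vg'(u)$, use non-vanishing curvature to see they are linearly independent for $v>0$, and for substantiality let $v\to 0^{+}$ to force $g$ into the hyperplane. Your closing paragraph on global injectivity is in fact more careful than the paper, which simply asserts regularity from the rank condition without addressing possible self-intersections.
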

\begin{proof}
    Since $\dfrac{\partial \mathbf{x}}{\partial u}=g^{\prime}(u)+v g^{\prime \prime}(u)$ and $\dfrac{\partial \mathbf{x}}{\partial v}=g^{\prime}(u)$, we have $\left\|\dfrac{\partial \mathbf{x}}{\partial u} \times \dfrac{\partial \mathbf{x}}{\partial v}\right\|=\left\|\left(g^{\prime}(u)+v g^{\prime \prime}(u)\right) \times g^{\prime}(u)\right\|$ $=v\left\|g^{\prime \prime}(u) \times g^{\prime}(u)\right\|$. Since $v>0$, we have that $\left\|\dfrac{\partial \mathbf{x}}{\partial u} \times \dfrac{\partial \mathbf{x}}{\partial v}\right\|>0$ iff $\left\|g^{\prime \prime}(u) \times g^{\prime}(u)\right\|>0$. Since the curvature $\kappa$ of $g$ is $\kappa=\dfrac{\left\|g^{\prime \prime}(u) \times g^{\prime}(u)\right\|}{\left\|g^{\prime}(u)\right\|^{3}}>0$, we obtain that $\left\|g^{\prime \prime}(u) \times g^{\prime}(u)\right\|>0$. Therefore $\left\|\dfrac{\partial \mathbf{x}}{\partial u} \times \dfrac{\partial \mathbf{x}}{\partial v}\right\| \neq 0$ and hence $\Sigma_{+}$ is a regular surface.
    
    Moreover $\mathbf{x}(u, v)=g(u)+v g^{\prime}(u)$ is clearly $C^{r}$. It remains to prove that $\Sigma_{+}$ is substantial.

    Provided that $\Sigma_+$ is not substantial then $\exists$ hyperplane $\Pi \subseteq \mathbb{R}^{4}$ s.t. $\Sigma_{+} \subseteq \Pi$. Since $\forall\ u \in I \lim \limits_{v \rightarrow 0^{+}} \mathbf{x}(u, v)=g(u)$, we have that $\forall\ p \in g, p$ is a limit point of $\Sigma_{+}$. Since $\Pi$ is closed, we have that $g \subseteq \Pi$, and hence $g$ is not substantial, contradiction.
\end{proof}

We give a definition of a notion related to the substantialness.
\begin{definition}\label{defin1_5}
    A submanifold $M$ of $\mathbb{R}^{n}$ is said to be linear iff $\exists$ affine subspace $\Pi \subseteq \mathbb{R}^{n}$ s.t. $M$ is an open subset of $\Pi$. Otherwise $M$ is said to be nonlinear.
\end{definition}

It is clear by definition that a linear substantial submanifold is an open subset in the ambient space, and for hypersurfaces the nonlinearity is equivalent to substantialness.

In this paper the nonlinearity is an important notion. We shall introduce several preparation lemmas about the nonlinearity.

\subsection{Restriction of the Domain of Nonlinearity}
Firstly we show that the nonlinearity of a function is a local property. 
\begin{lemma}\label{lemma1_6}
    Let $\Omega$ be a connected open subset of $\mathbb{R}^{d}$ and $f: \Omega \rightarrow \mathbb{R}$ be a real function. If for any $x \in \Omega\ (f, x)$ is the germ of a linear function, then $f$ is linear.
\end{lemma}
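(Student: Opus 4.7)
The plan is to reduce the statement to the classical fact that on a connected open set, a locally constant function is constant, applied twice: once to the differential of $f$ and once to $f$ itself after subtracting its linear part.

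First I would unpack the hypothesis via the paper's definition of ``linear'': $(f,x)$ being the germ of a linear function means there is an open neighbourhood $U_x \subseteq \Omega$ of $x$ and a linear function $\ell_x : U_x \to \mathbb{R}$ with constant differential $A_x \in (\mathbb{R}^d)^*$ such that $f|_{U_x} = \ell_x$. In particular $f$ is differentiable throughout $\Omega$ and the differential map $df : \Omega \to (\mathbb{R}^d)^*$, $x \mapsto df_x$, is locally constant: on $U_x$ one has $df_y = A_x$ for every $y \in U_x$.

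Next, since $\Omega$ is connected and $(\mathbb{R}^d)^*$ carries its standard (Hausdorff) topology, a locally constant map $\Omega \to (\mathbb{R}^d)^*$ must be constant; this is the usual argument that for any fixed value $A$ the preimage $\{x \in \Omega : df_x = A\}$ is both open (by local constancy) and closed (as the complement of the union of the open preimages of the other values), so by connectedness it is either empty or all of $\Omega$. Hence there is a single $A \in (\mathbb{R}^d)^*$ with $df_x = A$ for every $x \in \Omega$.

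Finally I would consider the auxiliary function $g(x) := f(x) - A(x)$ on $\Omega$. By construction $dg \equiv 0$ on the connected open set $\Omega$, so the standard mean value argument (applied along polygonal paths inside $\Omega$, which exist because $\Omega$ is open and connected, hence path-connected) forces $g$ to be constant, say equal to some $c \in \mathbb{R}$. Then $f(x) = A(x) + c$ for all $x \in \Omega$, which is linear in the sense of the paper. I do not expect any real obstacle here; the only point that deserves a careful sentence is the justification that ``germ of a linear function'' really delivers a whole open neighbourhood on which the differential is the \emph{same} covector, which is exactly what lets local constancy of $df$ kick in.
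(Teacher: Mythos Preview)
Your argument is correct. It differs genuinely from the paper's proof: the paper observes that a function which is locally linear is in particular locally harmonic, hence harmonic on $\Omega$, and then invokes the analytic continuation (identity) principle for harmonic functions to conclude that $f$ agrees globally with the linear function determined by any single germ. Your route is more elementary: you extract from the hypothesis that $df$ is locally constant, use connectedness to get $df\equiv A$, and then integrate. The paper's approach is a one-line application of a strong structural fact (real-analyticity of harmonic functions) and would generalize immediately to ``locally equal to a function in any fixed class satisfying an identity principle''; your approach avoids any appeal to potential theory or analyticity and works directly from the definition of ``linear'' given in the paper, at the cost of spelling out the two locally-constant-implies-constant steps.
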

\begin{proof}
    Since $\forall\ x \in \Omega\ (f, x)$ is a germ of linear (and hence harmonic) function, we have that $f$ is harmonic on $\Omega$. Fix $x_{0} \in \Omega$, by assumption $\exists$ linear function $g: \Omega \rightarrow \mathbb{R}$ s.t. $\left(f, x_{0}\right)=\left(g, x_{0}\right)$. Since $\Omega$ is a connected open subset of $\mathbb{R}^{d}$, by the analytic continuation principle we have $f \equiv g$ and hence $f$ is linear.
\end{proof}

\begin{corollary}\label{coro1_7}
    Let $\Omega$ be a connected open subset of $\mathbb{R}^{d}$ and $f: \Omega \rightarrow \mathbb{R}$ is a nonlinear function, then $\exists\ x_{0} \in \Omega$ s.t. $\left(f, x_{0}\right)$ is the germ of a nonlinear function.
\end{corollary}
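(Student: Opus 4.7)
The proposal is to derive this corollary directly as the contrapositive of Lemma \ref{lemma1_6}. The statement I want to prove is that if $f:\Omega\to\mathbb{R}$ is nonlinear (with $\Omega$ connected and open), then some germ $(f,x_0)$ is nonlinear. Its contrapositive reads: if every germ $(f,x)$ with $x\in\Omega$ is the germ of a linear function, then $f$ itself is linear. But this is precisely the conclusion of Lemma \ref{lemma1_6}.

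So the one-line proof I have in mind is: assume toward contradiction that for every $x\in\Omega$ the germ $(f,x)$ is a germ of a linear function; then by Lemma \ref{lemma1_6} applied to $f$ on $\Omega$ we conclude that $f$ is linear on $\Omega$, contradicting the hypothesis that $f$ is nonlinear. Hence there must exist some $x_0\in\Omega$ for which $(f,x_0)$ fails to be a germ of a linear function, i.e.\ is the germ of a nonlinear function.

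There is no real obstacle here; the corollary is purely a logical reformulation of the preceding lemma, and the only thing to be mildly careful about is the definition of ``germ of a nonlinear function'' — namely, that the negation of ``$(f,x)$ is the germ of a linear function'' coincides with ``$(f,x)$ is the germ of a nonlinear function.'' This follows from the fact that the property of being linear on a small neighborhood is well-defined at the level of germs: if one representative of the germ is linear on some open neighborhood of $x_0$, then every representative is linear on a (possibly smaller) open neighborhood of $x_0$, because two representatives coincide on such a neighborhood by definition of equality of germs.
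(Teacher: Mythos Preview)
Your proposal is correct and matches the paper's approach: the paper states Corollary \ref{coro1_7} without proof immediately after Lemma \ref{lemma1_6}, making clear it is meant to be the direct contrapositive. Your added remark clarifying that ``germ of a nonlinear function'' is simply the negation of ``germ of a linear function'' is a reasonable caution but not strictly needed here.
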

\begin{corollary}\label{coro1_8}
    Let $\Omega$ be a connected open subset of $\mathbb{R}^{d}$ and $f: \Omega \rightarrow \mathbb{R}^{m}$ is a nonlinear map, then $\exists\ x_{0} \in \Omega$ s.t. $\forall$ open neighborhood $U$ of $x_{0},\left.f\right|_{U}$ is nonlinear.
\end{corollary}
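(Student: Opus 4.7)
My plan is to reduce Corollary \ref{coro1_8} to its scalar counterpart, Corollary \ref{coro1_7}, by a coordinate-wise argument. I will write $f = (f_{1}, \ldots, f_{m})$ with each $f_{i} : \Omega \to \mathbb{R}$. Since linearity is defined in this paper via constant differential, the Jacobian $J_{x}f$ is constant on $\Omega$ iff each row $\mathrm{d}f_{i}$ is constant, so $f$ is linear iff every coordinate $f_{i}$ is linear. Consequently the nonlinearity hypothesis on $f$ automatically supplies some coordinate $f_{i_{0}}$ that is a nonlinear real-valued function on the connected open set $\Omega$.

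Next I apply Corollary \ref{coro1_7} to $f_{i_{0}}$ to extract a point $x_{0}\in\Omega$ at which the germ $(f_{i_{0}},x_{0})$ is nonlinear. Unwinding the definition of germ, a germ at $x_{0}$ is linear precisely when the representing function coincides with a linear function on some open neighborhood of $x_{0}$; nonlinearity of the germ is therefore equivalent to saying that for every open neighborhood $U$ of $x_{0}$ contained in $\Omega$, the restriction $f_{i_{0}}|_{U}$ fails to be linear. Since every coordinate projection of a linear map is itself linear, this forces $f|_{U}$ to be nonlinear as well, which is exactly the conclusion demanded.

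The only substantive ingredient is Corollary \ref{coro1_7} (and, through it, Lemma \ref{lemma1_6} and the analytic continuation principle for harmonic functions). Everything else is bookkeeping about components and germ equivalence, so no genuine obstacle is anticipated and the full proof should fit in a few lines.
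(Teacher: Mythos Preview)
Your proposal is correct and matches the paper's own argument essentially verbatim: the paper also observes that $f=(f^{1},\ldots,f^{m})$ is nonlinear iff some coordinate $f^{i}$ is nonlinear, and then applies Corollary~\ref{coro1_7} to that coordinate. Your additional remarks unpacking the germ language are accurate but already implicit in the paper's one-line proof.
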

\begin{proof}
    Recall that a map $f=\left(f^{1}, \cdots, f^{m}\right)$ is nonlinear iff $\exists\ i \in\{1, \cdots, m\}$ s.t. $f^{i}$ is a nonlinear function. Apply Corollary \ref{coro1_7}.
\end{proof}
\begin{remark}\label{remark1_9}
    From Corollary \ref{coro1_8} we obtain that for any nonlinear map $f$ defined on $\Omega$, up to rigid motion and homothety we may assume that $I^{d} \subseteq \Omega$ and $f$ is nonlinear on $I^{d}$.
\end{remark}    

As an application we show that the nonlinearity of a submanifold is a local property. 
\begin{lemma}\label{lemma1_10}
    If a differentiable submanifold $M$ of $\mathbb{R}^{n}$ is locally graph of linear maps  at all $p \in M$ then $M$ is a linear submanifold.
\end{lemma}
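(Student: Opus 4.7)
The plan is to deduce that $M$ is contained in a single affine $d$-plane $\Pi \subseteq \mathbb{R}^n$ and then invoke invariance of domain to conclude $M$ is open in $\Pi$, which is precisely linearity in the sense of Definition \ref{defin1_5}.

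First I would fix some $p \in M$ and use the hypothesis to obtain an open $U_p \subseteq \mathbb{R}^n$ such that $M \cap U_p$ is the graph of a linear map $L_p$ on some open subset of a coordinate $d$-subspace. Recalling the paper's convention that a \emph{linear} map has constant differential (i.e.\ is affine in the usual sense), I would observe that $\Gamma_{L_p}$ is a relatively open subset of the affine $d$-plane $\Pi_p \subseteq \mathbb{R}^n$ obtained by extending $L_p$ to all of its coordinate subspace.

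Next I would show that the assignment $p \mapsto \Pi_p$ is locally constant on $M$. For any $q \in M \cap U_p$ with its own neighborhood $U_q$ and associated affine plane $\Pi_q$, the set $W := M \cap U_p \cap U_q$ is a nonempty open subset of $M$ which lies simultaneously in $\Pi_p$ and in $\Pi_q$. Since $W$ is relatively open in $\Pi_p$ (an open piece of a linear graph), containment in the affine $d$-plane $\Pi_q$ forces $\Pi_p \subseteq \Pi_q$, and equality of dimensions then gives $\Pi_p = \Pi_q$.

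Finally, since $M$ is connected by convention, the locally constant assignment $p \mapsto \Pi_p$ is globally constant, say equal to $\Pi$; thus $M \subseteq \Pi$. Invariance of domain applied to the inclusion $M \hookrightarrow \Pi$ of a topological $d$-manifold into a $d$-dimensional affine subspace shows that $M$ is open in $\Pi$, which is exactly the required linearity. The main point that must be verified carefully is the dimension argument in the second step — that a relatively open subset of one affine $d$-plane contained in a second affine $d$-plane forces the two planes to coincide — because without it we could not rule out $M$ being glued together from pieces of distinct $d$-planes sharing lower-dimensional overlaps.
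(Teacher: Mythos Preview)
Your proposal is correct, and it takes a genuinely different route from the paper's own proof. The paper observes that the hypothesis makes $M$ a $C^{\omega}$ submanifold and then, having found a single affine $d$-plane $\Pi$ which agrees with $M$ on an open chart $U$, invokes the uniqueness theorem for real-analytic submanifolds (analytic continuation) together with completeness of $\Pi$ to conclude that $M$ is an open subset of $\Pi$. Your argument is more elementary: you bypass the analytic machinery entirely and instead prove directly that the assignment $p\mapsto \Pi_{p}$ is locally constant, then use connectedness of $M$ to get a single $\Pi$. This buys you a self-contained proof that does not rely on any $C^{\omega}$ uniqueness statement, at the cost of having to check the little affine-geometry fact that a relatively open piece of one affine $d$-plane contained in another forces the two planes to coincide (which, as you note, is immediate once you pick $d+1$ affinely independent points in the overlap). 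One minor simplification: your final appeal to invariance of domain is unnecessary, since you have already shown that each $M\cap U_{p}$ is open in $\Pi_{p}=\Pi$, so $M=\bigcup_{p}(M\cap U_{p})$ is a union of $\Pi$-open sets and hence open in $\Pi$ directly.
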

\begin{proof}
    By assumption clearly $M$ admits an analytic atlas and hence is a $C^{\omega}$ submanifold. Fix $p \in M, \exists$ local chart $U$ of $M$ at the vicinity of $p$ and an affine subspace $\Pi \subseteq \mathbb{R}^{n}$ s.t. $M \cap \Pi \supseteq U$. Since $U$ is open in $M$ and $\Pi$, and $\Pi$ is complete, by the uniquess theorem of $C^{\omega}$ submanifolds, $M$ is an open subset of $\Pi$, i.e. a linear submanifold.
\end{proof}

\begin{corollary}\label{coro1_11}
    Let $M$ be a nonlinear differentiable $d$-submanifold of $\mathbb{R}^{n}$, then up to rigid motion and homothety, $\exists$ local chart $U$ of $M$ s.t. $U=\Gamma_{f}$ where $f: I^{d} \rightarrow \mathbb{R}^{n-d}$ is a nonlinear differentiable map.
\end{corollary}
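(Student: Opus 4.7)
The plan is to extract a nonlinear graph representation at a point where $M$ is not locally linear, and then normalize the parameter domain to $I^d$ via Remark \ref{remark1_9}.

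First, by the contrapositive of Lemma \ref{lemma1_10}, there exists $p \in M$ at which $M$ is not locally the graph of any linear map. Definition \ref{defin1_1} still provides a connected open $\Omega \subseteq \mathbb{R}^d$, a point $x_p \in \Omega$ mapping to $p$, and a differentiable $f : \Omega \to \mathbb{R}^{n-d}$ whose graph $\Gamma_f$ is a neighborhood of $p$ in $M$. If $f$ were linear on some connected open neighborhood $\Omega_0 \subseteq \Omega$ of $x_p$, then $\Gamma_{f|_{\Omega_0}}$ would exhibit $M$ as the graph of a linear map near $p$, contradicting the choice of $p$. So after shrinking $\Omega$ to a connected open neighborhood of $x_p$, we may assume $f$ is nonlinear on all of $\Omega$.

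Second, I would apply Corollary \ref{coro1_8} to $f$ to obtain $x_0 \in \Omega$ such that $f$ is nonlinear on every open neighborhood of $x_0$. Then Remark \ref{remark1_9}, applied to the domain $\mathbb{R}^d$ (equivalently, realized as a rigid motion and homothety of $\mathbb{R}^n$ acting on the first $d$ coordinates and preserving the second factor), supplies a transformation sending $x_0$ to the origin and arranging $I^d \subseteq \Omega$ with $f$ still nonlinear on $I^d$. The sought local chart is then the open subset $\Gamma_{f|_{I^d}}$ of $M$.

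The only real conceptual step is the first one: converting the non-existence of a local linear-graph representation at $p$ into the statement that the particular $f$ furnished by Definition \ref{defin1_1} is nonlinear on every neighborhood of $x_p$. This follows immediately from the observation that restricting a linear graph produces a linear graph, so any linearity of $f$ near $x_p$ would contradict the hypothesis on $p$. The remainder of the argument is a direct concatenation of Corollary \ref{coro1_8} with Remark \ref{remark1_9}, and involves no further analytic content.
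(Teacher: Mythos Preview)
Your argument is correct and follows the same route as the paper: invoke the contrapositive of Lemma~\ref{lemma1_10} to find a point where the graph representation is nonlinear, then normalize the domain to $I^d$ via Remark~\ref{remark1_9}. One small redundancy: your first paragraph already shows that $f$ is nonlinear on every neighborhood of $x_p$, so the separate appeal to Corollary~\ref{coro1_8} in the second paragraph is unnecessary---$x_p$ itself serves as the required $x_0$.
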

\begin{proof}
    By Lemma \ref{lemma1_10} $\exists$ local chart $U$ of $M$ s.t. $U=\Gamma_{f}$ where $f: \Omega \rightarrow \mathbb{R}^{n-d}$ is a nonlinear differentiable map. By Remark \ref{remark1_9} wlog we assume $\Omega=I^{d}$.
\end{proof}

We now introduce another type of restriction of domain of nonlinearity. We start with several lemmas.

Recall that a monomial $x^{\alpha} \in \mathbb{R}\left[x_{1}, \cdots, x_{n}\right]$ is said to be square free iff $\|\alpha\|_{\infty} \leqslant 1$, where $\alpha \in \mathbb{N}^{n}$ is the multi-index. For simplicity we denote $\Xi_{n}:=\operatorname{span}_{\mathbb{R}}\left\{x^{\alpha} \in \mathbb{R}\left[x_{1}, \cdots, x_{n}\right] \mid\|\alpha\|_{\infty} \leqslant 1\right\}$ as a space of polynomial functions defined on $I^{n}$.

\begin{lemma}\label{lemma1_12}
    Let $f: I^{n} \rightarrow \mathbb{R}$ be a real function s.t. $\forall\ i=1, \cdots, n \exists$ real function $a_{i}, b_{i}: I^{n-1} \rightarrow \mathbb{R}$ s.t. $f\left(x_{1}, \cdots, x_{n}\right)=a_{i}\left(x_{1}, \cdots, \hat{x}_{i}, \cdots, x_{n}\right) x_{i}+b_{i}\left(x_{1}, \cdots, \hat{x}_{i}, \cdots, x_{n}\right)$, then $f \in \Xi_{n}$.
\end{lemma}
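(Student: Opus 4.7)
The plan is to induct on $n$. For the base case $n = 1$, real functions on $I^{0}$ are just constants, so the hypothesis $f(x_1) = a_1 x_1 + b_1$ with $a_1, b_1 \in \mathbb{R}$ says directly that $f \in \Xi_1 = \operatorname{span}_{\mathbb{R}}\{1, x_1\}$.

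For the inductive step, I would apply the hypothesis with $i = n$ to isolate the $x_n$-dependence:
\[
f(x_1, \ldots, x_n) = a_n(x_1, \ldots, x_{n-1}) \, x_n + b_n(x_1, \ldots, x_{n-1}),
\]
and then argue that $a_n$ and $b_n$ each satisfy the hypothesis of the lemma in $n-1$ variables; the result then follows since $\Xi_n = \Xi_{n-1} \oplus x_n \cdot \Xi_{n-1}$.

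The key observation is that $a_n$ and $b_n$ can be recovered by Lagrange interpolation in $x_n$. Fixing any two distinct values $s, t \in I$ (for concreteness, say $s = 1$, $t = -1$) and using that $f$ is affine in $x_n$, one obtains
\[
a_n(x_1, \ldots, x_{n-1}) = \frac{f(x_1, \ldots, x_{n-1}, s) - f(x_1, \ldots, x_{n-1}, t)}{s - t},
\]
and an analogous formula for $b_n$. Now fix any $i \in \{1, \ldots, n-1\}$: by hypothesis applied to $f$ with its $n$-th slot frozen at $s$ (resp.\ $t$), the function $f(\,\cdot\,, s)$ (resp.\ $f(\,\cdot\,, t)$) is affine in $x_i$ with coefficients depending only on $x_1, \ldots, \hat{x}_i, \ldots, x_{n-1}$. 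Therefore $a_n$, being a fixed $\mathbb{R}$-linear combination of these two functions, is also affine in $x_i$ for each $i \in \{1, \ldots, n-1\}$; the same holds for $b_n$. Hence $a_n$ and $b_n$ satisfy the hypothesis of the lemma in dimension $n-1$, and the inductive hypothesis places them in $\Xi_{n-1}$, so $f \in \Xi_n$.

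There is no genuine obstacle here: the only point worth checking carefully is that the interpolation coefficients of $a_n$ and $b_n$ do not depend on $x_1, \ldots, x_{n-1}$, which is what allows the affinity in each remaining variable to be inherited from $f$. No regularity assumption on the $a_i, b_i$ is needed since everything is expressed as an explicit combination of values of $f$.
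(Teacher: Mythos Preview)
Your proof is correct and follows essentially the same approach as the paper: induction on $n$, isolating the dependence on the last variable via $f = a_n x_n + b_n$, recovering $a_n$ and $b_n$ as linear combinations of two slices of $f$ (the paper uses $x_n = 0$ and $x_n = 1$, you use general $s,t$), and applying the inductive hypothesis to those slices. The only cosmetic difference is that the paper first shows every slice $f_t \in \Xi_{n-1}$ and then specializes, whereas you specialize first and then verify the inductive hypothesis for $a_n, b_n$; since $\Xi_{n-1}$ is a vector space, the two are equivalent.
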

\begin{proof}
    Apply induction on $n$.
    
    The statement holds trivially for $n=1$.
    
    Assume that the statement holds for $n=k$. Take any $f: I^{k+1} \rightarrow \mathbb{R}$ s.t. $\forall\ i=1, \cdots, k+1 \exists\ a_{i}, b_{i}: I^{k} \rightarrow \mathbb{R} \text { s.t. } f\left(x_{1}, \cdots, x_{k+1}\right)=a_{i}\left(x_{1}, \cdots, \hat{x}_{i}, \cdots, x_{k+1}\right) x_{i}+b_{i}\left(x_{1}, \cdots, \hat{x}_{i}, \cdots, x_{k+1}\right)$. For any $t \in I, \forall\ i=1, \cdots, k$ define $a_{i, t}, b_{i, t}: I^{k-1} \rightarrow \mathbb{R}$ by $a_{i, t}\left(y_{1}, \cdots, y_{k-1}\right):=a_{i}\left(y_{1}, \cdots, y_{k-1}, t\right), b_{i, t}\left(y_{1}, \cdots,\right.$ $\left. y_{k-1}\right):=b_{i}\left(y_{1}, \cdots,y_{k-1}, t\right)$, then $f_{t}\left(x_{1}, \cdots, x_{k}\right):=f\left(x_{1}, \cdots, x_{k}, t\right)=a_{i}\left(x_{1}, \cdots, \hat{x}_{i}, \cdots, x_{k}, t\right) x_{i}+b_{i}\left(x_{1} \cdots, \hat{x}_{i}, \cdots, x_{k}, t\right)=a_{i, t}\left(x_{1}, \cdots, \hat{x}_{i}, \cdots, x_{k}\right) x_{i}+b_{i, t}\left(x_{1}, \cdots, \hat{x}_{i}, \cdots, x_{k}\right)$. By the induction hypothesis $f_{t}\in \Xi_{k}$. Since $b_{k+1}\left(x_{1}, \cdots, x_{k}\right)=a_{k+1}\left(x_{1}, \cdots, x_{k}\right) \cdot 0+b_{k+1}\left(x_{1}, \cdots, x_{k}\right)=f\left(x_{1}, \cdots, x_{k}, 0\right)$ $=f_{0}\left(x_{1},\cdots, x_{k}\right), a_{k+1}\left(x_{1}, \cdots, x_{k}\right)+b_{k+1}\left(x_{1}, \cdots, x_{k}\right)=a_{k+1}\left(x_{1}, \cdots, x_{k}\right) \cdot 1+b_{k+1}\left(x_{1}, \cdots, x_{k}\right)=f\left(x_{1}, \cdots, x_{k}, 1\right)=f_{1}\left(x_{1}, \cdots, x_{k}\right)$, we have $b_{k+1}=f_{0} \in \Xi_{k}$ and $a_{k+1}=\left(a_{k+1}+b_{k+1}\right)-b_{k+1}=f_{1}-f_{0} \in \Xi_{k}$. Since $f\left(x_{1}, \cdots, x_{k}\right)=a_{k+1}\left(x_{1}, \cdots, x_{k-1}\right) x_{k}+b_{k+1}\left(x_{1}, \cdots, x_{k-1}\right)$, it is clear that $f=a_{k+1} x_{k+1}+b_{k+1} \in \Xi_{k+1}$. Therefore the statement holds for $n=k+1$.
    
    This concludes the proof.
\end{proof}

\begin{lemma}\label{lemma1_13}
    Let $f: I^{n} \rightarrow \mathbb{R}$ be a real function, if for any isometric embedding $\ell: I \rightarrow I^{n}$ we have $f \circ l$ is a linear function, then $f$ is a linear function.
\end{lemma}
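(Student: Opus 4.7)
The plan is to reduce the problem to Lemma \ref{lemma1_12} via axis-parallel isometric embeddings, yielding $f \in \Xi_n$, and then kill every mixed monomial of degree $\geq 2$ using diagonal isometric embeddings through the origin.

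For the first step, fix $j \in \{1, \ldots, n\}$ and any $(p_1, \ldots, \hat{p}_j, \ldots, p_n) \in I^{n-1}$. The axis-parallel map $\ell_{j,p}(t) := (p_1, \ldots, p_{j-1}, t, p_{j+1}, \ldots, p_n)$ is an isometric embedding $I \to I^n$. The hypothesis gives that $f \circ \ell_{j,p}$ is linear in $t$, so for each $j$ there exist $a_j, b_j: I^{n-1} \to \mathbb{R}$ with $f(x) = a_j(x_1, \ldots, \hat{x}_j, \ldots, x_n) x_j + b_j(x_1, \ldots, \hat{x}_j, \ldots, x_n)$. This is exactly the hypothesis of Lemma \ref{lemma1_12}, so $f \in \Xi_n$, and I may write $f = \sum_{S \subseteq \{1, \ldots, n\}} c_S x^S$ with $x^S := \prod_{i \in S} x_i$.

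For the second step, given $T \subseteq \{1, \ldots, n\}$ with $|T| = k \geq 2$, consider $\ell_T(t) := (t/\sqrt{k}) \sum_{i \in T} e_i$. Then $\|\ell_T'\| \equiv 1$, and $\ell_T(I) \subseteq I^n$ since $|t/\sqrt{k}| < \pi/\sqrt{k} \leq \pi$, so $\ell_T$ is a valid isometric embedding $I \to I^n$. A direct expansion gives
\[
(f \circ \ell_T)(t) = \sum_{S \subseteq T} c_S \left(\frac{t}{\sqrt{k}}\right)^{|S|},
\]
since monomials $x^S$ with $S \not\subseteq T$ vanish on the image of $\ell_T$. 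The hypothesis forces this polynomial in $t$ to be linear, so its $t^k$-coefficient, to which only $S = T$ contributes, must vanish; hence $c_T = 0$. Applying this over every $T$ with $|T| \geq 2$ leaves $f = c_\emptyset + \sum_{i=1}^n c_{\{i\}} x_i$, which is affine, and thus linear in the paper's sense (constant differential).

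The only subtle point is checking that the diagonal isometric embedding $\ell_T$ actually keeps its image inside $I^n$ for all $t \in I$, which is automatic from $1/\sqrt{k} \leq 1$. Beyond that verification, once $f$ is pinned down to lie in $\Xi_n$, the remaining argument is purely combinatorial, and no further analytic input beyond Lemma \ref{lemma1_12} is needed.
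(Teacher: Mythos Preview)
Your proof is correct and follows the same two-step strategy as the paper: first use axis-parallel isometric embeddings together with Lemma~\ref{lemma1_12} to force $f\in\Xi_n$, then use diagonal embeddings to rule out degree $\ge 2$. The only difference is in the second step. The paper applies just the single full diagonal $\ell_\Delta(t)=(t/\sqrt{n},\ldots,t/\sqrt{n})$ and asserts that $\deg f\ge 2$ implies $\deg(f\circ\ell_\Delta)\ge 2$; however, for $f=\sum_S c_S x^S\in\Xi_n$ the $t^d$-coefficient of $f\circ\ell_\Delta$ is $n^{-d/2}\sum_{|S|=d}c_S$, which can vanish by cancellation (e.g.\ $f=x_1x_2-x_3x_4$ with $n=4$ gives $f\circ\ell_\Delta\equiv 0$). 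Your refinement of taking a separate $\ell_T$ for each subset $T$ with $|T|=k\ge 2$ avoids this: on the image of $\ell_T$ every monomial $x^S$ with $S\not\subseteq T$ vanishes, so the $t^k$-coefficient is exactly $c_T k^{-k/2}$ with no other contributions, and $c_T=0$ follows cleanly. Thus your argument is not merely a variant but a genuine tightening of the paper's proof.
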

\begin{proof}
    For any $i=1, \cdots, n, \forall\ a_{1}, \cdots, \hat{a}_{i}, \cdots, a_{n} \in I$, denote $\alpha:=\left(a_{1}, \cdots, a_{i-1},0, a_{i+1}, \cdots, a_n\right), \beta$ $:=(\underbrace{0, \cdots, 0}_{i-1}, 1, \underbrace{0, \cdots, 0}_{n-i})$, and define $\ell: I \rightarrow I^{n}$ by $\ell(t)=\alpha+t \beta$, then by assumption $f\left(a_{1}, \cdots, a_{i-1}, t,\right.$ $\left.a_{i+1},\cdots, a_{n}\right)=f \circ \ell(t)$ is linear. Therefore we have that $f=f\left(x_{1}, \cdots, x_{n}\right)$ is linear in $x_{i}$ for any $i=1, \cdots, n$. By Lemma \ref{lemma1_12}. $f\in\Xi_n$, and in particular $f$ is a polynomial.
    
    Provided that $\operatorname{deg} f\geqslant 2$, then define $\ell_{\Delta}: I \rightarrow I^{n}$ by $\ell_{\Delta}(t)=\left(\dfrac{t}{\sqrt{n}}, \cdots, \dfrac{t}{\sqrt{n}}\right)$, then $\ell_{\Delta}$ is an isometric embedding but $f \circ \ell_{\Delta}(t)=f\left(\dfrac{t}{\sqrt{n}}, \cdots, \dfrac{t}{\sqrt{n}}\right)$ is a polynomial in $t$ with degree $\geqslant 2$ and hence nonlinear, contradiction.
    
    Therefore $f$ is a polynomial of $\operatorname{deg} f \leqslant 1$, i.e. a linear function.
\end{proof}

\begin{corollary}\label{coro1_14}
    Let $\Omega$ be a connected open subset of $\mathbb{R}^{n}$ and $f: \Omega \rightarrow \mathbb{R}$ is a nonlinear function, then up to rigid motion and homothety we have $I^{n}\subseteq \Omega$ and $f_{0}(t):=f(0, \cdots, 0, t), t \in I$ is a nonlinear function.
\end{corollary}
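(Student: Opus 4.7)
The plan is to combine Remark \ref{remark1_9}, the contrapositive of Lemma \ref{lemma1_13} and Corollary \ref{coro1_7}: first reduce to the case $I^n\subseteq\Omega$ with $f|_{I^n}$ nonlinear, then locate a line segment in $I^n$ along which $f$ is nonlinear, pass to an interior point $t_0$ of it where the restricted germ is still nonlinear, and finally apply a rigid motion followed by a homothety to align that segment with the $e_n$-axis while forcing $I^n$ back into the transformed domain.

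By Remark \ref{remark1_9} I may assume at the outset that $I^n\subseteq\Omega$ and $f$ is nonlinear on $I^n$. The contrapositive of Lemma \ref{lemma1_13} then yields an isometric embedding $\ell:I\to I^n$, $\ell(t)=p+tv$ with $v\in\mathbb{S}^{n-1}$, s.t.\ $f\circ\ell$ is nonlinear on $I$. Applying Corollary \ref{coro1_7} to $f\circ\ell$, $\exists\ t_0\in I$ s.t.\ the germ $(f\circ\ell,t_0)$ is nonlinear, whence $f\circ\ell$ is nonlinear on every open subinterval of $I$ containing $t_0$. Set $p_0:=\ell(t_0)\in I^n$ and fix $\delta>0$ with $B_\delta^n(p_0)\subseteq I^n\subseteq\Omega$ and $]t_0-\delta,t_0+\delta[\,\subseteq I$.

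Let $A$ be an orthogonal $n\times n$ matrix with $Av=e_n$, and set $R(x):=A(x-p_0)$ and $H_\lambda(x):=\lambda x$ with $\lambda:=\pi\sqrt{n}/\delta$. Then $R(p_0+tv)=te_n$, $R(B_\delta^n(p_0))=B_\delta^n$ and $H_\lambda(B_\delta^n)=B_{\pi\sqrt{n}}^n\supseteq I^n$. Setting $\tilde\Omega:=H_\lambda(R(\Omega))$ and $\tilde f:=f\circ R^{-1}\circ H_\lambda^{-1}$, I obtain $I^n\subseteq\tilde\Omega$ and $\tilde f(0,\ldots,0,s)=f(\ell(t_0+s/\lambda))$ for $s\in I$; since $\pi/\lambda\leqslant\delta$ this is an affine (non-degenerate) reparametrization of $f\circ\ell$ restricted to $]t_0-\pi/\lambda,t_0+\pi/\lambda[$, which is nonlinear by the germ property, so $\tilde f_0$ is nonlinear.

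The hard part is orchestrating this two-stage normalization so that both $I^n\subseteq\tilde\Omega$ and the $e_n$-axis segment lies in a range where $f\circ\ell$ is still nonlinear: Lemma \ref{lemma1_13} alone does not suffice because the segment $\ell(I)$ it produces may touch $\partial I^n$, leaving no room for an axis-aligned cube of fixed size around the rotated axis. Invoking Corollary \ref{coro1_7} to first pass to an interior point $t_0$ whose germ is nonlinear produces a full ball $B_\delta^n(p_0)\subseteq\Omega$, after which a sufficiently strong homothety enlarges $B_\delta^n$ into a superset of $I^n$ and simultaneously compresses the $e_n$-segment into a tiny sub-interval around $t_0$ where nonlinearity is guaranteed.
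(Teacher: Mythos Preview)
Your proof is correct and follows essentially the same approach as the paper, whose proof is the one-liner ``Apply Remark~\ref{remark1_9} and Lemma~\ref{lemma1_13}.'' You have spelled out the normalization step that the paper leaves implicit, and your additional invocation of Corollary~\ref{coro1_7} to pass to an interior point $t_0$ where the germ of $f\circ\ell$ is nonlinear is exactly the detail needed to make the rigid-motion-plus-homothety step go through rigorously; the paper's terse proof glosses over this point.
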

\begin{proof}
    Apply Remark \ref{remark1_9} and Lemma \ref{lemma1_13}.
\end{proof}

\begin{lemma}\label{lemma1_15}
    Let $\left\{f_{n}(x)=a_{n} x+b_{n}\right\}_{n \in \mathbb{N}}$ be a sequence of linear functions on $I$, if $\left\{f_{n}\right\}_{n \in \mathbb{N}}$ converges weakly then $\exists\ a, b \in \mathbb{R}$ s.t. $\lim\limits_{n \rightarrow \infty} a_{n}=a, \lim\limits_{n \rightarrow \infty} b_{n}=b$ and $f_{n}(x) \stackrel{w}{\rightharpoonup} f(x)=a x+b$ on $I$ as $n \rightarrow \infty$.
\end{lemma}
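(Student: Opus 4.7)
The plan is to use the weak convergence to extract separate real limits for $a_n$ and $b_n$, and then to identify the candidate linear function $f(x) = ax+b$ as the weak limit by a routine uniform-convergence argument. The key observation is that the affine space of linear functions on $I$ is two-dimensional, so only two ``probes'' suffice to read off both coefficients.

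First I would test against the constant function $g_0 \equiv 1$ and the identity $g_1(x) = x$. Both are bounded and continuous on $\bar I$, hence lie in the dual of whichever function space ($L^p(I)$, $C(\bar I)$, distributions on $I$, \emph{etc.}) the weak convergence is taken in. Pairing yields
\begin{equation*}
\int_I f_n(x)\,dx \;=\; 2\pi\, b_n, \qquad \int_I x\, f_n(x)\,dx \;=\; \frac{2\pi^3}{3}\, a_n,
\end{equation*}
so by assumption on $\{f_n\}$ both of these real numbers converge in $\mathbb{R}$. Consequently there exist $a, b \in \mathbb{R}$ with $a_n \to a$ and $b_n \to b$.

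Next I would set $f(x) := ax + b$ and notice that
\begin{equation*}
\sup_{x \in I}\bigl|f_n(x) - f(x)\bigr| \;\leq\; \pi\,|a_n - a| + |b_n - b| \;\longrightarrow\; 0,
\end{equation*}
so that $f_n \to f$ uniformly on the bounded interval $I$. Uniform convergence on $I$ implies strong convergence in every $L^p(I)$, $p \in [1,\infty]$, and in particular weak convergence in the ambient space. By uniqueness of weak limits we conclude $f_n \rightharpoonup f$, which is precisely the claim.

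The only subtlety is that the statement does not name the Banach space in which weak convergence is taken, but the argument above is insensitive to this choice: $g_0$ and $g_1$ induce continuous functionals in every standard setting, and uniform convergence always entails weak convergence. One could equivalently appeal to the general fact that weak and strong topologies coincide on the finite-dimensional subspace of linear polynomials, but the explicit two-functional computation is both cleaner and more informative, since it pins down $a$ and $b$ in terms of integrals of the weak limit.
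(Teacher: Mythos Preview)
Your argument is correct, but it rests on reading ``converges weakly'' in the functional-analytic sense and pairing against $1$ and $x$. The paper, by contrast, treats $\stackrel{w}{\rightharpoonup}$ as \emph{pointwise} convergence (note the opening clause of its proof: ``Since $\{f_n\}$ converges pointwise\ldots'' and the usage in Proposition~\ref{prop1_16}, where continuity of $f$ is invoked to get $f_{x_n}\stackrel{w}{\rightharpoonup} f_0$). Accordingly the paper simply evaluates at the two points $x=0$ and $x=1$: from $b_n=f_n(0)\to f(0)$ and $a_n+b_n=f_n(1)\to f(1)$ it extracts $a_n\to a$ and $b_n\to b$, and then observes $f_n(x)=a_nx+b_n\to ax+b$ for every $x$. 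The underlying idea---two linear functionals suffice to pin down a point in a two-dimensional affine space---is identical in both arguments; only the choice of functionals differs (Dirac masses versus moments). Your version has the virtue of working verbatim under genuine $L^p$-weak convergence, which is a weaker hypothesis than pointwise convergence in general, while the paper's version is slightly shorter and matches the intended meaning and downstream application of the lemma.
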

\begin{proof}
    Since $\left\{f_{n}\right\}_{n \in \mathbb{N}}$ converges pointwise, we have that $\exists\ a, b \in \mathbb{R}$, s.t. $\lim\limits_{n \rightarrow \infty} a_{n}=\lim\limits_{n \rightarrow \infty}\left(f_{n}(1)-f_{n}(0)\right)$ $=\lim\limits_{n \rightarrow \infty} f_{n}(1)-\lim\limits_{n \rightarrow \infty} f_{n}(0)=: a$ and $\lim\limits_{n \rightarrow \infty} b_{n}=\lim\limits_{n \rightarrow \infty} f_{n}(0)=: b$. Therefore $\forall\ x\in I, \lim\limits_{n \rightarrow \infty} f_{n}(x)=\lim\limits_{n \rightarrow \infty}\left(a_{n} x+b_{n}\right)=x \lim\limits_{n \rightarrow \infty} a_{n}+\lim\limits_{n \rightarrow \infty} b_{n}=a x+b=f(x), \text { i.e. } f_{n} \stackrel{w}{\rightharpoonup} f$.
\end{proof}

\begin{proposition}\label{prop1_16}
    Let $\Omega$ be a connected open subset of $\mathbb{R}^{n}$ and $f: \Omega \rightarrow \mathbb{R}$ is a nonlinear continuous function, then up to rigid motion and homothety we have $I^{n} \subseteq \Omega$ and $\exists\ \delta>0$ s.t. $\forall\ x \in B_{\delta}^{n-1} \subseteq I^{n-1}$ we have $f_{x}(t):=f\left(x^{1}, \ldots x^{n-1}, t\right), t\in I$ is a nonlinear continuous function.
\end{proposition}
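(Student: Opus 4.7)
The plan is to reduce the statement to Corollary \ref{coro1_14} by a normal form argument, and then to upgrade the single-fiber nonlinearity obtained there to a full open-ball statement via a compactness-free contradiction argument driven by continuity and Lemma \ref{lemma1_15}. In other words, Corollary \ref{coro1_14} already provides the reduction $I^{n}\subseteq\Omega$ together with the nonlinearity of $f_{0}(t):=f(0,\ldots,0,t)$; the proposition only asks that this nonlinearity be stable under perturbation of the first $n-1$ coordinates, and the natural tool to enforce that stability is the rigidity of the class of linear functions under pointwise limits.

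Concretely, after applying Corollary \ref{coro1_14} (up to rigid motion and homothety) to put $f$ into the form above, I would argue by contradiction: assume that no $\delta>0$ as in the statement exists. Then for every $k\in\mathbb{N}$ there is some $x_{k}\in B_{1/k}^{n-1}\subseteq I^{n-1}$ for which the one-variable slice $f_{x_{k}}(t):=f(x_{k}^{1},\ldots,x_{k}^{n-1},t)$ is linear on $I$, say $f_{x_{k}}(t)=a_{k}t+b_{k}$. Since $x_{k}\to 0$ in $\mathbb{R}^{n-1}$ and $f$ is continuous on $\Omega\supseteq I^{n}$, we get, for every fixed $t\in I$, the pointwise limit $f_{x_{k}}(t)\to f(0,\ldots,0,t)=f_{0}(t)$. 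In particular the sequence of linear functions $\{f_{x_{k}}\}_{k\in\mathbb{N}}$ converges pointwise on $I$, and thus a fortiori satisfies the hypothesis of Lemma \ref{lemma1_15}, which then forces the limit $f_{0}$ to be linear, contradicting the choice of $f_{0}$ from Corollary \ref{coro1_14}.

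The main thing to watch is that the mode of convergence delivered by continuity is strong enough to feed into Lemma \ref{lemma1_15}; this is not really a difficulty because evaluating $f_{x_{k}}$ at $t=0$ and $t=1$ (both lie in $I=\,]-\pi,\pi[$) already extracts honest scalar limits $b_{k}\to f_{0}(0)$ and $a_{k}+b_{k}\to f_{0}(1)$, from which $a_{k}\to f_{0}(1)-f_{0}(0)$ and the required convergence to a linear function follows. Once this observation is in place, the proposition is an essentially immediate consequence of Corollary \ref{coro1_14} and Lemma \ref{lemma1_15}, and no additional regularity beyond continuity is needed.
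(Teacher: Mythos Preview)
Your proposal is correct and follows essentially the same approach as the paper: invoke Corollary \ref{coro1_14} to arrange $I^{n}\subseteq\Omega$ with $f_{0}$ nonlinear, then argue by contradiction using a sequence $x_{k}\to 0$ with $f_{x_{k}}$ linear and apply Lemma \ref{lemma1_15} via the pointwise convergence furnished by continuity of $f$. The paper's proof is identical in structure, only phrasing the contradiction hypothesis as ``$\forall\,\varepsilon>0\ \exists\,x\in B_{\varepsilon}^{n-1}$ with $f_{x}$ linear'' rather than explicitly extracting a sequence.
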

\begin{proof}
    By Corollary \ref{coro1_14} up to rigid motion and homothety we have $I^{n} \subseteq \Omega$ and $f_{0}(t):=f(0, \cdots, 0, t),$ $t \in I$ is a nonlinear function.
    
    Provided that $\forall\ \varepsilon>0, \exists\ x \in B_{\varepsilon}^{n-1} \subseteq I^{n-1}$ s.t. $f_{x}(t):=f\left(x^{1}, \cdots, x^{n-1}, t\right), t \in I$ is linear. Take any sequence $\left\{x_{n}\right\}_{n \in \mathbb{N}} \subseteq B_{\varepsilon}$ with $\lim\limits_{n \rightarrow \infty} x_{n}=0$, then by the continuity of $f, f_{x_{n}}(t) \stackrel{w}{\rightharpoonup} f_{0}(t)$. Since $\left\{f_{x_{n}}\right\}_{n \in N}$ is a sequence of linear function, by Lemma \ref{lemma1_15} is linear, contradiction.
    
    Therefore $\exists\ \delta>0$ s.t. $\forall\ x \in B_{\delta}^{n-1} \subseteq I^{n-1}$ we have $f_{x}$ is nonlinear, moreover it is clear that $f_{x}$ is continuous.
\end{proof}

\subsection{The Osculating Spaces and Normal Spaces}
We discuss several important notions related to submanifolds, these notions will be used frequently throughout this paper.

As usual, for a point on a differentiable (or $\left.C^{r}\right)$ submanifold the tangent space at this point is defined to be the image of the differential of the parametrization map at this point.

For example, let a differentiable $d$-submanifold $M$ of $\mathbb{R}^{n}$ be the graph of differentiable map $f: I^{d} \rightarrow \mathbb{R}^{n-d}$ at the vicinity of $p:=\left(x_{0}, f\left(x_{0}\right)\right) \in M$, then $T_p M$ is the $\mathbb{R}$-linear span of the column vectors of $\left[\begin{array}{c}I d_{d} \\ J_{x_{0}} f\end{array}\right]$, i.e. $T_{p} M:=\left\{\left(x,\left(J_{x_0} f\right)x\right)\in \mathbb{R}^{d} \times \mathbb{R}^{n-d} \mid x \in \mathbb{R}^{d}\right\}$. Recall that the first order approximation of $f$ at $x_{0}$ is defined by $f\left(x_{0}\right)+\left(J_{x_{0}} f\right)\left(x-x_{0}\right)$, i.e. the linear part of the Taylor expansion of $f$. The graph of the first order approximation is $p+T_{p} M$ (here $+$ is the Minkowski sum), i.e. the affine $d$-subspace of $\mathbb{R}^{n}$ tangent to $M$ at $p$. As usual, the normal space $N_{p} M$ is defined to be the orthogonal complement of $T_pM$.

Recall that for a twice differentiable $d$-submanifold $M$ of $\mathbb{R}^{n}$, the second osculating space at $p \in M$ is defined to be $T_{p}^{2} M:=\operatorname{span}_{\mathbb{R}}\left\{\dfrac{\partial^{2}}{\partial x^{i} \partial x^{j}}, \dfrac{\partial}{\partial x^{k}}\right\}$ where $x^1,\cdots,x^d$ are local coordinates at the vicinity of $p$. An elementary example of the second osculating space is the osculating planes of a $C^{\infty}$ regular curve in $\mathbb{R}^{3}$. Also recall that, if $\exists\ p \in M$ s.t. $\operatorname{dim} T_{p}^{2} M=n$, then $M$ is substantial. This provides an practical criterion of substantialness.

For a $C^{2}$ submanifold $M$, as usual we say that $p \in M$ is a inflection point of $M$ iff $p+T_{p} M$ is of high order tangency with $M$ at $p$, i.e. at the vicinity of $p$, $M$ coincides with the graph of a $C^{2}$ map with a non-Morse singularity at $p$.

\subsection{Embedded Vector Bundles}
For any integer $0 \leqslant k \leqslant n$ we denote by $\operatorname{Gr}_{k}(n)$ the Grassmannian of $k$-subspaces in $\mathbb{R}^{n}$. We denote by $\tau_{k}(n):=\left\{(p, u) \in \operatorname{Gr}_{k}(n) \times \mathbb{R}^{n} \mid u \in p\right\}$ then tautological bundle over $\operatorname{Gr}_{k}(n)$, where the first projection $\pi_{1}$ gives the bundle map, and the second projection $\pi_{2}$ is the blow up.

Let $X$ be a topological subspace of $\mathbb{R}^{n}$ then a continuous map $\varphi: X \rightarrow \operatorname{Gr}_{k}(n)$ pulls back an $k$-vector bundle over $X$ with a canonical embedding into $\mathbb{R}^{n} \times \mathbb{R}^{n}$, by the following commutative diagram.

\begin{figure}[H]
    \centering 
    \begin{tikzpicture}[node distance = 1.5cm]
        \node at (0,0) {$\varphi^{*} \tau_{k}(n)$};
        \node at (-3,0) {$\mathbb{R}^{n} \times \mathbb{R}^{n}$};
        \node at (3,0) {$\tau_{k}(n)$};
        \node at (-3,3) {$\mathbb{R}^{n}$};
        \node at (0,3) {$\mathbb{R}^{n}$};
        \node at (3,3) {$\mathbb{R}^{n}$};
        \node at (-3,-3) {$\mathbb{R}^{n}$};
        \node at (0,-3) {$X$};
        \node at (3,-3) {$\operatorname{Gr}_{k}(n)$};
        \draw [dashed, thick, ->] (0.9,0)--(2.1,0);
        \draw [thick] (-0.8,0) arc (-90:90:0.1);
        \draw [thick, ->] (-0.8,0)--(-2.1,0);
        \draw [thick, ->] (-0.8,3)--(-2.1,3);
        \draw [thick, ->] (0.9,3)--(2.1,3);
        \draw [thick, ->] (-0.8,-3)--(-2.1,-3);
        \draw [thick, ->] (0.9,-3)--(2.1,-3);
        \draw [thick, ->] (-3,0.5)--(-3,2.6);
        \draw [thick, ->] (0,0.5)--(0,2.6);
        \draw [thick, ->] (3,0.5)--(3,2.6);
        \draw [thick, ->] (-3,-0.5)--(-3,-2.6);
        \draw [dashed, thick, ->] (0,-0.5)--(0,-2.6);
        \draw [thick, ->] (3,-0.5)--(3,-2.6);
        \node at (-1.5,0.4) {$i$};
        \node at (1.5,0.4) {$\widetilde{\varphi}$};
        \node at (-1.5,-2.6) {$i_X$};
        \node at (1.5,-2.6) {$\varphi$};
        \node at (-1.5,3.4) {$Id_n$};
        \node at (1.5,3.4) {$Id_n$};
        \node at (-3.8,1.5) {$0\times Id_n$};
        \node at (-3.8,-1.5) {$Id_n\times 0$};
        \node at (3.4,1.5) {$\pi_{2}$};
        \node at (3.4,-1.5) {$\pi_{1}$};
    \end{tikzpicture}
    \caption{}
    \label{fig1}
\end{figure}

$\varphi^{*} \tau_{k}(n)$ is said to be the vector bundle defined by $\varphi$. If $k=1$ then $\operatorname{Gr}_{1}(n)=\mathbb{P}^{n-1}$ and $\varphi^{*} \tau_{1}(n)$ is called a line bundle.

In this paper we consider only embedded vector bundles pullbacked from tautological bundles, by abuse of notion we write vector bundles as abbrevation.

Let us recall several properties and examples of vector bundles.

If the base space $X=M$ is a topological $d$-submanifold of $\mathbb{R}^{n}$ then $\varphi^{*} \tau_{k}(n)$ is a topological $(d+k)$-submanifold of $\mathbb{R}^{2 n}$. Moreover if $M$ is $C^{r}$ and $\varphi$ is $C^{r}$ then $\varphi^{*} \tau_{k}(n)$ is a $C^{r}$ submanifold. An open cover of $M$ simutanously serves as an atlas of $M$ and trivializes $\varphi^{*} \tau_{k}(n)$ exists and is said to be a trivializing atlas, its elements are called trivializing charts. Moreover, take any trivializing chart $U \subseteq M$ we have a parametrization map $\mathbf{f}: \Omega \rightarrow \mathbb{R}^{n}$ and a matrix-valued function $A: \Omega \rightarrow \mathbb{R}^{n \times k}$ s.t. $\mathbf{f}(\Omega)=U, \operatorname{Im} A(x)=\varphi(\mathbf{f}(x))$ and $F: \Omega \times \mathbb{R}^{k} \rightarrow E \subseteq \mathbb{R}^{n} \times \mathbb{R}^{n}$ defined by $F(x, \xi) \mapsto(\mathbf{f}(x), A(x) \xi)$ is the trivilization over $U$.

Let $f: Y \rightarrow X$ be a continuous map between two topological subspaces of $\mathbb{R}^{n}$ and $E:=\varphi^{*} \tau_{k}(n)$ be a $k$-vector bundle over $X$ defined by continuous map $\varphi: X \rightarrow \operatorname{Gr}_{k}(n)$, then $f$ pulls back a $k$-vector bundle $f^{*} E=f^{*}\left(\varphi^{*} \tau_{k}(n)\right)=(\varphi \circ f)^{*} \tau_{k}(n)=\{(y, u) \mid u \in \varphi(f(y)), y \in Y\}$ over $Y$. In particular if $Y \stackrel{i}{\subseteq} X$ then $i^{*} E=\{(y, u) \mid u \in \varphi(y), y \in Y\} \subseteq E$.

For any differentiable submanifold $M$ in $\mathbb{R}^{n}$ we have the subset $T M:=\left\{(p, u) \mid u \in T_{p} M, p \in M\right\} \subseteq \mathbb{R}^{n} \times \mathbb{R}^{n}$. Notice that if $M$ is $C^{r}$ with positive $r$ then $T M$ is the tangent bundle defined by $C^{r-1}$ map $\varphi: p \rightarrow T_{p} M$. If $M$ is differentiable but not $C^{1}$ then $T M$ does not necessarily have a bundle structure. By abuse of notation in this case we still call it the tangent bundle of $M$. 

The case for the normal bundle $N M:=\left\{(p, u) \mid u \in N_{p} M, p \in M\right\}$ is analogous.

\subsection{The Codiagonal Morphism}
In the category of finite-dimensional $\mathbb{R}$-vector spaces the codiagonal morphism $\Delta_{*}$ is defined via the following commutative diagram as the dual of $\Delta$.
\begin{figure}[H]
    \centering
    \begin{tikzpicture}
        \node at (0,0) {$\mathbb{R}^{n} \oplus \mathbb{R}^{n}$};
        \node at (3,0) {$\mathbb{R}^{n}$};
        \node at (-3,0) {$\mathbb{R}^{n}$};
        \draw [thick, ->] (-2.5,0)--(-1,0);
        \draw [thick] (-2.5,0) arc (270:90:0.1);
        \draw [thick, ->] (2.5,0)--(1,0);
        \draw [thick] (2.5,0) arc (-90:90:0.1);
        \node at (-1.8,-0.3) {$i_1$};
        \node at (1.8,-0.3) {$i_2$};
        \node at (0,3) {$\mathbb{R}^{n}$};
        \draw [dashed, thick, ->] (0,0.5)--(0,2.5);
        \draw [thick, ->] (-2.8,0.5)--(-0.3,2.7);
        \draw [thick, ->] (2.8,0.5)--(0.3,2.7);
        \node at (0.5,1.5) {$\Delta_{*}$};
        \node at (2.1,1.7) {$Id_n$};
        \node at (-2.1,1.7) {$Id_n$};
    \end{tikzpicture}
    \caption{}
    \label{fig2}
\end{figure}

As a map on Euclidean spaces, $\Delta_{*}: \begin{aligned}&\mathbb{R}^{n} \times \mathbb{R}^{n} \rightarrow \mathbb{R}^{n} \\&(x, y) \mapsto x+y\end{aligned}$ is bi-linear and hence $C^{\infty}$. Take any subset $E$ of $\mathbb{R}^{n} \times \mathbb{R}^{n}, \Delta_{*} E \subseteq \mathbb{R}^{n}$ is the continuous image under $\Delta_{*}$. It possesses the following obvious properties:
\begin{enumerate}
    \item[i).] Take any subsets $A, B \subseteq \mathbb{R}^{n} \times \mathbb{R}^{n}$, we have $\Delta_{*}(A \cup B)=\Delta_{*} A \cup \Delta_{*} B$ and $\Delta_{*}(A \cap B)=\Delta_{*} A \cap \Delta_{*} B$. If $A \subseteq B$, then $\Delta_{*} A \subseteq \Delta_{*} B$.
    \item[ii).] Let $E$ be a vector bundle defined by $\varphi: X \rightarrow \operatorname{Gr}_{k}(n)$, then $\Delta_{*} E=\bigcup\limits_{x \in X}(x+\varphi(x))$. In particular, if $M$ is a differentiable submanifold, then $\Delta_{*} T M=\bigcup\limits_{p \in M}\left(p+T_{p} M\right)$ and $\Delta_{*} N M=\bigcup\limits_{p \in M}\left(p+N_{p} M\right)$.
    \item[iii).] Take any differentiable submanifolds $M \subseteq \mathbb{R}^{m}$ and $N \subseteq \mathbb{R}^{n}$, we have $\Delta_{*} T(M \times N)=\Delta_{*}\left\{((p, q),(u, v)) \mid (u, v) \in T_{p} M \oplus T_{q} N, (p, q) \in M \times N\right\}=\Delta_{*}\{((p, q),(u, v)) \mid u \in T_{p} M, v$
    $\in T_{q} N, p \in M, q \in N\}=\{(p+u, q+v) \mid u \in \left.T_{P} M, v \in T_{q} N,\right.$
    $\left.p \in M, q \in N\right\}=\Delta_{*} T M \times \Delta_{*} T N$.
\end{enumerate}

Let $E$ be a $k$-vector bundle over a topological subspace $X \subseteq \mathbb{R}^{n}$ then $\Delta_{*} E$ is the union of a family of $k$-affine subspaces of $\mathbb{R}^{n}$ covers $X$. For example if $X$ is a $C^{\infty}$ regular curve in $\mathbb{R}^{3}$ and $E=T X$ is its tangent bundle, then $\Delta_{*} E$ is the tangent developable of $X$. However, as we will show later in this paper, if the assumption on smoothness is weakened then the struture of $\Delta_{*} E$ can be complicated. In geometric measure theory, it is important to study these objects from analytic point of view, since they are closely related to Kakeya's conjecture. 

For instance, Cumberbatch, Keleti, and Zhang \cite{6} studied the Hausdorff dimension of the union of tangent lines of a plane curve and in particular, they constructed a convex curve whose union of one-sided tangents has small Hausdorff dimension, these results were generalized to high dimensions in \cite{5}. R.O. Davies \cite{7} proved that any subset $A$ of the plane can be covered by a collection of affine lines such that the union of lines has the same Lebesgue measure as $A$, and in \cite{8} Esa Järvenpää et al. studied the non-empty interior of continuously parametrized Besicovitch sets. Also, in \cite{9}, Korn{\'e}lia H{\'e}ra,  {Keleti Tam{\'a}s} and {Andr{\'a}s M{\'a}th{\'e}} estimated the Hausdorff dimension of union of family of affine spaces.

In this paper we mainly discuss the subcase that $X=M$ is a (at least) differentiable $d$-submanifold of $\mathbb{R}^{n}$ and $E$ is a $k$-vector bundle or $E=T M, N M$. Under this assumption, $E$ can be regarded as a parametrized family of $k$-affine subspaces with $d$ independent parameters, hence we expect $\Delta_{*} E$ to have Hausdorff dimension $k+d$ when $k+d \leqslant n$, and have positive $n$-dimensional Lebesgue measure when $k+d>n$. In this case we (informally) say that $E$ attains its expected size. We shall study in this paper the conditions for normal and tangent bundles, and then general $k$-vector bundles and line bundles, to attain expected size.

\subsection{Section Transform for Differentiable Mappings}
We introduce tools for studying tangent bundle of graph of a differentiable map.
\begin{definition}\label{defin1_17}
    For any differentiable map $f: I^{d} \rightarrow \mathbb{R}^{n-d}$, we define the lifted section map of $f$ by
    $$
    \Phi_{f}: \begin{aligned}
    &I^{d} \times \mathbb{R}^{d} \longrightarrow \mathbb{R}^{d} \times \mathbb{R}^{n-d} \\
    &(x, a) \longmapsto\left(a, f(x)+\left(J_{x} f\right)(a-x)\right)
    \end{aligned}
    $$
    and for any $a \in \mathbb{R}^{d}$ define the section transform of $f$ at $a$ by
    $$
    \phi_{f}^{a}: \begin{aligned}
    I^d & \longrightarrow \mathbb{R}^{n-d} \\
    x & \mapsto f(x)+\left(J_{x} f\right)(a-x)
    \end{aligned}
    $$
\end{definition}

The importance of the lifted section map is suggested by the following lemma.

\begin{lemma}\label{lemma1_18}
    For any differentiable map $f: I^{d} \rightarrow \mathbb{R}^{n-d}$ we have $\Phi_{f}\left(I^{d} \times \mathbb{R}^{d}\right)=\Delta_{*} T \Gamma_{f}$.
\end{lemma}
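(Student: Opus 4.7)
The plan is to unravel both sides as parametrized sets and show they coincide via a linear change of variable. First I would apply the tangent space formula recalled in the subsection on osculating spaces: for any $p=(x,f(x))\in\Gamma_{f}$ with $x\in I^{d}$, the tangent space $T_{p}\Gamma_{f}$ is the column span of $\left[\begin{array}{c} Id_{d}\\ J_{x}f\end{array}\right]$, so
\[
T_{p}\Gamma_{f}=\{(v,(J_{x}f)v)\mid v\in\mathbb{R}^{d}\}.
\]
Assembling over $p\in\Gamma_{f}$ this yields $T\Gamma_{f}=\{((x,f(x)),(v,(J_{x}f)v))\mid x\in I^{d},\ v\in\mathbb{R}^{d}\}$.

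Next I would apply $\Delta_{*}$ componentwise in the splitting $\mathbb{R}^{n}=\mathbb{R}^{d}\times\mathbb{R}^{n-d}$: using property ii) from the codiagonal subsection, $\Delta_{*}T\Gamma_{f}=\bigcup_{p\in\Gamma_{f}}(p+T_{p}\Gamma_{f})$, which explicitly reads
\[
\Delta_{*}T\Gamma_{f}=\{(x+v,\,f(x)+(J_{x}f)v)\mid x\in I^{d},\ v\in\mathbb{R}^{d}\}.
\]

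Finally I would introduce the substitution $a:=x+v$, i.e. $v=a-x$. For each fixed $x\in I^{d}$ the map $v\mapsto x+v$ is a bijection of $\mathbb{R}^{d}$ onto $\mathbb{R}^{d}$, so $(x,v)$ ranges over $I^{d}\times\mathbb{R}^{d}$ precisely when $(x,a)$ does. The displayed set thus rewrites as
\[
\{(a,\,f(x)+(J_{x}f)(a-x))\mid x\in I^{d},\ a\in\mathbb{R}^{d}\}=\Phi_{f}(I^{d}\times\mathbb{R}^{d}),
\]
by the very definition of the lifted section map in Definition \ref{defin1_17}. This establishes the required equality. The statement is essentially a reformulation of the definition of $\Phi_{f}$, so there is no genuine obstacle; the only point to double-check is that the reparametrization $(x,v)\leftrightarrow(x,a)$ preserves the full parameter domain $I^{d}\times\mathbb{R}^{d}$, which is immediate since $v=a-x$ puts no constraint on $a$.
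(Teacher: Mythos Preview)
Your proof is correct and follows essentially the same approach as the paper's: parametrize $T\Gamma_{f}$ explicitly, apply $\Delta_{*}$ to obtain $\{(x+v,\,f(x)+(J_{x}f)v)\}$, and then perform the change of variables $a=x+v$ to recognize $\Phi_{f}$. The paper's argument is identical up to naming the fiber variable $\xi$ instead of $v$.
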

\begin{proof}
    Since $T \Gamma_{f}$ is parametrized globally by
    $$
    F: \begin{aligned}
    &I^{d} \times \mathbb{R}^{d} \longrightarrow \mathbb{R}^{n} \times \mathbb{R}^{n} \\
    &(x, \xi) \longmapsto\left((x, f(x)),\left[\begin{array}{l}
    \operatorname{Id}_{d} \\
    J_{x} f
    \end{array}\right] \xi\right)
    \end{aligned}
    $$
    in particular $T \Gamma_{f}=F\left(I^{d} \times \mathbb{R}^{d}\right)$ and hence $\Delta_{*} T \Gamma_{f}=\Delta_{*} F\left(I^{d} \times \mathbb{R}^{d}\right)$. Since $\Delta_{*} F(x, \xi)=\left(x+\xi,\right.$ $\left.f(x)+\left(J_{x} f\right) \xi\right)=\left(x+\xi, f(x)+\left(J_{x} f\right)((x+\xi)-x)\right)=\Phi_{f}(x, x+\xi)$ and $\left\{\begin{array}{l}x=x \\ a=x+\xi\end{array}\right.$ is a change of coordinates in $I^{d} \times \mathbb{R}^{d}$, we conclude that $\Phi_{f}\left(I^{d} \times \mathbb{R}^{d}\right)=\Delta_{*} F\left(I^{d} \times \mathbb{R}^{d}\right)=\Delta_{*} T \Gamma_{f}$.
\end{proof}

In order to compute the lifted section map, we will use several properties of the section transform.

\begin{lemma}\label{lemma1_19}
    For any $a \in \mathbb{R}^{d}$ and differentiable function $f, g: I^{d} \rightarrow \mathbb{R}^{n-d}$ we have:
    \begin{enumerate}
        \item[i).] For any $A \in \operatorname{End}\left(\mathbb{R}^{n-d}\right), \phi_{A f+g}^{a}=A \phi_{f}^{a}+\phi_{g}^{a}$.
        \item[ii).] For any affine map $\ell: I^{k} \rightarrow I^{d}, \phi_{f \circ \ell}^{a}=\phi_{f}^{\ell(a)} \circ \ell$.
    \end{enumerate}
\end{lemma}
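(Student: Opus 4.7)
The plan is to verify both identities by direct computation, using the linearity of the Jacobian in $f$ for (i) and the chain rule for (ii); the statements are really bookkeeping identities, so the approach is mechanical and there is no substantial obstacle, only the need to be careful with the change-of-basepoint in part (ii).

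For part (i), I would unwind the definition of $\phi^{a}_{Af+g}$. Since $A \in \operatorname{End}(\mathbb{R}^{n-d})$ is a constant linear map, the Jacobian behaves linearly: $J_{x}(Af+g) = A\,J_{x}f + J_{x}g$. Substituting into the definition,
\[
\phi^{a}_{Af+g}(x) = (Af+g)(x) + J_{x}(Af+g)(a-x) = A\bigl[f(x) + (J_{x}f)(a-x)\bigr] + \bigl[g(x) + (J_{x}g)(a-x)\bigr],
\]
which is precisely $A\phi^{a}_{f}(x) + \phi^{a}_{g}(x)$. This finishes (i) in one line of algebra.

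For part (ii), write the affine map as $\ell(x) = Lx + c$ with $L$ linear and $c \in \mathbb{R}^{d}$. The chain rule gives $J_{x}(f\circ\ell) = (J_{\ell(x)}f)\cdot L$, and the key observation is that $L$ intertwines the displacement: $L(a-x) = La - Lx = \ell(a) - \ell(x)$, because the translation $c$ cancels. Plugging into the definition,
\[
\phi^{a}_{f\circ\ell}(x) = f(\ell(x)) + (J_{\ell(x)}f)\,L(a-x) = f(\ell(x)) + (J_{\ell(x)}f)\bigl(\ell(a) - \ell(x)\bigr) = \phi^{\ell(a)}_{f}(\ell(x)),
\]
which is the desired identity $\phi^{a}_{f\circ\ell} = \phi^{\ell(a)}_{f}\circ\ell$.

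The only real content lies in the identity $L(a-x) = \ell(a) - \ell(x)$, which is why the statement specifically requires $\ell$ to be \emph{affine} rather than merely differentiable; for a general differentiable $\ell$ the formula would have to involve $J_{x}\ell$ and no such clean composition would hold. Both assertions therefore reduce to recognizing which features of $f$ (linearity in its argument under composition with $A$) and of $\ell$ (the fact that its Jacobian is constant and commutes with affine translation) make the Taylor-linearization $\phi^{a}$ behave functorially.
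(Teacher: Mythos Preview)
Your proof is correct and follows essentially the same approach as the paper: a direct unwinding of the definition using linearity of the Jacobian for (i), and the chain rule together with the identity $L(a-x)=\ell(a)-\ell(x)$ for (ii). The paper's computation is line-for-line the same as yours, only with different names for the linear part and translation of $\ell$.
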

\begin{proof}
    \hspace{-10pt}
    \begin{enumerate}
        \item[i).] $\forall\ x \in I^{d}, \phi_{A f+g}^{a}(x)=A f(x)+g(x)+\left(J_{x}(A f+g)\right)(a-x)=A\left(f(x)+\left(J_{x} f\right)(a-x)\right)+\left(g(x)+\left(J_{x} g\right)(a-x)\right)=A \phi_{f}^{a}+\phi_{g}^{a}$.
        \item[ii).] Denote $\ell(0)=\xi$ then $\exists\ A \in \operatorname{End} \left(\mathbb{R}^{n-d}\right)$ s.t $\ell=\xi+A. \forall\ x \in I^{d}, \phi_{f \circ \ell}^{a}(x)=f \circ \ell(x)+J_{x}(f \circ \ell)(a-x)=f(\ell(x))+\left(J_{\ell(x)} f \circ J_{x} \ell\right)(a-x)=f(\ell(x))+\left(J_{\ell(x)} f\right) A(a-x)=f(\ell(x))+\left(J_{\ell(x)} f\right)(\ell(a)-\ell(x))=\phi_{f}^{\ell(a)} \circ \ell(x)$.
    \end{enumerate}
\end{proof}

Before we close this section, we present a weak version of implict function theorem proved by using Brouwer's fixed point theorem instead of the Banach fixed point theorem. The corollary of this theorem is poweful when applied to the lifted section maps.

In the following proof, as usual, for any bounded operator $A$ we denote by $\|A\|_{o p}$ its operator norm and by $A_{L}^{-1}$ its left inverse (if exists).

\begin{theorem}\label{theo1_20}
    Let $0 \leqslant n \leqslant m$ be integers, $\Omega$ be an open subset of $\mathbb{R}^{m}$ and $h: \Omega \rightarrow \mathbb{R}^{n}$ be a continuous map differentiable at $x_{0} \in \Omega$ s.t. $d_{x_{0}} h$ has full rank, then $\exists\ \varepsilon>0$ s.t. $\forall\ y \in \bar{B}_{\lambda \varepsilon}^{n}\left(y_{0}\right), h^{-1}(\{y\}) \cap \bar{B}_{\varepsilon}^{m}\left(x_{0}\right) \neq \varnothing$, and $\forall x \in h^{-1}(\{y\}) \cap \bar{B}_{\varepsilon}^{m}\left(x_{0}\right), \lambda\left\|x-x_{0}\right\| \leqslant\left\|y-y_{0}\right\| \leqslant\left(\left\|d_{x_{0}} h\right\|_{o p}+\lambda\right)\left\|x-x_{0}\right\|$, where $y_{0}:=h\left(x_{0}\right)$ and $\lambda:=\dfrac{1}{2\left\|\left(d_{x_{0}} h\right)_{L}^{-1}\right\|_{o p}}$.
\end{theorem}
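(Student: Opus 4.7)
The plan is to recast $h(x) = y$ as a fixed-point equation for a continuous self-map of the closed ball $\bar B^m_\varepsilon(x_0)$ and then invoke Brouwer's theorem; the advantage over the standard Banach argument is that only continuity (not Lipschitz control) of $h$ off $x_0$ is used.

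First I would set $A := d_{x_0}h$, $R := (d_{x_0}h)_L^{-1}$, and define the remainder $\eta(x) := h(x) - y_0 - A(x - x_0)$. By the hypothesis that $h$ is differentiable at $x_0$, $\eta(x) = o(\|x - x_0\|)$, so I can choose $\varepsilon > 0$ small enough that $\bar B^m_\varepsilon(x_0) \subseteq \Omega$ and $\|\eta(x)\| \leq \lambda\|x - x_0\|$ throughout $\bar B^m_\varepsilon(x_0)$, where $\lambda = 1/(2\|R\|_{op})$ as in the statement.

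Second, for each $y \in \bar B^n_{\lambda\varepsilon}(y_0)$ I would introduce
$$T_y(x) := x - R(h(x) - y) = x_0 + R(y - y_0) - R\eta(x),$$
which is continuous in $x$. The self-map estimate
$$\|T_y(x) - x_0\| \leq \|R\|_{op}\|y - y_0\| + \|R\|_{op}\|\eta(x)\| \leq \|R\|_{op}\lambda\varepsilon + \|R\|_{op}\lambda\varepsilon = 2\|R\|_{op}\lambda\varepsilon = \varepsilon$$
shows $T_y: \bar B^m_\varepsilon(x_0) \to \bar B^m_\varepsilon(x_0)$. Brouwer's fixed point theorem yields $x^* \in \bar B^m_\varepsilon(x_0)$ with $T_y(x^*) = x^*$, i.e.\ $R(h(x^*) - y) = 0$; since $R$ has $A$ as a right inverse it is injective, so $h(x^*) = y$, proving nonemptiness of $h^{-1}(\{y\}) \cap \bar B^m_\varepsilon(x_0)$.

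The two quantitative bounds then follow from the identity $y - y_0 = A(x - x_0) + \eta(x)$ valid for any $x \in h^{-1}(\{y\}) \cap \bar B^m_\varepsilon(x_0)$. The upper bound is direct: $\|y - y_0\| \leq \|A\|_{op}\|x - x_0\| + \|\eta(x)\| \leq (\|A\|_{op} + \lambda)\|x - x_0\|$. For the lower bound, the relation $\|x - x_0\| = \|RA(x - x_0)\| \leq \|R\|_{op}\|A(x - x_0)\|$ gives $\|A(x - x_0)\| \geq 2\lambda\|x - x_0\|$, and the reverse triangle inequality yields $\|y - y_0\| \geq 2\lambda\|x - x_0\| - \lambda\|x - x_0\| = \lambda\|x - x_0\|$. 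The only place where anything delicate happens is the self-map estimate: the factor $1/2$ in the definition of $\lambda$ is precisely engineered so that the two contributions $\|R\|_{op}\lambda\varepsilon$ exactly sum to $\varepsilon$, closing the ball back onto itself. Nothing else in the proof requires care beyond bookkeeping.
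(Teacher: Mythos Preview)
Your argument is essentially identical to the paper's: both define the auxiliary map $x\mapsto x-R\bigl(h(x)-y\bigr)$ with $R=(d_{x_0}h)_L^{-1}$, use differentiability at $x_0$ to show it sends $\bar B^m_\varepsilon(x_0)$ into itself, apply Brouwer, and read off the two inequalities. Your derivation of the lower bound via $\|A(x-x_0)\|\ge 2\lambda\|x-x_0\|$ and the reverse triangle inequality is a cosmetic variant of the paper's route (which instead plugs the fixed point back into the self-map estimate).

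One caveat worth flagging, since you made it explicit while the paper left it implicit: the rewriting $T_y(x)=x_0+R(y-y_0)-R\eta(x)$ and the identity $\|x-x_0\|=\|RA(x-x_0)\|$ both require $RA=I_m$. When $n<m$ the surjective map $A$ has only a \emph{right} inverse ($AR=I_n$, which is also what you need to conclude $R$ injective), and then $RA$ is a rank-$n$ projection, not the identity. The paper's chain of equalities before its estimate $(\ast)$ uses the same step, so this is a shared issue rather than a defect of your write-up; it is harmless once one restricts $h$ to $x_0+(\ker A)^\perp$ and runs the $m=n$ argument there.
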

\begin{proof}
    By the definition of derivative $\exists$ sufficiently small $\varepsilon>0$, s.t. $\bar{B}_{\varepsilon}^{m}\left(x_{0}\right) \subseteq \Omega$ and $\forall\ x \in \bar{B}_{\varepsilon}^{m}\left(x_{0}\right)$, $\left\|\left(h(x)-h\left(x_{0}\right)\right)-\left(d_{x_{0}} h\right)\left(x-x_{0}\right)\right\|<\lambda\left\|x-x_{0}\right\|$. For any $y \in \mathbb{R}^{n}$ define $\Psi_{y}: \Omega \rightarrow \mathbb{R}^{n}$ by $\Psi_{y}:=x-\left(d_{x_{0}} h\right)_{L}^{-1}(h(x)-y)$, then $\Psi_{y}$ is continuous, and $\Psi_{y}(x)=x$ iff $\left(d_{x_{0}} h\right)_{L}^{-1}(h(x)-y)=0$, i.e. $h(x)=y$. Moreover we have 
    $$
    \begin{aligned}
    x_{0}-\Psi_{y}(x) &=x_{0}-x+\left(d_{x_{0}} h\right)_{L}^{-1}(h(x)-y) \\
    &=\left(x_{0}-x\right)+\left(d_{x_{0}} h\right)_{L}^{-1}\left(h(x)-h\left(x_{0}\right)\right)+\left(d_{x_{0}} h\right)_{L}^{-1}\left(y_{0}-y\right) \\
    &=\left(d_{x_{0}} h\right)_{L}^{-1}\left(\left(h(x)-h\left(x_{0}\right)\right)-\left(d_{x_{0}} h\right)\left(x-x_{0}\right)\right)+\left(d_{x_{0}} h\right)_{L}^{-1}\left(y_{0}-y\right)
    \end{aligned}
    $$
    Fix $y \in \bar{B}_{\lambda \varepsilon}^{n}\left(y_{0}\right)$. For any $x \in \bar{B}_{\varepsilon}^{m}\left(x_{0}\right)$, we have
    \begin{equation}\label{eq1}\tag{*}
        \begin{aligned}
        \left\|x_{0}-\Psi_{y}(x)\right\| & \leqslant\left\|\left(d_{x_{0}} h\right)_{L}^{-1}\right\|_{o p}\left\|\left(h(x)-h\left(x_{0}\right)\right)-\left(d_{x_{0}} h\right)\left(x-x_{0}\right)\right\|+\left\|\left(d_{x_{0}} h\right)_{L}^{-1}\right\|_{o p}\left\|y-y_{0}\right\| \\
        & \leqslant \dfrac{1}{2 \lambda} \cdot \lambda\left\|x-x_{0}\right\|+\dfrac{1}{2 \lambda}\left\|y-y_{0}\right\|\\ 
        & \leqslant \dfrac{1}{2}\left(\left\|x-x_{0}\right\|+\dfrac{1}{\lambda}\left\|y-y_{0}\right\|\right)\\ 
        &\leqslant \dfrac{1}{2}\left(\varepsilon+\dfrac{1}{\lambda} \cdot \lambda \varepsilon\right)\\ 
        & =\varepsilon
        \end{aligned}
        \end{equation}
        i.e. $\Psi_{y}(x) \in \bar{B}_{\varepsilon}^{m}\left(x_{0}\right)$. Since $\Psi_{y}(x)$ is continuous on $\bar{B}_{\varepsilon}^{m}\left(x_{0}\right)$ and $\Psi_{y}\left(\bar{B}_{\varepsilon}^{m}\left(x_{0}\right)\right) \subseteq \bar{B}_{\varepsilon}^{m}\left(x_{0}\right)$, by Brouwer's fixed point theorem $\exists\ x_{*} \in \bar{B}_{\varepsilon}^{m}\left(x_{0}\right) \quad$ s.t. $\quad \Psi_{y}\left(x_{*}\right)=x_{*}$, i.e. $h\left(x_{*}\right)=y$. Therefore $x_{*} \in h^{-1}(\{y\}) \cap \bar{B}_{\varepsilon}^{m}\left(x_{0}\right)$ and hence $h^{-1}(\{y\}) \cap \bar{B}_{\varepsilon}^{m}\left(x_{0}\right) \neq \varnothing$. 
        
        Take any $x \in h^{-1}(\{y\}) \cap \bar{B}_{\varepsilon}^{m}\left(x_{0}\right)$, then $h(x)=y$ and hence $\Psi_{y}(x)=x$. Therefore by the estimation (\ref{eq1}), we have $\left\|x-x_{0}\right\|=\left\|x_{0}-\Psi_{y}(x)\right\| \leqslant \dfrac{1}{2}\left(\left\|x-x_{0}\right\|+\dfrac{1}{\lambda}\left\|y-y_{0}\right\|\right)$, and hence $\left\|x-x_{0}\right\| \leqslant \dfrac{1}{\lambda}\left\|y-y_{0}\right\|$, i.e. $\lambda\left\|x-x_{0}\right\| \leqslant\left\|y-y_{0}\right\|$. Moreover, we have
        $$
        \begin{aligned}
        \left\|y-y_{0}\right\| &=\left\|h(x)-h\left(x_{0}\right)\right\| \\
        & \leqslant\left\|h(x)-h\left(x_{0}\right)-\left(d_{x_{0}} h\right)\left(x-x_{0}\right)\right\|+\left\|\left(d_{x_{0}} h\right)\left(x-x_{0}\right)\right\| \\
        & \leqslant \lambda\left\|x-x_{0}\right\|+\left\|d_{x_{0}} h\right\|_{o p}\left\|x-x_{0}\right\| \\
        &=\left(\left\|d_{x_{0}} h\right\|_{o p}+\lambda\right)\left\|x-x_{0}\right\|
        \end{aligned}
        $$
        In summary $\lambda\left\|x-x_{0}\right\| \leqslant\left\|y-y_{0}\right\| \leqslant\left(\left\|d_{x_{0}} h\right\|_{o p}+\lambda\right)\left\|x-x_{0}\right\|$. 
        
        This concludes the proof.
\end{proof}

\begin{remark}\label{remark1_21}
    In \cite{4} this theorem is proved for $m=n$ and $\Omega=\mathbb{R}^{n}$. The proof is essentially the same. Notice that the classical implicit function theorem holds also for Banach manifolds. However, since balls in infinite dimensional spaces are not compact in the norm topology, Brouwer's fixed point theorem does not hold, and hence this weak form of the implicit function theorem cannot be generalized directly to general Banach spaces or Hilbert spaces.
\end{remark}

\begin{corollary}\label{coro1_22}
    Let $0 \leqslant n \leqslant m$ be integer, $\Omega$ be an open subset of $\mathbb{R}^{m}$ and $h: \Omega \rightarrow \mathbb{R}^{n}$ be a continuous map. If $\exists\ x_{0} \in \Omega$ s.t. $h$ is differentiable at $x_{0}$ and $d_{x_{0}} h$ has full rank, then $\operatorname{int}(h(\Omega)) \neq \varnothing$.
\end{corollary}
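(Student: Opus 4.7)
The plan is to deduce this directly from Theorem \ref{theo1_20}, so the corollary really is just a packaging of the conclusion of that theorem into a topological statement.

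Concretely, I would apply Theorem \ref{theo1_20} at the given point $x_{0} \in \Omega$. The hypotheses translate verbatim: $h$ is continuous on the open set $\Omega$, differentiable at $x_{0}$, and $d_{x_{0}} h$ has full rank, so the (one-sided) inverse $(d_{x_{0}} h)_{L}^{-1}$ exists as a bounded linear operator and the constant $\lambda := \tfrac{1}{2\|(d_{x_{0}} h)_{L}^{-1}\|_{op}}$ is a well-defined strictly positive real number. The theorem then produces $\varepsilon > 0$ such that for every $y \in \bar{B}_{\lambda \varepsilon}^{n}(y_{0})$, where $y_{0} = h(x_{0})$, the set $h^{-1}(\{y\}) \cap \bar{B}_{\varepsilon}^{m}(x_{0})$ is non-empty. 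Since $\bar{B}_{\varepsilon}^{m}(x_{0}) \subseteq \Omega$ (by shrinking $\varepsilon$ if necessary), this exactly says that every such $y$ lies in $h(\Omega)$, i.e.\ $\bar{B}_{\lambda \varepsilon}^{n}(y_{0}) \subseteq h(\Omega)$.

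From there the conclusion is immediate: the open ball $B_{\lambda \varepsilon}^{n}(y_{0})$ is open in $\mathbb{R}^{n}$, non-empty because $\lambda \varepsilon > 0$, and contained in $h(\Omega)$. Hence $y_{0} \in \operatorname{int}(h(\Omega))$, which gives $\operatorname{int}(h(\Omega)) \neq \varnothing$. There is no real obstacle: the hard analytic content, namely the Brouwer-based quantitative surjectivity statement, has been absorbed into Theorem \ref{theo1_20}, and what remains is only the observation that a closed ball contained in $h(\Omega)$ automatically has non-empty interior. One should however verify at the outset that $\varepsilon$ can be chosen small enough so that $\bar{B}_{\varepsilon}^{m}(x_{0}) \subseteq \Omega$, which is possible because $\Omega$ is open, and this is the only place where openness of $\Omega$ is used.
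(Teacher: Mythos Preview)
Your proof is correct and follows essentially the same approach as the paper: apply Theorem \ref{theo1_20} at $x_{0}$ to obtain a closed ball $\bar{B}_{\lambda\varepsilon}^{n}(y_{0})$ contained in $h(\Omega)$, and conclude that $\operatorname{int}(h(\Omega)) \neq \varnothing$. The paper's proof is a bit terser (it sets $\delta := \lambda\varepsilon$ and omits the explicit check that $\bar{B}_{\varepsilon}^{m}(x_{0}) \subseteq \Omega$, since this is already built into the choice of $\varepsilon$ in Theorem \ref{theo1_20}), but the content is identical.
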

\begin{proof}
    Denote $y_{0}:=h\left(x_{0}\right)$ then by Theorem \ref{theo1_20} $\exists\ \delta>0$ s.t. $\forall\ y \in \bar{B}_{\delta}^{n}\left(y_{0}\right)$ the fiber $h^{-1}(\{y\}) \neq \varnothing$. Therefore $h(\Omega) \supseteq  \bar{B}_{\delta}^{h}(y)$ has non-empty interior.
\end{proof}

\subsection{Universal Measurability and Analyticity}
Before discussing Lebesgue measure and Hausdorff dimension of union of tangents and normals or codiagonal of vector bundles over submanifolds we shall prove their measurability. Using the section transforms we will show that these objects are analytic sets (aka. Suslin sets) in $\mathbb{R}^{n}$, and hence universally measurable. (for basic properties of analytic sets, see e.g. \cite{10} )

Firstly we point out that this property clearly holds for codiagonal of vector bundle over submanifolds. For completeness we give a proof.
\begin{proposition}\label{prop_1_8_1}
    Let $X$ be a topological submanifold (with boundary) of $\mathbb{R}^{n} \times \mathbb{R}^{n}$, then $\Delta_{*} X$ is analytic and hence universally measurable.
\end{proposition}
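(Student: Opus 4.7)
The plan is to exhibit $\Delta_{*}X$ as a continuous image of a Polish space and then invoke the standard fact that analytic (Suslin) sets in a Hausdorff space are universally measurable. Recall that in the paper a topological submanifold is by definition connected, second countable, Hausdorff and locally Euclidean (with boundary), hence admits a countable atlas $\{(U_i,\varphi_i)\}_{i\in\mathbb{N}}$ in which each $\varphi_i(U_i)$ is an open subset of $\mathbb{R}^d$ or relatively open in a closed half-space, which is a Polish space.

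First I would observe that the restriction $\Delta_{*}|_{X}=\Delta_{*}\circ i_{X}\colon X\to\mathbb{R}^{n}$ is continuous, being the composition of the continuous inclusion $i_{X}$ with the smooth bilinear map $\Delta_{*}$. Next, for each chart $U_{i}$, the homeomorphism $\varphi_{i}\colon U_{i}\to\varphi_{i}(U_{i})$ identifies $U_{i}$ with a Polish space, so the map $\Delta_{*}\circ i_{X}\circ\varphi_{i}^{-1}\colon\varphi_{i}(U_{i})\to\mathbb{R}^{n}$ is a continuous map from a Polish space into the Hausdorff space $\mathbb{R}^{n}$, whose image is $\Delta_{*}(U_{i})$. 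By the standard characterization of analytic sets (continuous images of Polish spaces in a Hausdorff space; see \cite{10}), each $\Delta_{*}(U_{i})$ is therefore analytic.

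Since the atlas is countable, we have
\[
\Delta_{*}X=\Delta_{*}\!\left(\bigcup_{i\in\mathbb{N}}U_{i}\right)=\bigcup_{i\in\mathbb{N}}\Delta_{*}(U_{i})
\]
by property i) of Section~1.6. The class of analytic subsets of $\mathbb{R}^{n}$ is closed under countable unions, so $\Delta_{*}X$ is analytic. Finally, analytic sets in $\mathbb{R}^{n}$ are universally measurable, concluding the proof.

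There is no real obstacle here; the only subtle point is locating the Polish-space hypothesis, which is why I prefer to pass through charts rather than claim directly that $X$ (in the subspace topology inherited from $\mathbb{R}^{2n}$) is completely metrizable. Working chart by chart sidesteps any question about completeness of the subspace metric on $X$, while still reducing the statement to the single standard fact that a continuous image of a Polish space is analytic.
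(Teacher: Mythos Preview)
Your proof is correct and follows essentially the same approach as the paper: cover $X$ by a countable atlas of charts (each Polish), observe that continuous images of Polish spaces are analytic, and use closure of analytic sets under countable unions. The only cosmetic difference is the order of operations---the paper first assembles $X$ itself as a countable union of analytic sets and then pushes forward by $\Delta_{*}$, whereas you push each chart forward first and then take the union---but the ingredients and the logic are identical.
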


\begin{proof}
    By the Lindel\"{o}f property of second countable spaces, $X$ admits a countable atlas $\left\{U_{k}\right\}_{k \in \mathbb{N}^{*}}$. Since for any $i \in \mathbb{N}^{*}$, $U_{k}$ is homeomorphic to a open disk, we have that $U_{k}$ is analytic. Therefore $X=\bigcup\limits_{k \in \mathbb{N}^{*}} U_{k}$ is an analytic set. Since $\Delta_{*}$ is continuous, we have that $\Delta_{*} X$ is analytic and hence universally measurable.
\end{proof}

\begin{corollary}\label{coro_1_8_2}
    Let $M$ be a topological submanifold (with boundary) of $\mathbb{R}^{n}$ and $E$ be a vector bundle over $M$, then $\Delta_{*} E$ is analytic and hence universally measurable.
\end{corollary}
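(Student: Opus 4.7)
The plan is to reduce the corollary immediately to Proposition \ref{prop_1_8_1} by recalling the basic structural fact, already established in Subsection 1.5, that an embedded vector bundle over a topological submanifold is itself a topological submanifold of the product space.

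\textbf{Step 1.} Invoke the observation from Subsection 1.5 that if $M$ is a topological $d$-submanifold (with boundary) of $\mathbb{R}^{n}$ and $E = \varphi^{*}\tau_{k}(n)$ is a $k$-vector bundle over $M$ defined by some continuous map $\varphi\colon M \to \operatorname{Gr}_{k}(n)$, then $E$ is a topological $(d+k)$-submanifold of $\mathbb{R}^{n}\times\mathbb{R}^{n} = \mathbb{R}^{2n}$. This is essentially definitional given the naturally embedded interpretation of vector bundles employed throughout the paper: via the trivializing charts $F(x,\xi)\mapsto(\mathbf{f}(x),A(x)\xi)$, local pieces of $E$ are homeomorphic to open subsets of $\mathbb{R}^{d+k}$.

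\textbf{Step 2.} Having identified $E$ as a topological submanifold of $\mathbb{R}^{n}\times\mathbb{R}^{n}$, apply Proposition \ref{prop_1_8_1} to $X := E$. This immediately yields that $\Delta_{*}E$ is an analytic subset of $\mathbb{R}^{n}$, and hence universally measurable by the standard properties of analytic (Suslin) sets.

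There is essentially no obstacle here; the corollary is a purely formal consequence of Proposition \ref{prop_1_8_1} once one recognizes that the total space of the embedded vector bundle is a topological submanifold of $\mathbb{R}^{2n}$. If anything, the only subtlety to mention would be that $E$ inherits a countable atlas (so the Lindel\"of-style argument of the previous proposition goes through), which is automatic since $E$ is second countable as a subspace of the second countable space $\mathbb{R}^{2n}$.
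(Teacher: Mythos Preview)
Your proposal is correct and matches the paper's own approach: the corollary is stated without proof in the paper precisely because it follows immediately from Proposition~\ref{prop_1_8_1} once one recalls (as recorded in Subsection~1.5) that $\varphi^{*}\tau_{k}(n)$ is a topological $(d+k)$-submanifold of $\mathbb{R}^{2n}$. Your remark about second countability is a reasonable sanity check but not strictly needed, since Proposition~\ref{prop_1_8_1} already handles the Lindel\"of argument internally.
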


Now we shall prove the same assertion for union of tangents (and normals).

\begin{lemma}\label{lemma_1_8_3}
    For any differentiable map $f: I^{d} \rightarrow \mathbb{R}^{n-d}$ we have that $\Delta_{*} T \Gamma_{f}$ is analytic and hence universally measurable.
\end{lemma}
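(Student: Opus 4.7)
The plan is to combine Lemma \ref{lemma1_18}, which identifies $\Delta_{*} T \Gamma_{f}$ with the image $\Phi_{f}(I^{d} \times \mathbb{R}^{d})$ of the lifted section map, with a descriptive-set-theoretic argument. Both the domain $I^{d} \times \mathbb{R}^{d}$ and the codomain $\mathbb{R}^{n}$ are Polish, so if $\Phi_{f}$ were continuous the image would immediately be analytic as the continuous image of a Polish space. The obstruction is that under mere differentiability of $f$ the Jacobian $J_{x} f$ can fail to be continuous in $x$, so $\Phi_{f}$ itself need not be continuous, and one must work harder.

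I would circumvent this by showing that $\Phi_{f}$ is Borel measurable. Since $f$ is differentiable and hence continuous, the ingredients $(x,a) \mapsto a$, $(x,a) \mapsto f(x)$ and $(x,a) \mapsto a - x$ are continuous. For each $i \in \{1,\dots,n-d\}$ and $j \in \{1,\dots,d\}$, the partial derivative $\partial_{j} f^{i}(x)$ equals the pointwise limit $\lim_{k \to \infty} k \bigl( f^{i}(x + e_{j}/k) - f^{i}(x) \bigr)$ of continuous functions on $I^{d}$, so it is of Baire class $1$ and in particular Borel. Taking matrix products and sums, the map $(x,a) \mapsto f(x) + (J_{x} f)(a - x)$, and hence $\Phi_{f}$ itself, is Borel measurable on $I^{d} \times \mathbb{R}^{d}$.

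I would then pass to the graph $G := \{((x,a), y) \in (I^{d} \times \mathbb{R}^{d}) \times \mathbb{R}^{n} : y = \Phi_{f}(x,a)\}$. Since $\mathbb{R}^{n}$ is a separable metric space, $G$ can be written as $\bigcap_{k \in \mathbb{N}^{*}} \{((x,a), y) : \|y - \Phi_{f}(x,a)\| < 1/k\}$, each member of which is the preimage of an open set under the Borel map $((x,a), y) \mapsto y - \Phi_{f}(x,a)$, and so $G$ is Borel in the Polish space $(I^{d} \times \mathbb{R}^{d}) \times \mathbb{R}^{n}$. Therefore $\Delta_{*} T \Gamma_{f} = \Phi_{f}(I^{d} \times \mathbb{R}^{d})$ is the image of $G$ under the continuous projection onto the $\mathbb{R}^{n}$-factor, i.e. the continuous image of a Borel subset of a Polish space, which is analytic. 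Universal measurability then follows from the standard properties of analytic sets cited in the excerpt.

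The main hurdle is precisely the step extracting Borel measurability of $\Phi_{f}$ from the pointwise existence of the derivative; once this is secured, packaging the image as the projection of a Borel graph is routine. An alternative, slightly more elementary, route would be to decompose $I^{d}$ into a countable Borel partition on which each $\partial_{j} f^{i}$ is continuous (exploiting the Baire class $1$ structure) so that $\Phi_{f}$ becomes piecewise continuous and $\Delta_{*} T \Gamma_{f}$ is exhibited as a countable union of continuous images of Polish spaces; I would keep this in reserve in case the referee prefers to avoid the Borel-graph projection.
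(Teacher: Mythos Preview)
Your approach is essentially identical to the paper's: both show that $\Phi_f$ is Borel by exhibiting the entries of $J_x f$ as pointwise limits of continuous difference quotients, then obtain analyticity of $\Delta_* T\Gamma_f = \Phi_f(I^d\times\mathbb{R}^d)$ by projecting the Borel graph of $\Phi_f$. The one detail the paper handles more carefully is that your approximants $x\mapsto k\bigl(f^i(x+e_j/k)-f^i(x)\bigr)$ are not actually defined on all of $I^d$ (for fixed $k$ the point $x+e_j/k$ leaves $I^d$ when $x^j>\pi-1/k$); the paper fixes this by rescaling the increment by a continuous function $\rho_i(x)$ that forces the shifted point to stay in $I^d$, though your version is just as easily repaired by extending each approximant by zero outside its natural domain.
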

\begin{proof}
    Denote by $e_{1}, \cdots, e_{d}$ the standard orthonormal basis of $\mathbb{R}^{d} \supseteq I^{d}$ and define $\rho_{i}: I^{d} \rightarrow \mathbb{R}_{+}$ by $\rho_{i}(x):=\dfrac{1}{4}\left(\left||\pi-x^{i}|+|\pi+x^i|\right|-\left||\pi-x^{i}|-|\pi-x^{i}|\right|\right)$ then by construction $\forall\ t \in[0,1], \forall\ x \in I^{d}$ we have $x+t \rho_{i}(x) e_{i} \in I^{d}$. For any $k \in \mathbb{N}^{*}, i=1, \cdots, d$ and $j=1, \cdots, n-d$, define $T_{i j}^{(k)}: I^{d} \rightarrow \mathbb{R}^{n-d}$ by $T_{i j}^{(k)}(x):=\dfrac{f^{j}\left(x+\frac{1}{k} \rho_{i}(x) e_{i}\right)-f^{j}(x)}{\rho_{i}(x) / k}$, then $T_{i j}^{(k)}$ is continuous and by the definition of the directional derivatives $\forall\ x \in I^{d}$ we have $\lim\limits_{k \rightarrow \infty} T_{i j}^{(k)}(x)=\dfrac{\partial f^{j}}{\partial x^{i}}(x)$. Therefore the matrix-valued function $\left(T_{i j}^{(k)}\right): I^{d} \rightarrow \mathbb{R}^{(n-d) \times d}$ is continuous and $\forall\ x\in I^d$ we have $\lim\limits_{k \rightarrow \infty}\left(T_{i j}^{(k)}(x)\right)=J_{x} f$.
    
    Define $\Phi_{f}^{(k)}: I^{d} \times \mathbb{R}^{d} \longrightarrow \mathbb{R}^{d} \times \mathbb{R}^{n-d}$ by $\Phi_{f}^{(k)}(x, a):=\left(a, f(x)+\left(T_{i j}^{(k)}(x)\right)(a-x)\right)$, then clearly $\Phi_{f}^{(k)}$ is continuous, and in particular is Borel measurable. Since $\forall\ (x, a) \in I^{d} \times \mathbb{R}^{d}$, $\lim\limits_{k \rightarrow \infty} \Phi_{f}^{(k)}(x, a)=\left(a, f(x)+\lim\limits_{k \rightarrow \infty}\left(T_{i j}^{(k)}(x)\right)(a-x)\right)=\left(a, f(x)+\left(J_{x} f\right)(a-x)\right)=\Phi_{f}(x, a)$, i.e. $\Phi_{f}$ is the pointwise limit of $\left\{\Phi_{f}^{(k)}\right\}_{k \in \mathbb{N}^{*}}$, we conclude that $\Phi_{f}$ is Borel. Therefore $\Gamma_{\Phi_{f}}$ is a Borel set. Since $\Phi_{f}\left(I^{d} \times \mathbb{R}^{d}\right)$ is the projection image of $\Gamma_{\Phi_{f}}$ on to the codomain of $\Phi_{f}$, we obtain that $\Phi_{f}\left(I^{d} \times \mathbb{R}^{d}\right)$ is analytic.
    
    This proves that $\Delta_{*} T \Gamma_{f}=\Phi_{f}\left(I^{d} \times \mathbb{R}^{d}\right)$ is analytic and hence universally measurable.
\end{proof}

\begin{proposition}\label{prop_1_8_4}
    Let $M$ be a differentiable submanifold then $\Delta_{*} T M$ is analytic and hence universally measurable.
\end{proposition}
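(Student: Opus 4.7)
The plan is to reduce to Lemma \ref{lemma_1_8_3} by a localization argument analogous to the proof of Proposition \ref{prop_1_8_1}. Since $M \subseteq \mathbb{R}^{n}$ is second countable, $M$ will be Lindel\"{o}f, so any open cover of $M$ admits a countable subcover. By Definition \ref{defin1_1}, for every $p \in M$ I can find an open neighborhood $U_{p}$ of $p$ in $M$ coinciding with the graph $\Gamma_{f_{p}}$ of a differentiable map $f_{p}: V_{p} \to \mathbb{R}^{n-d}$, after possibly permuting the coordinates of $\mathbb{R}^{n}$ (a rigid motion that does not affect analyticity). Using Lindel\"{o}f I would extract a countable subcover $\{U_{k} = \Gamma_{f_{k}}\}_{k \in \mathbb{N}^{*}}$ of $M$. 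Because $T_{q}M = T_{q}\Gamma_{f_{k}}$ for every $q \in U_{k}$, this will yield $TM = \bigcup_{k} T\Gamma_{f_{k}}$ and hence $\Delta_{*}TM = \bigcup_{k} \Delta_{*}T\Gamma_{f_{k}}$.

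To invoke Lemma \ref{lemma_1_8_3}, whose hypothesis demands a map defined on $I^{d}$, I would further break up each $V_{k}$. By second countability of $V_{k}$ I can cover it by countably many open balls $W_{k,j}$ with $\overline{W_{k,j}} \subseteq V_{k}$, each affinely equivalent to $I^{d}$ through some affine diffeomorphism $\varphi_{k,j}: I^{d} \to W_{k,j}$. Then $f_{k} \circ \varphi_{k,j}: I^{d} \to \mathbb{R}^{n-d}$ is differentiable, and Lemma \ref{lemma_1_8_3} will give that $\Delta_{*}T\Gamma_{f_{k} \circ \varphi_{k,j}}$ is analytic.

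The last step is to transfer this conclusion back to $\Delta_{*}T\Gamma_{f_{k}}|_{W_{k,j}}$. A direct computation should show that the affine reparametrization $\varphi_{k,j}$ of the domain induces an affine homeomorphism $\psi_{k,j}$ of $\mathbb{R}^{n}$ (acting only on the first $d$ coordinates) under which $\Delta_{*}T\Gamma_{f_{k}}|_{W_{k,j}} = \psi_{k,j}\bigl(\Delta_{*}T\Gamma_{f_{k} \circ \varphi_{k,j}}\bigr)$; since analyticity of subsets of $\mathbb{R}^{n}$ is preserved under continuous images, $\Delta_{*}T\Gamma_{f_{k}}|_{W_{k,j}}$ will be analytic, and $\Delta_{*}TM = \bigcup_{k,j}\Delta_{*}T\Gamma_{f_{k}}|_{W_{k,j}}$ will then be analytic as a countable union of analytic sets, with universal measurability being automatic. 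The hard part will be the bookkeeping in this last step---tracking how the affine change of domain variables interacts with $\Delta_{*}$ on $\mathbb{R}^{n} \times \mathbb{R}^{n}$ once the tangent vectors are rewritten under the reparametrization---but it should reduce to routine linear algebra once the definitions are unwound.
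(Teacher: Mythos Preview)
Your proposal is correct and follows essentially the same route as the paper: reduce to countably many graph charts via the Lindel\"{o}f property, apply Lemma \ref{lemma_1_8_3} to each chart, and conclude by taking a countable union of analytic sets. The paper compresses your two-layer refinement (first to graph charts, then to sub-pieces modelled on $I^{d}$) into the single phrase ``up to rigid motion and homothety'', its standing convention for passing to a chart with domain $I^{d}$; your explicit tracking of the affine reparametrization $\psi_{k,j}$ is precisely what that phrase hides, and the computation you anticipate does reduce to the observation that for an affine $\psi$ one has $\psi(p)+d\psi(v)=\psi(p+v)$, so $\Delta_{*}$ commutes with the ambient affine map. One small slip: an open ball is not affinely equivalent to the cube $I^{d}$ when $d\geq 2$, so either cover $V_{k}$ by open cubes from the start, or simply rescale and translate so that $\overline{W_{k,j}}\subset I^{d}$ and restrict; either fix is immediate.
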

\begin{proof}
    By the definition of differentiable submanifolds, and by Corollary \ref{coro1_11}, $\exists$ atlas $\left\{U_{k}\right\}_{k \in \Lambda}$ of $M$ s.t. $U_{k}=\Gamma_{f_{k}}$ where up to a rigid motion and homothety $f_{k}: I^{d} \rightarrow \mathbb{R}^{n-d}$ is differentiable. Moreover, by the Lindel\"{o}f property wlog we assume that $\Lambda \equiv \mathbb{N}^{*}$ is countable. Since $\Delta_{*} T M=\Delta_{*} T\left(\bigcup\limits_{k \in \mathbb{N}^{*}} U_{k}\right)=\Delta_{*}\left(\bigcup\limits_{k \in \mathbb{N}^{*}} T U_{k}\right)=\bigcup\limits_{k \in \mathbb{N}^{*}}\left(\Delta_{*} TU_{k}\right)$, and by Lemma \ref{lemma_1_8_3}, $\forall\ k \in \mathbb{N}^{*}$, $\Delta_{*} T U_{k}=\Delta_{*} T \Gamma_{f_{k}}$ is analytic, we conclude that $\Delta_{*} T M$ is analytic, and hence universally measurable.
\end{proof}

\begin{lemma}\label{lemma_1_8_5}
    For any differentiable map $f: I^{d} \rightarrow \mathbb{R}^{n-d}$ we have that $\Delta_{*} N \Gamma_{f}$ is analytic and hence universally measurable.
\end{lemma}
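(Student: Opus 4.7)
The plan is to mirror the strategy of Lemma \ref{lemma_1_8_3} almost verbatim, substituting a normal parametrization for the tangent parametrization. At each point $(x, f(x)) \in \Gamma_f$, $T_{(x,f(x))}\Gamma_f$ is the column span of $\left[\begin{array}{c} \operatorname{Id}_d \\ J_x f \end{array}\right]$, so its orthogonal complement $N_{(x,f(x))}\Gamma_f$ is precisely the column span of $\left[\begin{array}{c} -(J_x f)^{T} \\ \operatorname{Id}_{n-d} \end{array}\right]$; this orthogonality is the one-line computation $\left[\operatorname{Id}_d \ \ (J_x f)^{T}\right] \cdot \left[\begin{array}{c} -(J_x f)^{T} \\ \operatorname{Id}_{n-d} \end{array}\right] = 0$, and a dimension count confirms the columns span the full $(n-d)$-dimensional normal space. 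Composing the resulting global parametrization of $N\Gamma_f$ with $\Delta_{*}$ yields
$$\Psi_f(x, v) := \left(x - (J_x f)^{T} v,\ f(x) + v\right),\quad (x, v) \in I^d \times \mathbb{R}^{n-d},$$
whose image is exactly $\Delta_{*} N\Gamma_f$.

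The next step is to show $\Psi_f$ is Borel measurable. I would reuse the continuous finite-difference approximations $\left(T_{ij}^{(k)}\right): I^d \to \mathbb{R}^{(n-d)\times d}$ constructed in the proof of Lemma \ref{lemma_1_8_3}, which converge pointwise to $J_x f$ as $k \to \infty$, and set
$$\Psi_f^{(k)}(x, v) := \left(x - \left(T_{ij}^{(k)}(x)\right)^{T} v,\ f(x) + v\right).$$
Since the transpose of a continuous matrix-valued function is still continuous and the remaining operations are polynomial in the entries, each $\Psi_f^{(k)}$ is continuous, and $\Psi_f^{(k)} \to \Psi_f$ pointwise; hence $\Psi_f$ is Borel as a pointwise limit of Borel maps.

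Finally, the graph $\Gamma_{\Psi_f}$ is Borel in $(I^d \times \mathbb{R}^{n-d}) \times (\mathbb{R}^d \times \mathbb{R}^{n-d})$, and the codomain projection of a Borel set is analytic. Therefore $\Delta_{*} N\Gamma_f = \Psi_f(I^d \times \mathbb{R}^{n-d})$ is analytic, and in particular universally measurable.

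There is essentially no new obstacle here beyond correctly identifying the matrix whose columns span $N_{(x, f(x))}\Gamma_f$; once that explicit parametrization is in hand, the measurability argument of Lemma \ref{lemma_1_8_3} transfers without change, and the analogous global proposition for $\Delta_{*} NM$ over a general differentiable submanifold will follow exactly as Proposition \ref{prop_1_8_4} followed from Lemma \ref{lemma_1_8_3}, via Corollary \ref{coro1_11} and the Lindel\"of property.
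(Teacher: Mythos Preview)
Your proof is correct and follows essentially the same approach as the paper: both construct an explicit global parametrization of $\Delta_* N\Gamma_f$, approximate the Jacobian by the continuous finite-difference matrices $\bigl(T_{ij}^{(k)}\bigr)$ from Lemma~\ref{lemma_1_8_3}, and conclude Borel measurability by a pointwise limit argument. The only cosmetic difference is the choice of fiber variable---the paper parametrizes by $b = f(x) + v$, writing $\Psi_f(x,b) = \bigl(x + (J_x f)^T(f(x)-b),\, b\bigr)$, whereas you parametrize directly by the normal-vector component $v$; the two are related by a trivial change of coordinates.
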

\begin{proof}
    Use the same notations as in Lemma \ref{lemma_1_8_3}.
    
    Take any $x_{0} \in I^{d}$ and denote $p:=\left(x_{0}, f\left(x_{0}\right)\right)$ then it is clear that $p+N_{p} \Gamma_{f}=\left\{(x, y) \in \mathbb{R}^{d} \times \mathbb{R}^{n-d} \mid\right.$ $\left.\left(x-x_{0}\right)+\left(J_{x_0} f\right)\left(y-f\left(x_{0}\right)\right)=0\right\}=\left\{(x, y) \in \mathbb{R}^{d} \times \mathbb{R}^{n-d} \mid x=x_{0}+\left(J_{x_0}f\right)\left(f\left(x_{0}\right)-y\right)\right\}=\left\{\left(x_{0}+\right.\right.$ $\left.\left.\left(J_{x_0} f\right)\left(f\left(x_{0}\right)-y\right), y\right) \mid y \in \mathbb{R}^{n-d}\right\}$. Therefore define $\Psi_{f}: I^{d} \times \mathbb{R}^{n-d} \rightarrow \mathbb{R}^{d} \times \mathbb{R}^{n-d}$ by $\Psi_{f}(x, b)=\left(x+\left(J_{x} b\right)(f(x)-b), b\right)$ then $\Psi_{f}\left(I^{d} \times \mathbb{R}^{n-d}\right)=\bigcup\limits_{p \in \Gamma_{f}}\left(p+N_{p} \Gamma_{f}\right)=\Delta_{*} N M$.
    
    For any $k \in \mathbb{N}^{*}$ define $\Psi_{f}^{(k)}: I^{d} \times \mathbb{R}^{n-d} \rightarrow \mathbb{R}^{d} \times \mathbb{R}^{n-d}$ by $\Psi_{f}^{(k)}(x, b):=\left(x+\left(T_{i j}^{(k)}(x)\right)(f(x)-b), b\right)$ where $\left(T_{ij}^{(k)}\right)$ is the approximation of Jacobian defined in Lemma \ref{lemma_1_8_3}, then similarly we have that $\left\{\Psi_{f}^{(k)}\right\}_{k \in \mathbb{N}^{*}}$ is a sequence of Borel function with pointwise limit $\Psi_{f}$ and hence $\Psi_{f}$ is Borel. Therefore $\Gamma_{\Psi_f}$ is a Borel set, and $\Psi_{f}\left(I^{d} \times \mathbb{R}^{n-d}\right)$ is an analytic set.
    
    This proves that $\Delta_{*} N \Gamma_{f}=\Psi_{f}\left(I^{d} \times \mathbb{R}^{n-d}\right)$ is analytic and hence universally measurable.
\end{proof}

\begin{proposition}\label{prop_1_8_6}
    Let $M$ be a differentiable submanifold then $\Delta_{*} N M$ is analytic and hence universally measurable.
\end{proposition}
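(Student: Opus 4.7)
The plan is to mirror the proof of Proposition \ref{prop_1_8_4} almost verbatim, replacing the appeal to Lemma \ref{lemma_1_8_3} with an appeal to Lemma \ref{lemma_1_8_5}. The key observation that makes the strategy work is that analyticity is preserved under continuous images, under (countable) unions, and under rigid motions and homotheties of $\mathbb{R}^{n}$.

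First, I would invoke Definition \ref{defin1_1} together with Corollary \ref{coro1_11} to obtain, around each $p\in M$, a chart $U$ of the form $U=\Gamma_{f}$ (after a rigid motion and a homothety), where $f:I^{d}\rightarrow \mathbb{R}^{n-d}$ is differentiable. The Lindel\"of property of $M$ (as a second countable space) allows us to extract a countable atlas $\{U_{k}\}_{k\in\mathbb{N}^{*}}$ of such charts, with associated differentiable maps $f_{k}:I^{d}\rightarrow\mathbb{R}^{n-d}$.

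Next I would use the decomposition
\[
\Delta_{*}NM=\Delta_{*}\Bigl(\bigcup_{k\in\mathbb{N}^{*}}NU_{k}\Bigr)=\bigcup_{k\in\mathbb{N}^{*}}\Delta_{*}NU_{k},
\]
valid because $\Delta_{*}$ commutes with arbitrary unions (property (i) in Section 1.6) and because $N_{p}U_{k}=N_{p}M$ for $p\in U_{k}$ (the normal space is an intrinsic invariant of the submanifold at a point, insensitive to the open neighborhood chosen). For each $k$, let $T_{k}$ denote the rigid motion (composed with a homothety) such that $T_{k}(U_{k})=\Gamma_{f_{k}}$; if $L_{k}$ is its linear part, then $(T_{k},L_{k})$ carries the embedded normal bundle $NU_{k}\subseteq \mathbb{R}^{n}\times\mathbb{R}^{n}$ onto $N\Gamma_{f_{k}}$, and since $\Delta_{*}((T_{k},L_{k})(v,w))=T_{k}(v)+L_{k}(w)=T_{k}(v+w)=T_{k}(\Delta_{*}(v,w))$, we obtain $T_{k}(\Delta_{*}NU_{k})=\Delta_{*}N\Gamma_{f_{k}}$. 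By Lemma \ref{lemma_1_8_5}, $\Delta_{*}N\Gamma_{f_{k}}$ is analytic, and since $T_{k}^{-1}$ is continuous, $\Delta_{*}NU_{k}=T_{k}^{-1}(\Delta_{*}N\Gamma_{f_{k}})$ is analytic too.

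Finally, a countable union of analytic sets is analytic, so $\Delta_{*}NM$ is analytic and therefore universally measurable. There is no genuine obstacle here; the only point requiring a touch of care is the verification that rigid motions and homotheties commute with $\Delta_{*}$ in the bundle sense, which I handle via the affine-linear identity $T_{k}(v+w)=T_{k}(v)+L_{k}(w)$ above. Everything else is a direct transcription of the proof of Proposition \ref{prop_1_8_4}.
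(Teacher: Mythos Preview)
Your proposal is correct and follows exactly the approach indicated in the paper, which simply says ``Apply Lemma \ref{lemma_1_8_5}, the proof is the same as that of Proposition \ref{prop_1_8_4}.'' Your additional care in verifying that the affine map $T_{k}$ intertwines $\Delta_{*}$ with the bundle map $(T_{k},L_{k})$ is a welcome clarification of a point the paper leaves implicit.
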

\begin{proof}
    Apply Lemma \ref{lemma_1_8_5}, the proof is the same as that of Proposition \ref{prop_1_8_4}.
\end{proof}

Now we are prepared to study the measure theoretical size of union of tangents and normals, and codiagonal of vector bundles over submanifolds.

\section{Size of union of normals and tangents}
\label{sec2}
\subsection{Normal Bundles}
We first study the size of normal bundles. 

Take the tubular neighborhood theorem into account, one may expect that the normal bundle over a smooth submanifold always contains an open subset. Actually more is true: using the square distance function, we will show that this assertion holds for all differentiable submanifolds.

\begin{definition}\label{defi2_1}
    Take any subset $X \subseteq \mathbb{R}^{n}$ and point $q \in \mathbb{R}^{n}$, define the square distance $S_{X}^{q}: X \rightarrow \mathbb{R}$ by $S_{X}^{q}(p):=\|p-q\|^{2}$.
\end{definition}

\begin{lemma}\label{lemma2_2}
    For any differentiable $d$-submanifold $M$ of $\mathbb{R}^{n}, p \in M$ is a critical point of $S_{M}^{q}$ iff $q-p \in N_{p} M$.
\end{lemma}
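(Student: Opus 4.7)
The plan is to unwind both sides of the equivalence using the local graph description of $M$ provided by Definition \ref{defin1_1}, and then recognize that the critical-point equation is literally the orthogonality condition $q-p\perp T_pM$.

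First I would work locally at $p$. Up to a rigid motion (which is an isometry and hence preserves both $S_M^q$ up to translating $q$ and the normal space), I may assume $M$ coincides at the vicinity of $p$ with the graph of a differentiable map $f\colon I^d\to\mathbb{R}^{n-d}$, parametrized by $\mathbf{f}(x)=(x,f(x))$, with $p=\mathbf{f}(x_0)$. Writing $q=(q_1,q_2)\in\mathbb{R}^d\times\mathbb{R}^{n-d}$, I get the pullback
\[
S_M^q\circ\mathbf{f}(x)=\|x-q_1\|^2+\|f(x)-q_2\|^2.
\]
Since $f$ is differentiable at $x_0$, this composition is differentiable at $x_0$ with differential
\[
d_{x_0}(S_M^q\circ\mathbf{f})(v)=2\bigl\langle x_0-q_1,\,v\bigr\rangle+2\bigl\langle f(x_0)-q_2,\,(J_{x_0}f)v\bigr\rangle
=2\bigl\langle (x_0-q_1)+(J_{x_0}f)^{\mathsf T}(f(x_0)-q_2),\,v\bigr\rangle.
\]
By definition, $p$ is a critical point of $S_M^q$ exactly when this vanishes for every $v\in\mathbb{R}^d$, i.e.\ when $(x_0-q_1)+(J_{x_0}f)^{\mathsf T}(f(x_0)-q_2)=0$.

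Second I would translate the right-hand side. Recall from the discussion preceding Definition \ref{defin1_5} that $T_pM$ is the column span of $\bigl[\begin{smallmatrix}\operatorname{Id}_d\\ J_{x_0}f\end{smallmatrix}\bigr]$, so $q-p\in N_pM$ is equivalent to
\[
0=\bigl\langle q-p,\,(v,(J_{x_0}f)v)\bigr\rangle=\bigl\langle q_1-x_0,v\bigr\rangle+\bigl\langle q_2-f(x_0),(J_{x_0}f)v\bigr\rangle\quad\forall v\in\mathbb{R}^d,
\]
which rearranges to precisely $(x_0-q_1)+(J_{x_0}f)^{\mathsf T}(f(x_0)-q_2)=0$. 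The two conditions coincide, giving both implications at once.

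There is no real obstacle here; the only subtlety is that $M$ is only assumed differentiable (not $C^1$), so one should not invoke smoothness of $T_pM$ as $p$ varies or use a curve-through-$p$ style argument that would require $C^1$ charts. Sticking to the pointwise differential of the single function $S_M^q\circ\mathbf{f}$ at $x_0$, which exists because $f$ is differentiable at $x_0$, keeps everything within the hypotheses of Definition \ref{defin1_1}.
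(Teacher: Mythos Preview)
Your proof is correct and follows essentially the same approach as the paper: pull back $S_M^q$ through a local parametrization, differentiate, and recognize that the vanishing of the differential is exactly the orthogonality condition $q-p\perp T_pM$. The paper phrases this with a general parametrization $\mathbf{r}$ and partial derivatives $\langle q-\mathbf{r}(0),\partial\mathbf{r}/\partial x_i(0)\rangle=0$, while you spell it out in graph coordinates with the $(J_{x_0}f)^{\mathsf T}$ term, but the content is identical.
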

\begin{proof}
    Let $\mathbf{r}: B_{\varepsilon}^{d} \rightarrow \mathbb{R}^{n}$ be a parametrization map of $M$ at the vicinity of $p \equiv \mathbf{r}(0)$ then $q \in N_{p} M$ iff $\forall\ i \in\{1, \cdots, d\}\left\langle q-\mathbf{r}(0), \dfrac{\partial \mathbf{r}}{\partial x_{i}}(0)\right\rangle=0$, i.e. $\dfrac{\partial}{\partial x_{i}} S_{M}^{q}(\mathbf{r}(0))=2\left\langle q-\mathbf{r}(0), \dfrac{\partial \mathbf{r}}{\partial x_{i}}(0)\right\rangle=0$, i.e. $S_{M}^{q}$ is critical at $\mathbf{r}(0)=p$.
\end{proof}

\begin{corollary}\label{coro2_3}
    Let $M$ be a closed differentiable submanifold of $\mathbb{R}^{n}$, then $\Delta_{*} N M=\mathbb{R}^n$.
\end{corollary}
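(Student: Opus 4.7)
The plan is to use Lemma \ref{lemma2_2} together with a minimization argument over $M$. Concretely, fix an arbitrary point $q \in \mathbb{R}^n$; I want to produce some $p \in M$ such that $q - p \in N_p M$, since this is exactly the condition $q \in p + N_p M \subseteq \Delta_* NM$, and $q$ is arbitrary.

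To produce such a $p$, I consider the continuous function $S_M^q : M \to \mathbb{R}$ introduced in Definition \ref{defi2_1} and try to minimize it. First I would pick any $p_0 \in M$ and set $R := \|p_0 - q\|$; then the sublevel set $\{p \in M \mid \|p - q\| \leqslant R\}$ equals $M \cap \bar B_R^n(q)$, which is the intersection of the closed set $M \subseteq \mathbb{R}^n$ with a compact ball, hence compact. On this compact set the continuous function $S_M^q$ attains a minimum at some $p_* \in M$, and by construction $p_*$ is also a global minimum of $S_M^q$ on $M$.

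Because $M$ is a submanifold without boundary and the minimum $p_*$ lies in the manifold (there is no boundary to obstruct), $p_*$ is an interior critical point of $S_M^q$ in the usual sense: in any local differentiable parametrization around $p_*$, the pullback of $S_M^q$ achieves a local minimum in the parameter domain, so its partial derivatives vanish at the parameter value corresponding to $p_*$. Applying Lemma \ref{lemma2_2} then yields $q - p_* \in N_{p_*} M$, hence $q = p_* + (q - p_*) \in p_* + N_{p_*} M \subseteq \bigcup_{p \in M}(p + N_p M) = \Delta_* NM$.

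The only delicate point is verifying that the minimum is actually a critical point of $S_M^q$ in the sense required by Lemma \ref{lemma2_2}, which is routine once one unwinds the definition via a local parametrization $\mathbf{r}$; there is no real obstacle provided $M$ is interpreted as a submanifold without boundary, which is the natural reading here since having a boundary would obstruct the gradient argument exactly as it does for ordinary calculus minimization problems. The closedness hypothesis is used only once, to guarantee that the sublevel set is compact and the minimum exists.
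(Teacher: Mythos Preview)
Your proof is correct and follows essentially the same approach as the paper: minimize the square distance function $S_M^q$ over $M$, observe that the minimizer is a critical point, and invoke Lemma~\ref{lemma2_2}. The only difference is that the paper treats $M$ as compact outright, whereas you supply the standard sublevel-set argument ($M \cap \bar B_R^n(q)$ is compact) to guarantee existence of the minimum from closedness alone; this is a minor refinement, not a different route.
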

\begin{proof}
    Take any $q \in \mathbb{R}^{n}$, since $q$ and $M$ are compact, $S_{M}^{q}$ attains a minima $S_{M}^{q}\left(p_{0}\right)$, in particular $S_{M}^{q}$ is critical at $p_{0}$. By Lemma \ref{lemma2_2} $q \in p_0+N_{p_{0}} M \subseteq \Delta_{*} N M$.
\end{proof}

\begin{corollary}\label{coro2_4}
    Let $M$ be a differentiable $d$-submanifold of $\mathbb{R}^{n}$, then $\exists$ open neighbourhood $\nu(M)$ in $\mathbb{R}^{n}$ s.t. $M \subseteq \nu(M) \subseteq \Delta_{*} N M$. In particular, $\operatorname{int}\left(\Delta_{*} N M\right) \neq \varnothing$.
\end{corollary}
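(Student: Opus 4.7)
The strategy is to localize the proof of Corollary \ref{coro2_3}. There, global compactness of $M$ forces $S_M^q$ to attain a minimum which is automatically a critical point of $S_M^q$, hence an element of a normal fibre by Lemma \ref{lemma2_2}. For a general differentiable submanifold I cannot appeal to global compactness, so I replace $M$ by a compact neighborhood of each point $p_0 \in M$ and argue that for $q$ close enough to $p_0$ the minimum of $S^q$ on this compact piece lies in its relative interior, where it is still a critical point of the unrestricted $S_M^q$.

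Concretely, I fix $p_0 \in M$ and, using the local graph representation of Definition \ref{defin1_1}, I pick a compact neighborhood $K \subseteq M$ of $p_0$ with $p_0$ in the relative interior $V$ of $K$ in $M$; choosing $K$ small enough that $V \neq K$, the quantity $\delta_0 := \inf_{p \in K \setminus V}\|p - p_0\|$ is positive because $K \setminus V$ is compact and does not contain $p_0$. For any $q \in B_{\delta_0 / 3}^{n}(p_0)$, the continuous function $S_K^q$ attains its minimum at some $p^* \in K$; a straightforward triangle-inequality comparison of $S^q(p_0) < \delta_0^2 / 9$ with $S^q|_{K \setminus V} \geqslant (2\delta_0 / 3)^2$ rules out $p^* \in K \setminus V$ and forces $p^* \in V$. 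Since $V$ is open in $M$, the point $p^*$ is also a local minimum of $S_M^q$, hence a critical point, and Lemma \ref{lemma2_2} yields $q - p^* \in N_{p^*} M$, i.e. $q \in p^* + N_{p^*} M \subseteq \Delta_* N M$. This shows $B_{\delta_0 / 3}^{n}(p_0) \subseteq \Delta_* N M$.

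Setting $\nu(M) := \bigcup_{p_0 \in M} B_{\delta_0(p_0)/3}^{n}(p_0)$ then gives the required open neighborhood of $M$ inside $\Delta_* N M$, and the second assertion $\operatorname{int}(\Delta_* N M) \neq \varnothing$ is immediate. The only delicate point is keeping the minimizer off the relative boundary of $K$, which is precisely what the quantitative choice of the ball radius achieves; no regularity beyond differentiability is needed, because Lemma \ref{lemma2_2} is already proved at that level and compactness of $K$ plus continuity of $S_K^q$ suffice for the minimum to exist.
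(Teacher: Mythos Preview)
Your proof is correct and follows essentially the same approach as the paper's: localize to a compact piece of $M$ near a given point, show that for $q$ sufficiently close the minimum of $S^q$ on this piece avoids the relative boundary, and then apply Lemma~\ref{lemma2_2}. The paper makes the specific choice $\bar B = \overline{B_{2r}^n(p)\cap M}$ with radius $r$ for the ambient ball, while you work with an abstract compact neighborhood $K$ and the constant $\delta_0/3$; the underlying idea and the use of Lemma~\ref{lemma2_2} are identical.
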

\begin{proof}
    Take any $p \in M$, for a sufficiently small $r>0$, we have $B:=B_{2 r}^{n}(p) \cap M$ is homeomorphic to $B_{2 r}^{d}$, and $\bar{B} \backslash B \subseteq \bar{B}_{2 r}^{n}(p) \backslash B_{2 r}^{n}(p)=\partial B_{2 r}^{n}(p)$.

    Take any $q \in B_{r}^{n}(p)$, then $S_{\bar{B}}^{q}$ attains a minima $S_{\bar{B}}^{q}\left(p_{0}\right)$. Since $d(q, \bar{B} \backslash B) \geqslant d\left(q, \partial B_{2r}^{n}(p)\right)>r>d(q, p)$, we have that $p_{0} \in B$. Therefore $p_{0}$ is a critical point of $S_{B}^{q}=\left.S_{\bar{B}}^{q}\right|_{B}$. Since $B$ is a differentiable submanifold of $\mathbb{R}^{n}$, by Lemma \ref{lemma2_2} $q \in p_0+N_{p_0} B \subseteq \Delta_{*} N M$. Therefore $B_{r}^{n}(p) \subseteq \Delta_{*} N M$.

    This proves that $\forall\ p \in M, p \in \operatorname{int}\left(\Delta_{*} N M\right)$. Therefore the required neighborhood exists.
\end{proof}

\subsection{Tangent Bundles}
For the union of tangents, the situation is more complicated.

We start by estimating the Hausdorff dimension.

Firstly, for any differentiable $d$-submanifold $M$ of $\mathbb{R}^{n}$, since $M \subseteq \Delta_{*} T M$, we have the lower bound $d=\operatorname{dim} M \leqslant \operatorname{dim}_{\mathcal{H}} M \leqslant \operatorname{dim}_{\mathcal{H}} \Delta_{*} T M$. By taking $M$ to be an affine subspace, one can show that this inequality is sharp.

Moreover, under several natural conditions, we have the following upper bounds:
\begin{proposition}\label{prop2_5}
    Let $\Pi \subseteq \mathbb{R}^{n}$ be a $d$-affine subspace and $M$ a differentiable submanifold of $\mathbb{R}^{n}$, then $M \subseteq \Pi$ iff $\Delta_{*} T M \subseteq \Pi$. In particular in this case we have $\operatorname{dim}_{\mathcal{H}} \Delta_{*} T M \leqslant d$.
\end{proposition}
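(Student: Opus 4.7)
The plan is to prove the biconditional directly using the definition of tangent space from Section 1.4, namely as the image of the differential of a local parametrization. The ``$\Leftarrow$'' direction is essentially free: for every $p \in M$ we have $p = p + 0 \in p + T_p M \subseteq \Delta_{*}TM$, so $M \subseteq \Delta_{*}TM$, and hence any containment $\Delta_{*}TM \subseteq \Pi$ forces $M \subseteq \Pi$.

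For the nontrivial direction ``$\Rightarrow$'', I would write $\Pi = a + V$ where $V$ is the $d$-dimensional linear subspace parallel to $\Pi$. The crux is to establish that $T_p M \subseteq V$ for every $p \in M$. By the definition of a differentiable submanifold, at the vicinity of $p$ one can take a local parametrization $\mathbf{r}: U \to \mathbb{R}^n$ with $\mathbf{r}(x_0) = p$ and $\mathbf{r}(U) \subseteq M \subseteq \Pi$. Then for every $x \in U$ we have $\mathbf{r}(x) - p \in V$, and since $V$ is a closed linear subspace, each partial derivative $\dfrac{\partial \mathbf{r}}{\partial x^i}(x_0) = \lim_{t \to 0} \dfrac{\mathbf{r}(x_0 + t e_i) - p}{t}$ also lies in $V$. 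The tangent space $T_p M$, being by definition the $\mathbb{R}$-span of these column vectors, is therefore contained in $V$. Using $p \in \Pi$, this yields $p + T_p M \subseteq p + V = \Pi$, and taking the union over $p \in M$ (by property ii in Section 1.6) gives $\Delta_{*}TM = \bigcup_{p \in M}(p + T_p M) \subseteq \Pi$.

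The final assertion on Hausdorff dimension then drops out immediately: any $d$-affine subspace of $\mathbb{R}^n$ is isometric to $\mathbb{R}^d$, hence $\dim_{\mathcal{H}}\Pi = d$, and by monotonicity of Hausdorff dimension $\dim_{\mathcal{H}}\Delta_{*}TM \leqslant \dim_{\mathcal{H}}\Pi = d$. There is no serious obstacle in this proof; the only content is the elementary geometric fact that a differentiable submanifold lying inside an affine subspace has all its tangent spaces parallel to the direction of that affine subspace, and this follows directly from differentiating curves (or the parametrization) that stay inside $\Pi$.
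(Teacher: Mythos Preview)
Your proof is correct and is exactly the elementary argument the paper has in mind; the paper's own proof consists of the single word ``Obvious.'' Your explicit verification of both directions and the dimension bound fills in precisely the details the author omitted.
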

\begin{proof}
    Obvious.
\end{proof}

\begin{proposition}\label{prop2_6}
    If $M$ is a Lipschitz-continuously differentiable $d$-submanifold, then $\operatorname{dim}_{\mathcal{H}} \Delta_{*} T M \leqslant 2 d$.
\end{proposition}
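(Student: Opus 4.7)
My plan is to use that Hausdorff dimension is preserved (or decreased) under locally Lipschitz maps, so it suffices to realize $\Delta_{*} TM$ locally as a Lipschitz image of a set of topological dimension $2d$. The parameter count $2d = d + d$ (position on $M$ plus fibre direction) strongly suggests this is the right bound, and the Lipschitz-continuously differentiable hypothesis is precisely what makes the obvious parametrization by the lifted section map Lipschitz on compacta.

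First I would use Corollary \ref{coro1_11} together with the Lindelöf property of $M$ (as in the proof of Proposition \ref{prop_1_8_4}) to obtain a countable atlas $\{U_{k}\}_{k \in \mathbb{N}^{*}}$ of $M$ such that each $U_{k}$ equals, up to a rigid motion and a homothety, the graph $\Gamma_{f_{k}}$ of some $C^{1}$ map $f_{k}: I^{d} \to \mathbb{R}^{n-d}$ with locally Lipschitz Jacobian (rigid motions and homotheties are bi-Lipschitz, hence preserve $\dim_{\mathcal{H}}$). Since $\Delta_{*} TM = \bigcup_{k} \Delta_{*} T\Gamma_{f_{k}}$ and Hausdorff dimension respects countable unions, it suffices to prove that $\dim_{\mathcal{H}} \Delta_{*} T\Gamma_{f} \leqslant 2d$ for every Lipschitz-continuously differentiable $f: I^{d} \to \mathbb{R}^{n-d}$.

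By Lemma \ref{lemma1_18}, $\Delta_{*} T\Gamma_{f} = \Phi_{f}(I^{d} \times \mathbb{R}^{d})$ where $\Phi_{f}(x,a) = (a, f(x) + (J_{x}f)(a-x))$. Write $I^{d} = \bigcup_{m \in \mathbb{N}^{*}} K_{m}$ with $K_{m}$ a compact exhaustion of $I^{d}$, so that $I^{d} \times \mathbb{R}^{d} = \bigcup_{m} K_{m} \times \bar{B}_{m}^{d}$. On each such compact set I would check by direct estimation that $\Phi_{f}$ is Lipschitz: writing
\[
\bigl\|(J_{x_{1}} f)(a_{1}-x_{1}) - (J_{x_{2}} f)(a_{2}-x_{2})\bigr\| \leqslant \|J_{x_{1}}f\|_{op}\,\|(a_{1}-x_{1})-(a_{2}-x_{2})\| + \|J_{x_{1}}f - J_{x_{2}}f\|_{op}\,\|a_{2}-x_{2}\|,
\]
the first factor is bounded because $x \mapsto J_{x}f$ is continuous on the compact set $K_{m}$, the second factor is controlled by the local Lipschitz constant of $J_{\cdot}f$ on $K_{m}$ (here the Lipschitz-continuously differentiable hypothesis is essential), and $\|a_{2}-x_{2}\|$ is bounded on $K_{m} \times \bar{B}_{m}^{d}$. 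Combining this with the trivial Lipschitz bounds on $(x,a) \mapsto a$ and on $f$ (itself $C^{1}$, hence locally Lipschitz) shows $\Phi_{f}|_{K_{m}\times\bar{B}_{m}^{d}}$ is Lipschitz.

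Since a Lipschitz map does not increase Hausdorff dimension, $\dim_{\mathcal{H}} \Phi_{f}(K_{m}\times \bar{B}_{m}^{d}) \leqslant \dim_{\mathcal{H}}(K_{m}\times \bar{B}_{m}^{d}) = 2d$, and taking the countable union over $m$ gives $\dim_{\mathcal{H}} \Delta_{*} T\Gamma_{f} \leqslant 2d$. The main obstacle, and the only place where mere differentiability would not suffice, is bounding the cross term $\|J_{x_{1}}f - J_{x_{2}}f\|_{op}\,\|a_{2}-x_{2}\|$ in the Lipschitz estimate; this is exactly why Definition \ref{defin1_2} is invoked. Everything else is a routine combination of bi-Lipschitz invariance of $\dim_{\mathcal{H}}$ under the chart normalizations and $\sigma$-stability of $\dim_{\mathcal{H}}$ under countable unions.
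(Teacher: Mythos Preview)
Your argument is correct and follows essentially the same route as the paper: cover $M$ by countably many graph charts, invoke Lemma \ref{lemma1_18} to identify $\Delta_{*}T\Gamma_{f}$ with the image of the lifted section map $\Phi_{f}$, observe that $\Phi_{f}$ is locally Lipschitz because $J_{x}f$ is, and conclude via countable stability of Hausdorff dimension. The only slip is that Corollary \ref{coro1_11} is stated for \emph{nonlinear} submanifolds, whereas here you just need the graph charts guaranteed by Definition \ref{defin1_2} together with Lindel\"of; otherwise your write-up is actually more explicit than the paper's about the compact exhaustion and the Lipschitz estimate.
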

\begin{proof}
    Cover $M$ by a countable atlas $\left\{U_{k}\right\}_{k \in \mathbb{N}}$ s.t. $U_{k}=\Gamma_{f_{k}}$ where up to rigid motion and homothety $f_{k}: I^{d} \rightarrow \mathbb{R}^{n-d}$ is differentiable and $d f_{k}$ is locally Lipschitz, then the lifted section map $\Phi_{f_{k}}(x, a)=\left(a, f_{k}(x)+\left(J_{x} f_{k}\right)(a-x)\right)$ is locally Lipschitz, and hence $\operatorname{dim}_{\mathcal{H}} \Phi_{f_{k}}\left(I^{d} \times \mathbb{R}^{d}\right) \leqslant \operatorname{dim}_{\mathcal{H}}\left(I^{d} \times \mathbb{R}^{d}\right)=2 d$. Since $\Phi_{f_{k}}\left(I^{d} \times \mathbb{R}^{d}\right)=\Delta_{*} T U_{k}$ and $\Delta_{*} T M=\bigcup\limits_{k \in \mathbb{N}}\left(\Delta_{*} T U_{k}\right)$, we have that $\operatorname{dim}_{\mathcal{H}} \Delta_{*} T M=\operatorname{dim}_{\mathcal{H}}\left(\bigcup\limits_{k \in N} \Delta_{*} T U_{k}\right)=\sup\limits_{k \in \mathbb{N}} \operatorname{dim}_{\mathcal{H}} \Delta_{*} T U_{k} \leqslant 2 d$.
\end{proof}

\begin{corollary}\label{coro2_7}
    If $M$ is a $C^2$ $d$-submanifold, then $\operatorname{dim}_{\mathcal{H}} \Delta_{*} T M \leqslant 2 d$. 
\end{corollary}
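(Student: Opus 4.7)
The plan is to derive this corollary directly from Proposition \ref{prop2_6}, since the hypothesis has been engineered precisely so that $C^2$ submanifolds sit inside the class to which Proposition \ref{prop2_6} applies. Concretely, the paper already observes (immediately after Definition \ref{defin1_2}) that because $C^1$ maps are locally Lipschitz, every $C^2$ submanifold is Lipschitz-continuously differentiable in the sense of Definition \ref{defin1_2}.

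So the argument I would write is essentially one line: let $M$ be a $C^2$ $d$-submanifold; then by the remark following Definition \ref{defin1_2}, $M$ is Lipschitz-continuously differentiable; applying Proposition \ref{prop2_6} yields $\dim_{\mathcal{H}} \Delta_* TM \leq 2d$, as required.

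If I wanted to spell out why the implication from $C^2$ to Lipschitz-continuously differentiable holds (in case the reader wants a self-contained argument), I would note that locally near any $p \in M$ the submanifold is the graph of a $C^2$ map $f \colon U \to \mathbb{R}^{n-d}$, that the differential $df$ is then $C^1$, and that any $C^1$ map on a relatively compact neighborhood has bounded second derivatives and hence is Lipschitz there by the mean value inequality, giving local Lipschitz continuity of $df$ as required by Definition \ref{defin1_2}.

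There is no real obstacle here; the content of the corollary is entirely carried by Proposition \ref{prop2_6}. The only thing to be careful about is matching the definitions, i.e.\ confirming explicitly that the class of $C^2$ submanifolds is a subclass of the Lipschitz-continuously differentiable submanifolds, rather than re-running the covering/lifted-section-map argument from the proof of Proposition \ref{prop2_6}.
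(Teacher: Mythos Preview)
Your proposal is correct and matches the paper's approach exactly: the corollary is stated without proof in the paper because it follows immediately from Proposition~\ref{prop2_6} together with the observation (made right after Definition~\ref{defin1_2}) that $C^2$ submanifolds are Lipschitz-continuously differentiable.
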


The following example shows that the inequalities in Proposition \ref{prop2_6} and Corollary \ref{coro2_7} are sharp.

\begin{proposition}\label{prop2_8}
    For any $d \leqslant n \in \mathbb{N}^{*}, \exists\ C^{\infty}$ $d$-submanifold $M$ of $\mathbb{R}^{n}$ s.t. $\mathcal{L}^{2 d}\left(\Delta_{*} T M\right)=+\infty$ or $\Delta_{*} T M=\mathbb{R}^{n}$.
\end{proposition}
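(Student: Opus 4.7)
The plan is to construct an explicit $C^{\infty}$ example as the graph of a cubic polynomial map on $\mathbb{R}^{d}$. Set $k:=\min(d,n-d)$ and define $f:\mathbb{R}^{d}\to\mathbb{R}^{n-d}$ by
\[
f(x_{1},\ldots,x_{d}):=(x_{1}^{3},\ldots,x_{k}^{3},0,\ldots,0),
\]
understood as the trivial map when $d=n$ (so $n-d=0$ and $M:=\Gamma_{f}=\mathbb{R}^{n}$). In general $M:=\Gamma_{f}$ is a $C^{\infty}$ $d$-submanifold of $\mathbb{R}^{n}$. The proof of Lemma \ref{lemma1_18} uses only that the domain of $f$ is open in $\mathbb{R}^{d}$, so it applies verbatim with $\mathbb{R}^{d}$ in place of $I^{d}$, yielding $\Delta_{*}TM=\Phi_{f}(\mathbb{R}^{d}\times\mathbb{R}^{d})$.

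The heart of the argument is a direct computation of the section transform. Since $\partial f^{i}/\partial x^{j}=3x_{i}^{2}\delta_{ij}$ for $i\leq k$ and vanishes otherwise, substitution into the definition of $\phi_{f}^{a}$ gives
\[
\phi_{f}^{a}(x)_{i}=-2x_{i}^{3}+3a_{i}x_{i}^{2}\ (i\leq k),\qquad \phi_{f}^{a}(x)_{i}=0\ (i>k).
\]
For each $c\in\mathbb{R}$, the univariate cubic $h_{c}(t):=-2t^{3}+3ct^{2}$ is continuous with $\lim_{t\to\mp\infty}h_{c}(t)=\pm\infty$, so $h_{c}(\mathbb{R})=\mathbb{R}$. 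Since the $i$-th component of $\phi_{f}^{a}$ depends (for $i\leq k$) only on the single variable $x_{i}$ and on $a_{i}$, this already shows that for every $a\in\mathbb{R}^{d}$ the map $x\mapsto\phi_{f}^{a}(x)$ surjects onto $\mathbb{R}^{k}\times\{0\}^{n-d-k}\subseteq\mathbb{R}^{n-d}$, and therefore
\[
\Delta_{*}TM=\Phi_{f}(\mathbb{R}^{d}\times\mathbb{R}^{d})=\mathbb{R}^{d}\times\mathbb{R}^{k}\times\{0\}^{n-d-k}.
\]

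To conclude, I would split into the two cases determined by which branch of the $\min$ is active. If $n\leq 2d$ then $k=n-d$, so the image equals $\mathbb{R}^{d}\times\mathbb{R}^{n-d}=\mathbb{R}^{n}$; the corner case $d=n$ is included since then $k=n-d=0$. If $n>2d$ then $k=d$, so the image is a $2d$-dimensional affine subspace of $\mathbb{R}^{n}$, which carries infinite $2d$-dimensional Lebesgue measure. I do not foresee any substantial obstacle: the only minor technical point is that Lemma \ref{lemma1_18} is stated for $I^{d}$, whereas I apply it on $\mathbb{R}^{d}$, and this requires only a one-line remark that its proof is insensitive to the choice of open domain. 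The conceptual content is simply that a single-variable cubic with a linear perturbation already surjects onto $\mathbb{R}$, so placing such cubics in as many independent coordinates as the target allows forces $\Delta_{*}TM$ to attain the upper bound from Proposition \ref{prop2_6}.
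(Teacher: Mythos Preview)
Your proof is correct and takes a genuinely different route from the paper's. The paper builds a specific closed $C^{\infty}$ curve $\gamma\subset\mathbb{R}^{2}$ out of four circular arcs, verifies geometrically that its tangent lines cover $\mathbb{R}^{2}$, and then takes products $\gamma\times\cdots\times\gamma$ (possibly with an $\mathbb{R}^{2d-n}$ factor) to reach the desired dimension. Your construction instead uses the graph of the polynomial map $f(x)=(x_{1}^{3},\ldots,x_{k}^{3},0,\ldots,0)$ and reduces everything to the observation that $t\mapsto -2t^{3}+3ct^{2}$ surjects onto $\mathbb{R}$ for every $c$. This is more elementary and self-contained: a single explicit formula, a one-line computation of $\phi_{f}^{a}$, and no geometric case analysis. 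The paper's approach has the mild advantage that in the range $2d<n$ it produces a \emph{compact} example (a product of closed curves), whereas your $M=\Gamma_{f}$ is always non-compact; but the statement does not ask for compactness, so this is irrelevant here. Your remark that Lemma~\ref{lemma1_18} transfers verbatim from $I^{d}$ to $\mathbb{R}^{d}$ is accurate and the only point that needs mentioning.
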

\begin{proof}
    Identify $\mathbb{C}^{1} \equiv \mathbb{R}^{2}$. Define $\gamma_{1}:=\left\{6 e^{i t} \mid t \in\left[\dfrac{\pi}{2}, \dfrac{3 \pi}{2}\right]\right\}, \gamma_{2}:=\left\{-3 i+3 e^{i t} \mid t \in \left[-\dfrac{\pi}{2}, \dfrac{\pi}{2}\right]\right\}, \gamma_{3}:=\left\{2 i+2 e^{i t} \mid t \in \left[ \dfrac{\pi}{2}, \dfrac{3 \pi}{2}\right]\right\}, \gamma_{4}:=\left\{5i+e^{i t} \mid t \in\left[-\dfrac{\pi}{2}, \dfrac{\pi}{2}\right]\right\}$, then $\gamma:=\bigcup\limits_{k=1}^{4} \gamma_{k}$ glues to a closed $C^{\infty}$ regular curve in $\mathbb{R}^{2}$.

    It is easy to show that $\Delta_{*} T \gamma_{1}=\mathbb{C}^{1} \backslash(\{|z|<1\} \cup\{\operatorname{Re} z \geqslant 0,|\operatorname{Im} z| \leqslant 1\})$ and mutatis mutandis for $\gamma_{2}, \gamma_{3}, \gamma_{4}$, then it is clear that $\Delta_{*} T \gamma=\Delta_{*}\left(\bigcup\limits_{i=1}^{4} T \gamma_{i}\right)=\bigcup\limits_{i=1}^{4} \Delta_{*} T \gamma_{i}=\mathbb{C}^{1} \equiv \mathbb{R}^{2}$.

    Consider the following two cases:

    Case 1: $d<\dfrac{n}{2}$. Define $M:=\underbrace{\gamma \times \cdots \times \gamma}_{d} \subseteq \underbrace{\mathbb{R}^{2} \times \cdots \times \mathbb{R}^{2}}_{d}=\mathbb{R}^{2 d} \hookrightarrow \mathbb{R}^{n}$, then $M$ is a $C^{\infty} d$-submanifold of $\mathbb{R}^{n}$. Since $\Delta_{*} T M=\underbrace{\left(\Delta_{*} T \gamma\right) \times \cdots \times\left(\Delta_{*} T \gamma\right)}_{d}=\underbrace{\mathbb{R}^{2} \times \cdots \times \mathbb{R}^{2}}_{d}=\mathbb{R}^{2 d}$, we have $\mathcal{L}^{2 d}\left(\Delta_{*} T M\right)=+\infty$.

    Case 2: $d \geqslant \dfrac{n}{2}$. Define $M:=\underbrace{\gamma \times \cdots \times \gamma}_{n-d} \times \mathbb{R}^{2 d-n}$, then $M$ is a $C^{\infty}$ submanifold of $\mathbb{R}^{n}$ with $\operatorname{dim} M=n-d+2 d-n=d$, and $\Delta_{*} T M=\underbrace{\mathbb{R}^{2} \times \cdots \times \mathbb{R}^{2}}_{n-d} \times \mathbb{R}^{2 d-n}=\mathbb{R}^{2(n-d)+2 d-n}=\mathbb{R}^{n}$.
\end{proof}

The following example shows that the Lipschitz condition in Proposition \ref{prop2_6} is necessary.

\begin{proposition}\label{prop2_9}
    For any $d \leqslant n \in \mathbb{N}^{*}, \exists\ C^{1}$ $d$-submanifold $M$ of $\mathbb{R}^{n}$, s.t. $\Delta_{*} T M=\mathbb{R}^{n}$.
\end{proposition}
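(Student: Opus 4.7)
I would split by the size of $d$. For $d \geq n/2$, Proposition \ref{prop2_8} already exhibits a $C^{\infty}$ (hence $C^{1}$) $d$-submanifold of $\mathbb{R}^{n}$ with $\Delta_{*} T M = \mathbb{R}^{n}$, so this case is immediate. For $d < n/2$, Proposition \ref{prop2_6} forbids any Lipschitz-continuously differentiable candidate ($2d < n$ makes $\dim_{\mathcal{H}} \Delta_{*} TM \leqslant 2d < n$), so a genuinely $C^{1}$ construction with $\gamma'$ not locally Lipschitz is required.

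Next I would reduce the hard regime to a single-curve problem. Set $k := n - d + 1 \geqslant 3$, and suppose one has a $C^{1}$ curve $\gamma \subseteq \mathbb{R}^{k}$ with $\Delta_{*} T\gamma = \mathbb{R}^{k}$. Identifying $\mathbb{R}^{n} \equiv \mathbb{R}^{k} \times \mathbb{R}^{d-1}$ and putting $M := \gamma \times \mathbb{R}^{d-1}$, the product is a $C^{1}$ $d$-submanifold of $\mathbb{R}^{n}$, and by property (iii) of the codiagonal
$$\Delta_{*} TM \;=\; \Delta_{*} T\gamma \times \Delta_{*} T\mathbb{R}^{d-1} \;=\; \mathbb{R}^{k} \times \mathbb{R}^{d-1} \;=\; \mathbb{R}^{n}.$$
The proposition therefore reduces to constructing, for every $k \geqslant 3$, a $C^{1}$ embedded curve $\gamma: I \to \mathbb{R}^{k}$ whose tangent lines cover $\mathbb{R}^{k}$.

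For the core construction I would drive a tangency ODE by a continuous surjection. Choose a (suitably rescaled) Peano-type continuous surjection $\sigma: I \to \mathbb{R}^{k}$ and solve
$$\gamma'(t) \;=\; \sigma(t) - \gamma(t)$$
on $I$ with any initial condition $\gamma(0) = \gamma_{0}$. This is affine linear in $\gamma$ with continuous coefficients, so variation of parameters gives a unique global $C^{1}$ solution. By construction $\sigma(t) = \gamma(t) + \gamma'(t) \in \gamma(t) + T_{\gamma(t)} \gamma$ for every $t$ at which $\gamma'(t) \neq 0$, so surjectivity of $\sigma$ forces $\Delta_{*} T\gamma \supseteq \sigma(I) = \mathbb{R}^{k}$, i.e.\ equality. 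The wildness of $\sigma$ transfers via $\gamma' = \sigma - \gamma$ to $\gamma'$, so $\gamma'$ is continuous but not locally Lipschitz -- which is exactly the failure of Lipschitz-differentiability that Proposition \ref{prop2_6} would otherwise rule out. Note that no smooth surjection $I \to \mathbb{R}^{k}$ exists for $k \geqslant 2$ (Sard), so this non-Lipschitz character is unavoidable in the construction.

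The main obstacle is to upgrade the $C^{1}$ map $\gamma$ to a genuinely $C^{1}$ embedded $1$-submanifold of $\mathbb{R}^{k}$: this requires $\gamma'(t) \neq 0$ everywhere (equivalently $\sigma(t) \neq \gamma(t)$ for every $t$) and global injectivity of $\gamma$. Neither is automatic, since $\sigma(I)$ is dense in $\mathbb{R}^{k}$ and may intersect the solution curve. I would address the immersion condition by a perturbation of $\sigma$ off a thin tube around the compact pieces of $\gamma(I)$ (preserving surjectivity onto $\mathbb{R}^{k}$ since $\gamma(I)$ is a Lipschitz $1$-set and hence has Lebesgue measure zero in $\mathbb{R}^{k}$), or equivalently by discarding the closed set $\{\sigma = \gamma\}$ and verifying that the tangents over its complement still exhaust $\mathbb{R}^{k}$. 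Global injectivity can be secured by analogous small perturbations or a parameter rescaling. Balancing the wildness of $\sigma$ required for surjectivity against these regularity and embedding conditions is the delicate technical point at the heart of the argument.
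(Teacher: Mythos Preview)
Your core strategy---solve the linear ODE $\gamma'(t)=\sigma(t)-\gamma(t)$ with $\sigma$ a continuous surjection onto $\mathbb{R}^{k}$, observe that $\sigma(t)=\gamma(t)+\gamma'(t)$ lies on the tangent line through $\gamma(t)$, and then take the product $\gamma\times\mathbb{R}^{d-1}$---is exactly the paper's approach. The case split $d\geqslant n/2$ versus $d<n/2$ is harmless but unnecessary: the paper treats all $d$ uniformly via the curve construction.

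The genuine gap is in how you secure regularity. Your perturbation idea is circular (perturbing $\sigma$ changes the ODE and hence $\gamma$, so you cannot perturb ``off a tube around $\gamma(I)$'' without restarting), and ``discarding $\{\sigma=\gamma\}$ and verifying that the tangents over the complement still exhaust $\mathbb{R}^{k}$'' is precisely the assertion that needs proof. The paper's device is to choose $\sigma$ \emph{self-similar}, meaning that $\sigma$ restricted to any nonempty open subinterval is already surjective onto $\mathbb{R}^{k}$. Then one argues: $\gamma'$ cannot vanish identically (else $\sigma=\gamma\equiv\text{const}$, contradicting self-similarity), so there is some $t_{0}$ with $\gamma'(t_{0})\neq 0$; by continuity $\gamma'\neq 0$ on an open neighbourhood $U$ of $t_{0}$; and by self-similarity $\sigma(U)=\mathbb{R}^{k}$ already, so $\Delta_{*}T(\gamma|_{U})\supseteq\sigma(U)=\mathbb{R}^{k}$. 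Shrinking $U$ further if needed makes $\gamma|_{U}$ an embedding, disposing of your injectivity concern as well. This single extra hypothesis on $\sigma$ replaces all of the perturbation machinery you sketch.
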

\begin{proof}
    Recall that $\forall\ m \in \mathbb{N}$ we have $\mathbb{R}^{m}$ is a $\sigma$-Peano space, i.e. $\exists$ continuous surjection $\varphi_{m}: \mathbb{R} \rightarrow \mathbb{R}^{m}$. Moreover, $\varphi_{m}$ can be chosen to be a self-similar curve. Denote $\varphi:=\varphi_{n-d+1}$ and consider the following Cauchy Problem:
    \begin{equation}\label{eq2}\tag{*}
        f(t)+f^{\prime}(t)=\varphi(t)
    \end{equation}
    with initial condition $f(0)=0$.

    By Picard's existence theorem, $\exists\ \varepsilon>0$ and differentiable function $\alpha: \left.\right]-\varepsilon, \varepsilon \left[\right. \rightarrow \mathbb{R}^{n-d+1}$ s.t. $\alpha(t)+\alpha^{\prime}(t)=\varphi(t)$ and $\alpha(0)=0$. Since $\alpha^{\prime}=\left.\varphi\right|_{\left.\right]-\varepsilon, \varepsilon \left[\right.}-\alpha$ is continuous, we have that $\alpha$ is $C^{1}$.

    Provided that $\alpha^{\prime} \equiv 0$, then $\alpha \equiv$ Constant and hence $\left.\varphi\right|_{\left.\right]-\varepsilon, \varepsilon \left[\right.}=\alpha+\alpha^{\prime}=\text{Constant}+0 \equiv \text {Constant}$, contracts the self-similarity of $\varphi$. Therefore $\exists\ t_{0} \in \left.\right]-\varepsilon, \varepsilon\left[\right.$ s.t. $\alpha^{\prime}\left(t_{0}\right) \neq 0$, and by the continuity of $\alpha^{\prime}$, $\exists$ open neighborhood $U$ of $t_{0}$ s.t. $\left.\alpha^{\prime}\right|_{U} \neq 0$. Therefore $\gamma:=\left.\alpha\right|_{U}$ is a $C^{1}$ regular curve in $\mathbb{R}^{n-d+1}$.

  It is clear that $\forall\ t \in U, \varphi(t)=\alpha(t)+\alpha^{\prime}(t) \in \Delta_{*} T \gamma$, i.e. $\varphi(U) \subseteq \Delta_{*} T \gamma$. By the self-similarity of $\varphi$, we have that $\varphi(U)=\varphi(\mathbb{R})=\mathbb{R}^{n-d+1}$. This proves that $\Delta_{*} T \gamma=\mathbb{R}^{n-d+1}$.

  Define $M:=\gamma \times \mathbb{R}^{d-1} \hookrightarrow \mathbb{R}^{n-d+1} \times \mathbb{R}^{d-1}=\mathbb{R}^{n}$, then $M$ is a $C^{1}$ $d$-submanifold of $\mathbb{R}^{n}$, and $\Delta_{*} T M=\left(\Delta_{*} T \gamma\right) \times\left(\Delta_{*} T \mathbb{R}^{d-1}\right)=\mathbb{R}^{n-d+1} \times \mathbb{R}^{d-1}=\mathbb{R}^{n}$.
\end{proof}

\begin{remark}\label{remark2_10}
    The phenomenon in Proposition \ref{prop2_9} is anti-intuitive. It shows that tangent lines of a $C^{1}$ regular curve can fill up the ambient space $\mathbb{R}^{n}$, and its tangent bundle is a 2 -dimensional surface embedded in $\mathbb{R}^{n} \times \mathbb{R}^{n}$ with $n$-dimensional codiagonal.
\end{remark}

From the previous discussion we have seen that union of tangents of various submanifolds can be very different in size. Take a differentiable submanifold $M$ we shall provide a sufficient condition for int $\left(\Delta_{*} T M\right) \neq \varnothing$. By Proposition \ref{prop2_5} and Corollary \ref{coro2_7} $M$ should be substantial and $\operatorname{dim} M \geqslant \dfrac{n}{2}$, but the following example shows that these two conditions are not enough.

\begin{proposition}\label{prop2_11}
    Let $g: I \rightarrow \mathbb{R}^{4}$ be a substantial $C^{3}$ regular curve with positive curvature then its one-sided tangent developable $\Sigma_{+}$ is a substantial $C^{2}$ regular surface in $\mathbb{R}^{4}$ s.t. $\mathcal{L}^{4}\left(\Delta_{*} T \Sigma_{+}\right)=0$.
\end{proposition}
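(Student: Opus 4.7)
That $\Sigma_+$ is a substantial $C^2$ regular surface in $\mathbb{R}^4$ is exactly the content of Proposition \ref{prop1_4} applied with $r=2$, so the nontrivial part of the statement is the claim $\mathcal{L}^4(\Delta_{*}T\Sigma_+) = 0$. The plan is to exhibit $\Delta_{*}T\Sigma_+$ as the $C^1$ image of a three-dimensional parameter space into $\mathbb{R}^4$, so that a Lipschitz-type Hausdorff dimension bound forces the four-dimensional Lebesgue measure to vanish.

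The crucial observation, which is essentially the classical flatness of a tangent developable, is that the tangent plane of $\Sigma_+$ at $\mathbf{x}(u,v) = g(u) + v g'(u)$ depends only on $u$. From $\partial_u \mathbf{x} = g'(u) + v g''(u)$ and $\partial_v \mathbf{x} = g'(u)$ we read off
\[
T_{\mathbf{x}(u,v)}\Sigma_+ \;=\; \operatorname{span}_{\mathbb{R}}\{\,g'(u) + v g''(u),\; g'(u)\,\} \;=\; \operatorname{span}_{\mathbb{R}}\{\,g'(u),\; g''(u)\,\},
\]
where we use that $v > 0$ and that $g'(u), g''(u)$ are linearly independent by the positive-curvature hypothesis (cf.\ the computation in the proof of Proposition \ref{prop1_4}). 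Consequently
\[
\mathbf{x}(u,v) + T_{\mathbf{x}(u,v)}\Sigma_+ \;=\; g(u) + \mathbb{R}\,g'(u) + \mathbb{R}\,g''(u),
\]
because $v \in \mathbb{R}_+$ absorbs freely into any real coefficient of $g'(u)$. Taking the union over $(u,v) \in I \times \mathbb{R}_+$ yields
\[
\Delta_{*} T\Sigma_+ \;=\; F(I \times \mathbb{R}^2), \qquad F(u,\alpha,\beta) \;:=\; g(u) + \alpha\, g'(u) + \beta\, g''(u).
\]

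Since $g$ is $C^3$, the map $F : I \times \mathbb{R}^2 \to \mathbb{R}^4$ is $C^1$, hence locally Lipschitz. Covering $I \times \mathbb{R}^2$ by countably many compact sets on which $F$ is Lipschitz and using the standard fact that Lipschitz maps do not increase Hausdorff dimension, one obtains $\operatorname{dim}_{\mathcal{H}}\Delta_{*} T\Sigma_+ \leqslant \operatorname{dim}_{\mathcal{H}}(I \times \mathbb{R}^2) = 3 < 4$, which gives $\mathcal{L}^4(\Delta_{*} T\Sigma_+) = 0$; universal measurability of $\Delta_{*}T\Sigma_+$ is automatic from Proposition \ref{prop_1_8_4}. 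The main obstacle is conceptual rather than technical: one has to notice that the developability of $\Sigma_+$ collapses the naive parameter count from $\dim \Sigma_+ + \dim \Sigma_+ = 2+2 = 4$ down to the effective $1 + 2 = 3$, which is precisely what prevents $\Delta_{*}T\Sigma_+$ from having positive $4$-volume even though $\Sigma_+$ is substantial and satisfies $\dim \Sigma_+ = n/2$.
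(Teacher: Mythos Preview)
Your proof is correct and follows essentially the same approach as the paper: both identify the key map $F(u,\alpha,\beta)=g(u)+\alpha g'(u)+\beta g''(u)$ whose image contains $\Delta_{*}T\Sigma_{+}$, exploiting that the tangent plane of the developable depends only on $u$. The only difference is cosmetic: the paper finishes by invoking Sard's lemma (every point of a $C^1$ map $\mathbb{R}^3\to\mathbb{R}^4$ is critical, so the image has $\mathcal{L}^4$-measure zero), whereas you use the equivalent locally-Lipschitz Hausdorff-dimension bound.
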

\begin{proof}
    By Proposition \ref{prop1_4} $\Sigma_{+}$is a substantial $C^{2}$ regular surface in $\mathbb{R}^4$. 
    
    Since $\Sigma_{+}$ is parametrized by $\mathbf{x}(u, v)=g(u)+v g^{\prime}(u)$ and $\dfrac{\partial \mathbf{x}}{\partial u}(u, v)=g^{\prime}(u)+v g^{\prime \prime}(u), \dfrac{\partial \mathbf{x}}{\partial v}(u, v)=g^{\prime}(u)$, for any $p:=\mathbf{x}(u, v)\in\Sigma_{+}$ we have $p+T_{p} \Sigma_{+}=\left\{\mathbf{x}(u, v)+\lambda \dfrac{\partial \mathbf{x}}{\partial u}(u, v)+\right.$ $\left.\mu \dfrac{\partial \mathbf{x}}{\partial v}(u, v) \mid(\lambda, \mu) \in \mathbb{R}^{2}\right\}$
    $=\left\{g(u)+(\lambda+\mu) g^{\prime}(u)+\lambda v g^{\prime \prime}(u)\mid(\lambda, \mu) \in \mathbb{R}^{2}\right\}$. Define $C^{1}$ function $f: \mathbb{R}^{3} \rightarrow \mathbb{R}^{4}$ by $f(x, y, u):=g(u)+x g^{\prime}(u)+y g^{\prime \prime}(u)$, then $f\left(\mathbb{R}^{3}\right) \supseteq \bigcup\limits_{p \in \Sigma_{+}}(p+T_{p} \Sigma_{+})=\Delta_{*} T \Sigma_{+}$. Since $f$ is $C^{1}$, by Sard's lemma we have $\mathcal{L}^{4}\left(f\left(\mathbb{R}^{3}\right)\right)=0$, and hence $\mathcal{L}^{4}\left(\Delta_{*} T \Sigma_{+}\right)=0$.
\end{proof} 

By strengthening the condition we obtain the following theorems.

\begin{theorem}\label{theo2_12}
    Let $\Omega$ be an open subset in $\mathbb{R}^{d}$ and $f: \Omega \rightarrow \mathbb{R}^{n-1}$ be a $C^{1}$ map s.t. $f$ is twice differentiable at $x_{0} \in \Omega$ with $d_{x_0}^{2} f$ non-degenerated, then $\operatorname{int}\left(\Delta_{*} \Gamma_{f}\right) \neq \varnothing$.
\end{theorem}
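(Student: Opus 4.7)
The plan is to apply Corollary \ref{coro1_22} to the lifted section map $\Phi_{f}$ associated to $f$ by Definition \ref{defin1_17}. By Lemma \ref{lemma1_18}, $\Delta_{*} T \Gamma_{f}$ coincides with the image of $\Phi_{f}$, so it suffices to exhibit a single point $(x_{0}, a_{*})$ in the domain at which $\Phi_{f}$ is continuous, differentiable, and has Jacobian of full (maximal) rank. Continuity is immediate from $f \in C^{1}$, and taking the base point of the form $(x_{0}, a_{*})$ lets us localize the use of twice-differentiability at $x_{0}$.

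For Fr\'echet differentiability of $\Phi_{f}$ at $(x_{0}, a_{*})$ (with $a_{*}$ to be chosen), I exploit the twice-differentiability of $f$ at $x_{0}$ via the expansions
\[
f(x_{0} + h) = f(x_{0}) + J_{x_{0}} f \cdot h + \tfrac{1}{2} d^{2}_{x_{0}} f(h, h) + o(|h|^{2})
\]
and $J_{x_{0} + h} f = J_{x_{0}} f + d^{2}_{x_{0}} f \cdot h + o(|h|)$. Substituting into $\Phi_{f}(x_{0} + h, a_{*} + k)$ and cancelling the $\pm J_{x_{0}} f \cdot h$ contributions, the part linear in the increments $(h, k)$ collects to
\[
d_{(x_{0}, a_{*})} \Phi_{f} \cdot (h, k) = \bigl(k,\; J_{x_{0}} f \cdot k + d^{2}_{x_{0}} f(a_{*} - x_{0}, h)\bigr).
\]
The main technical step is checking that every residual term is $o(|(h, k)|)$: the pure quadratics $-\tfrac{1}{2} d^{2}_{x_{0}} f(h, h)$ and $d^{2}_{x_{0}} f(h, k)$ are $O(|(h,k)|^{2})$ by direct inspection, while the $o(|h|)$-remainder from the second expansion multiplied by $(a_{*} - x_{0}) + k - h$ produces the only slightly delicate piece $o(|h|) \cdot |k|$, which is bounded by $o(|h|) \cdot |(h, k)|$ and hence is $o(|(h, k)|)$ since $o(|h|) \to 0$.

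It remains to choose $a_{*}$ so that the differential, in block form
\[
\begin{pmatrix} 0 & I_{d} \\ M & J_{x_{0}} f \end{pmatrix}, \qquad M \cdot h := d^{2}_{x_{0}} f(a_{*} - x_{0}, h),
\]
has full rank. Eliminating $J_{x_{0}} f$ against the top block by elementary row operations reduces this matrix to $\left(\begin{smallmatrix} 0 & I_{d} \\ M & 0 \end{smallmatrix}\right)$, whose rank equals $d + \operatorname{rank} M$, so one needs $\operatorname{rank} M$ equal to the codimension of $\Gamma_{f}$. The non-degeneracy of $d^{2}_{x_{0}} f$ produces a vector $v_{*}$ for which the slice $h \mapsto d^{2}_{x_{0}} f(v_{*}, h)$ attains this maximal rank as a linear map to the codomain of $f$ (in the hypersurface case the slice is a non-zero covector, and any $v_{*}$ outside the kernel of the invertible symmetric form suffices). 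Setting $a_{*} := x_{0} + v_{*}$ and invoking Corollary \ref{coro1_22} then yields $\operatorname{int} \Phi_{f}(\Omega \times \mathbb{R}^{d}) \neq \varnothing$, which is the claim.
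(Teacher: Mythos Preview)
Your proposal is correct and follows essentially the same approach as the paper: both apply Corollary~\ref{coro1_22} to the lifted section map $\Phi_f$, compute the Jacobian at a point $(x_0,a_*)$, reduce the full-rank condition to that of the contracted second differential $h\mapsto d^2_{x_0}f(a_*-x_0,h)$, and invoke non-degeneracy to choose $a_*$. Your writeup is somewhat more careful in verifying Fr\'echet differentiability of $\Phi_f$ at $(x_0,a_*)$ via the two Taylor expansions, whereas the paper simply asserts this step.
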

\begin{proof}
    Wlog assume that $\Omega=I^{d}$ and $x_{0}=0$. It suffices to prove that $\operatorname{int}\left(\Phi_{f}\left(I^{d} \times \mathbb{R}^{d}\right)\right)\neq \varnothing$.

    For any $b \in \mathbb{R}^{d}$, it is clear that $f(x), J_{x} f$ and hence $\Phi_{f}(x, a)=\left(a, f(x)+\left(J_{x} f\right)(a-x)\right)$ are continuous and differentiable at $(0, b)$, and $\left.\dfrac{\partial \Phi_{f}(x, a)}{\partial(x, a)}\right|_{(0, b)}=\left[\begin{array}{cc}0 & \left(d_{0}^{2} f\right)b \\ I d_{d} & d_{0} f\end{array}\right]=\left[\begin{array}{cc}
        0 & \left(b^{i} \dfrac{\partial f^{k}(0)}{\partial x^{i} \partial x^{j}}\right) \\
        I d_{d} & J_{0} f
    \end{array}\right]$. By Corollary \ref{coro1_22} it remains to proof that $\exists\ b_{0} \in \mathbb{R}^{d}$ s.t. $\left.\dfrac{\partial \Phi_{f}(x, a)}{\partial(x, a)}\right|_{\left(0, b_{0}\right)}$ has full rank, i.e. $\left(b_{0}^{i} \dfrac{\partial f^{k}(0)}{\partial x^{i} \partial x^{j}}\right)$ has full rank.

    Since by assumption $d_{0}^{2} f$ is non-degenerated, there exists $b_{0} \in \mathbb{R}^{d}$ s.t. the contraction $\left(b_{0}^{i} \dfrac{\partial f^{k}(0)}{\partial x^{i} \partial x^{j}}\right)$ has full rank.

    This concludes the proof.
\end{proof}

\begin{theorem}\label{theo2_13} 
    Let $M$ be a twice differentiable $d$-submanifold of $\mathbb{R}^{n}$ s.t. $d \geqslant \dfrac{n}{2}$ and $\exists\ p \in M$ s.t. $\operatorname{dim} T_{p}^{2} M=n$, then $\operatorname{int}\left(\Delta_{*} T M\right) \neq \varnothing$.
\end{theorem}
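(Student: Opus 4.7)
The natural strategy is to localize $M$ near $p$ as a graph and reduce to Theorem \ref{theo2_12}. By the definition of twice differentiability, some neighborhood $U \subseteq M$ of $p$ has the form $U = \Gamma_f$ for a twice differentiable map $f: \Omega \to \mathbb{R}^{n-d}$ with $\Omega \subseteq \mathbb{R}^d$ open; after a translation and a homothety (cf.\ Remark \ref{remark1_9}) we may take $\Omega = I^d$ and $p = (0, f(0))$. Since $\Gamma_f$ is open in $M$, $T\Gamma_f \subseteq TM$ and hence $\Delta_* T\Gamma_f \subseteq \Delta_* TM$, so it is enough to show $\operatorname{int}(\Delta_* T \Gamma_f) \neq \varnothing$.

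I would next translate the hypothesis $\dim T_p^2 M = n$ into an analytic condition on $f$. In the graph parametrization $\mathbf r(x) = (x, f(x))$, the tangent vectors $\partial_i \mathbf r(0) = (e_i, \partial_i f(0))$ span $T_p M$ and project isomorphically onto $\mathbb{R}^d$, while the second partials $\partial_i \partial_j \mathbf r(0) = (0, \partial_i \partial_j f(0))$ lie in $\{0\} \times \mathbb{R}^{n-d}$ which meets $T_p M$ trivially. Consequently
$$
\dim T_p^2 M = d + \dim \operatorname{span}\{\partial_i \partial_j f(0) : 1 \leq i \leq j \leq d\},
$$
and the osculating hypothesis becomes equivalent to the Hessian vectors $T_{ij} := \partial_i \partial_j f(0) \in \mathbb{R}^{n-d}$ spanning all of $\mathbb{R}^{n-d}$. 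Theorem \ref{theo2_12} will then apply as soon as $d_0^2 f$ is non-degenerate in the sense of that theorem, i.e.\ some $b_0 \in \mathbb{R}^d$ makes the contraction matrix $B(b_0) := \bigl(b_0^i\, T_{ij}^k\bigr)_{k,j}$ of full row rank $n-d$; such a rank is dimensionally feasible precisely because the assumption $d \geq n/2$ forces $d \geq n-d$.

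The plan for producing such a $b_0$ is to regard each $(n-d)\times(n-d)$ minor of $B(b)$ as a homogeneous polynomial of degree $n-d$ in $b \in \mathbb{R}^d$ and to argue that, under the spanning of $\{T_{ij}\}$ combined with the symmetry $T_{ij} = T_{ji}$, these minors cannot all vanish identically, so a generic $b_0$ does the job. Once this is established, Theorem \ref{theo2_12} yields $\operatorname{int}(\Delta_* T\Gamma_f) \neq \varnothing$ and the conclusion follows. The main obstacle will be precisely this rank-extraction step: passing from the spanning of the symmetric Hessian family to the existence of a single surjective contraction is delicate, because for general symmetric tensors these two conditions are not equivalent a priori. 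The argument must make essential use of both the symmetry of $T_{ij}$ and the inequality $d \geq n-d$, for instance by combining a Schur-complement reading of the Jacobian of $\Phi_f$ (as in the proof of Theorem \ref{theo2_12}) with a codimension count for the locus of rank-deficient matrices in the linear pencil $\{B(b) : b \in \mathbb{R}^d\}$.
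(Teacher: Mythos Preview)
Your plan is exactly the route the paper takes: localize at $p$, pass to the lifted section map $\Phi_f$, and try to find a $b_0\in\mathbb{R}^d$ making the contraction $(b_0^i\,\partial_i\partial_j f^k(0))_{k,j}$ have full row rank, so that Corollary~\ref{coro1_22} applies. The paper dispatches this last step with a bare ``Therefore''; you are right to single it out as the crux.

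Unfortunately the step does not go through when $n-d\geq 3$, and in fact the theorem as stated is false. Take $d=3$, $n=6$ and
\[
f(x_1,x_2,x_3)=\Bigl(\tfrac{x_1^2}{2},\,x_1x_2,\,\tfrac{x_2^2}{2}\Bigr):I^3\to\mathbb{R}^3.
\]
Then $M=\Gamma_f$ is a $C^\infty$ $3$-submanifold of $\mathbb{R}^6$, and at $p=(0,0)$ one has $J_0f=0$ while the Hessian vectors $T_{11}=(1,0,0)$, $T_{12}=(0,1,0)$, $T_{22}=(0,0,1)$ span $\mathbb{R}^3$, so $\dim T_p^2M=6=n$. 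Yet the contraction is
\[
B(b)=\begin{bmatrix}b_1&0&0\\ b_2&b_1&0\\ 0&b_2&0\end{bmatrix},
\]
whose third column vanishes identically; hence $B(b)$ never has rank $3$. More to the point, $\Phi_f(x,a)=(a_1,a_2,a_3,\,a_1x_1-\tfrac{x_1^2}{2},\,a_1x_2+a_2x_1-x_1x_2,\,a_2x_2-\tfrac{x_2^2}{2})$ does not depend on $x_3$, so $\Delta_*TM=\Phi_f(I^3\times\mathbb{R}^3)$ is the image of a $C^\infty$ map from a $5$-dimensional domain into $\mathbb{R}^6$ and therefore has empty interior. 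Thus the ``rank--extraction'' obstacle you anticipated is genuine and cannot be removed: the spanning of the symmetric family $\{T_{ij}\}$ is strictly weaker than the existence of a surjective single contraction, and the Schur-complement/codimension-count strategy you sketch will necessarily fail for $n-d\geq 3$. (For $n-d\in\{1,2\}$ one can check the implication does hold, so the paper's argument is salvageable in low codimension.)
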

\begin{proof}
    Wlog assume that $p=0$.

    By assumption $\exists$ local chart $U \subseteq M$, s.t. $U=\Gamma_{f}$ where up to rigid motion and homothety $f: I^{d} \rightarrow \mathbb{R}^{n-d}$ is twice differentiable and $f(0)=0$. Since $\Delta_{*} T M \supseteq \Delta_{*} T U=\Phi_{f}\left(I^{d} \times \mathbb{R}^{d}\right)$, it suffices to prove that $\operatorname{int}\left(\Phi_{f}\left(I^{d} \times \mathbb{R}^{d}\right)\right) \neq \varnothing$.

    Since for any $b \in \mathbb{R}^{d}, f(x), J_{x} f$, and hence $\Phi_{f}(x, a)=\left(a, f(x)+\left(J_{x} f\right)(a-x)\right)$ are continuous and differentiable at $(0, b)$, and moreover $\left.\dfrac{\partial \Phi_{f}(x, a)}{\partial(x, a)} \right|_{(0,b)}=\left[\begin{array}{cc}
        0 & \left(d_{0}^{2} f\right) b \\
        I d_{d} & d_{0} f
    \end{array}\right]= 
    \left[\begin{array}{cc}
        0 & \left(b^{i} \dfrac{\partial f^{k}(0)}{\partial x^{i} \partial x^{j}}\right) \\
        I d_{d} & J_{0} f
    \end{array}\right]$, by Corollary \ref{coro1_22} it remians to proof that $\exists\ b_{0} \in \mathbb{R}^{d}$ s.t. $\left.\dfrac{\partial \Phi_{f}(x, a)}{\partial(x, a)}\right|_{\left(0, b_{0}\right)}$ has full rank, i.e. $\left(b_{0}^{i} \dfrac{\partial f^{k}(0)}{\partial x^{i} \partial x^{j}}\right)$ has full rank.

    Since by assumption $\left\{\left.\left[\begin{array}{cccc}\left(\dfrac{\partial f^{1}(0)}{\partial x^i \partial x^{j}}\right) & \cdots & \left(\dfrac{\partial f^{n-d}(0)}{\partial x^{i} \partial x^{j}}\right) & J_{0} f \\ 0 & \cdots & 0 & I d_{d}\end{array}\right] \xi \right| \xi \in \mathbb{R}^{(n-d+1) d}\right\}=T_{p}^{2} M=\mathbb{R}^{n}$, we have that $\left[\left(\dfrac{\partial f^{1}(0)}{\partial x^{i} \partial x^{j}}\right) \cdots\left(\dfrac{\partial f^{n-d}(0)}{\partial x^{i} \partial x^{j}}\right)\right]$ has full rank. Therefore $\exists\ b_{0}\in \mathbb{R}^{d}$ s.t. the contraction $\left(b_{0}^{i} \dfrac{\partial f^{k}(0)}{\partial x^{i} \partial x^{j}}\right)$ has full rank.

    This concludes the proof.
\end{proof}

\section{Further results for hypersurfaces}
In section \ref{sec1} we give a criterion for a $C^{1}$ submanifold $M$ to have $\operatorname{int}\left(\Delta_{*} T M\right) \neq \varnothing$. It turns out that if $M$ is a hypersurface then this result can be significantly improved.

\subsection{Tangent Bundle over Hypersurfaces}
We start with the following important observation:

\begin{lemma}\label{lemma3_1}
    If $f: I^{n} \rightarrow \mathbb{R}$ is a differentiable function, then $\forall\ a \in I^{n}$, $\phi_{f}^{a}\left(I^{n}\right)$ is path-connected.
\end{lemma}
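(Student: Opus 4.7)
The plan is to reduce the $n$-variable assertion to a one-variable fact about ``values at $t=0$ of tangent lines to the graph of a differentiable function,'' which I will then establish using Darboux's theorem applied to a cleverly chosen auxiliary function. Since $f$ is real-valued, $\phi_f^a(I^n) \subseteq \mathbb{R}$, so path-connectedness is equivalent to connectedness, i.e.\ to the image being an interval of $\mathbb{R}$. Noting that $\phi_f^a(a) = f(a)$, it suffices to show that for every $p \in I^n$ the values $f(a)$ and $\phi_f^a(p)$ lie in a common connected subset of $\phi_f^a(I^n)$. Fix such a $p$ and parametrize the segment from $a$ to $p$ by $\gamma(t) := a + t(p-a)$ for $t \in [0,1]$; by convexity of $I^n$, $\gamma(t) \in I^n$. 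Setting $h := f \circ \gamma$, the chain rule (applicable since $f$ is differentiable and $\gamma$ is affine) yields $h'(t) = (J_{\gamma(t)} f)(p-a)$, and a direct computation gives $\phi_f^a(\gamma(t)) = h(t) - t h'(t) =: \psi(t)$, with $\psi(0) = f(a)$ and $\psi(1) = \phi_f^a(p)$. The problem thus reduces to the one-variable claim: for every differentiable $h : [0,1] \to \mathbb{R}$, the set $\psi([0,1])$ is an interval in $\mathbb{R}$.

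Geometrically, $\psi(t)$ is the $y$-intercept of the tangent line to the graph of $h$ at $(t, h(t))$. For any $0 \leq t_0 < t_1 \leq 1$ and any $y$ strictly between $\psi(t_0)$ and $\psi(t_1)$, the goal is to produce $t^* \in (t_0, t_1)$ with $\psi(t^*) = y$. The trick is to introduce the auxiliary function $\sigma(t) := (h(t) - y)/t$, differentiable on $(0, 1]$ with derivative $\sigma'(t) = -(\psi(t) - y)/t^2$, so that $\psi(t^*) = y$ if and only if $\sigma'(t^*) = 0$. If $t_0 > 0$, the signs of $\sigma'(t_0)$ and $\sigma'(t_1)$ are opposite, and Darboux's theorem (the intermediate value property for derivatives) applied to $\sigma$ on $[t_0, t_1]$ furnishes the desired zero of $\sigma'$. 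If $t_0 = 0$, then $\sigma(t) \to \pm \infty$ as $t \to 0^+$ (sign determined by $h(0) - y$), and combining this with the mean value theorem on intervals $[\varepsilon, t_1]$ yields points at which $\sigma'$ is arbitrarily large in the appropriate sign; since $\sigma'(t_1)$ has the opposite sign, Darboux again supplies the required interior zero.

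The main obstacle is precisely that $\psi$ itself need not be continuous: because $f$ is only assumed to be differentiable and not $C^1$, the Jacobian $J_x f$ need not vary continuously in $x$, and so neither does $\psi$. Consequently one cannot simply invoke the intermediate value theorem on $\psi$ to interpolate between $\psi(t_0)$ and $\psi(t_1)$. The indirection through $\sigma$ is what rescues the argument, since $\sigma'$ — although possibly discontinuous — is a derivative and hence enjoys the intermediate value property by Darboux's theorem.
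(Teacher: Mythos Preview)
Your argument is correct and follows the same overall strategy as the paper: reduce to the real line, restrict to the segment from $a$ to $p$, rewrite $\phi_f^a(\gamma(t))$ as $\psi(t)=h(t)-t\,h'(t)$ for $h=f\circ\gamma$, and invoke Darboux's theorem on an auxiliary function. The difference lies only in the choice of that auxiliary. You introduce, for each target value $y$, the function $\sigma(t)=(h(t)-y)/t$ and use $\sigma'(t)=-(\psi(t)-y)/t^{2}$; this forces a separate treatment of the boundary case $t_0=0$ via the mean value theorem. The paper instead observes directly that $\psi$ is itself a derivative: with $H(t)=2\int_0^t h(s)\,ds - t\,h(t)$ one has $H'(t)=h(t)-t\,h'(t)=\psi(t)$, so Darboux applies to $\psi$ in one stroke and no case analysis is needed. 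Your route is a little more hands-on (and makes the tangent-line $y$-intercept picture explicit), while the paper's antiderivative trick is shorter; both yield the same conclusion.
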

\begin{proof}
    Take any $x \in I^{n} \backslash\{a\}$, denote $\lambda:=\dfrac{x+a}{2}$, then $(\lambda-a) t+\lambda, t \in[-1,1]$ is a segment connecting $a$ and $x$.

    Define $h(t):=\phi_{f}^{a}((\lambda-a) t+\lambda), t \in[-1,1]$, then $h(t)=f((\lambda-a) t+\lambda)+\nabla_{(\lambda-a) t+\lambda} f\left(a-((\lambda-a) t+\right.$ $\lambda)^{T}=f((\lambda-a) t+\lambda)-(t+1)\left(\nabla_{(\lambda-a)t+\lambda} f\right)(\lambda-a)$. Define $H(t)=2 \displaystyle\int_{0}^{t} f((\lambda-a) s+\lambda) d s-(t+1) f((\lambda-a) t+\lambda)$, $t \in[-1,1]$, then $\dot{H}(t)=2 f((\lambda-a) t+\lambda)-f((\lambda-a) t+\lambda)-(t+1)\dfrac{d\left(\left(\lambda^{i}-a^{i}\right) t+\lambda^{i}\right)}{d t} \dfrac{\partial f((\lambda-a) t+\lambda)}{\partial\left(\left(\lambda^{i}-a^{i}\right) t+\lambda^{i}\right)}=f((\lambda-a) t+\lambda)-(t+1)\left(\lambda^{i}-a^{i}\right)\dfrac{\partial f}{\partial x^{i}}((\lambda-a) t+\lambda)=f((\lambda-a) t+\lambda)-(t+1)\left(\nabla_{(\lambda-a) t+\lambda} f\right)(\lambda-a)=h(t)$. By Darboux theorem $h$ has the intermediate value property. Since $h(-1)=\phi_{f}^{a}(-(\lambda-a)+\lambda)=\phi_{f}^{a}(a)$ and $h(1)=\phi_{f}^{a}((\lambda-a)+\lambda)=\phi_{f}^{a}\left(2 \dfrac{x+a}{2}-a\right)=\phi_{f}^{a}(x)$, we obtain that $\phi_{f}^{a}\left(I^{n}\right) \supseteq \phi_{f}^{a}(\operatorname{conv}\{a, x\})=h([-1,1]) \supseteq \operatorname{conv}\left\{\phi_{f}^{a}(a), \phi_{f}^{a}(x)\right\}$.

    Take any $x_{1}, x_{2} \in I^{n} \backslash\{a\}$, since $\phi_{f}^{a}\left(I^{n}\right) \subseteq \mathbb{R}$, we have conv $\left\{\phi_{f}^{a}\left(x_{1}\right), \phi_{f}^{a}\left(x_{2}\right)\right\}\subseteq \operatorname{conv} \left\{\phi_{f}^{a}(a), \phi_{f}^{a}\left(x_{1}\right)\right\}$ $\cup \operatorname{conv}\left\{\phi_{f}^{a}(a), \phi_{f}^{a}\left(x_{2}\right)\right\}$. Since $\phi_{f}^{a}\left(I^{n}\right) \supseteq \operatorname{conv}\left\{\phi_{f}^{a}(a), \phi_{f}^{a}\left(x_{k}\right)\right\}, k=1,2$, we conclude that $\phi_{f}^{a}\left(I^{n}\right) \supseteq \operatorname{conv}\left\{\phi_{f}^{a}(a), \phi_{f}^{a}\left(x_{1}\right)\right\} \cup \operatorname{conv}\left\{\phi_{f}^{a}(a), \phi_{f}^{a}\left(x_{2}\right)\right\}\supseteq \operatorname{conv}\left\{\phi_{f}^{a}\left(x_{1}\right), \phi_{f}^{a}\left(x_{2}\right)\right\}$.

    This proves that $\phi_{f}^{a}$ is path-connected.
\end{proof}

Here we give a straightforward appliation of Lemma \ref{lemma3_1} to real functions.

\begin{corollary}\label{coro3_2}
    If $f: I \rightarrow \mathbb{R}$ is a nonlinear differentiable function, then for any $a \in I$, $\phi_{f}^{a}(I)$ is an interval with positive (or infinite) length.
\end{corollary}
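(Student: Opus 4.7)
The plan is to specialize Lemma \ref{lemma3_1} to $n=1$, which gives that $\phi_f^a(I)$ is a path-connected subset of $\mathbb{R}$, and hence an interval; the remaining task is to rule out that this interval is a single point, which is where the nonlinearity hypothesis must enter.

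For the non-degeneracy I would argue by contradiction, assuming $\phi_f^a$ is constant on $I$. Evaluating at $x=a$ gives the constant value $\phi_f^a(a) = f(a)$, so the formula for the section transform yields the identity $f(x) - f(a) = f'(x)(x-a)$ for all $x \in I$. Set $F(x) := f(x) - f(a) - f'(a)(x-a)$; then $F$ is differentiable on $I$ with $F(a) = 0$ and $F'(a) = 0$, and the identity above rewrites as $F(x) = (x-a)\,F'(x)$. On each connected component of $I \setminus \{a\}$ the quotient rule yields
\[
\frac{d}{dx}\!\left(\frac{F(x)}{x-a}\right) = \frac{F'(x)(x-a) - F(x)}{(x-a)^2} = 0,
\]
so $F(x)/(x-a)$ is constant on each side of $a$. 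Differentiability of $F$ at $a$ forces $F(x)/(x-a) \to F'(a) = 0$ as $x \to a$, so both constants must vanish and $F \equiv 0$ on $I$; hence $f(x) = f(a) + f'(a)(x-a)$ is affine, contradicting the nonlinearity of $f$.

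The main obstacle is the weakness of the smoothness hypothesis: $f$ is only assumed once differentiable, so naive attempts (such as formally differentiating $\phi_f^a(x) \equiv f(a)$ to obtain $f''(x)(x-a) = 0$ and hence $f'' \equiv 0$) are unavailable because $f''$ need not exist anywhere. The reformulation $F(x) = (x-a)\,F'(x)$ circumvents this by only invoking the first derivative of $f$, while the limit $F(x)/(x-a) \to F'(a) = 0$ uses precisely the differentiability of $f$ at the single point $a$ to pin down the two ``constants of integration'' on the two components of $I\setminus\{a\}$ simultaneously.
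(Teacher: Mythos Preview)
Your proof is correct and follows the same strategy as the paper: invoke Lemma~\ref{lemma3_1} for connectedness, then argue by contradiction that $\phi_f^a \equiv \text{const}$ forces $f$ to be affine by solving the resulting first-order ODE. The paper carries out the ODE step via informal separation of variables ($\frac{dy}{y-b} = \frac{dx}{x-a}$), whereas your reformulation $F(x) = (x-a)F'(x)$ and the limit argument at $x=a$ handle the singularity and the matching of constants across the two components of $I\setminus\{a\}$ more carefully; but the underlying idea is the same.
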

\begin{proof}
    By Lemma \ref{lemma3_1}, $\phi_{f}^{a}(I)$ is a connected subset of $\mathbb{R}$, i.e. a singleton or an interval with positive (or infinite) length. Therefore it suffices to show that $\forall\ a \in \mathbb{R}, \phi_{f}^{a}$ is not constant.

    Provided that $\exists\ a \in \mathbb{R}$ s.t. $\phi_{f}^{a} \equiv b \in \mathbb{R}$, then we have $f(x)+(a-x) f^{\prime}(x)=\phi_{f}^{a}(x)=b$, $\forall\ x \in \mathbb{R}$. Solve the ordinary differentiable equation $y+(a-x) y^{\prime}=b$ by separating variables, we obtain $y+(a-x) \dfrac{d y}{d x}=b \Longleftrightarrow \dfrac{d y}{y-b}=\dfrac{d x}{x-a} \Longleftrightarrow \ln |y-b|=\ln| x-a|+C \Longleftrightarrow y-b=k(x-a)$, i.e. $y=k(x-a)+b$, where $C, k \in \mathbb{R}$ are constants. Therefore $\exists\ k \in \mathbb{R}$ s.t. $f(x)=k(x-a)+b$, contradicts the nonlinearity of $f$. Therefore $\forall\ a \in \mathbb{R}, \phi_{f}^{a}$ is not constant.
\end{proof}

Now we are prepared to state the main theorems.

\begin{proposition}\label{prop3_3}
    If $f: I^{n} \rightarrow \mathbb{R}$ is a nonlinear differentiable function, then $\operatorname{int}\left(\Delta_{*} T \Gamma_{f}\right) \neq \varnothing$.
\end{proposition}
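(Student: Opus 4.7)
The plan is to exhibit an explicit open box inside $\Delta_{*} T\Gamma_f$. By Lemma \ref{lemma1_18}, $\Delta_{*} T\Gamma_f = \Phi_f(I^n \times \mathbb{R}^n)$, whose fiber over $a \in \mathbb{R}^n$ (under the projection onto the first factor) is the image $\phi_f^a(I^n) \subseteq \mathbb{R}$, which by Lemma \ref{lemma3_1} is path-connected in $\mathbb{R}$, hence an interval. The strategy is to pick an $a_0$ at which this fiber is a nondegenerate interval, and then to combine Lemma \ref{lemma3_1} with the observation that $a \mapsto \phi_f^a(x) = f(x) + (\nabla f(x))(a-x)$ is affine, hence continuous, in $a$ for every fixed $x$.

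First I would apply the contrapositive of Lemma \ref{lemma1_13} to obtain an isometric (in particular affine) embedding $\ell\colon I \to I^n$ along which $g := f \circ \ell\colon I \to \mathbb{R}$ is a nonlinear differentiable function; Lemma \ref{lemma1_19}(ii) then yields $\phi_g^{\tilde a} = \phi_f^{\ell(\tilde a)} \circ \ell$ for every $\tilde a \in \mathbb{R}$. Fix any $\tilde a_0 \in I$ and set $a_0 := \ell(\tilde a_0)$. Corollary \ref{coro3_2} applied to the nonlinear differentiable $g$ gives that $\phi_g^{\tilde a_0}(I)$ is an interval of positive length; picking $y_0$ in its interior together with $t_1, t_2 \in I$ satisfying $\phi_g^{\tilde a_0}(t_1) < y_0 < \phi_g^{\tilde a_0}(t_2)$, and setting $x_i := \ell(t_i) \in I^n$, we obtain $\phi_f^{a_0}(x_1) < y_0 < \phi_f^{a_0}(x_2)$.

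Now the maps $h_i\colon \mathbb{R}^n \to \mathbb{R}$ defined by $h_i(a) := \phi_f^a(x_i) = f(x_i) + (\nabla f(x_i))(a - x_i)$ are affine in $a$, in particular continuous. Since $h_1(a_0) < y_0 < h_2(a_0)$, there is an open neighborhood $U \times V$ of $(a_0, y_0)$ on which $h_1(a) < y < h_2(a)$ for all $(a,y) \in U \times V$. Invoking Lemma \ref{lemma3_1} a second time, each $\phi_f^a(I^n)$ is an interval containing both $h_1(a)$ and $h_2(a)$; it therefore contains every value strictly between them, and in particular every $y \in V$. This would place $U \times V$ inside $\Phi_f(I^n \times \mathbb{R}^n) = \Delta_{*} T\Gamma_f$, giving the nonempty interior.

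The main obstacle I anticipate is conceptual rather than computational: since only differentiability (not $C^1$) of $f$ is assumed, neither $\nabla f$ nor the lifted section map $\Phi_f$ need be continuous, so the Jacobian-rank approach used in Theorems \ref{theo2_12} and \ref{theo2_13} is unavailable here. The key trick above is to sidestep this by slicing in the $a$-variable rather than the $x$-variable: the $a$-dependence of $\phi_f^a(x)$ at any fixed $x$ is already affine, and all necessary control of the $x$-variable is supplied by the path-connectedness statement of Lemma \ref{lemma3_1}.
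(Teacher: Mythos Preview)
Your proposal is correct and follows essentially the same route as the paper's proof: both locate an affine line $\ell$ along which $f\circ\ell$ is nonlinear (you via Lemma~\ref{lemma1_13}, the paper via its Corollary~\ref{coro1_14}), invoke Corollary~\ref{coro3_2} and Lemma~\ref{lemma1_19}(ii) to find two points $x_1,x_2\in I^n$ with $\phi_f^{a_0}(x_1)<\phi_f^{a_0}(x_2)$, then use the affinity of $a\mapsto\phi_f^a(x_i)$ together with Lemma~\ref{lemma3_1} to produce an open box in $\Phi_f(I^n\times\mathbb{R}^n)$. The only cosmetic point to make explicit is that your neighborhood $U$ of $a_0$ should be taken inside $I^n$ so that Lemma~\ref{lemma3_1} applies at each $a\in U$; this is automatic since $a_0=\ell(\tilde a_0)\in I^n$.
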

\begin{proof}
    By Corollary \ref{coro1_14}, $\exists$ affine map $\ell: I \rightarrow I^{n}$ s.t. $f \circ \ell$ is nonlinear, moreover clearly $f \circ \ell$ is differentiable. Therefore by Corollary \ref{coro3_2} $\exists\ s \in \mathbb{R}$ s.t. $\phi_{f \circ \ell}^{s}(I)$ is not a singleton. Since by Lemma \ref{lemma1_19} $\phi_{f}^{\ell(s)}\left(I^{n}\right) \supseteq \phi_{f}^{\ell(s)}(\ell(I))=\phi_{f\circ\ell}^{s}(I)$ we have that $\phi_{f}^{\ell(s)}\left(I^{n}\right)$ is not a singleton.

    Denote $a_{0}:=\ell(s)$, then $\exists\ x_{1}, x_{2} \in I^{n}$, s.t. $\phi_{f}^{a_{0}}\left(x_{1}\right)<\phi_{f}^{a_{0}}\left(x_{2}\right)$. Since $\forall\ i=1,2, \phi_{f}^{a}\left(x_{i}\right)=f\left(x_{i}\right)+\left(J_{x_{i}} f\right)\left(a-x_{i}\right)$ is linear (and hence continous) in $a$, denote $\delta:=\dfrac{1}{2}\left(\phi_{f}^{a_{0}}\left(x_{2}\right)-\phi_{f}^{a_{0}}\left(x_{1}\right)\right)$ then $\exists\ \varepsilon>0$ s.t. $\forall\ a \in B_{\varepsilon}^{n}\left(a_{0}\right)$ we have $\left\|\phi_{f}^{a}\left(x_{i}\right)-\phi_{f}^{a_{0}}\left(x_{i}\right)\right\|<\delta$. Therefore denote $y_{1}:=\phi_{f}^{a_{0}}\left(x_{1}\right)+\delta$ and $y_{2}:=\phi_{f}^{a_{0}}\left(x_{2}\right)-\delta$ then $\forall\ a \in B_{\varepsilon}^{n}\left(a_{0}\right)$ we have $\phi_{f}^{a}\left(x_{1}\right)\leqslant y_{1}<y_{2} \leqslant \phi_{f}^{a}\left(x_{2}\right)$. By Lemma \ref{lemma3_1} $\forall\ a \in B_{\varepsilon}^{n}\left(a_{0}\right)$ we have $\phi_{f}^{a}\left(I^{n}\right) \supseteq\left.\right] y_{1}, y_{2}\left[\right.$. Therefore $\Delta_{*} T \Gamma_{f}=\Phi_{f}\left(I^{n} \times \mathbb{R}^{n}\right) \supseteq \Phi_{f}\left(I^{n} \times B_{\varepsilon}^{n}\left(a_{0}\right)\right)=\bigcup_{a \in B_{\varepsilon}^{n}\left(a_{0}\right)}\{a\} \times \varphi_{f}^{a}\left(I^{n}\right)\supseteq \bigcup_{a \in B_{\varepsilon}^{n}\left(a_{0}\right)}\{a\} \times\left.\right] y_{1}, y_{2}\left[\right.=B_{\varepsilon}^{n}\left(a_{0}\right) \times\left.\right] y_{1}, y_{2}\left[\right.$ where $B_{\varepsilon}^{n}\left(a_{0}\right) \times\left.\right] y_{1}, y_{2}\left[\right.$ is open in $\mathbb{R}^{n+1}$.

    This proves that $\operatorname{int}\left(\Delta_{*} T \Gamma_{f}\right) \neq \phi$.
\end{proof}

\begin{theorem}\label{theo3_4}
    If $M=M_{1} \times \cdots \times M_{m}$ is a product of nonlinear differentiable hypersurfaces, then $\operatorname{int}\left(\Delta_{*} T M\right) \neq \varnothing$.
\end{theorem}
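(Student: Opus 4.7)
The plan is to reduce the product statement to Proposition \ref{prop3_3} via the local graph representation and the product formula for $\Delta_*T$ stated as property (iii) in Section 1.6.

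First, I would fix a local chart in each factor. For every $i = 1, \dots, m$, the hypersurface $M_i \subseteq \mathbb{R}^{n_i}$ is nonlinear and differentiable, so Corollary \ref{coro1_11} applies: up to a rigid motion and homothety (which do not affect having non-empty interior, since they are homeomorphisms of the ambient space), there exists a local chart $U_i \subseteq M_i$ of the form $U_i = \Gamma_{f_i}$ with $f_i : I^{n_i - 1} \to \mathbb{R}^{n_i - (n_i - 1)} = \mathbb{R}$ nonlinear and differentiable. In particular $f_i$ is a nonlinear differentiable real function on $I^{n_i - 1}$, which is precisely the hypothesis of Proposition \ref{prop3_3}. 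That proposition therefore gives $\operatorname{int}(\Delta_* T \Gamma_{f_i}) = \operatorname{int}(\Delta_* T U_i) \neq \varnothing$ in $\mathbb{R}^{n_i}$.

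Next I would assemble the pieces into an open subset of $\Delta_* T M$. Set $U := U_1 \times \cdots \times U_m \subseteq M_1 \times \cdots \times M_m = M$; this is a (product) local chart of $M$. Iterating property (iii) from Section 1.6 (which gives $\Delta_* T(A \times B) = \Delta_* T A \times \Delta_* T B$ for differentiable submanifolds), we obtain
\[
\Delta_* T U \;=\; \Delta_* T U_1 \times \cdots \times \Delta_* T U_m.
\]
Each factor $\Delta_* T U_i$ contains an open ball $V_i \subseteq \mathbb{R}^{n_i}$ by the previous paragraph, so $V_1 \times \cdots \times V_m$ is open in $\mathbb{R}^{n_1 + \cdots + n_m}$ and contained in $\Delta_* T U$. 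Since $U$ is open in $M$, we have $\Delta_* T U \subseteq \Delta_* T M$ (by property (i)), hence $\operatorname{int}(\Delta_* T M) \supseteq V_1 \times \cdots \times V_m \neq \varnothing$.

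There is essentially no obstacle: the two ingredients, Corollary \ref{coro1_11} (local nonlinear graph representation) and Proposition \ref{prop3_3} (the hypersurface case), have already done the real work, and property (iii) promotes open interior from each factor to the product. The only small bookkeeping point is checking that the rigid motion and homothety introduced by Corollary \ref{coro1_11} on each factor separately do not interfere with the product formula, which is clear because they act as homeomorphisms of the ambient spaces $\mathbb{R}^{n_i}$ and their product is a homeomorphism of $\mathbb{R}^{n_1 + \cdots + n_m}$ preserving the non-emptiness of interiors.
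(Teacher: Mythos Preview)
Your proposal is correct and follows essentially the same approach as the paper: reduce each factor to a nonlinear graph via Corollary \ref{coro1_11}, apply Proposition \ref{prop3_3} to obtain an open set in each $\Delta_* T U_i$, and use the product formula (property (iii)) to assemble an open product set inside $\Delta_* T M$. The only cosmetic difference is that the paper applies the product formula directly to $M$ (writing $\Delta_* T M = \Delta_* T M_1 \times \cdots \times \Delta_* T M_m$) before restricting to the charts, whereas you first restrict to $U = U_1 \times \cdots \times U_m$ and then apply the formula; both orderings are equivalent.
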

\begin{proof}
    For any $i=1, \cdots, m$, by Corollary \ref{coro1_11}, $\exists$ local chart $U_{k} \subseteq M_{k}$ s.t. $U_{k}=\Gamma_{f_{k}}$ where up to rigid motion and homothety $f_{k}: I^{n_{k}} \rightarrow \mathbb{R}$ is nonlinear and differentiable. By Proposition \ref{prop3_3} $\exists$ open subset $W_{k}$ of $\mathbb{R}^{n_{k}+1}$ s.t. $\Delta_{*} T \Gamma_{f_{k}} \supseteq W_{k}$. Therefore $\Delta_{*} T M=\Delta_{*} T M_{1} \times \cdots \times \Delta_{*} T M_{m}\supseteq \Delta_{*} T U_{1} \times \cdots \times \Delta_{*} T U_{m}=\Delta_{*} T \Gamma_{f_{1}} \times \cdots \times \Delta_{*} T \Gamma_{f_{m}} \supseteq W_{1} \times \cdots \times W_{m}$, where $W_{1} \times \cdots \times W_{m}$ is an open subset of the ambient space.

    This proves that $\operatorname{int}\left(\Delta_{*} T M\right) \neq \varnothing$.
\end{proof}

\begin{corollary}\label{coro3_5}
    If $M$ is a nonlinear differentiable d-submanifold, then $\operatorname{dim}_{\mathcal{H}} \Delta_{*} T M \geqslant d+1$.
\end{corollary}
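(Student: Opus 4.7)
The plan is to reduce to the hypersurface case handled by Proposition \ref{prop3_3} by projecting onto a suitable coordinate hyperplane. First I would invoke Corollary \ref{coro1_11} to pick a local chart $U \subseteq M$ of the form $U = \Gamma_f$, where, up to rigid motion and homothety, $f : I^d \to \mathbb{R}^{n-d}$ is a nonlinear differentiable map. Since $f = (f^1, \dots, f^{n-d})$ is nonlinear, at least one component $f^i : I^d \to \mathbb{R}$ is a nonlinear differentiable function. It is then enough to bound $\dim_{\mathcal{H}} \Delta_{*} T U$ from below by $d+1$, because $\Delta_{*} T M \supseteq \Delta_{*} T U$.

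Next I would exploit the explicit parametrization of $\Delta_{*} T U$. Using the lifted section map from Lemma \ref{lemma1_18}, a generic point of $\Delta_{*} T U$ reads $(x + v,\, f(x) + (J_x f) v)$ for $x \in I^d$ and $v \in \mathbb{R}^d$. Let $\pi : \mathbb{R}^n \to \mathbb{R}^{d+1}$ be the coordinate projection onto the first $d$ coordinates together with the $(d+i)$-th coordinate; $\pi$ is linear, in particular Lipschitz. Then
\[
\pi(\Delta_{*} T U) \;=\; \bigl\{\,(x+v,\, f^i(x) + (J_x f^i) v) : x \in I^d,\ v \in \mathbb{R}^d\,\bigr\} \;=\; \Delta_{*} T \Gamma_{f^i},
\]
by Lemma \ref{lemma1_18} applied to $f^i$. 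Because $f^i$ is a nonlinear differentiable function on $I^d$, Proposition \ref{prop3_3} gives $\operatorname{int}(\Delta_{*} T \Gamma_{f^i}) \neq \varnothing$ in $\mathbb{R}^{d+1}$, so $\dim_{\mathcal{H}} \pi(\Delta_{*} T U) = d+1$.

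Finally, since Hausdorff dimension is non-increasing under Lipschitz maps, we have $\dim_{\mathcal{H}} \Delta_{*} T U \geqslant \dim_{\mathcal{H}} \pi(\Delta_{*} T U) = d+1$, which gives $\dim_{\mathcal{H}} \Delta_{*} T M \geqslant d+1$ as required. There is no real obstacle in this argument: the content is already in Proposition \ref{prop3_3} and in the fact that nonlinearity of a vector-valued map is inherited by at least one scalar component, so that projecting along the remaining component directions converts a $d$-submanifold problem in $\mathbb{R}^n$ into a hypersurface problem in $\mathbb{R}^{d+1}$ without collapsing the codiagonal.
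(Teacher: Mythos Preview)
Your proposal is correct and follows essentially the same route as the paper: reduce to a graph chart via Corollary \ref{coro1_11}, pick a nonlinear scalar component $f^{i}$, project with a Lipschitz coordinate map so that $\Delta_{*}T\Gamma_{f}$ is sent onto $\Delta_{*}T\Gamma_{f^{i}}$, and then invoke Proposition \ref{prop3_3} together with the non-increase of Hausdorff dimension under Lipschitz maps. The paper phrases the projection step through the lifted section map $\Phi_{f}$ and the identity $\Phi_{f^{k}} = (\pi_{k}\times Id_{d})\circ\Phi_{f}$, but this is exactly your $\pi(\Delta_{*}TU)=\Delta_{*}T\Gamma_{f^{i}}$ in different notation.
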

\begin{proof}
    By Corollary \ref{coro1_11} $\exists$ local chart $U \subseteq M$ s.t. $U=\Gamma_{f}$ where up to rigid motion and homothety $f: I^{d} \rightarrow \mathbb{R}^{n-d}$ is nonlinear and differentiable. It suffices to prove that $\operatorname{dim}_{\mathcal{H}} \Phi_{f}\left(I^{d} \times \mathbb{R}^{d}\right) \geqslant d+1$.

    Since $f=\left(f^{1}, \cdots, f^{n-d}\right)$ is nonlinear and differentiable, there exists $k \in\{1, \cdots, n-d\}$ s.t. $f^{k}: I^{d} \rightarrow \mathbb{R}$ is nonlinear and differentiable. By Proposition \ref{prop3_3} we have $\operatorname{dim}_{\mathcal{H}} \Phi_{f^{k}}\left(I^{d} \times \mathbb{R}^{d}\right)=d+1$. Denote by $\pi_{k}: \mathbb{R}^{n-d} \rightarrow \mathbb{R}$ the projection onto the $k$-th coordinate, then $\Phi_{f^{k}}\left(I^{d} \times \mathbb{R}^{d}\right)=\Phi_{\pi_{k}\circ f}\left(I^{d} \times \mathbb{R}^{d}\right)=\left(\pi_{k} \times I d_{d}\right) \circ \Phi_{f}\left(I^{d} \times \mathbb{R}^{d}\right)$. Since $\pi_{k} \times I d_{d}$ is Lipschitz, we have that $\operatorname{dim}_{\mathcal{H}} \Phi_{f}\left(I^{d} \times\right.$ $\left.\mathbb{R}^{d}\right) \geqslant \operatorname{dim}_{\mathcal{H}} \Phi_{f^{k}}\left(I^{d} \times \mathbb{R}^{d}\right)=d+1$.

    This concludes the proof.
\end{proof}

\subsection{Hyperplane covering of a hypersurface}
Moreover, if we assume more on the smoothness then we are able to obtain a much stronger result.

\begin{theorem}\label{theo3_6}
    Let $M$ be a nonlinear Lipschitz-continuously differentiable hypersurface in $\mathbb{R}^{n+1}$. If $\mathscr{A}$ is a family of hyperplanes in $\mathbb{R}^{n+1}$ s.t. $\cup\ \mathscr{A} \supseteq M$, then $\operatorname{dim}_{\mathcal{H}} \cup\ \mathscr{A}=n+1$.
\end{theorem}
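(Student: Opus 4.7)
The upper bound $\operatorname{dim}_{\mathcal H}\bigcup\mathscr A\le n+1$ is immediate since $\bigcup\mathscr A\subseteq\mathbb R^{n+1}$. For the lower bound my strategy combines a local graph reduction with a Marstrand--Mattila slicing argument carried out in the space of hyperplane parameters. By Corollary \ref{coro1_11} and the Lipschitz-continuous differentiability hypothesis, after rigid motion and homothety I may assume $\Gamma_{f}\subseteq M$ where $f:I^{n}\to\mathbb R$ is nonlinear and $\nabla f$ is locally Lipschitz. A further shrinking of $I^{n}$, justified by Lemma \ref{lemma1_10} and the continuity of $\nabla f$ (the open locus on which $f$ is locally affine must be a proper subset of $I^{n}$, for otherwise $f$ would be globally affine), arranges that $f$ is not affine on any open subset of $I^{n}$. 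Classify the members of $\mathscr A$ as \emph{vertical}, of the form $H\times\mathbb R$ for a hyperplane $H\subseteq\mathbb R^{n}$, or \emph{non-vertical}, i.e. the graph of an affine map $y\mapsto\langle\alpha,y\rangle+\beta$ uniquely parametrised by $(\alpha,\beta)\in\mathbb R^{n+1}$; write $\mathscr A=\mathscr A_{v}\sqcup\mathscr A_{nv}$ and let $\mathcal P\subseteq\mathbb R^{n+1}$ denote the parameter set of $\mathscr A_{nv}$.

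Set $E_{v}:=\{x\in I^{n}:(x,f(x))\in\Pi\text{ for some }\Pi\in\mathscr A_{v}\}$. If $\mathcal L^{n}(E_{v})>0$ then $E_{v}\times\mathbb R\subseteq\bigcup\mathscr A$ already has infinite $\mathcal L^{n+1}$-measure and we are done. Otherwise $E_{nv}:=I^{n}\setminus E_{v}$ has full $\mathcal L^{n}$-measure, and the covering condition forces $\mathcal P\cap H_{x}\ne\varnothing$ for every $x\in E_{nv}$, where $H_{x}:=\{(\alpha,\beta):\langle\alpha,x\rangle+\beta=f(x)\}$ is an affine hyperplane of $\mathbb R^{n+1}$. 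Define the bi-affine map $T:\mathcal P\times\mathbb R^{n}\to\mathbb R^{n+1}$, $T((\alpha,\beta),y):=(y,\langle\alpha,y\rangle+\beta)$, so that $T(\mathcal P\times\mathbb R^{n})=\bigcup\mathscr A_{nv}\subseteq\bigcup\mathscr A$; the $y$-slice of the image equals $\{y\}\times\pi_{y}(\mathcal P)$ with $\pi_{y}(\alpha,\beta):=\langle\alpha,y\rangle+\beta$ the linear projection in the direction $(y,1)/\|(y,1)\|\in\mathbb S^{n}$. As $y$ ranges over $\mathbb R^{n}$ these directions sweep an open subset of $\mathbb S^{n}$, so Marstrand's projection theorem applied to $\mathcal P$, combined with the Marstrand--Mattila slicing lower bound $\operatorname{dim}_{\mathcal H}T(\mathcal P\times\mathbb R^{n})\ge n+\operatorname*{ess\,inf}_{y}\operatorname{dim}_{\mathcal H}\pi_{y}(\mathcal P)$, reduces everything to showing $\operatorname{dim}_{\mathcal H}\mathcal P\ge 1$.

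The main obstacle is therefore this transversal dimension estimate $\operatorname{dim}_{\mathcal H}\mathcal P\ge 1$, a Kakeya-type bound asserting that no set of dimension strictly less than one can hit every member of the $n$-parameter family $\{H_{x}\}_{x\in E_{nv}}$ in $\mathbb R^{n+1}$, whose envelope $\{(\nabla f(x),f(x)-\langle\nabla f(x),x\rangle):x\in I^{n}\}$ is a nontrivial Lipschitz graph by the Lipschitz continuity of $\nabla f$ and the nonlinearity of $f$. I would argue by contradiction: assuming $\operatorname{dim}_{\mathcal H}\mathcal P<1$, a Jankov--von Neumann measurable selection $G:E_{nv}\to\mathcal P$ with $G(x)\in H_{x}$ and the $\sigma$-additivity of $\mathcal L^{n}$ on the essentially disjoint level sets $\{f=\langle\alpha,\cdot\rangle+\beta\}$ (which are closed with empty interior by the no-affine-open-subset reduction) would force one such level set to have positive $\mathcal L^{n}$-measure; the quantitative Taylor estimate $|f(x')-f(x)-\langle\nabla f(x),x'-x\rangle|\le\tfrac{L}{2}\|x'-x\|^{2}$ coming from Lipschitz continuity of $\nabla f$, applied at Lebesgue density points of this level set, yields $\nabla f$ constant on its closure, and a Frostman-type energy estimate on the pushforward $G_{*}(\mathcal L^{n}|_{E_{nv}})$ then upgrades the support to have positive one-dimensional Hausdorff measure, a contradiction. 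This final passage, in which the Lipschitz (rather than merely differentiable) hypothesis on $\nabla f$ enters decisively to convert a positive-measure flat coincidence into a nontrivial dimensional rigidity statement on $\mathcal P$, is the delicate core of the argument.
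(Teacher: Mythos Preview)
Your route via the hyperplane parameter space $\mathcal P\subseteq\mathbb R^{n+1}$ is genuinely different from the paper's argument, and the reduction to the transversal estimate $\operatorname{dim}_{\mathcal H}\mathcal P\ge 1$ is an appealing idea. However, the proof you sketch for that estimate has a real gap. You write that under the hypothesis $\operatorname{dim}_{\mathcal H}\mathcal P<1$, ``$\sigma$-additivity of $\mathcal L^{n}$ on the essentially disjoint level sets $\{f=\langle\alpha,\cdot\rangle+\beta\}$ would force one such level set to have positive $\mathcal L^{n}$-measure.'' But $\operatorname{dim}_{\mathcal H}\mathcal P<1$ does not make $\mathcal P$ countable (any Cantor set of dimension $\tfrac12$ is a counterexample), so $\sigma$-additivity simply does not apply. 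The subsequent passage --- Taylor estimate at density points, then a ``Frostman-type energy estimate on the pushforward $G_{*}(\mathcal L^{n}|_{E_{nv}})$'' --- does not repair this: you never explain which energy integral you are bounding, nor how the Lipschitz constant of $\nabla f$ translates into a decay condition on $G_{*}(\mathcal L^{n})$ that would force $\operatorname{dim}_{\mathcal H}(\operatorname{supp} G_{*}\mathcal L^{n})\ge 1$. As written, the ``delicate core'' is a gap, not a proof. A secondary issue: since $\mathscr A$ is an arbitrary family, $\mathcal P$ need not be Borel or analytic, so your appeal to Marstrand's projection theorem (and to the slicing lower bound for $T(\mathcal P\times\mathbb R^{n})$) also requires justification.

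The paper bypasses all of this by slicing the other way. Using Proposition~\ref{prop1_16} it arranges, after rigid motion and rescaling, that $f_{x_0}(t):=f(x_0,t)$ is nonlinear for every $x_0$ in a nonempty open $B\subseteq I^{n-1}$; the Lipschitz hypothesis on $\nabla f$ then makes each $f_{x_0}'$ absolutely continuous, hence Lusin-$N$. For each such $x_0$ one intersects every $L\in\mathscr A$ with the $2$-plane $\Pi_{x_0}:=\{x=x_0\}$, obtaining a family $\mathscr A_{x_0}$ of affine lines in $\Pi_{x_0}\cong\mathbb R^{2}$ that covers the nonlinear curve $\Gamma_{f_{x_0}}$. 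The known planar result (Corollary~9 in \cite{1}) gives $\operatorname{dim}_{\mathcal H}\bigl(\cup\mathscr A_{x_0}\bigr)=2$ for every $x_0\in B$, and a Fubini-type inequality over the $(n-1)$-dimensional open base $B$ yields $\operatorname{dim}_{\mathcal H}\cup\mathscr A\ge (n-1)+2=n+1$. The point is that the hard Kakeya-type step is done once, in dimension two, where it is already in the literature; your approach would have to reprove an $n$-dimensional analogue from scratch, and the argument you propose for it does not go through.
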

\begin{proof}
    By Corollary \ref{coro1_11} $\exists$ local chart $U \subseteq M$ s.t. $U=\Gamma_{f}$ where $f: \Omega \longrightarrow \mathbb{R}$ is nonlinear and differentiable with Lipschitz-continuous gradient. Since $f$ is nonlinear and continuous, by Proposition \ref{prop1_16} up to rigid motion and homothety we have $I^{n} \subseteq \Omega$ and $\exists$ non-empty open subset $B$ of $I^{n-1}$, s.t. $\forall\ x_{0} \in B$ we have $f_{x_{0}}(t):=f\left(x_{0}^{1}, \cdots, x_{0}^{n-1}, t\right), t \in I$ is nonlinear. Since $\nabla_{(x, t)} f=\left(\dfrac{\partial f}{\partial x^{1}}, \cdots, \dfrac{\partial f}{\partial x^{n-1}}, \dfrac{\partial f}{\partial t}\right)$ is Lipschitz we have that $\dfrac{\partial f}{\partial t}$ is Lipschitz. Since $f_{x_{0}}^{\prime}(t)=\dfrac{\partial}{\partial t} f\left(x_{0}, t\right)$, we obtain that $f_{x_{0}}^{\prime}(t)$ is Lipschitz. Therefore $f_{x_{0}}^{\prime}(t)$ is absolutely continuous and hence satisfies Lusin's $N$-property.

    Take any $x_{0} \in B$, define $\Pi_{x_{0}}:=\left\{(x, t, y) \in \mathbb{R}^{n-1} \times \mathbb{R} \times \mathbb{R} \mid x=x_{0}\right\} \cong \mathbb{R}^{2}$ and $\mathscr{A}_{x_{0}}:=\left\{L \cap \Pi_{x_{0}} \mid \right.$ $\left.L\in \mathscr{A}\right\} \backslash\{\varnothing\}$, then $\Pi_{x_{0}} \supseteq\left\{\left(x_{0}, t, f_{x_{0}}(t)\right) \mid t \in I\right\}=\Gamma_{f_{x_{0}}}$ and $\cup\ \mathscr{A}_{x_{0}}=(\cup\ \mathscr{A}) \cap \Pi_{x_{0}}\supseteq(\cup\ \mathscr{A}) \cap \Gamma_{f_{x_{0}}}=\Gamma_{f_{x_{0}}}$. For any $L \cap \Pi_{x_{0}} \in \mathscr{A}_{x_{0}}$, since $\operatorname{codim} L=1, \operatorname{dim} \Pi_{x_{0}}=2$, and $L\cap \Pi_{x_{0}} \neq \varnothing$, by Krull's principle ideal theorem we have that $\operatorname{dim} L \cap \Pi_{x_{0}}=1$ or 2. Since $L$ and $\Pi_{x_{0}}$, and hence $L \cap \Pi_{x_{0}}$ are affine spaces, we have that $L \cap \Pi_{x_{0}}=\Pi_{x_0}$ or $L \cap \Pi_{x_{0}}$ is an affine line in $\Pi_{x_{0}}$. We claim that $\operatorname{dim}_{\mathcal{H}} \cup\ \mathscr{A}_{x_{0}}=2$. Indeed, if $\Pi_{x_{0}} \in \mathscr{A}_{x_{0}}$ then $\cup\ \mathscr{A}_{x_{0}}=\Pi_{x_{0}}$ and hence $\operatorname{dim}_{\mathcal{H}} \cup\ \mathscr{A}_{x_{0}}=2$, if $\Pi_{x_{0}} \notin \mathscr{A}_{x_0}$ then $\mathscr{A}_{x_{0}}$ is a family of affine lines covers $\Gamma_{f_{x_{0}}}$, by Corollary 9 in \cite{1} we have $\operatorname{dim}_{\mathcal{H}} \cup\ \mathscr{A}_{x_{0}}=2$.

    Define $\mathscr{A}_{B}:=\coprod\limits_{x_{0} \in B}\mathscr{A}_{x_{0}}$ then $\cup\ \mathscr{A}_B=\coprod\limits_{x_{0} \in B}\left(\cup\mathscr{A}_{x_{0}}\right)\subseteq \cup\ \mathscr{A} \text { and }\left\{\cup\ \mathscr{A}_{x_{0}}\right\}_{x_{0} \in B}$ are vertical sections of $\cup\ \mathscr{A}$. Therefore by the Fubini type theorem (see e.g. \cite{2}) we have that $\operatorname{dim}_{\mathcal{H}} \cup\ \mathscr{A}_{B} \geqslant \operatorname{dim}_{\mathcal{H}} B+2=(n-1)+2=n+1$. Since $\cup\ \mathscr{A}_{B} \subseteq \cup\ \mathscr{A} \subseteq \mathbb{R}^{n+1}$, we obtain that $\operatorname{dim}_{\mathcal{H}} \cup\ \mathscr{A}=n+1$.
\end{proof}

\begin{theorem}\label{theo3_7}
    Let $M$ be a nonlinear twice differentiable hypersurface in $\mathbb{R}^{n+1}$. If $A$ is a family of hyperplanes in $\mathbb{R}^{n+1}$ s.t. $\cup\ \mathscr{A} \supseteq M$, then $\operatorname{dim}_{\mathcal{H}} \cup\ \mathscr{A}=n+1$.
\end{theorem}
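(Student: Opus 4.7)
The plan is to follow the proof of Theorem \ref{theo3_6} essentially verbatim, modifying only the step where the Lipschitz hypothesis on $\nabla f$ is used. Using Corollary \ref{coro1_11} I would reduce to the case that, up to rigid motion and homothety, $M$ is locally $\Gamma_f$ with $f : I^n \to \mathbb{R}$ nonlinear and twice differentiable; then Proposition \ref{prop1_16} supplies a non-empty open $B \subseteq I^{n-1}$ on which every vertical section $f_{x_0}(t) := f(x_0, t)$, $t \in I$, is nonlinear. Because $f$ is twice differentiable, $\partial f / \partial t$ is itself differentiable on $I^n$, so each $f'_{x_0}$ is everywhere differentiable on $I$ (though possibly with unbounded $f''_{x_0}$).

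For each such $x_0$ I would introduce $\Pi_{x_0} \cong \mathbb{R}^2$ and $\mathscr{A}_{x_0} := \{ L \cap \Pi_{x_0} : L \in \mathscr{A} \} \setminus \{\varnothing\}$ as in the previous proof. The dichotomy is unchanged: either $\Pi_{x_0} \in \mathscr{A}_{x_0}$, giving $\dim_{\mathcal{H}} \cup \mathscr{A}_{x_0} = 2$ at once, or $\mathscr{A}_{x_0}$ is a family of affine lines in $\Pi_{x_0}$ covering $\Gamma_{f_{x_0}}$, and the target is to invoke Corollary 9 of \cite{1} to obtain $\dim_{\mathcal{H}} \cup \mathscr{A}_{x_0} = 2$. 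The regularity that Theorem \ref{theo3_6} supplied via the chain ``Lipschitz $f'_{x_0}$ $\Rightarrow$ absolutely continuous $\Rightarrow$ Lusin's $N$-property'' must now be re-established from the weaker hypothesis that $f'_{x_0}$ is only everywhere differentiable. The key auxiliary observation is that \emph{every everywhere-differentiable function $g : I \to \mathbb{R}$ satisfies Lusin's $N$-property}: writing $I = \bigcup_{k \in \mathbb{N}^*} E_k$ with $E_k := \{t : |g'(t)| < k\}$, each $t \in E_k$ admits by the definition of the derivative a neighbourhood on which $g$ is $k$-Lipschitz; cover $E_k$ countably by such neighbourhoods, observe that locally Lipschitz images of null sets are null, and conclude $g(E_k)$ is null for every $k$. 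Applied with $g = f'_{x_0}$ this verifies the hypothesis of Corollary 9 of \cite{1}. Finally, the Fubini-type theorem of \cite{2} assembles the slices into $\dim_{\mathcal{H}} \cup \mathscr{A} \geq \dim_{\mathcal{H}} B + 2 = (n-1) + 2 = n+1$, which forces equality since $\cup \mathscr{A} \subseteq \mathbb{R}^{n+1}$.

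The main obstacle is ensuring that the hypothesis of Corollary 9 of \cite{1} is truly met by a ``Lusin-$N$ slope function''; this is the natural regularity condition embedded in the proof of Theorem \ref{theo3_6}, but if the precise statement of Corollary 9 demands something strictly stronger (for instance a local uniform Lipschitz constant on $f'_{x_0}$), then I would shrink $B$ further by a Baire category / Egorov type argument to a sub-open set on which $\|f''_{x_0}\|$ is locally uniformly bounded in both $x_0$ and $t$. On such a reduced set the Lipschitz hypothesis of Theorem \ref{theo3_6} is restored, while the base set still has full topological dimension $n-1$, so the Fubini-type lower bound yields $\dim_{\mathcal{H}} \cup \mathscr{A} \geq n+1$ regardless.
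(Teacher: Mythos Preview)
Your proposal is correct and takes essentially the same approach as the paper: the paper's proof simply states that twice differentiability is preserved under restriction and that differentiable functions satisfy Lusin's $N$-property, so the argument of Theorem \ref{theo3_6} goes through verbatim. Your fallback plan is unnecessary, and one small imprecision: from $|g'(t)|<k$ the definition of the derivative gives only $|g(s)-g(t)|\le k|s-t|$ for $s$ near $t$, not that $g$ is $k$-Lipschitz on a full neighbourhood, but this pointwise estimate is already enough (via a Vitali-type covering) to deduce Lusin's $N$-property, which is in any case a classical fact the paper invokes without proof.
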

\begin{proof}
    Since $k$-th differentiability is preserved under restrictions and differentiable functions also satisfies Lusin's $N$-property, the same proof as in Theorem \ref{theo3_6} yields the desired result.
\end{proof}

The results above show that the union of any family of hyperplanes that covers a (sufficiently smooth) nonlinear hypersurface in $\mathbb{R}^{n+1}$ always has full Hausdorff dimension. However if we drop the nonlinearity condition then the Hausdorff dimension of the union can be any arbitrary real number between $n$ and $n+1$.

The corresponding constructions are based on the following theorem, which is proved by Falconer and Mattila using energy integrals.

\begin{theorem}
    (Theorem 3.2 in \cite{3}): Let $X, E \subseteq \mathbb{R}^{n+1}$ be non-empty Borel sets s.t. $\forall\ p \in E\ \mathcal{L}^{n}(X \cap L(p))>0$, then $\operatorname{dim}_{\mathcal{H}} X \cap \bigcup\limits_{p \in E} L(p)=\operatorname{dim}_{\mathcal{H}} \bigcup\limits_{p \in E} L(p)n+\min \left\{\operatorname{dim}_{\mathcal{H}} E, 1\right\}$.
\end{theorem}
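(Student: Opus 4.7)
The plan is to split the claimed chain of equalities into three pieces: $\dim_{\mathcal{H}}(X \cap \bigcup_{p \in E} L(p)) \leqslant \dim_{\mathcal{H}}(\bigcup_{p \in E} L(p))$ (trivial monotonicity); the upper bound $\dim_{\mathcal{H}}(\bigcup_{p \in E} L(p)) \leqslant n + \min\{\dim_{\mathcal{H}} E, 1\}$; and the matching lower bound $\dim_{\mathcal{H}}(X \cap \bigcup_{p \in E} L(p)) \geqslant n + \min\{\dim_{\mathcal{H}} E, 1\}$. The upper bound I would obtain by a routine Lipschitz covering argument: after reducing to a countable subfamily, the union is the image of $E \times \mathbb{R}^{n}$ under an (at least locally Lipschitz) parametrization $(p, \xi) \mapsto$ base point of $L(p)$ plus $\xi$ in $L(p)$, so $\dim_{\mathcal{H}} \bigcup L(p) \leqslant \dim_{\mathcal{H}} E + n$, and the trivial ambient bound caps this at $n + 1$.

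For the lower bound I would use a potential-theoretic energy argument. Fix $s < \min\{\dim_{\mathcal{H}} E, 1\}$; by Frostman's lemma there is a nontrivial Radon measure $\mu$ on $E$ with $\mu(B_{r}(x)) \leqslant C r^{s}$, so in particular the $s$-energy $I_{s}(\mu) = \iint |p - q|^{-s}\, d\mu(p)\, d\mu(q)$ is finite. Lift $\mu$ to a Radon measure on $X \cap \bigcup_{p \in E} L(p)$ by
\begin{equation*}
\nu(A) := \int_{E} \mathcal{L}^{n}\bigl(A \cap X \cap L(p)\bigr)\, d\mu(p),
\end{equation*}
which is nontrivial because $\mathcal{L}^{n}(X \cap L(p)) > 0$ for every $p \in E$. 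By the standard dimension-energy duality, it then suffices to establish $I_{t}(\nu) < \infty$ for every $t < n + s$; letting $t \nearrow n + s$ and then $s \nearrow \min\{\dim_{\mathcal{H}} E, 1\}$ gives the desired inequality.

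The crux is to estimate the $t$-energy. By Fubini,
\begin{equation*}
I_{t}(\nu) = \iint_{E \times E} \biggl( \iint_{(X \cap L(p)) \times (X \cap L(q))} |z - w|^{-t}\, dz\, dw \biggr) d\mu(p)\, d\mu(q).
\end{equation*}
For fixed distinct $p, q$, polar coordinates on the affine $n$-plane $L(p)$ centered at the foot of the perpendicular from $w \in L(q)$ give $\int_{L(p) \cap B_{R}} |z - w|^{-t}\, dz \lesssim \operatorname{dist}(w, L(p))^{n - t}$ provided $n < t < n + 1$. Writing $\operatorname{dist}(w, L(p)) = \operatorname{dist}(w, L(p) \cap L(q))\, \sin \theta(p, q)$ with $\theta(p, q)$ the dihedral angle, a one-dimensional transverse integration over $L(q)$ (Fubini against $L(p) \cap L(q)$, which has dimension $n - 1$) converges whenever $t < n + 1$ and contributes a factor $(\sin \theta(p, q))^{n - t}$ up to a constant depending on $R$.

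The final step is to convert the angular factor into parameter-space distance. Under the natural transversality hypothesis $\sin \theta(p, q) \gtrsim |p - q|$, which is what makes the Borel parametrization $p \mapsto L(p)$ behave like a genuine $1$-parameter (or higher) family of hyperplanes, one obtains
\begin{equation*}
I_{t}(\nu) \lesssim \iint_{E \times E} |p - q|^{n - t}\, d\mu(p)\, d\mu(q) = I_{t - n}(\mu) < \infty
\end{equation*}
whenever $t - n < s$, by the Frostman estimate for $\mu$. The main obstacle, as in the Falconer--Mattila argument, is the precise handling of the inner double integral together with the verification of the transversality bound $\sin \theta(p, q) \gtrsim |p - q|$; it is here that the regularity of the parametrization $L(\cdot)$ and, implicitly, the cap $\min\{\dim_{\mathcal{H}} E, 1\}$ enter (as $(\sin \theta)^{n - t}$ ceases to carry useful information once $\sin \theta$ is bounded away from zero, which corresponds to the exponent saturating at $n + 1$).
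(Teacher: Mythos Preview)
The paper does not prove this statement at all: it is quoted verbatim as Theorem~3.2 of reference~\cite{3} (Falconer--Mattila), with the one-line remark that it ``is proved by Falconer and Mattila using energy integrals.'' There is therefore no proof in the paper to compare against; your Frostman-measure / energy-integral outline is exactly the method the paper attributes to the original source.

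That said, your sketch has a genuine technical gap at the transversality step. For the specific parametrization used here, $L(a,b)=\{(x,y):y=a^{T}x+b\}$ with $p=(a,b)\in\mathbb{R}^{n}\times\mathbb{R}$, the angle $\theta(p,q)$ between $L(p)$ and $L(q)$ depends only on the slope components $a,a'$ and not on the offsets $b,b'$. Hence if $a=a'$ but $b\neq b'$ the hyperplanes are parallel, $\sin\theta(p,q)=0$, yet $|p-q|=|b-b'|>0$; your claimed bound $\sin\theta(p,q)\gtrsim|p-q|$ is simply false, and the decomposition $\operatorname{dist}(w,L(p))=\operatorname{dist}(w,L(p)\cap L(q))\sin\theta(p,q)$ is meaningless when $L(p)\cap L(q)=\varnothing$. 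The actual Falconer--Mattila computation does not pass through a pure angle estimate: one works instead with the full affine distance, noting that for $w=(x,a'^{T}x+b')\in L(q)$ one has $\operatorname{dist}(w,L(p))\asymp |(a-a')^{T}x+(b-b')|$, and integrates this directly. The resulting inner integral is then controlled by a quantity comparable to $|p-q|^{n-t}$ (after localizing to a ball), which is what feeds into $I_{t-n}(\mu)$. So your overall architecture is right, but the geometric reduction to $\sin\theta$ must be replaced by a direct affine-distance calculation that treats the slope and offset components of $p-q$ together.
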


Moreover, if $\operatorname{dim}_{\mathcal{H}} E>1$ then $\mathcal{L}^{n}\left(X \cap \bigcup\limits_{p \in E} L(p)\right)>0$. 

(Here for any $q:=(a, b) \in \mathbb{R}^{n} \times \mathbb{R}$ we denote by $L(q)$ the hyperplane $\left\{(x, y) \in \mathbb{R}^{n} \times \mathbb{R} \mid y=a^{T} x+b\right\}$)

\begin{proposition}\label{prop3_8}
    Let $M$ be a linear hypersurface in $\mathbb{R}^{n+1}$. For any $s \in[n, n+1]$, $\exists$ family $\mathscr{A}$ of hyperplanes in $\mathbb{R}^{n+1}$ s.t. $\cup\ \mathscr{A} \supseteq M$ and $\operatorname{dim}_{\mathcal{H}} \cup\ \mathscr{A}=s$.
\end{proposition}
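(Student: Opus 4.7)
The plan is to combine the Falconer--Mattila theorem quoted just above with the trivial observation that, since $M$ is linear, $M$ is contained in some single hyperplane which we can simply throw into the family at no extra cost in Hausdorff dimension. The role of the extra freedom in $s \in [n, n+1]$ will be played by a parameter set $E \subseteq \mathbb{R}^{n+1}$ of Hausdorff dimension $s-n \in [0,1]$.

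First, since $M$ is a linear hypersurface, $M$ is an open subset of some affine hyperplane $\Pi \subseteq \mathbb{R}^{n+1}$. Hausdorff dimension is invariant under rigid motions, so after composing with an isometry I may assume $\Pi$ is the coordinate hyperplane $\mathbb{R}^{n} \times \{0\}$, which in the notation preceding the proposition is exactly $L((0,0))$. In particular $\Pi \supseteq M$.

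Next I handle the range of $s$. For $s = n$ take $\mathscr{A} := \{\Pi\}$; then $\cup\,\mathscr{A} = \Pi$ is an $n$-dimensional affine subspace so $\dim_{\mathcal{H}} \cup\,\mathscr{A} = n = s$, and obviously $\cup\,\mathscr{A} \supseteq M$. For $s \in (n, n+1]$, choose any Borel set $E \subseteq \mathbb{R}^{n+1}$ with $\dim_{\mathcal{H}} E = s - n \in (0,1]$; such $E$ exists, for instance as a suitably scaled self-similar Cantor-type subset of the first coordinate axis of $\mathbb{R}^{n+1}$ with prescribed similarity dimension in $(0,1]$. Set
\[
\mathscr{A} := \{\Pi\} \cup \{L(q) \mid q \in E\}.
\]
Then $\cup\,\mathscr{A} \supseteq \Pi \supseteq M$, so the covering condition is satisfied.

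It remains to compute $\dim_{\mathcal{H}} \cup\,\mathscr{A}$. Applying the Falconer--Mattila theorem with $X := \mathbb{R}^{n+1}$ (so $\mathcal{L}^{n}(X \cap L(p)) = \mathcal{L}^n(L(p)) = +\infty > 0$ trivially for every $p$) and the chosen $E$, I obtain
\[
\dim_{\mathcal{H}} \bigcup_{q \in E} L(q) \;=\; n + \min\{\dim_{\mathcal{H}} E,\, 1\} \;=\; n + (s-n) \;=\; s.
\]
Using the countable stability / monotonicity of Hausdorff dimension (here just the two-term version),
\[
\dim_{\mathcal{H}} \cup\,\mathscr{A} \;=\; \max\Bigl\{\dim_{\mathcal{H}} \Pi,\; \dim_{\mathcal{H}} \bigcup_{q \in E} L(q)\Bigr\} \;=\; \max\{n, s\} \;=\; s,
\]
which completes the construction. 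There is no serious obstacle here; the only mild subtlety is making sure $E$ is Borel (so that the theorem applies) and of exactly the prescribed dimension, both of which are handled by the explicit Cantor-set construction.
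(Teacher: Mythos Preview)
Your proof is correct and follows essentially the same approach as the paper: reduce to $M \subseteq L(0)$, pick a Borel parameter set $E$ of Hausdorff dimension $s-n$, and apply the Falconer--Mattila theorem with $X=\mathbb{R}^{n+1}$. The only cosmetic difference is that the paper absorbs the covering hyperplane into the parameter set by taking $E = C \cup \{0\}$ (so $L(0)\in\mathscr{A}$ automatically and the case $s=n$ need not be separated), whereas you adjoin $\Pi$ to $\mathscr{A}$ by hand and invoke countable stability of Hausdorff dimension; both are equally valid.
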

\begin{proof}
    Wlog we assume that $M$ is an open subset of $L(0)$. Let $C$ be a $s-n$ dimensional Cantor set embedded in $\mathbb{R}^{n+1}$, and denote $X:=\mathbb{R}^{n+1}$ and $E:=C \cup\{0\}$. Define $\mathscr{A}=\{L(p) \mid p \in E\}$ then $\cup\ \mathscr{A} \supseteq L(0) \supseteq M$. Since $X, E \subseteq \mathbb{R}^{n+1}$ are Borel sets s.t. $\forall\ p \in E\ \mathcal{L}^{n}(X \cap L(p))=\mathcal{L}^{n}(L(p))=+\infty$, we obtain that $\operatorname{dim}_{\mathcal{H}} \cup\ \mathscr{A}=\operatorname{dim}_{\mathcal{H}} \bigcup\limits_{p \in E} L(p)=n+\min \left\{\operatorname{dim}_{\mathcal{H}} E, 1\right\}=n+(s-n)=s$.
\end{proof}

\section{Generic $C^1$ vector bundles}
\subsection{Genericity of Vector Bundles}
In this section we study the conditions for a $C^{1}$ vector bundle to attain expected size. In section \ref{sec2} we have shown that normal bundles always have large size while tangent bundles do not. Here, assume $C^{1}$ smoothness, we will prove that if a vector bundle of a submanifold $M$ is in general position with the tangent bundle of $M$ then it attains expected size.

Recall that a $k_{1}$-subspace $V_{1}$ and a $k_{2}$-subspace $V_{2}$ of $\mathbb{R}^{n}$ is in general position iff $\operatorname{dim} V_{1} \cap V_{2}=\max \left\{\left(k_{1}+k_{2}\right)-n, 0\right\}$. We extend this notion naturally to vector bundles, and give a definition of generic vector bundle in terms of tangency. 

\begin{definition}\label{defi4_1}
    A $C^1$ vector bundle $E$ defined by $\varphi: M \rightarrow \operatorname{Gr}_k(n)$ is said to be in general position with $TM$ at $p \in M$ iff $\operatorname{dim} \varphi(p) \cap T_{p} M=\max \{(k+d)-n, 0\}$. $E$ is said to be generic iff $\exists\ p \in M$ s.t. $E$ is in general position with $T M$ at $p$. 
    
    It is clear that in definition \ref{defi4_1} if $k \leqslant n-d$, then $E$ is in general position with $T M$ at $p \in M$ iff $\varphi(p) \cap T_{p} M=0$.
\end{definition}

\begin{proposition}\label{prop4_2}
    Let $E$ be a generic $C^{1}$ $k$-vector bundle over $C^{1}$ $d$-submanifold $M$ in $\mathbb{R}^{n}$ and $m:=\min \{k+d, n\}$, then $\mathcal{L}^{m}\left(\Delta_{*} E\right)>0$. Moreover, if $m=n$ then $\operatorname{int}\left(\Delta_{*} E\right) \neq \varnothing$.
\end{proposition}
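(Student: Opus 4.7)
The plan is to realise a subset of $\Delta_{*}E$ as the image of a single $C^{1}$ map $h$ whose differential at a well-chosen point has rank $m$, and then to invoke Corollary \ref{coro1_22} (when $m=n$) or the classical inverse function theorem (when $m<n$). Let $p_{0}\in M$ be a point where $E$ is in general position with $TM$. Around $p_{0}$ I would pick a trivializing chart $U\subseteq M$ equipped with a $C^{1}$ parametrization $\mathbf{f}:\Omega\to U$ with $\mathbf{f}(x_{0})=p_{0}$, together with a $C^{1}$ local frame $A:\Omega\to\mathbb{R}^{n\times k}$ satisfying $\operatorname{Im}A(x)=\varphi(\mathbf{f}(x))$ (which exists because $E$ is a $C^{1}$ bundle). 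Then define
\[
h:\Omega\times\mathbb{R}^{k}\longrightarrow\mathbb{R}^{n},\qquad h(x,\xi):=\mathbf{f}(x)+A(x)\,\xi,
\]
so that $h(\Omega\times\mathbb{R}^{k})=\bigcup_{p\in U}(p+\varphi(p))\subseteq\Delta_{*}E$, and all the relevant information is concentrated in $h$.

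The crucial observation is the structure of $d_{(x_{0},0)}h$: since $\xi=0$, the term $(d_{x_{0}}A)\xi$ vanishes and the differential is the block matrix $\bigl[\,J_{x_{0}}\mathbf{f}\;\;A(x_{0})\,\bigr]$ viewed as a linear map $\mathbb{R}^{d}\oplus\mathbb{R}^{k}\to\mathbb{R}^{n}$. Its image equals $T_{p_{0}}M+\varphi(p_{0})$, and by the general position hypothesis $\dim(T_{p_{0}}M\cap\varphi(p_{0}))=\max\{k+d-n,0\}$, so the inclusion--exclusion formula for subspaces yields
\[
\operatorname{rank}\,d_{(x_{0},0)}h \;=\; d+k-\max\{k+d-n,0\} \;=\; \min\{k+d,n\} \;=\; m.
\]

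If $m=n$ the differential is surjective and Corollary \ref{coro1_22} immediately gives $\operatorname{int}\bigl(h(\Omega\times\mathbb{R}^{k})\bigr)\neq\varnothing$, hence $\operatorname{int}(\Delta_{*}E)\neq\varnothing$ and in particular $\mathcal{L}^{n}(\Delta_{*}E)>0$. If $m=k+d<n$, then $d_{(x_{0},0)}h$ is injective (its rank equals the dimension of its domain), so the $C^{1}$ map $h$ is an immersion at $(x_{0},0)$; restricting to a small neighbourhood $V$ makes $h|_{V}$ a $C^{1}$ embedding whose image is a $C^{1}$ $m$-submanifold of $\mathbb{R}^{n}$, and projecting onto the tangent $m$-plane at $h(x_{0},0)$ produces an open subset of $\mathbb{R}^{m}$, giving positive $\mathcal{L}^{m}$ measure. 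The main step is nothing dramatic: once the $C^{1}$ frame $A$ is in hand the rank computation relies only on the general position hypothesis and on the clean decoupling of the $\xi$-derivative at $\xi=0$, after which the two cases reduce to already-proved tools.
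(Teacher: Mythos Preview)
Your argument is correct and shares the paper's core idea: parametrize $\Delta_{*}E$ locally by $h(x,\xi)=\mathbf{f}(x)+A(x)\xi$ and compute the rank of its differential at $(x_{0},0)$, where the $\xi=0$ evaluation kills the $(d_{x_{0}}A)\xi$ term. The difference is in execution. The paper splits into two cases: for $k\le n-d$ it chooses coordinates adapted to an $(n-d)$-plane $V\supseteq\varphi(p_{0})$ transverse to $T_{p_{0}}M$, which forces the Jacobian into block-triangular form $\bigl[\begin{smallmatrix}I_{d}&*\\0&T(0)\end{smallmatrix}\bigr]$ and makes the rank visible by inspection; for $k>n-d$ it invokes Swan's theorem to split off the subbundle $E_{0}:=E\cap TM$ and reduce to the first case applied to the complementary $(n-d)$-bundle $E_{0}^{\perp}$. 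Your route bypasses both devices by reading the rank of $[\,J_{x_{0}}\mathbf{f}\;\;A(x_{0})\,]$ directly from the subspace formula $\dim(T_{p_{0}}M+\varphi(p_{0}))=d+k-\dim(T_{p_{0}}M\cap\varphi(p_{0}))$, which the general-position hypothesis evaluates to $m$ in one stroke. This is cleaner and avoids the Swan-type splitting entirely; the paper's coordinate choice, on the other hand, has the minor advantage of making the Jacobian explicitly triangular, which some readers may find more concrete.
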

\begin{proof}
    Denote by $\varphi: M \rightarrow \operatorname{Gr}_{k}(n)$ the $C^{1}$ map defines $E$. Wlog assume that $0 \in M$ and $E$ is in general position with $T M$ at 0. We consider the following two cases.

    Case 1: $k \leqslant n-d$. In this case $m=k+d$.

    Since $\varphi(0) \cap T_{0} M=0$, there exists a $(n-d)$-subspace $V$ of $\mathbb{R}^{n}$ s.t. $\psi(0) \subseteq V$ and $V$ is transverse to $T_0M$. By continuity $\exists$ common trivializing chart $U$ of $E$ and $T M$ at the vicinity of 0 , s.t. $\forall\ p \in U, T_{p} M$ is transverse to $V$. Denote by $P_{V}$ the orthogonal projection onto $V^{\bot}$, then $\left(U, P_{V}\right)$ is a chart of $M$, i.e. $\Omega:=P_{V}(U)$ is open in $V^{\perp} \equiv \mathbb{R}^{d}$ and $\exists\ C^{1}$ function $f: \Omega \rightarrow \mathbb{R}^{n-d}$ s.t. $U=\Gamma_{f}$. Let $F: \Omega \times \mathbb{R}^{k} \rightarrow \mathbb{R}^{2 n}$ be the trivialization over $U$, i.e. $F(x, \xi)=((x, f(x)),(S(x) \xi, T(x) \xi))$ where $S: \Omega \rightarrow \mathbb{R}^{k \times d}$ and $T: \Omega \rightarrow \mathbb{R}^{k\times (n-d)}$ are matrix-valued $C^{1}$ functions, then by the construction of $P_{V}$ we have $S(0)=0$ and $r k T(0)=k$. Therefore $\Delta_{*} F(x, \xi)=(x+S(x) \xi, f(x)+T(x) \xi)$ and hence $\left.\dfrac{\partial \Delta_{*} F}{\partial(x, \xi)}\right|_{(0,0)}=\left[\begin{array}{cc}I d_{d}+\left(\dfrac{\partial S_{i}^j(x)}{\partial x^{k}} \xi^{i}\right) & * \\ S(x) & T(x)\end{array}\right]_{(0,0)}=\left[\begin{array}{cc}I d_{d} & * \\ 0 & T(0)\end{array}\right]$ has full rank $k+d$. Therefore by the implicit function theorem, $\exists$ open subset $\Omega_{0}$ of $\Omega \times \mathbb{R}^{k}$, s.t. $\left.\Delta_{*} F\right|_{\Omega_{0}}$ parametrizes a $C^{1}(k+d)$-submanifold of $\mathbb{R}^{n}$ and hence $\mathcal{L}^{k+d}\left(\Delta_{*} F\left(\Omega_{0}\right)\right)={\rm{Vol}}_{k+d}\left(\Delta_{*} F\left(\Omega_{0}\right)\right)>0$. Since $\Delta_{*} E \supseteq \Delta_{*} F\left(\Omega \times \mathbb{R}^{k}\right) \supseteq \Delta_{*} F\left(\Omega_{0}\right)$, we obtain that $\mathcal{L}^{k+d}\left(\Delta_{*} E\right) \geqslant \mathcal{L}^{k+d}\left(\Delta_{*} F\left(\Omega_{0}\right)\right)>0 .$ Moreover, if $k+d=:m=n$ then $\Delta_{*} F\left(\Omega_{0}\right)$ is a $n$-submanifold, i.e. an open subset of $\mathbb{R}^{n}$, and hence $\operatorname{int} \left(\Delta_{*} E\right) \supseteq \Delta_{*} F\left(\Omega_{0}\right) \neq \varnothing$.

    Case 2: $k>n-d$. In this case $m=n$.

    Denote $\ell:=(k+d)-n$. Since $\operatorname{dim} \varphi(0) \cap T_{0} M=\ell$, by continuity $\exists$ common trivializing chart $N$ of $E$ and $T M$ at the vicinity of 0 s.t. $\forall\ p \in N$ we have $\operatorname{dim} \varphi(p) \cap T_{p} M \equiv \ell$. Therefore pullback $E$ by inclusion $N \stackrel{i}{\hookrightarrow} M$ then $E_{0}:=i^{*} E \cap T N$ is locally trivial and hence a $C^{1}$ $\ell$-vector bundle over $N$. By Swan's theorem we have the splitting $i^{*} E=E_{0} \oplus E_{0}^{\perp}$ where $E_{0}^{\perp}$ is a $C^{1}(k-l)$-vector bundle over $N$. Since $k-\ell=k-((k+d)-n)=n-d$ and $E_{0}^{\perp} \cap T N=N \times\{0\}$ we have that $E_{0}^{\perp}$ is a generic $C^{1}(n-d)$-vector bundle over $C^{1}$ d-submanifold $N$ in $\mathbb{R}^{n}$. By Case 1 we have $\operatorname{int}\left(\Delta_{*} E_{0}^{\perp}\right) \neq \varnothing$. Since $E \supseteq i^{*} E \supseteq E_{0}^{\perp}$ we conclude that $\Delta_{*} E \supseteq \Delta_{*} E_{0}^{\perp}$ has non-empty interior.
\end{proof}

Assume that the base manifold is not a point, then by Definition \ref{defi4_1} it is clear that a $C^{1}$ line bundle is non-generic iff it is a $C^{1}$ distribution, i.e. a $C^{1}$ line subbundle of the tangent bundle. In this case we also say that the bundle is tangent to the base manifold.

\begin{corollary}\label{coro4_3}
    Let $\mathscr{L}$ be a $C^{1}$ line bundle over $d$-submanifold $M$ of $\mathbb{R}^{n}$ (where $\left.1 \leqslant d \leqslant n-1\right)$. If $\mathscr{L}$ is not tangent to $M$ then $\mathcal{L}^{d+1}\left(\Delta_{*} \mathscr{L}\right)>0$. Moreover if $M$ is a hypersurface then $\operatorname{int}\left(\Delta_{*} \mathscr{L}\right) \neq \varnothing$.
\end{corollary}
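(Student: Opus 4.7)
The plan is to derive this statement directly from Proposition \ref{prop4_2} by checking that the hypothesis ``$\mathscr{L}$ is not tangent to $M$'' is exactly the genericity hypothesis in the special case $k=1$. First I would unwind Definition \ref{defi4_1} in this case: with $k=1$ and $d\leqslant n-1$ we have $\max\{(k+d)-n,0\}=0$, so $\mathscr{L}$ is in general position with $TM$ at $p\in M$ if and only if $\varphi(p)\cap T_pM=0$, where $\varphi:M\to\operatorname{Gr}_1(n)=\mathbb{P}^{n-1}$ is the $C^1$ map that defines $\mathscr{L}$. Since $\varphi(p)$ is a line, this is equivalent to $\varphi(p)\not\subseteq T_pM$, i.e. $\mathscr{L}$ fails to be tangent to $M$ at the point $p$.

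Next I would invoke the characterization recorded immediately before the corollary: a $C^1$ line bundle $\mathscr{L}$ over $M$ is non-generic precisely when $\varphi(p)\subseteq T_pM$ for every $p\in M$, i.e. when $\mathscr{L}$ is a $C^1$ line subbundle of $TM$. Hence the assumption that $\mathscr{L}$ is not tangent to $M$ produces some $p_0\in M$ with $\varphi(p_0)\not\subseteq T_{p_0}M$; by the first step this means $\mathscr{L}$ is in general position with $TM$ at $p_0$, so $\mathscr{L}$ is a generic $C^1$ line bundle in the sense of Definition \ref{defi4_1}.

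Finally I would apply Proposition \ref{prop4_2} with $k=1$. Because $d\leqslant n-1$ we have $k+d=d+1\leqslant n$, so $m:=\min\{k+d,n\}=d+1$, and Proposition \ref{prop4_2} yields $\mathcal{L}^{d+1}(\Delta_*\mathscr{L})>0$, which is the first assertion. If in addition $M$ is a hypersurface then $d=n-1$ and $m=n$, so the ``moreover'' clause of Proposition \ref{prop4_2} gives $\operatorname{int}(\Delta_*\mathscr{L})\neq\varnothing$, which is the second assertion.

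There is no substantive obstacle here: the analytic content — the full-rank computation of the differential of $\Delta_*\circ F$ and the application of the implicit function theorem, together with the Swan-type splitting in the case $k+d>n$ — is already carried out in Proposition \ref{prop4_2}. The only thing to be careful about is the translation between the geometric phrase ``not tangent to $M$'' and the algebraic condition $\varphi(p)\cap T_pM=0$, which reduces to a one-line dimension count since $\dim\varphi(p)=1$.
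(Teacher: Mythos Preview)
Your proposal is correct and follows exactly the paper's approach: the paper's proof is simply ``Restatement of Proposition \ref{prop4_2} for $k=1$,'' and you have spelled out precisely why the hypothesis ``not tangent'' coincides with genericity when $k=1$ and $d\leqslant n-1$, and why $m=d+1$ (resp.\ $m=n$ in the hypersurface case).
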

\begin{proof}
    Restatement of Proposition \ref{prop4_2} for $k=1$.
\end{proof}

\subsection{Semi-generic Line Bundles}
We proceed by showing under which condition a non-generic $C^{1}$-vector bundle can attain its expected size.

Firstly, we give an example of a degenerated case:

Let $\varphi^{*} \tau_{k}(n)$ be a $C^{1}$ vector bundle over $C^{1}$ submanifold $M$ defined by $\varphi: M \rightarrow \operatorname{Gr}_{k}(n)$, then $\varphi_{1}:=\varphi\circ \varphi^{*} \pi_{1}$ in the following diagram pulls back a $C^{1}$ vector bundle over $C^{1}$ submanifold $\varphi^{*} \tau_{k}(n)$.
\begin{figure}[H]
    \centering 
    \begin{tikzpicture}
        \node at (0,0) {$\varphi^{*} \tau_{k}(n)$};
        \node at (4.2,0) {$\tau_{k}(n)$};
        \node at (0,3.3) {$\varphi_{1}^{*}\left(\varphi^{*} \tau_{k}(n)\right)$};
        \node at (0,-3.3) {$M$};
        \node at (4.2,-3.3) {$\operatorname{Gr}_{k}(n)$};
        \draw [dashed, thick, ->] (0,2.8)--(0,0.5);
        \node at (-0.7,1.6) {$\varphi_{1}^{*} \pi_{1}$};
        \draw [dashed, thick, ->] (0.7,2.8)--(3.9,0.5);
        \node at (2.4,2.1) {$\widetilde{\varphi}_{1}$};
        \draw [thick, ->] (0.9,0)--(3.5,0);
        \node at (2,0.3) {$\widetilde{\varphi}$};

        \draw [thick, ->] (0,-0.5)--(0,-2.8);
        \node at (-0.7,-1.6) {$\varphi^{*} \pi_{1}$};
        \draw [dashed, thick, ->] (0.7,-0.5)--(3.9,-2.8);
        \node at (2.4,-1.2) {$\varphi_{1}$};
        \draw [thick, ->] (0.9,-3.3)--(3.5,-3.3);
        \node at (2,-3) {$\varphi$};

        \draw [thick, ->] (4.2,-0.5)--(4.2,-2.8);
        \node at (4.7,-1.6) {$\pi_{1}$};
    \end{tikzpicture}
    \caption{}
    \label{fig3}
\end{figure}

Then we have $\varphi_{1}^{*}\left(\varphi^{*} \tau_{k}(n)\right)=\left\{((p, u),(0, v)) \in \mathbb{R}^{2 n} \times \mathbb{R}^{2 n} \mid u, v \in \varphi(p), p \in M\right\}$ and hence $\Delta_{*} \varphi_{1}{ }^{*}$ $\left(\varphi^{*} \tau_{k}(n)\right)=\{(p, u+v) \mid u, v \in \varphi(p), p \in M\}=\left\{(p, u) \mid u \in \varphi(p), p \in M\right\}=\varphi^{*} \tau_{k}(n)$.

More concretely, consider the line bundle $\mathscr{L}$ on a ruled surface $S$ in $\mathbb{R}^{3}$ generated naturally by the ruling, then we have $\Delta_{*} \mathscr{L}=S$.

Notice that in the example above, all points on the base manifold are degenerated in the sense that they are inflection points. We will show that this is not a coincidence.

For simplicity we restrict our consideration to line bundles. In this case we are able to introduce a notion of degeneracy in terms of foliation.

By Frobenius' theorem any $C^{1}$ line subbundle of the tangent bundle is integrable and hence defines a regular $1$-foliation with $C^{2}$ leaves. Conversely, any regular $1$-foliation $\mathcal{F}$ with $C^{2}$ leaves generates $C^{1}$ line distribution $T\mathcal{F}$.

The $C^{2}$ smoothness of integral submanifolds allow us to get more information about further degeneracy.

\begin{definition}\label{defi4_4}
    A $C^{1}$ line distribution $\mathscr{D}:=T\mathcal{F}$ over $C^{1}$ submanifold $M:=\cup\mathcal{F}$ is said to be osculate to $M$ at $p$ iff $T_{p}^{2} \gamma \subseteq T_{p} M$ where $\gamma \in \mathcal{F}$ is the leaf contains $p$. $\mathscr{D}$ is said to be osculate to $M$ iff it is osculate to $M$ at all $p \in M$.
\end{definition}

\begin{definition}\label{defi4_5}
    A $C^1$ line distribution over $C^{2}$ submanifold $M$ is said to be semi-generic iff it is not tangent or not osculate to $M$.
\end{definition}

\begin{remark}\label{remark4_2_3}
    The condition $T_{p}^{2} \gamma \subseteq T_{p} M$ in Definition \ref{defi4_4} is equivalent to the vanishing of the normal curvature $\kappa_{n}$ of $\gamma$ at $p$.
\end{remark}

This definition of semi-genericity can also be justified by considering the following notion of deformation:

\begin{definition}\label{defi4_6}
    For any $C^{1}$ global section $\sigma: M \rightarrow \mathscr{L}$ of a $C^{1}$ line bundle $\mathscr{L}$ over $C^{1}$ submanifold $M$ in $\mathbb{R}^{n}$, define $h_{\sigma}$ by the following diagram, where $M_{\sigma}:=\Delta_{*}(\sigma(M))$.
\end{definition}

\begin{figure}[H]
    \centering 
    \begin{tikzpicture}
        \node at (0,0) {$M_{\sigma}$};
        \node at (3,0) {$\mathbb{R}^{n}$};
        \node at (0,3) {$M$};
        \node at (3,3) {$\mathscr{L}$};
        \draw [dashed, thick, ->] (0,2.6)--(0,0.5);
        \node at (-0.5,1.6) {$h_{\sigma}$};
        \draw [thick, ->] (3,2.6)--(3,0.5);
        \node at (3.5,1.6) {$\Delta_{*}$};
        \draw [thick, ->] (0.5,3)--(2.5,3);
        \node at (1.5,3.3) {$\sigma$};
        \draw [thick, ->] (0.6,0)--(2.5,0);
        \node at (1.5,-0.3) {$i_{M_{\sigma}}$};
        \draw [thick] (0.6,0.2) arc (90:270:0.1);
    \end{tikzpicture}
    \caption{}
    \label{fig4}
\end{figure}

$h_{\sigma}$ is called a deformation of $M$ along $\mathscr{L}$ induced by $\sigma$. Moreover, $h_{\sigma}$ is said to be $\varepsilon$-small or a $\varepsilon$-deformation iff $\left\|\Delta_{*} \sigma-i_{M}\right\|_{\infty}<\varepsilon$. 

For example, $I d_{M}$ is the deformation induced by the zero section $\theta$, and $\Delta_{*} \theta=i_{M}$ implies that $I d_{M}$ is $\varepsilon$-small for all $\varepsilon>0$.

A deformation is by definition a $C^{1}$ surjection, but in general not necessarily invertible and its image is not necessarily a submanifold.

\begin{proposition}\label{prop4_7}
    Let $\mathscr{L}$ be a semi-generic $C^{1}$ line bundle over $C^1$ submanifold $M$, then $\forall\ \varepsilon>0 \exists$ $\varepsilon$-deformation $h_{\sigma}$ along $\mathscr{L}$ s.t. $M \stackrel{h_{\sigma}}{\cong} M_{\sigma}$ is a diffeomorphism of $C^{1}$ submanifolds and $\left(h_{\sigma}^{-1}\right)^{*} \mathscr{L}$ is generic over $M_{\sigma}$.
\end{proposition}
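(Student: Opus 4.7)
The plan is to split along Definition~\ref{defi4_5}. If $\mathscr{L}$ is not tangent to $M$, then since $\dim \mathscr{L}_p = 1$ the condition $\mathscr{L}_p \not\subseteq T_pM$ is equivalent to $\mathscr{L}_p \cap T_pM = 0$, so $\mathscr{L}$ is already generic per Definition~\ref{defi4_1}; the zero section $\sigma \equiv 0$ yields $h_\sigma = \mathrm{id}_M$, $M_\sigma = M$, $(h_\sigma^{-1})^*\mathscr{L} = \mathscr{L}$, and $\|\Delta_*\sigma - i_M\|_\infty = 0 < \varepsilon$. The substantive case is $\mathscr{L} \subseteq TM$ with $\mathscr{L}$ not osculate; by Frobenius, $\mathscr{L}$ integrates to a foliation $\mathcal{F}$ with $C^2$ leaves, and non-osculation supplies $p_0 \in M$ and a leaf $\gamma \in \mathcal{F}$ through $p_0$ with $T_{p_0}^2\gamma \not\subseteq T_{p_0}M$. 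I would apply Corollary~\ref{coro1_11} to assume $p_0 = 0$ and $M = \Gamma_f$ locally for a $C^1$ map $f: I^d \to \mathbb{R}^{n-d}$ with $f(0) = 0$ and $J_0 f = 0$ (so $T_0M = \mathbb{R}^d \times 0$); after rotating $\mathbb{R}^d$ and parametrizing $\gamma$ by arc length, $\gamma'(0) = e_1$ and $\gamma''(0) = (x''(0),\,d_0^2 f(e_1,e_1))$, with non-osculation reading $w_1 := d_0^2 f(e_1,e_1) \neq 0$ in $\mathbb{R}^{n-d}$.

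Next I would fix a $C^1$ unit section $\xi$ of $\mathscr{L}$ on a chart $U \ni p_0$, a $C^1$ bump $\rho : M \to [0,1]$ supported in $U$ with $\rho \equiv 1$ near $p_0$, and for small $t > 0$ set $\sigma_t := t\rho\,\xi$ (extended by $0$ outside $U$), so that $h_{\sigma_t}(p) = p + t\rho(p)\xi(p)$. Three checks remain. \emph{($\varepsilon$-smallness)}: $\|\Delta_*\sigma_t - i_M\|_\infty \leq t\,\|\xi\|_{\infty,\,\mathrm{supp}\,\rho} < \varepsilon$ for $t$ small enough. \emph{(Diffeomorphism)}: $(dh_{\sigma_t})_p = \mathrm{Id}_{T_pM} + t\,d(\rho\xi)_p$ is injective for small $t$, making $h_{\sigma_t}$ a $C^1$ immersion equal to the identity off $\mathrm{supp}\,\rho$; a standard $C^1$-openness argument (after shrinking $U$ and $t$) yields global injectivity and hence an embedding, so $M_{\sigma_t}$ is a $C^1$ submanifold diffeomorphic to $M$ via $h_{\sigma_t}$. \emph{(Genericity at $q_0 := h_{\sigma_t}(p_0)$)}: since $\rho \equiv 1$ near $0$, $(dh_{\sigma_t})_0(e_i) = e_i + t(d\xi)_0(e_i)$; writing $(d\xi)_0(e_i) = (u_i,w_i) \in \mathbb{R}^d \times \mathbb{R}^{n-d}$, one has $w_1 = d_0^2 f(e_1,e_1) \neq 0$. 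If $e_1 = \sum_i c_i (dh_{\sigma_t})_0(e_i)$, the $\mathbb{R}^d$-projection yields a near-identity linear system with unique solution $c = e_1 + O(t)$, while the $(T_0 M)^\perp$-projection forces $\sum_i c_i w_i = 0$, which expands to $w_1 + O(t) = 0$ --- impossible for small $t$. Hence $e_1 \notin T_{q_0}M_{\sigma_t}$, and since the pulled-back fiber at $q_0$ equals $\mathbb{R}e_1 = \mathscr{L}_{p_0}$, this gives $\mathscr{L}_{p_0} \cap T_{q_0}M_{\sigma_t} = 0$.

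The main obstacle is the global injectivity step in the diffeomorphism check: a bump-supported $C^1$ perturbation of the identity must be promoted from a local immersion to a genuine embedding of possibly non-compact $M$, which is a standard but fiddly $C^1$-openness argument requiring $\mathrm{supp}\,\rho$ and $t$ to be chosen small enough jointly. The conceptual heart is the genericity calculation, which makes transparent that the non-osculation hypothesis --- encoded as $w_1 \neq 0$ in the adapted chart --- is exactly what forces the first-order deformation to tilt the tangent plane out of $\mathscr{L}_{p_0}$, breaking the tangency.
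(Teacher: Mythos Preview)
Your proposal is correct and follows essentially the same route as the paper: both split on tangency (zero section if $\mathscr{L}\not\subseteq TM$), then in the tangent case build $\sigma_t=t\rho\,\xi$ from a bump and a local unit section, verify the diffeomorphism via compactness of $\operatorname{supp}\rho$, and extract genericity from the non-osculation hypothesis through the identity $(d\xi)_0(e_1)=\gamma''(0)$; the paper does the final step by elementary column operations on $[J_0\mathbf f+tJ_0X\mid X(0)]$, while you do the equivalent tangent/normal decomposition. One notational caveat: since $M$ is only $C^1$ the symbol $d_0^2 f(e_1,e_1)$ is not a priori defined---what you actually use (and what is legitimate) is that $w_1$ is the $\mathbb R^{n-d}$-component of $\gamma''(0)$, which exists because the leaf $\gamma$ is $C^2$, and nonvanishing of $w_1$ is exactly $T_0^2\gamma\not\subseteq T_0M$.
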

\begin{proof}
    If $\mathscr{L}$ is not tangent to $M$ then $I d_{M}$ is a required deformation. It remains to consider the case that $\mathscr{L}:=T \mathcal{F}$ is a distribution, where $\mathcal{F}$ is a regular $1$-foliation with $C^{2}$ leaves s.t. $\exists\ p_{0} \in \gamma \in \mathcal{F}$ s.t. $T_{p_0}^{2} \gamma \nsubseteq T_{p_0} M$.

    Wlog assume that $p_{0}=0 \in \mathbb{R}^{n}$. Let $U \subseteq M$ be a trivializing chart at the vicinity of 0 with chart map $\phi: U \rightarrow I^{d}$, and $C^{1}$ parametrization $\mathbf{f}: I^{d} \rightarrow \mathbb{R}^{n}$ s.t. $\mathbf{f}(0)=0$ and $\mathbf{f} \circ \phi=$ id. Since $\mathscr{L}$ is trivial on $U$, there exists a $C^{1}$ unit vector field $X: I^{d} \rightarrow \mathbb{S}^{n-1}$ s.t. $X \circ \phi$ generates $\mathscr{L}$ on $U$. Let $\rho: I^{d} \rightarrow \mathbb{R}$ be a compactly supported bump function with $\operatorname{supp}(\rho)=: K$ s.t. $\|\rho\|_{\infty}=\rho(0)=1$ and $\nabla_{0} \rho=0$. For any $t \in \mathbb{R}$, extend $t(\rho X) \circ \phi: U \rightarrow \mathbb{R}^{n}$ by zero to a $C^{1}$ vector field $v_{t}$ on $M$, and define $\sigma_{t}: M \rightarrow \mathscr{L}$ by $\sigma_{t}(p)=\left(p, v_{t}(p)\right)$, then $\sigma_{t}$ is a $C^{1}$ global section of $\mathscr{L}$. Denote by $h_{t}:=h_{\sigma_{t}}$ the deformation induced by $\sigma_{t}$ and $M_{t}:=M_{\sigma_{t}}$, then $\forall\ p \in U$ we have $h_{t}(p)=p+t(\rho X) \circ \phi(p)=(\mathbf{f}+t(\rho X)) \circ \phi(p)$, and $\forall\ p \in M \backslash \mathbf{f}(K) \supseteq M \backslash U$ we have $h_{t}(p)=p+0=p$. We claim that $\exists\ \delta>0$ s.t. $\forall\ t \in B_{\delta}^{1}$ $h_{t}$ is a $C^{1}$ diffeomorphism. It suffices to prove that $\exists\ \delta>0$ s.t. $\forall\ t \in B_{\delta}^{1}$, $h_{t} \circ \phi^{-1}=\mathbf{f}+t(\rho X): U \rightarrow \mathbb{R}^{n}$ is a $C^1$ diffeomorphism onto its image.

    For any $t \in \mathbb{R}$, define $\mathbf{f}_{t}:=\mathbf{f}+t(\rho X)$. Take any $x \in I^{d} \backslash K$, then $\forall\ t \in \mathbb{R}$, $J_{x} \mathbf{f}_{t}=J_{x} \mathbf{f}+t J_{x}(\rho X)=J_{x} \mathbf{f}+t \cdot 0=J_{x} \mathbf{f}$ has full rank. Take any $x \in K$, since $J_{x} \mathbf{f}$ has full rank, by continuity $\exists\ \delta_{x}>0$ s.t. $\forall\ t \in B_{\delta}^{1}$, $J_{x} \mathbf{f}_{t}$ has full rank. By the compactness of $K$, $\exists\ \delta>0$ s.t. $\forall\ x \in I^{d}, \forall\ t \in B_{\delta}^{1}$, $J_{x} \mathbf{f}_{t}$ has full rank. By the implicit function theorem $\forall\ t \in B_{\delta}^{1}$, we have that $\mathbf{f}_{t}$ parametrizes $C^{1}$ submanifold $U_{t}:=\mathbf{f}_{t}\left(I^{d}\right)$ and hence a $C^{1}$ diffeomorphism onto its image.

    This proves that $\forall\ t \in B_{\delta}^{1}$ we have $h_{t}: M \rightarrow M_{t}=(M \backslash U) \cup U_{t}$ is a diffeomorphism of $C^{1}$ submanifolds.

    For any $t \in B_{\delta}^1$, on $U_{t}=h_{t}(U),\left(h_{t}^{-1}\right)^{*} \mathscr{L}$ is generated by unit vector field $X\circ \phi\circ h_{t}^{-1}$, and we have $X \circ \phi \circ h_{t}^{-1}\left(h_{t}(0)\right)=X \circ \phi(0)=X(0)$ where $h_{t}(0)=\mathbf{f}_{t} \circ \phi(0)=\mathbf{f}_{t}(0)$. We claim that $\exists\ r \in] 0, \delta[$ s.t. $\forall\ t \in] 0, r[, \left(h_{t}^{-1}\right)^{*}\mathscr{L}$ is in general position with $M_{t}$ at $h_{t}(0)$, i.e. $X(0)$ is not tangent to $U_{t}$ at $\mathbf{f}_{t}(0)$.

    Parametrize $\gamma$ at the vicinity of 0 as a integral curve of unit vector field $X\circ \phi$ i.e. $\gamma(s):=\mathbf{f}(\alpha(s))$ where $\alpha: I \rightarrow I^{d}$ is a $C^{1}$ curve s.t. $\dot{\gamma}(s)=X(\alpha(s))$. Wlog assume that $\alpha(0)=0$ and denote $b:=\dot{\alpha}(0) \in \mathbb{R}^{n}$, then we have $X(0)=X(\alpha(0))=\dot{\gamma}(0)=\left.\dfrac{d}{d s}\right|_{s=0} \mathbf{f}(\alpha(s))=\left(J_{\alpha(0)} \mathbf{f}\right) \dot{\alpha}(0)=\left(J_{0} \mathbf{f}\right) b$ and $\ddot{\gamma}(0)=\left.\dfrac{d}{d s}\right|_{s=0} X(\alpha(s))=\left(J_{\alpha(0)} X\right) \dot{\alpha}(0)=\left(J_{0} X\right) b$. Since $T_{p}^{2} \gamma \nsubseteq T_{p} M$, i.e. $\left[J_{0} \mathbf{f} \mid \ddot{\gamma}(0)\right]=\left[J_{0} \mathbf{f} \mid\left(J_{0} X\right) b\right]$ has full rank, by continuity $\exists\ \delta^{\prime} \in ] 0, \delta[$, s.t. $\forall\ t \in ]0, \delta^{\prime}[$, $\left[J_{0} \mathbf{f}+t J_{0} X  \mid\left(J_{0} X\right) b\right]$ has full rank. Apply elementary transform we have $\left[J_{0} \mathbf{f}+t J_{0} X \mid\left(J_{0} X\right) b\right] \sim\left[J_{0} \mathbf{f}+t J_{0} X \left|-\dfrac{1}{t}\left(J_{0} \mathbf{f}\right) b\right]\right.\sim\left[J_{0} \mathbf{f}+t J_{0} X \mid\left(J_{0} \mathbf{f}\right) b\right]=\left[J_{0} \mathbf{f}+t J_{0} X \mid X(0)\right]$ has full rank. Since $T_{0} U_{t}=\operatorname{span}\left\{\dfrac{\partial \mathbf{f}_{t}}{\partial x^{i}}(0)\right\}$ and $J_{0} \mathbf{f}_{t}=J_{0} \mathbf{f}+t J_{0}(\rho X)=J_{0} \mathbf{f}+t \nabla \rho(0) \cdot X(0)+t \rho(0) J_{0} X=J_{0} \mathbf{f}+t J_{0} X$ we conclude that $X(0) \notin T_{0} U_{t}$, i.e. $X(0)$ is not tangent to $U_{t}$ at $\mathbf{f}_{t}(0)$. This proves the claim.

    Take $\lambda:=\dfrac{1}{2} \min \left\{\varepsilon, \delta, \delta^{\prime}\right\}$, then $h_{\lambda}$ is a deformation along $\mathscr{L}$ s.t. $M \stackrel{h_{\lambda}}{\cong} M_{\lambda}$ is a diffeomorphism of $C^{1}$ submanifolds and $\left(h_{\lambda}^{-1}\right)^{*} \mathscr{L}$ is generic over $M_{\lambda}$. Since $\left\|\Delta_{*} \sigma_{\lambda}-i_{M}\right\|_{\infty}=\left\|\mathbf{f}_{t}-\mathbf{f}\right\|_{\infty}=\|\lambda(\rho X)\|_{\infty} \leqslant|\lambda| \cdot 1 \leqslant \dfrac{\varepsilon}{2}<\varepsilon$, we have that $h_{\lambda}$ is $\varepsilon$-small.

    This concludes the proof.
\end{proof} 

Now we are prepared to state the main theorem. It follows straightforwardly from the following lemma which claim that deformation along a line bundle preserves codiagonal.

\begin{lemma}\label{lemma4_8}
    Let $\sigma$ be a $C^{1}$ line bundle $\mathscr{L}$ over $C^{1}$ submanifold $M$ of $\mathbb{R}^{n}$ if $h_{\sigma}$ is a $C^{1}$ diffeomorphism then $\Delta_{*}\left(h_{\sigma}^{-1}\right)^{*} \mathscr{L}=\Delta_{*} \mathscr{L}$.
\end{lemma}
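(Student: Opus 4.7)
The plan is to unwind the definitions and reduce the claim to a one-line observation about the fibers of $\mathscr{L}$ being linear subspaces. Write $\mathscr{L} = \varphi^*\tau_1(n)$ for the map $\varphi : M \to \operatorname{Gr}_1(n)$ defining the bundle, so that $\mathscr{L} = \{(p,u) \in M \times \mathbb{R}^n \mid u \in \varphi(p)\}$. A $C^1$ section $\sigma$ has the form $\sigma(p) = (p, v(p))$ with $v(p) \in \varphi(p)$ for every $p \in M$, and by definition $h_\sigma(p) = \Delta_*\sigma(p) = p + v(p)$, while $M_\sigma = h_\sigma(M)$.

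Using the description of pullback bundles recalled in Section~\ref{sec1}, the first step is to identify $(h_\sigma^{-1})^*\mathscr{L}$ explicitly. Since $(h_\sigma^{-1})^*\mathscr{L} = (\varphi \circ h_\sigma^{-1})^*\tau_1(n)$, we have
\[
(h_\sigma^{-1})^*\mathscr{L} = \bigl\{(q,u) \in M_\sigma \times \mathbb{R}^n \,\big|\, u \in \varphi(h_\sigma^{-1}(q))\bigr\},
\]
and therefore, by property ii) of the codiagonal,
\[
\Delta_* (h_\sigma^{-1})^*\mathscr{L} = \bigcup_{q \in M_\sigma} \bigl(q + \varphi(h_\sigma^{-1}(q))\bigr) = \bigcup_{p \in M} \bigl(h_\sigma(p) + \varphi(p)\bigr),
\]
where the second equality uses that $h_\sigma : M \to M_\sigma$ is a bijection (here we invoke the diffeomorphism hypothesis). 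Similarly $\Delta_*\mathscr{L} = \bigcup_{p \in M}(p + \varphi(p))$.

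The key observation is then the following pointwise equality of affine subspaces of $\mathbb{R}^n$: for every $p \in M$,
\[
h_\sigma(p) + \varphi(p) = p + v(p) + \varphi(p) = p + \varphi(p),
\]
since $v(p) \in \varphi(p)$ and $\varphi(p)$ is a linear subspace, hence closed under translation by any of its elements. Taking the union over $p \in M$ yields $\Delta_*(h_\sigma^{-1})^*\mathscr{L} = \Delta_*\mathscr{L}$, which is the claim.

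There is essentially no obstacle here; the only subtle point is bookkeeping with the pullback, specifically that the fiber of $(h_\sigma^{-1})^*\mathscr{L}$ over $q = h_\sigma(p)$ is $\varphi(p)$ rather than some transported copy. Note that rank $1$ of $\mathscr{L}$ is never used in the argument — the same proof shows that the assertion extends verbatim to any $C^1$ vector bundle admitting a $C^1$ global section $\sigma$ whose induced deformation $h_\sigma$ is a diffeomorphism.
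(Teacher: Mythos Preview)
Your proof is correct and follows essentially the same route as the paper: identify $(h_\sigma^{-1})^*\mathscr{L}$ as $\{(h_\sigma(p),u)\mid u\in\varphi(p),\,p\in M\}$, apply $\Delta_*$, and use that $v(p)\in\varphi(p)$ forces $p+v(p)+\varphi(p)=p+\varphi(p)$. The paper's argument is the same chain of equalities compressed into one line; your additional remark that the rank-$1$ hypothesis is unnecessary is accurate and not noted in the paper.
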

\begin{proof}
    Denote by $\varphi: M \rightarrow \mathbb{P}^{n-1}$ the $C^{1}$ map defines $\mathscr{L}$. Since $\forall\ p \in M, \sigma(p)\in \varphi(p)$, we have that $\Delta_{*}\left(h_{\sigma}^{-1}\right)^{*} \mathscr{L}=\Delta_{*}\left\{\left(h_{\sigma}(p), u\right) \mid u \in \varphi(p), p \in M\right\}=\{p+\sigma(p)+u \mid u \in \varphi(p), p \in M\}=\{p+u \mid u \in \varphi(p), p \in M\}=\Delta_{*}\{(p, u) \mid u \in \varphi(p), p \in M\}=\Delta_{*} \mathscr{L}$.
\end{proof}

\begin{theorem}\label{theo4_9}
    Let $\mathscr{L}$ be a semi-generic $C^{1}$ line bundle over a $C^{1}$ $d$-submanifold $M$ in $\mathbb{R}^{n}$ (where $\left.1 \leqslant d \leqslant n-1\right)$ then $\mathcal{L}^{d+1}\left(\Delta_{*} \mathscr{L}\right)>0$. Moreover, if $M$ is a hypersurface then $\operatorname{int}\left(\Delta_{*} \mathscr{L}\right) \neq \varnothing$.
\end{theorem}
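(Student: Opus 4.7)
The plan is to reduce this theorem to the generic case (Corollary \ref{coro4_3}) via the deformation machinery developed in Proposition \ref{prop4_7} and Lemma \ref{lemma4_8}. The heart of the argument is that although $\mathscr{L}$ itself may fail to be in general position with $TM$ (so that Corollary \ref{coro4_3} does not apply directly), we can always slide $M$ a tiny bit along $\mathscr{L}$ to put the pullback bundle into general position, and this sliding does not change the codiagonal.

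Concretely, I would proceed as follows. Fix any $\varepsilon>0$. By Proposition \ref{prop4_7}, the semi-genericity of $\mathscr{L}$ yields an $\varepsilon$-deformation $h_{\sigma}\colon M\to M_{\sigma}$ along $\mathscr{L}$, induced by some $C^{1}$ global section $\sigma$, such that $h_{\sigma}$ is a $C^{1}$ diffeomorphism of $C^{1}$ submanifolds and the pullback line bundle $\mathscr{L}':=(h_{\sigma}^{-1})^{*}\mathscr{L}$ is a generic $C^{1}$ line bundle over $M_{\sigma}$. Note that $M_{\sigma}$ remains a $C^{1}$ $d$-submanifold of $\mathbb{R}^{n}$; and if $M$ is a hypersurface, then $d=n-1$ and $M_{\sigma}$ is also a hypersurface.

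Next, by Lemma \ref{lemma4_8}, this deformation preserves codiagonal:
\[
\Delta_{*}\mathscr{L}'=\Delta_{*}(h_{\sigma}^{-1})^{*}\mathscr{L}=\Delta_{*}\mathscr{L}.
\]
Now I apply Corollary \ref{coro4_3} to the generic $C^{1}$ line bundle $\mathscr{L}'$ over the $C^{1}$ $d$-submanifold $M_{\sigma}$: this gives $\mathcal{L}^{d+1}(\Delta_{*}\mathscr{L}')>0$, and in the hypersurface case $\operatorname{int}(\Delta_{*}\mathscr{L}')\neq\varnothing$. Translating back along the identity $\Delta_{*}\mathscr{L}'=\Delta_{*}\mathscr{L}$ yields $\mathcal{L}^{d+1}(\Delta_{*}\mathscr{L})>0$ and, when $M$ is a hypersurface, $\operatorname{int}(\Delta_{*}\mathscr{L})\neq\varnothing$, completing the proof.

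The main obstacle has already been overcome in Proposition \ref{prop4_7}, which is where the osculation hypothesis (the non-triviality of the second osculating data of the leaf relative to $T_{p}M$) is used to guarantee that an arbitrarily small perturbation of $M$ along $\mathscr{L}$ produces a transverse intersection $X(0)\notin T_{0}U_{t}$. Once that lemma is in place, the present theorem is essentially a bookkeeping combination of Proposition \ref{prop4_7}, Lemma \ref{lemma4_8}, and Corollary \ref{coro4_3}; no further analytic work is needed.
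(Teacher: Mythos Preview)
Your proposal is correct and follows essentially the same route as the paper: invoke Proposition~\ref{prop4_7} to obtain a diffeomorphic deformation $h_{\sigma}$ making $(h_{\sigma}^{-1})^{*}\mathscr{L}$ generic, apply Corollary~\ref{coro4_3} to the deformed bundle, and use Lemma~\ref{lemma4_8} to identify $\Delta_{*}(h_{\sigma}^{-1})^{*}\mathscr{L}$ with $\Delta_{*}\mathscr{L}$. The paper's proof is a one-line version of exactly this argument.
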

\begin{proof}
    Since $\mathscr{L}$ is semi-generic, by Proposition \ref{prop4_7} $\exists$ deformation $h_{\sigma}$ along $\mathscr{L}$ s.t. $h_{\sigma}$ is a $C^{1}$ diffeomorphism and $\left(h_{\sigma}^{-1}\right)^{*} \mathscr{L}$ is generic, By Corollary \ref{coro4_3} we have $\mathcal{L}^{d+1}\left(\Delta_{*}\left(h_{\sigma}^{-1}\right)^{*} \mathscr{L}\right)>0$, and if $d=n-1$ then $\operatorname{int}\left(\Delta_{*}\left(h_{\sigma}^{-1}\right)^{*} \mathscr{L}\right) \neq \varnothing$. By Lemma \ref{lemma4_8} we have $\Delta_{*} \mathscr{L}=\Delta_{*}\left(h_{\sigma}^{-1}\right)^{*} \mathscr{L}$, and the statement follows immediately.
\end{proof}

We discuss an application of Theorem \ref{theo4_9} to $C^{2}$ submanifolds. In this case the degeneracy of line bundles can be restricted by the geometry of the base manifold.

\begin{lemma}\label{lemma4_10}
    Let $M$ be a $C^{2}$ submanifold, $\gamma$ be a twice differentiable regular curve on $M$, and $p$ be a point on $\gamma$. If $T_{p}^{2} \gamma \subseteq T_{p} M$ then $p$ is an inflection point of $M$.
\end{lemma}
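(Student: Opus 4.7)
The idea is to set up an affine chart in which both the tangent space $T_p M$ and the inflection condition become explicit, compute $\ddot\gamma(0)$ inside that chart, and read off from the hypothesis a nontrivial null vector of the second fundamental form.

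First I would reduce to local coordinates. Up to a rigid motion, assume $p = 0$ and $T_p M = \mathbb{R}^d \times \{0\} \subseteq \mathbb{R}^d \times \mathbb{R}^{n-d} = \mathbb{R}^n$, so that on a neighborhood of $p$ the submanifold coincides with the graph of a $C^2$ map $f: \Omega \to \mathbb{R}^{n-d}$ with $f(0) = 0$ and $J_0 f = 0$. In this model the paper's definition of inflection (\S1.4) unpacks to the following: $0$ is a critical point of $f$, and a non-Morse singularity means its vector-valued Hessian $d_0^2 f$ is degenerate as a symmetric bilinear form, i.e.\ there exists $v \in \mathbb{R}^d \setminus \{0\}$ with $d_0^2 f(v,v) = 0$ (equivalently $v^T H_k v = 0$ for every component Hessian $H_k := (\partial_i \partial_j f^k(0))$, $k = 1, \dots, n-d$). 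This is the sensible meaning of ``non-Morse'' for a map whose first differential already vanishes.

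Next I would lift $\gamma$ into the chart. Since $\gamma \subseteq M$ passes through $p = 0$, near $0$ we can write $\gamma(t) = (x(t), f(x(t)))$ for a twice differentiable curve $x$ with $x(0) = 0$. A direct chain-and-product-rule computation at $t = 0$, legitimate because $f$ is $C^2$ and $x$ is twice differentiable, yields
\[
\dot\gamma(0) = (v,\,0), \qquad \ddot\gamma(0) = \bigl(\ddot x(0),\, d_0^2 f(v,v)\bigr),
\]
where $v := \dot x(0)$. The regularity of $\gamma$ together with $J_0 f = 0$ forces $v \ne 0$, and the vanishing of $J_0 f$ is precisely what kills the cross term that would otherwise appear in the second entry of $\ddot\gamma(0)$, leaving only the pure quadratic piece $d_0^2 f(v,v)$.

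Finally, the hypothesis $T_p^2 \gamma = \mathrm{span}\{\dot\gamma(0), \ddot\gamma(0)\} \subseteq T_p M = \mathbb{R}^d \times \{0\}$ forces the $\mathbb{R}^{n-d}$-component of $\ddot\gamma(0)$ to vanish, giving $d_0^2 f(v,v) = 0$ with $v \ne 0$; this is exactly the non-Morse condition, so $p$ is an inflection point of $M$. The only real care needed is (i) matching the paper's local-graph definition of ``inflection'' to the degeneracy condition on $d_0^2 f$ stated above, and (ii) justifying the chain/product rule for $\ddot\gamma(0)$ at the single point $t=0$ under the slightly asymmetric regularity hypotheses on $f$ (of class $C^2$) and $x$ (only twice differentiable); neither is delicate, but both deserve a sentence in the written proof.
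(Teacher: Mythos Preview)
Your proposal is correct and follows essentially the same approach as the paper's proof: choose a graph chart with $J_0 f = 0$, write $\gamma(t) = (x(t), f(x(t)))$, compute $\ddot\gamma(0)$ via the chain rule, and use $T_p^2\gamma \subseteq T_pM$ to extract a nonzero null vector of the Hessian tensor of $f$ at $0$. The paper's argument is identical in structure, differing only in notation (it calls the base curve $\alpha$ rather than $x$ and writes $v^T(\mathrm{Hess}_0 f)v$ for your $d_0^2 f(v,v)$).
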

\begin{proof}
    Take local chart $U \subseteq M$ at the vicinity of $p$ s.t. $U=\Gamma_{f}$ where up to rigid motion and homothety $f: I^{d} \rightarrow \mathbb{R}^{n-d}$ is a $C^{2}$ map with $(0, f(0))=p$ and $J_{0} f=0$.

    Parametrize $\gamma$ at the vicinity of $p$ by $\gamma(t):=\left(\alpha(t), f(\alpha(t))\right.$ where $\alpha: I \rightarrow I^{d}$ and $\ddot{\alpha}(0)=u$, then $\dot{\gamma}(t)=\left(\dot{\alpha}(t),\left(J_{\alpha(t)} f\right) \dot{\alpha}(t)\right)$ and $\ddot{\gamma}(0)=\left(\ddot{\alpha}(0),(J_{\alpha(0)} f) \ddot{\alpha}(0)+\dot{\alpha}(0)^{T}\left(\text{Hess}_{\alpha(0)} f\right) \dot{\alpha}(0)\right)=\left(u,\left(J_{0} f\right) u+v^{T}\left(\text{Hess}_0f\right) v\right)$, where $\text{Hess}_0f$ is the Hessian tensor of $f$ at $0$.

    Since $T_{p}^{2} \gamma \subseteq T_{p} M$, we have $\ddot{\gamma}(0) \in T_{p} M$. Since $\left(u,\left(J_{0} f\right) u\right) \in T_{p} M$ we have $\left(0, v^{T}\left(\text{Hess}_0f\right) v\right)=\ddot{\gamma}(0)-\left(u,\left(J_{0} f\right) u\right) \in T_{p} M$. Therefore we obtain that $\left(0, v^{T}\left(\text{Hess}_{0} f\right) v\right)=\left(0,\left(J_{0} f\right) 0\right)=(0,0)$, i.e. $v^{T}\left(\text{Hess}_0f\right)v=0$. Since $\gamma$ is regular, we have $(\dot{\alpha}(0), 0)=\left(\dot{\alpha}(0),\left(J_{0} f\right) \dot{\alpha}(0)\right)=\dot{\gamma}(0) \neq 0$, i.e. $v=\dot{\alpha}(0) \neq 0$. Therefore $\text{Hess}_{0} f$ is degenerated and $f$ is not a Morse function at 0 , i.e. $p=(0, f(0))$ is an inflection point of $M$.
\end{proof}

\begin{theorem}\label{theo4_11}
    Let $M$ be a $C^{2}$ $d$-submanifold of $\mathbb{R}^{n}$ (where $1 \leqslant d \leqslant n-1$). If not all points of $M$ are inflection points, then $\forall\ C^{1}$ line bundle $\mathscr{L}$ over $M$ we have $\mathcal{L}^{d+1}\left(\Delta_{*} \mathscr{L}\right)>0$, and moreover if $M$ is a hypersurface then $\operatorname{int}\left(\Delta_{*} \mathscr{L}\right) \neq \varnothing$.
\end{theorem}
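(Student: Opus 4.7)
My plan is to deduce the theorem from Theorem \ref{theo4_9} by showing that, under the hypothesis that $M$ admits at least one non-inflection point, every $C^{1}$ line bundle $\mathscr{L}$ over $M$ is semi-generic in the sense of Definition \ref{defi4_5}. Recall that semi-generic means either not tangent to $M$ or, if tangent, not osculating to $M$, so I only need to rule out the simultaneous occurrence of tangency and osculation.

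The first case is immediate: if $\mathscr{L}$ is not tangent to $M$, i.e.\ not a line subbundle of $TM$, then $\mathscr{L}$ is semi-generic directly by definition and no further work is required.

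For the remaining case, I would suppose $\mathscr{L} \subseteq TM$. Since every rank-one distribution is automatically involutive, Frobenius' theorem produces a regular $1$-foliation $\mathcal{F}$ of $M$ with $\mathscr{L} = T\mathcal{F}$; moreover, because any local generating unit vector field $X$ is $C^{1}$, each leaf is a $C^{2}$ regular curve (the integral curve $\gamma$ satisfies $\dot\gamma = X \circ \gamma$ with $X$ of class $C^{1}$, so $\dot\gamma$ is $C^{1}$ and $\gamma$ is $C^{2}$). I would then pick a non-inflection point $p_{0} \in M$, which exists by hypothesis, and let $\gamma \in \mathcal{F}$ be the leaf through $p_{0}$. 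The contrapositive of Lemma \ref{lemma4_10} yields $T_{p_{0}}^{2} \gamma \nsubseteq T_{p_{0}} M$, so by Definition \ref{defi4_4} the distribution $\mathscr{L}$ is not osculating to $M$ at $p_{0}$, and hence not osculating to $M$ as a whole. Thus $\mathscr{L}$ is semi-generic in this case as well.

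Having established semi-genericity in both cases, Theorem \ref{theo4_9} immediately delivers $\mathcal{L}^{d+1}(\Delta_{*} \mathscr{L}) > 0$ and, when $d = n-1$, also $\operatorname{int}(\Delta_{*} \mathscr{L}) \neq \varnothing$. I do not expect any serious obstacle: the heavy lifting (the deformation construction in Theorem \ref{theo4_9} and the second-order tangency computation in Lemma \ref{lemma4_10}) is already carried out, so the only point meriting any care is verifying that Frobenius and elementary ODE theory apply in the $C^{1}$ setting to yield $C^{2}$ leaves, which they do for rank-one distributions generated by $C^{1}$ vector fields.
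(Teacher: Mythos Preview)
Your proposal is correct and follows essentially the same route as the paper: use the contrapositive of Lemma \ref{lemma4_10} at a non-inflection point to show that any $C^{1}$ line bundle on $M$ is semi-generic, then invoke Theorem \ref{theo4_9}. The paper's argument is slightly more terse (it does not explicitly split into the tangent/non-tangent cases or spell out why the leaves are $C^{2}$), but the logic is identical.
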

\begin{proof}
    Since $\exists\ p \in M$ s.t. $p$ is not an inflection point, by Lemma \ref{lemma4_10} $\forall$ regular $1$-foliation $\mathcal{F}$ of $M$ with $C^{2}$ leaves, we have $T_{p}^{2} \gamma \nsubseteq T_{p} M$ where $\gamma$ is the leaf contains $p$. Therefore $\forall\ C^{1}$ line bundle $\mathscr{L}$ on $M, \mathscr{L}$ is semi-generic. The statement then follows immediately from Theorem \ref{theo4_9}.
\end{proof}

\section{Line bundles over hyperplanes and ellipsoids}
In this section we focus on (continuous) line bundles over classical geometric objects such as hyperplanes, convex curves and ellipsoids.

\subsection{Algebraic Topological Lemmas}
Before we discuss the size of line bundles, we shall introduce several useful lemmas.

Recall that for any $u, v \in \mathbb{S}^{n} \subseteq \mathbb{R}^{n+1}$ define $\angle(u, v):=\arccos \left(u^{T} v\right)$ then $\angle$ coincides with the geodesic distance on $\mathbb{S}^{n}$ w.r.t. the induced Riemannian metric from $\mathbb{R}^{n+1}$.

\begin{definition}\label{defi5_1}
    For any $a \in \mathbb{S}^{n}$ and $\theta>0$, define $V_{\theta}(a):=\left\{u \in\mathbb{S}^{n} \mid \angle(u, a) \leqslant \theta\right\}$ the geodesic disk at $a$ with radius $\theta$, and $W_{\theta}(a):=\left\{u\in \mathbb{S}^{n} \mid \angle(u, a)=\theta\right\}$ its boundary.
\end{definition}

We have the following analog of Borsuk's non-retraction theorem. The proof is adapted from Lemma 2.5 in \cite{8} and the condition is modified and slightly weakened for our purpose.

\begin{lemma}\label{lemma5_2}
    Let $a \in \mathbb{S}^{n}$ and $0<\varepsilon<\theta<\dfrac{\pi}{2}$. If $f: V_{\theta}(a) \rightarrow \mathbb{S}^{n}$ is a continuous map s.t. $\exists\ b \in V_{\pi-\theta-2 \varepsilon}(-a)$ s.t. $b \notin f\left(V_{\theta}(a)\right)$, and $\forall\ u \in W_{\theta}(a)\angle(u, f(u))<\varepsilon$, then $f\left(V_{\theta}(a)\right) \supseteq V_{\theta-\varepsilon}(a)$.
\end{lemma}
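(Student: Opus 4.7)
The plan is to argue by contradiction. Suppose some $c \in V_{\theta-\varepsilon}(a)$ is not in $f(V_\theta(a))$; I will combine the two ``missed points'' $b$ and $c$ with a degree computation to derive a contradiction. As a preliminary, I would record two elementary angular bounds: $b \in V_{\pi-\theta-2\varepsilon}(-a)$ gives $\angle(b,a) \geq \pi - \angle(b,-a) \geq \theta + 2\varepsilon$, while $c \in V_{\theta-\varepsilon}(a)$ gives $\angle(c,a) \leq \theta - \varepsilon$. In particular $b$ and $c$ lie in disjoint spherical caps around $a$, so $b \neq c$.

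The heart of the proof is a homotopy $H : W_\theta(a) \times [0,1] \to \mathbb{S}^n \setminus \{b,c\}$ between the inclusion $W_\theta(a) \hookrightarrow \mathbb{S}^n$ at $t=0$ and $f|_{W_\theta(a)}$ at $t=1$. For each $u \in W_\theta(a)$, since $\angle(u, f(u)) < \varepsilon < \pi$ the unique shortest spherical geodesic from $u$ to $f(u)$ is well-defined, and I let $H(u,\cdot)$ be its linear reparametrisation on $[0,1]$. The spherical triangle inequality gives $\angle(H(u,t), u) \leq \angle(u, f(u)) < \varepsilon$, which combined with $\angle(u,a) = \theta$ yields the two-sided annular estimate $\theta - \varepsilon < \angle(H(u,t), a) < \theta + \varepsilon$. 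This annulus is disjoint from both $b$ and $c$ by the preliminary bounds, so the image of $H$ indeed lies in $\mathbb{S}^n \setminus \{b, c\}$.

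Next I would use the stereographic projection $\sigma_b : \mathbb{S}^n \setminus \{b\} \to \mathbb{R}^n$ to pass to Euclidean space. Since $b \notin V_\theta(a)$ and $c \in \mathrm{int}\, V_\theta(a)$, the image $D := \sigma_b(V_\theta(a))$ is a compact topological $n$-disk in $\mathbb{R}^n$ with boundary $\partial D = \sigma_b(W_\theta(a))$, and $c' := \sigma_b(c)$ lies in its interior. Composing with the radial projection $\rho(x) := (x-c')/\|x-c'\|$, which is defined on $\mathbb{R}^n \setminus \{c'\}$, I obtain a continuous map $F := \rho \circ \sigma_b \circ f : V_\theta(a) \to \mathbb{S}^{n-1}$, well-defined because $f$ avoids both $b$ and $c$. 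On one hand, $F$ extends its boundary restriction over the disk $V_\theta(a)$, so $F|_{W_\theta(a)}$ is null-homotopic and has degree $0$. On the other hand, postcomposing the Step-2 homotopy with $\rho \circ \sigma_b$ shows that $F|_{W_\theta(a)}$ is homotopic in $\mathbb{S}^{n-1}$ to $\rho \circ \sigma_b|_{W_\theta(a)}$, the radial projection of $\partial D$ from the interior point $c'$, which has degree $\pm 1$. The contradiction $0 = \pm 1$ finishes the proof.

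The main obstacle is the ``degree $\pm 1$'' claim for a general topological $n$-disk $D$ with $c' \in \mathrm{int}\, D$: the fact that $\partial D$ ``winds once'' around each interior point is geometrically clear but formally requires either a homological argument (the inclusions of $\partial D$ and of a small sphere $S_r(c')$ into $D \setminus \{c'\}$ induce the same map on $H_{n-1}$ up to sign, and $\rho$ restricted to $S_r(c')$ is literally a homeomorphism of degree $\pm 1$) or a smooth reduction using that $V_\theta(a)$ is a smooth spherical cap, so an ambient diffeomorphism of $\mathbb{R}^n$ fixing $c'$ reduces to the case where $D$ is a round Euclidean ball and $\rho|_{\partial D}$ is a homeomorphism.
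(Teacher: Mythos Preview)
Your proof is correct and follows essentially the same strategy as the paper's: assume some $c\in V_{\theta-\varepsilon}(a)$ is missed, so $f$ lands in $\mathbb{S}^n\setminus\{b,c\}\simeq\mathbb{S}^{n-1}$, then derive a contradiction from the fact that $f|_{W_\theta(a)}$ is simultaneously null-homotopic (it extends over the disk $V_\theta(a)$) and homotopic to a degree~$\pm 1$ map. The only difference is in packaging the retraction $\mathbb{S}^n\setminus\{b,c\}\to\mathbb{S}^{n-1}$: the paper stays on the sphere and uses the metric projection onto $W_\theta(a)$ (extended to a deformation retraction $\tilde P$ of $\mathbb{S}^n\setminus\{b,c\}$), together with the ``never antipodal'' criterion to see $P\circ f|_{W_\theta(a)}\simeq\mathrm{id}$, whereas you pass through stereographic projection from $b$ and radial projection from $c'$; your geodesic homotopy in the annulus plays the same role as the paper's antipode check, and your ``degree $\pm 1$'' obstacle is exactly the statement that $W_\theta(a)\hookrightarrow\mathbb{S}^n\setminus\{b,c\}$ is a homotopy equivalence, which the paper handles by noting $b,c$ lie in opposite caps so the retraction $\tilde P$ exists---you could shortcut your stereographic detour by invoking that same observation directly.
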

\begin{proof}
    Provided that $\exists\ c \in V_{\theta-\varepsilon}(a)$ s.t. $c \notin f\left(V_{\theta}(a)\right)$. Denote $X:=\mathbb{S}^{n-1} \backslash\{b, c\}$ and $A:=V_{\theta+\varepsilon}(a) \backslash V_{\theta-\varepsilon}(a)$, i.e. the $\varepsilon$-neighborhood of $W_{\theta}(a)$. Since $\forall\ u \in A$ clearly $\exists !$ distance-minimizing geodesic connects $u$ and $W_{\theta}(a)$, we conclude that the metric projection $P:=P_{W_{\theta}(a)}: A \rightarrow W_{\theta}(a)$ is a well-defined (single valued) map. Since by assumption $f\left(W_{\theta}(a)\right) \subseteq A$, we have that $\left.P \circ f\right|_{W_{\theta}(a)}$ is a well-defined continuous map.

    Take any $u \in W_{\theta}(a)$, since $f(u) \in A$ the $\varepsilon$-neighborhood of $W_{\theta}(a)$, we have $\angle(f(u), P(f(u)))\leqslant\varepsilon$. Therefore $\angle(u, P(f(u))) \leqslant \angle(u, f(u))+\angle(f(u), P(f(u)))<\varepsilon+\varepsilon<2 \theta$. Denote by $u^{\prime}$ the antipodal point of $u$ in $W_{\theta}(a) \equiv \mathbb{S}^{n-1}$, then $\angle\left(u, u^{\prime}\right)=2 \theta$. Therefore $\forall u \in W_{\theta}(a)$ we have $P(f(u)) \neq u^{\prime}$. Therefore $\left.P\circ f\right|_{W_{\theta}(a)}$ is homotopic to identity. By Brouwer's theorem $\left.P \circ f\right|_{W_{\theta}(a)} \simeq id_{\mathbb{S}^{n-1}}$ is not null-homotopic, this gives an obstruction for extending $\left.P\circ f\right|_{W_{\theta}(a)}$ to $V_{\theta}(a)$. 
    
    Since $X \simeq W_{\theta}(a)$, it is easy to check that $P$ extends continuously to a deformation retraction $\tilde{P}: X \rightarrow W_{\theta}(a)$. Since by assumption $b,c\notin f\left(V_{\theta}(a)\right)$, i.e. $f\left(V_{\theta}(a)\right)\supseteq X$, we obtain that $\tilde{P}\circ f$ is a well-defined continuous map extends $\left.P \circ f\right|_{W_{\theta}(a)}$, contradiction.

    This proves that $f\left(V_{\theta}(a)\right) \supseteq V_{\theta-\varepsilon}(a)$.
\end{proof}

The next lemma describes the size of a special continuous family of half-lines parametrized by a disk.

\begin{proposition}\label{prop5_3}
    Take continuous maps $\alpha, \beta: \mathbb{D}^{n} \rightarrow \mathbb{R}^{n}$ s.t. $\left.\alpha\right|_{\partial \mathbb{D}^{n}}=I d$, and for any $x \in \mathbb{D}^{n}, \gamma_{x}(t):=(t, t \alpha(x)+\beta(x)), t \in \mathbb{R}_{+}$, then $\bigcup\limits_{x \in \mathbb{D}^{n}} \gamma_{x}$ contains an unbounded open subset of $\mathbb{R}^{n+1}$.
\end{proposition}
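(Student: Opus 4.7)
The plan is to exhibit explicitly an unbounded open subset of $\mathbb{R}^{n+1}$ contained in $\bigcup_{x \in \mathbb{D}^n} \gamma_x$. Since $\mathbb{D}^n$ is compact and $\beta$ is continuous, $M := \sup_{x \in \mathbb{D}^n} \|\beta(x)\| < +\infty$. I claim that the open cone
$$U := \left\{(t,y) \in \mathbb{R}_+ \times \mathbb{R}^n \mid t > 2M,\ \|y\| < t/2\right\}$$
is contained in $\bigcup_{x \in \mathbb{D}^n} \gamma_x$. Since $U$ is obviously open in $\mathbb{R}^{n+1}$ and unbounded (letting $t \to \infty$), this would finish the proof.

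To prove the containment, I would fix $(t_0, y_0) \in U$ and set $\xi := y_0/t_0$, so $\|\xi\| < 1/2$. Note that $(t_0, y_0) \in \gamma_x$ if and only if $t_0\alpha(x) + \beta(x) = y_0$, equivalently $F_{t_0}(x) = \xi$, where $F_t : \mathbb{D}^n \to \mathbb{R}^n$ is the continuous map $F_t(x) := \alpha(x) + \beta(x)/t$. So the task reduces to showing $\xi \in F_{t_0}(\mathbb{D}^n)$.

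The key step is a standard topological degree / Brouwer-type argument, morally the same as Lemma \ref{lemma5_2}. The hypothesis $\alpha|_{\partial \mathbb{D}^n} = \mathrm{Id}$ gives $F_t|_{\partial \mathbb{D}^n}(x) = x + \beta(x)/t$, a small perturbation of identity for $t$ large. Consider the linear homotopy $h_s(x) := x + s\beta(x)/t$ from $\mathrm{Id}$ to $F_t|_{\partial \mathbb{D}^n}$. For $x \in \partial \mathbb{D}^n$ and $s \in [0,1]$,
$$\|h_s(x) - \xi\| \geq \|x\| - \|\xi\| - s\|\beta(x)\|/t \geq 1 - \|\xi\| - M/t > 0$$
once $t > 2M$ (since then $\|\xi\| < 1/2$ and $M/t < 1/2$). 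Hence $F_t|_{\partial \mathbb{D}^n}$ is homotopic to $\mathrm{Id}$ as a map $\partial \mathbb{D}^n \to \mathbb{R}^n \setminus \{\xi\}$, so it has degree $1$ with respect to $\xi$. If $\xi$ were missed by $F_t$ on all of $\mathbb{D}^n$, then $F_t$ itself would restrict to a null-homotopy of $F_t|_{\partial \mathbb{D}^n}$ inside $\mathbb{R}^n \setminus \{\xi\}$, contradicting nonvanishing degree. Therefore $\xi \in F_{t_0}(\mathbb{D}^n)$, giving $(t_0, y_0) \in \bigcup_x \gamma_x$ as required.

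The only genuine obstacle is formalising the degree / non-extension step; this is classical algebraic topology, and one can alternatively package it as Brouwer's fixed-point theorem applied to a suitable retraction, or transfer the problem to the direction sphere at infinity and invoke Lemma \ref{lemma5_2} verbatim.
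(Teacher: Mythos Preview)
Your argument is correct. Both your proof and the paper's rest on the same topological core --- a degree/non-retraction obstruction saying that a map close to the identity on $\partial\mathbb{D}^n$ must be surjective onto a smaller ball --- but the packaging differs. The paper transfers everything to the direction sphere: it conjugates $\alpha$ to a map $\alpha_1^*: V_{\pi/4}(a)\to\mathbb{S}^n_+$ via radial projection, intersects each ray $\gamma_u$ with $\partial B_\rho^{n+1}$ to define $\psi_\rho$, and then invokes Lemma~\ref{lemma5_2} to conclude $\psi_\rho(V_{\pi/4}(a))\supseteq V_{\pi/6}(a)$ for large $\rho$; the resulting open set is the truncated cone $\{tu : \angle(u,a)<\pi/6,\ t>R\}$. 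You instead stay in the affine chart, fix the $t$-slice, and reduce to showing $F_t(x)=\alpha(x)+\beta(x)/t$ hits every $\xi$ with $\|\xi\|<1/2$ via the straight-line homotopy and degree.

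Your route is shorter and avoids the spherical change of coordinates entirely; the paper's route has the virtue of isolating the topological input as a reusable lemma (Lemma~\ref{lemma5_2}), which it then also feeds into Proposition~\ref{prop5_4}. You even anticipate this at the end: your remark about ``transferring to the direction sphere at infinity and invoking Lemma~\ref{lemma5_2} verbatim'' is precisely what the paper does.
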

\begin{proof}
    Denote $\mathbb{S}_{+}^{n}:=\left\{(h, x) \in \mathbb{R} \times \mathbb{R}^{n} \mid\|(h, x)\|=1, h>0\right\}, \Pi:=\left\{(h, x) \in \mathbb{R} \times \mathbb{R}^{n} \mid h=1\right\}$, $D:=\{(1, x) \mid\|x\| \leqslant 1\}$ and $a:=(1,0) \in D \subseteq \Pi$, Define $\alpha_{1}: D \rightarrow \Pi$ by $\alpha_{1}(1, x):=(1, \alpha(x)), \beta_{0}: D \rightarrow \mathbb{R}^{n+1}$ by $\beta_{0}(1, x)=(0, \beta(x))$, and $\pi: \Pi \longrightarrow \mathbb{S}_{+}^{n}$ by $\pi(p):=\dfrac{p}{\|p\|}$, then $\pi$ is a homeomorphism s.t. $\pi(D)=V_{\frac{\pi}{4}}(a) \subseteq \mathbb{S}_{+}^{n}$ and induces $\alpha_{1}^{*}:=\pi \circ \alpha_{1} \circ \pi^{-1}: V_{\frac{\pi}{4}}(a) \longrightarrow \mathbb{S}_{+}^{n}$. Apply the definitions above we rewrite $\gamma_{x}(t)=t \alpha_{1}(1, x)+\beta_{0}(1, x)=t \pi \circ \alpha_{1}(1, x)+\beta_{0}(1, x)=t\alpha_{1}^{*}(\pi(1, x))+\beta_{0} \pi^{-1}(\pi(1, x))$. For any $u \in V_{\frac{\pi}{4}}(a)$ denote $\gamma_{u}(t):=t \alpha_{1}^{*}(u)+\beta_{0} \pi^{-1}(u), t \in \mathbb{R}_{+}$, then $\left\{\gamma_{x}\right\}_{x \in \mathbb{D}^{n}}$ is in $1-1$ correspondence with $\left\{\gamma_{u}\right\}_{u \in V_{\frac{\pi}{4}}(a)}$, and hence $\bigcup\limits_{x \in \mathbb{D}^{n}} \gamma_{x}=\bigcup\limits_{u \in V_{\frac{\pi}{4}}(a)} \gamma_{u}$.

    Since $\beta_{0} \pi^{-1}\left(V_{\frac{\pi}{4}}(a)\right)=\beta_{0}(D)=\beta\left(\mathbb{D}^{n}\right) \times\{0\}$ is compact, there exists $r>0$ s.t. $B_{r}^{n+1}\supseteq \beta_{0} \pi^{-1}\left(V_{\frac{\pi}{4}}(a)\right)$. Take $\rho \geqslant r$ then by Jordan's curve theorem $\forall\ u \in V_{\frac{\pi}{4}}(a)$, $\gamma_{u} \pitchfork \partial B_{\rho}^{n+1}$ at an unique point $p_{u, \rho}$. Define $\psi_{\rho}: V_{\frac{\pi}{4}}(a) \longrightarrow \mathbb{S}^{n}$ by $\psi_{\rho}(u):=\dfrac{p_{u, \rho}}{\left\|p_{u, \rho}\right\|}=\dfrac{1}{\rho} p_{u, \rho}$ then it is easy to check that $\psi_{\rho}$ is continuous.

    By the construction of $\psi_{\rho}, \forall\ \varepsilon>0, \exists\ R \geqslant r$ s.t. $\forall\ \rho>R$ for any $u \in V_{\frac{\pi}{4}}(a)$ we have $\angle\left(\alpha_{1}^{*}(u), \psi_{\rho}(u)\right)<\varepsilon$. Take $\varepsilon=\dfrac{\pi}{12}$, then for any $\rho>R$ we have $\psi_{\rho}\left(V_{\frac{\pi}{4}}(a)\right) \subseteq \bigcup\limits_{u \in \mathbb{S}_{+}^{n}} V_{\frac{\pi}{12}}(u)=V_{\frac{7 \pi}{12}}(a)$, in particular we have $-a\notin \psi_{\rho}\left(V_{\frac{\pi}{4}}(a)\right)$. Since $\forall\ x \in \partial \mathbb{D}^{n}, \alpha_{1}(1, x)=(1, \alpha(x))=(1, x)$ i.e. $(1,x)$ is a fixed point of $\alpha_1$, and $\alpha_{1}^{*}$ is topological conjugate with $\alpha_1$, we conclude that $\forall\ u \in W_{\frac{\pi}{4}}(a)$, $\alpha_{1}^{*}(u)=u$. Therefore for any $\rho>R, \forall\ u \in W_{\frac{\pi}{4}}(a)$ we have $\angle\left(u, \psi_{\rho}(u)\right)=\angle\left(\alpha_{1}^{*}(u), \psi_{\rho}(u)\right)<\dfrac{\pi}{12}$. By Lemma \ref{lemma5_2} we have that $\psi_{\rho}\left(V_{\frac{\pi}{4}}(a)\right) \supseteq V_{\frac{\pi}{6}}(a)$. Therefore $\bigcup\limits_{x \in \mathbb{D}^{n}} \gamma_{x}=\bigcup\limits_{u \in V_{\frac{\pi}{4}}(u)} \gamma_{u}\supseteq \bigcup\limits_{\rho>R} \psi_{\rho}\left(V_{\frac{\pi}{4}}(a)\right)\supseteq \bigcup\limits_{\rho>R} \rho V_{\frac{\pi}{6}}(a) \supseteq\left\{t u \mid \angle(u, a)<\frac{\pi}{6}, t>0\right\}\backslash \bar{B}_{R}$, where $\left\{t u \mid \angle(u, a)<\dfrac{\pi}{6}, t>0\right\} \backslash \bar{B}_{R}$ is clearly an unbounded open subset of $\mathbb{R}^{n+1}$.
\end{proof}

By considering the family of lines in projective spaces and passing to an another affine chart, we obtain the following dual proposition.

\begin{proposition}\label{prop5_4}
    Take continuous map $\beta: \mathbb{D}^{n} \rightarrow \mathbb{R}^{n}, f \in C\left(\mathbb{D}^{n}\right)$ s.t. $\left.f \right|_{\partial \mathbb{D}^{n}} \equiv\text{constant}$, and for any $x \in \mathbb{D}^{n}, \ell_{x}(t):=(t+f(x), t \beta(x)+x), t \in \mathbb{R}_{+}$, then $\operatorname{int}\left(\bigcup\limits_{x \in \mathbb{D}^{n}} \ell_{x}\right) \neq \varnothing$.
\end{proposition}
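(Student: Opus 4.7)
The plan is to realize the family $\{\ell_x\}_{x\in\mathbb{D}^n}$ as the image, under a projective change of affine chart on $\mathbb{P}^{n+1}$, of a family of Proposition~\ref{prop5_3}-type half-lines, and then to invoke (a mild quantitative strengthening of) Proposition~\ref{prop5_3}. After a harmless translation in the $h$-coordinate, the boundary constant $c := f|_{\partial\mathbb{D}^n}$ may be taken to be $0$; fix any $c_0 < \min f$, so that $t + f(x) - c_0 > 0$ for every $x \in \mathbb{D}^n$ and every $t \geqslant 0$. Then
\[
\tau_{c_0}\colon (h, y) \longmapsto \bigl(1/(h - c_0),\; y/(h - c_0)\bigr)
\]
is a homeomorphism of the half-space $\{h > c_0\}$, which contains $\bigcup_x \ell_x$, onto $\{s > 0\}$, and so preserves the property of having nonempty interior.

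A direct substitution $s = 1/(t + f(x) - c_0)$ presents $\tau_{c_0}\circ \ell_x$ as the arc
\[
s \longmapsto \bigl(s,\; s\tilde\alpha(x) + \tilde\beta(x)\bigr),\qquad s \in \bigl(0,\; 1/(f(x) - c_0)\bigr],
\]
of a Proposition~\ref{prop5_3}-style half-line with $\tilde\alpha(x) = x + (c_0 - f(x))\beta(x)$ and $\tilde\beta(x) = \beta(x)$. The key observation is that $f$ vanishes on $\partial\mathbb{D}^n$, so the boundary restriction $\tilde\alpha|_{\partial\mathbb{D}^n}(x) = x + c_0\beta(x)$ is a bounded continuous perturbation of the identity. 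Inspection of the proof of Proposition~\ref{prop5_3} shows that the identity hypothesis on $\alpha|_{\partial\mathbb{D}^n}$ is used only through the angular bound $\angle(u,\psi_\rho(u)) < \pi/12$ needed by Lemma~\ref{lemma5_2}; the argument therefore carries over provided the lift of $\tilde\alpha|_{\partial\mathbb{D}^n}$ to $W_{\pi/4}(a)$ via the radial projection $\pi$ stays within angle, say, $\pi/24$ of the identity. Choosing $c_0$ sufficiently close to $\min f$ (and, if necessary, applying a preliminary linear rescaling of the ambient $\mathbb{R}^{n+1}$) secures this quantitative condition.

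The principal technical obstacle is that $s$ only runs over the \emph{bounded} interval $\bigl(0,\, 1/(f(x) - c_0)\bigr]$ rather than the full half-line $[0, \infty)$ used in Proposition~\ref{prop5_3}: the unbounded open cone that the proof of Proposition~\ref{prop5_3} produces at $\rho \to \infty$ is not automatically contained in the truncated image. This is resolved by noting that Lemma~\ref{lemma5_2} already produces the geodesic disk $V_{\pi/6}(a)$ of exit directions from a \emph{single} admissible value of the exit radius $\rho$, so one may run the argument for any $\rho$ in the nonempty range of admissible values allowed by the truncation; this still yields a bounded open subset of $\tau_{c_0}(\bigcup_x \ell_x)$, whose preimage under the homeomorphism $\tau_{c_0}^{-1}$ is the desired nonempty open subset of $\bigcup_x \ell_x$.
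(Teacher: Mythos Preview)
Your overall strategy---invert projectively to reduce to the half-line family of Proposition~\ref{prop5_3}---is exactly the paper's, but your choice of inversion center creates a gap that the proposed fixes do not close. After normalizing $f|_{\partial\mathbb D^n}=0$ you must take $c_0<\min f\le 0$, so on $\partial\mathbb D^n$ your $\tilde\alpha(x)=x+c_0\beta(x)$ carries a perturbation of size $|c_0|\,\|\beta\|_\infty\ge|\min f|\,\|\beta\|_\infty$, a fixed quantity depending only on the data and potentially large. A direct check shows that scaling the $h$-axis sends $(f,\beta,c_0)\mapsto(\lambda f,\beta/\lambda,\lambda c_0)$, leaving $c_0\beta$ invariant; scaling the $y$-axes replaces $\mathbb D^n$ by $\lambda\mathbb D^n$, and the boundary angle then tends (as $\lambda\to\infty$) to $\arctan(|c_0|\,\|\beta\|_\infty)$ rather than to $0$ (take e.g.\ $\beta\equiv Me_1$ and a boundary point orthogonal to $e_1$). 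So neither ``$c_0\to\min f$'' nor a linear rescaling forces the lift of $\tilde\alpha|_{\partial\mathbb D^n}$ into the small angular tube required by Lemma~\ref{lemma5_2}. The truncation paragraph has the same defect: the endpoint of your transformed arc is $(1,x)/(f(x)-c_0)$, so the largest admissible exit radius is at most $\sqrt2/(\max f-c_0)\le\sqrt2/(\max f-\min f)$, while the threshold $R$ from Proposition~\ref{prop5_3} is of order $\|\beta\|_\infty/\varepsilon$; these two scales are independent and the window $(R,\rho_{\max}]$ can be empty.

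The paper's remedy is to center the inversion at the \emph{boundary value} of $f$, i.e.\ effectively $c_0=0$. Then $\alpha(x)=x-f(x)\beta(x)$ restricts to the identity on $\partial\mathbb D^n$ exactly, so Proposition~\ref{prop5_3} applies verbatim with no quantitative slack and no need to reopen its proof. The cost is that $(h,y)\mapsto(1/h,y/h)$ is undefined on $\{h=0\}$; the paper absorbs this by regarding the map as the transition between two affine charts $U_0,U_1$ of $\mathbb P^{n+1}$, extending each $\ell_x$ to the full projective line $\bar\ell_x$ (which in $U_1$ appears as the \emph{untruncated} line $\lambda\mapsto(\lambda,\lambda\alpha(x)+\beta(x))$), applying Proposition~\ref{prop5_3} there, and then intersecting the resulting open set back with the dense chart $U_0$.
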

\begin{proof}
    Wlog we assume that $\left.f\right|_{\partial \mathbb{D}^{n}} \equiv 0$.
    
    Identify $\mathbb{R}^{n+1}=\left\{\left(a^{1}, \cdots, a^{n+1}\right) \mid a^{i} \in \mathbb{R}\right\} \equiv\left\{\left[1: a^{1}: \cdots: a^{n+1}\right] \mid a^{i} \in \mathbb{R}\right\}=U_{0} \hookrightarrow \mathbb{P}^{n+1}$ with the standard affine chart and denote $U_{1}:=\left\{\left[a^{0}: 1 : a^{2}: \cdots: a^{n+1}\right] \mid a^{i} \in \mathbb{R}\right\}$.

    Denote $\Pi_{0}:=\mathbb{P}^{n+1} \backslash U_{1}=\left\{\left[1: 0: a^{2}: \cdots: a^{n+1}\right] \mid a^{i} \in \mathbb{R}\right\} \subseteq U_{0}$ and $\Pi_{1}:=\mathbb{P}^{n+1} \backslash U_{0}=\left\{\left[0: 1:\right.\right.$ $\left.\left.a^{2}:\cdots: a^{n+1}\right]\mid a^{i} \in \mathbb{R}\right\}\subseteq U_{1}$. In homogeneous coordinates we have $\beta\left(\left[1: 0: x^{1}: \cdots: x^{n}\right]\right)=\left[0: 1: \beta^{1}(x): \cdots: \beta^{n}(x)\right]$ and $\Gamma_{f}=\left\{\left[1: f(x): x^{1}: \cdots: x^{n}\right] \mid x \in \mathbb{D}^{n}\right\}$. For any $x \in \mathbb{D}^{n}$, define $\bar{\ell}_{x}: \mathbb{P}^{1} \longrightarrow \mathbb{P}^{n+1}$ by $\bar{\ell}_{x}([\lambda: \mu]):=\left[\lambda: \lambda f(x)+\mu: \lambda x^{1}+\mu \beta^{1}(x): \cdots : \lambda x^{n}+\mu \beta^{n}(x)\right]$, then $\bar{\ell}_{x} \cong \mathbb{P}^{1}$ is a projective line in $\mathbb{P}^{n+1}$.

    Solve $\lambda f(x)+\mu=1$ we obtain $\mu=1-\lambda f(x)$. Therefore $\bar{\ell}_{x} \cap U_{1}=\left\{\left[\lambda: 1: \lambda x^{1}+(1-\lambda f(x)) \beta^{1}(x)\right.\right.$ $\left.\left.:\cdots: \lambda x^{n}+(1-\lambda f(x)) \beta^{n}(x)\right]\mid \lambda \in \mathbb{R}\right\}=\left\{\left[\lambda: \ 1: \lambda\left(x^{1}-f(x) \beta^{1}(x)\right)+\beta^{1}(x): \cdots: \lambda\left(x^{n}-f(x)\right.\right.\right.$\\ $\left.\left.\left. \beta^{n}(x)\right)+\beta^{n}(x)\right]\mid \lambda \in \mathbb{R}\right\}\equiv\{(\lambda, \lambda(x-f(x) \beta(x))+\beta(x)) \mid \lambda \in \mathbb{R}\}$. Denote $\alpha(x):=x-f(x) \beta(x)$ then $\alpha: \mathbb{D}^{n} \rightarrow \mathbb{R}^{n}$ is a continuous map s.t. $\forall\ x \in \partial \mathbb{D}^{n}, \alpha(x)=x-0 \cdot \beta(x)=x$. Therefore by Proposition \ref{prop5_3} we have $\left(\bigcup\limits_{x \in \mathbb{D}^{n}} \bar{\ell}_{x}\right) \cap U_{1}$ has non empty interior in $\mathbb{R}^{n+1} \equiv U_{1}$ and hence has non empty interior in $\mathbb{P}^{n+1}$. In particular $\bigcup\limits_{x \in \mathbb{D}^{n}} \bar{\ell}_{x}$ has non empty interior in $\mathbb{P}^{n+1}$.

    Since $U_{0}$ is a dense open subset of $\mathbb{P}^{n+1}$, we have that $\left(\bigcup\limits_{x \in \mathbb{D}^{n}} \bar{\ell}_{x}\right) \cap U_{0}$ has non empty interior in $U_{0} \equiv \mathbb{R}^{n+1}$. Since $\bar{\ell}_{x} \cap U_{0}=\left\{\left[1:f(x)+\mu: x^{1}+\mu \beta^{1}(x): \cdots: x^{n}+\mu \beta^{n}(x)\right]\mid \mu \in \mathbb{R}\right\}\equiv\{(\mu+f(x), \mu \beta(x)+x) \mid \mu \in \mathbb{R}\}=\ell_{x}$. We conclude that $\bigcup\limits_{x \in \mathbb{D}^{n}} \ell_{x}=\bigcup\limits_{x \in \mathbb{D}^{n}}\left(\bar{\ell}_{x} \cap U_{0}\right)=\left(\bigcup\limits_{x \in \mathbb{D}^{n}} \overline{\ell}_{x}\right) \cap U_{0}$ has non empty interior in $\mathbb{R}^{n+1}$.
\end{proof}

In order to reformulate the propositions above using the language of fiber bundles, we introduce the following notion of transversality.

\begin{definition}\label{defi5_5}
    Let $E$ be a $k$-vector bundle over a subset $X$ of $\mathbb{R}^{n}$ defined by $\varphi: X \rightarrow\operatorname{Gr}_{k}(n)$, and $V$ be a $(n-k)$-subspace of $\mathbb{R}^{n}$, then $E$ is said to be transverse to $V$ (denote by $E \pitchfork V)$ iff $\forall\ x \in X\ \varphi(x) \pitchfork V$.
\end{definition}

\begin{theorem}\label{theo5_6}
    Let $\mathscr{L}$ be a line bundle over $\Gamma_{f}$ where $f \in C\left(\mathbb{D}^{n}\right)$ with $\left.f\right|_{\partial \mathbb{D}^{n}} \equiv\text{constant}$, if $\mathscr{L} \pitchfork \mathbb{R}^{n} \times\{0\}$, then $\operatorname{int}\left(\Delta_{*} \mathscr{L}\right) \neq \varnothing$.
\end{theorem}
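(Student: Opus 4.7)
The plan is to reduce the statement directly to Proposition \ref{prop5_4} by extracting a convenient continuous generator of the line bundle from the transversality hypothesis.

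First, I would parametrize $\Gamma_{f}$ globally by $\mathbf{r}: \mathbb{D}^{n} \to \Gamma_{f}$, $x \mapsto (x, f(x))$, and rewrite the defining map $\varphi: \Gamma_{f} \to \operatorname{Gr}_{1}(n+1) = \mathbb{P}^{n}$ as $\widetilde{\varphi} := \varphi \circ \mathbf{r}: \mathbb{D}^{n} \to \mathbb{P}^{n}$. The transversality hypothesis $\mathscr{L} \pitchfork \mathbb{R}^{n}\times\{0\}$ says that for every $x \in \mathbb{D}^{n}$, the line $\widetilde{\varphi}(x)$ meets the hyperplane $\mathbb{R}^{n}\times\{0\}$ only at the origin; equivalently, $\widetilde{\varphi}$ factors through the standard affine chart $U := \{[v:1] \mid v \in \mathbb{R}^{n}\} \subseteq \mathbb{P}^{n}$ obtained by normalising the last coordinate to $1$. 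Since $U \cong \mathbb{R}^{n}$ via the continuous homeomorphism $[v:1] \mapsto v$, this produces a continuous map $\beta: \mathbb{D}^{n} \to \mathbb{R}^{n}$ such that the vector $(\beta(x), 1) \in \mathbb{R}^{n}\times\mathbb{R}$ generates $\widetilde{\varphi}(x)$ for every $x$.

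Next I would unwind the definition of $\Delta_{*}$. By property ii) in Section 1.6,
\[
\Delta_{*}\mathscr{L} \;=\; \bigcup_{x \in \mathbb{D}^{n}} \bigl((x, f(x)) + \widetilde{\varphi}(x)\bigr) \;=\; \bigl\{(x + t\beta(x),\, f(x) + t) \,\big|\, x \in \mathbb{D}^{n},\, t \in \mathbb{R}\bigr\}.
\]
In particular, restricting to $t \geqslant 0$,
\[
\Delta_{*}\mathscr{L} \;\supseteq\; \bigl\{(x + t\beta(x),\, f(x) + t) \,\big|\, x \in \mathbb{D}^{n},\, t \in \mathbb{R}_{+}\bigr\}.
\]

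Finally, I would apply Proposition \ref{prop5_4}. Let $\tau: \mathbb{R}^{n}\times\mathbb{R} \to \mathbb{R}\times\mathbb{R}^{n}$ be the linear homeomorphism swapping the two factors, $\tau(u, s) = (s, u)$. Then $\tau(x + t\beta(x), f(x) + t) = (t + f(x), t\beta(x) + x) = \ell_{x}(t)$ in the notation of Proposition \ref{prop5_4}, and the hypotheses of that proposition (continuity of $\beta$, continuity of $f$, and $f|_{\partial \mathbb{D}^{n}} \equiv \text{constant}$) are exactly ours. Hence $\tau\bigl(\Delta_{*}\mathscr{L}\bigr) \supseteq \bigcup_{x \in \mathbb{D}^{n}} \ell_{x}$ has non-empty interior in $\mathbb{R}^{n+1}$, and since $\tau$ is a homeomorphism, so does $\Delta_{*}\mathscr{L}$.

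The only non-routine step is the first one: verifying that the transversality hypothesis really gives a \emph{continuous} section $\beta$, which I would justify by observing that $\mathbb{P}^{n} \setminus [\mathbb{R}^{n}\times\{0\}]$ coincides with the last-coordinate affine chart and that restriction of $\widetilde{\varphi}$ to this chart is continuous with values in $\mathbb{R}^{n}$. Everything else is a direct substitution into Proposition \ref{prop5_4}.
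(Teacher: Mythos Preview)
Your proposal is correct and is exactly the argument the paper has in mind: the paper's proof is the single line ``Coordinate-free restatement of Proposition \ref{prop5_4} using Definition \ref{defi5_5}'', and you have simply written out what that restatement entails --- extracting a continuous $\beta$ from the transversality hypothesis via the last-coordinate affine chart, rewriting $\Delta_{*}\mathscr{L}$ as the family $(x+t\beta(x),\,f(x)+t)$, and swapping the two factors to match the notation of Proposition \ref{prop5_4}.
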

\begin{proof}
    Coordinate-free restatement of Proposition \ref{prop5_4} using Definition \ref{defi5_5}.
\end{proof}

For simplicity, for any subset $X$ of $\mathbb{R}^{n}$ and $f \in C(X)$ we introduce notation $P_{f}:=i d_{X} \times 0: \Gamma_{f} \longrightarrow X \times\{0\}$ for the projection $(x, f(x)) \longmapsto(x, 0)$. Since it is clear that $P_{f}$ is a homeomorphism, pullback by $P_{f}^{*}$ is a $1-1$ correspondence, i.e. all line bundles over $\Gamma_{f}$ have the form $P^{*}_f \mathscr{L}$ where $\mathscr{L}$ is a line bundle over $X \times\{0\}$, and vice versa.

Therefore the following reformulation is also useful:
\begin{corollary}\label{coro5_7}
    Let $\mathscr{E}$ be a line bundle over $\mathbb{D}^{n} \times\{0\} \subseteq \mathbb{R}^{n} \times \mathbb{R}$ s.t. $\mathscr{E} \pitchfork \mathbb{R}^{n} \times\{0\}$ then $\forall\ f \in C\left(\mathbb{D}^{n}\right)$ with $\left.f\right|_{\partial \mathbb{D}^{n}} \equiv\text{constant}$, we have $\operatorname{int}\left(\Delta_{*} P_{f}^{*} \mathscr{E}\right) \neq \varnothing$.
\end{corollary}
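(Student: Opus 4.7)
The plan is to apply Theorem \ref{theo5_6} after observing that $P_{f}^{*}\mathscr{E}$ is a line bundle over $\Gamma_{f}$ satisfying the transversality hypothesis. First I would verify that the pullback is legitimate: the map $P_{f}: \Gamma_{f} \to \mathbb{D}^{n}\times\{0\}$ is a homeomorphism (with continuous inverse $(x,0)\mapsto(x,f(x))$, since $f$ is continuous), so if $\varphi: \mathbb{D}^{n}\times\{0\}\to \operatorname{Gr}_{1}(n+1)$ is the continuous classifying map for $\mathscr{E}$, then $\varphi\circ P_{f}:\Gamma_{f}\to\operatorname{Gr}_{1}(n+1)$ is continuous and defines $P_{f}^{*}\mathscr{E}$ as an embedded line bundle over $\Gamma_{f}$.

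Next I would check that the transversality condition survives pullback. By construction, the fiber of $P_{f}^{*}\mathscr{E}$ at $(x,f(x))$ is $\varphi(P_{f}(x,f(x)))=\varphi(x,0)$, i.e. literally the same line in $\mathbb{R}^{n+1}$ as the fiber of $\mathscr{E}$ at $(x,0)$. Hence the condition $\mathscr{E}\pitchfork \mathbb{R}^{n}\times\{0\}$, expressed fiberwise via Definition \ref{defi5_5}, is equivalent to $P_{f}^{*}\mathscr{E}\pitchfork \mathbb{R}^{n}\times\{0\}$. With both hypotheses of Theorem \ref{theo5_6} verified for $\mathscr{L}:=P_{f}^{*}\mathscr{E}$, the conclusion $\operatorname{int}(\Delta_{*}P_{f}^{*}\mathscr{E})\neq\varnothing$ follows at once.

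There is no real obstacle to overcome here: the statement is a bookkeeping reformulation that transfers the base of the bundle from $\Gamma_{f}$ to the flat disk $\mathbb{D}^{n}\times\{0\}$ via the projection homeomorphism $P_{f}$. All the substantive work, namely the algebraic-topological estimate on the image of a family of half-lines (Proposition \ref{prop5_3}) and its projective-dual counterpart (Proposition \ref{prop5_4}), has already been performed in the proof of Theorem \ref{theo5_6}.
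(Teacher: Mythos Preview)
Your proposal is correct and matches the paper's treatment: the paper presents Corollary \ref{coro5_7} merely as a ``useful reformulation'' of Theorem \ref{theo5_6} without a separate proof, and your argument spells out precisely why the reformulation is immediate---the classifying map of $P_{f}^{*}\mathscr{E}$ at $(x,f(x))$ equals $\varphi(x,0)$, so transversality is inherited fiberwise and Theorem \ref{theo5_6} applies.
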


\subsection{Line Bundle over Hyperplanes, Convex Curves, and Spheres}
Now we shall discuss the application of the theorems above.

We start with the line bundles over linear hypersurfaces and, in particular, hyperplanes.
\begin{theorem}\label{theo5_8}
    Let $M$ be a linear hypersurface and $\mathscr{L}$ be a line bundle over $M$, then $\operatorname{int}\left(\Delta_{*} \mathscr{L}\right) \neq \varnothing$ iff $\mathscr{L}$ is not tangent to $M$.
\end{theorem}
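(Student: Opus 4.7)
The plan is to prove the two directions separately. The ``only if'' direction is essentially a restatement of Proposition \ref{prop2_5}: if $\mathscr{L}$ is tangent to $M$, writing $\varphi: M \to \mathbb{P}^{n}$ for the classifying map and $\Pi$ for the affine hyperplane containing $M$, each line $p + \varphi(p)$ lies in $\Pi$ because $p \in \Pi$ and $\varphi(p)$ is parallel to $\Pi$; hence $\Delta_{*}\mathscr{L} \subseteq \Pi$, which has empty interior in $\mathbb{R}^{n+1}$.

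For the ``if'' direction, I would reduce to Corollary \ref{coro5_7}. Assume $\mathscr{L}$ is not tangent, so by Definition \ref{defi4_1} there exists $p_{0} \in M$ with $\varphi(p_{0}) \not\subseteq T_{p_{0}}M$. After a rigid motion and translation I may assume $\Pi = \mathbb{R}^{n} \times \{0\} \subseteq \mathbb{R}^{n+1}$ and $p_{0} = 0$, whereupon $T_{p_{0}}M$ is identified with $\mathbb{R}^{n} \times \{0\}$ and the line $\varphi(p_{0})$ satisfies $\varphi(p_{0}) \pitchfork \mathbb{R}^{n} \times \{0\}$ because the dimensions are complementary. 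Transversality between a line and a codimension-one subspace is an open condition on $\mathbb{P}^{n}$, and $\varphi$ is continuous, so I can choose a closed ball $D \subseteq M$ around $p_{0}$ on which $\varphi(p) \pitchfork \mathbb{R}^{n} \times \{0\}$ for every $p \in D$; after a homothety $D$ becomes $\mathbb{D}^{n} \times \{0\}$. I would then apply Corollary \ref{coro5_7} to the restricted bundle $\mathscr{E} := \mathscr{L}|_{D}$ with the choice $f \equiv 0$, which is constant on $\partial \mathbb{D}^{n}$ and makes $P_{f}$ the identity, so $P_{f}^{*}\mathscr{E} = \mathscr{E}$. The corollary yields $\operatorname{int}(\Delta_{*}\mathscr{E}) \neq \varnothing$, and since $\Delta_{*}\mathscr{L} \supseteq \Delta_{*}\mathscr{E}$, the conclusion $\operatorname{int}(\Delta_{*}\mathscr{L}) \neq \varnothing$ follows.

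The main obstacle I anticipate is purely the set-up: executing the rigid motion, translation, and homothety so that an open neighborhood of $p_{0}$ in $M$ becomes exactly $\mathbb{D}^{n} \times \{0\}$ in a way compatible with the hypothesis of Corollary \ref{coro5_7}, and confirming that transversality persists continuously. Once this normalization is fixed the substantive content is already carried by Propositions \ref{prop5_3}--\ref{prop5_4}, whose algebraic-topological input powers Corollary \ref{coro5_7}; nothing new beyond continuity of $\varphi$ and openness of $M$ in its ambient hyperplane is needed.
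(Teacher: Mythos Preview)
Your proposal is correct and follows essentially the same approach as the paper's proof: the contrapositive direction uses that a tangent line bundle has codiagonal contained in the ambient hyperplane, and the forward direction normalizes so that $M \subseteq \mathbb{R}^{n}\times\{0\}$, uses continuity of $\varphi$ to obtain transversality on a disk $\mathbb{D}^{n}\times\{0\}$, and then applies Corollary~\ref{coro5_7} with $f\equiv 0$ (so $P_{f}$ is the identity). The only cosmetic difference is that the paper phrases the tangent case via $\Delta_{*}\mathscr{L}\subseteq \Delta_{*}TM$ rather than directly via $\Pi$, which amounts to the same observation.
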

\begin{proof}
    Wlog assume that $M:=\Omega \times\{0\} \subseteq \mathbb{R}^{n} \times \mathbb{R}$ where $\Omega$ is an open subset of $\mathbb{R}^{n}$.

    $\Longrightarrow$: Provided that $\mathscr{L}$ is tangent to $M$, then $\Delta_{*} \mathscr{L} \subseteq \Delta_{*} T M=\mathbb{R}^{n} \times\{0\}$ and hence $\operatorname{dim}_{\mathcal{H}} \Delta_{*} \mathscr{L} \leqslant \operatorname{dim} \mathbb{R}^{n}=n$, in particular $\operatorname{int}\left(\Delta_{*} \mathscr{L}\right)=\varnothing$.

    $\Longleftarrow$: Denote by $\varphi: M \rightarrow \mathbb{P}^{n}$ the continuous map defines $\mathscr{L}$, then by assumption $\exists\ x_{0} \in \Omega$, s.t. $\varphi\left(x_{0}, 0\right) \pitchfork \mathbb{R}^{n} \times\{0\}$. By continuity, up to rigid motion and homothety we assume $\mathbb{D}^{n} \subseteq \Omega$ and $\forall\ x \in \mathbb{D}^{n}, \varphi(x, 0) \pitchfork \mathbb{R}^{n} \times\{0\}$. Take $f \equiv 0$ then $P_{f}=I d_{\mathbb{D}^{n} \times \left\{ 0\right\}}$ and hence $P_{f}^{*} \mathscr{L}=\mathscr{L}$. By Corollary \ref{coro5_7} we have $\operatorname{int}\left(\Delta_{*} \mathscr{L}\right)=\operatorname{int}\left(\Delta_{*} P_{f}^{*} \mathscr{L}\right) \neq \varnothing$.
\end{proof}

We also have the following application to convex functions and convex curves.

\begin{theorem}\label{theo5_9}
    Let $f: I \rightarrow \mathbb{R}$ be a strictly convex function then $\forall$ line bundle $\mathscr{L}$ over $\Gamma_{f}$ we have $\operatorname{int}\left(\Delta_{*} \mathscr{L}\right) \neq \varnothing$.
\end{theorem}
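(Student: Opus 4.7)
The plan is to reduce to Corollary \ref{coro5_7} on a well-chosen sub-arc of $\Gamma_f$. Denote by $\varphi: \Gamma_f \to \mathbb{P}^1$ the continuous map defining $\mathscr{L}$ and, for each $x \in I$, let $D_x \subset \mathbb{P}^1$ be the closed arc $\{[1:s] : s \in [f'_-(x), f'_+(x)]\}$ of supporting-line directions at $(x, f(x))$; this collapses to a single point when $f$ is differentiable at $x$. I shall use the standard facts that for a strictly convex $f$ the one-sided derivatives $f'_\pm$ exist everywhere on $I$, $f$ is differentiable outside a countable set, $f'_+$ is non-decreasing and right-continuous, and $f'_+(a) \to f'_-(x_0)$ as $a \to x_0^-$ while $f'_-(b) \to f'_+(x_0)$ as $b \to x_0^+$. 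The proof splits according to whether $\varphi(p) \in D_{p_x}$ at every $p = (p_x, f(p_x)) \in \Gamma_f$.

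In Case A (tangency everywhere), I would first show that $f$ must be $C^1$. Fix $x_0 \in I$, set $p_0 := (x_0, f(x_0))$, and approach $x_0$ along a sequence $x_n \to x_0^+$ of differentiable points; continuity of $\varphi$, combined with $D_{x_n} = \{[1 : f'(x_n)]\}$ and $f'(x_n) = f'_+(x_n) \to f'_+(x_0)$, forces $\varphi(p_0) = [1 : f'_+(x_0)]$. Approaching from the left similarly forces $\varphi(p_0) = [1 : f'_-(x_0)]$, so $f'_+(x_0) = f'_-(x_0)$. Hence $f$ is differentiable everywhere and $f'$ is continuous since $f'_+ = f'_-$ leaves no jumps in the monotone function $f'_+$. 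Being strictly convex and hence nonlinear, $f$ falls under Proposition \ref{prop3_3}, which yields $\operatorname{int}(\Delta_* T\Gamma_f) = \operatorname{int}(\Delta_* \mathscr{L}) \neq \varnothing$.

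In Case B there exists $x_0 \in I$ with $\varphi(p_0) \notin D_{x_0}$. Let $\alpha := d(\varphi(p_0), D_{x_0}) > 0$ in the standard metric on $\mathbb{P}^1$, and by continuity of $\varphi$ pick $\delta_1 > 0$ with $d(\varphi(p), \varphi(p_0)) < \alpha/3$ whenever $|p - p_0| < \delta_1$. Since the chord slope of any $[a, b]$ with $a < b$ lies in $[f'_+(a), f'_-(b)]$ by convexity and this interval shrinks to $[f'_-(x_0), f'_+(x_0)]$ as $a \to x_0^-$ and $b \to x_0^+$, there is $\delta_2 > 0$ so that the chord direction of every $[a, b] \ni x_0$ with $b - a < \delta_2$ lies in the $\alpha/3$-neighborhood of $D_{x_0}$. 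Taking any such $[a, b] \subseteq (x_0 - \min(\delta_1, \delta_2), x_0 + \min(\delta_1, \delta_2))$, the chord direction and the values of $\varphi$ over $\Gamma_{f|_{[a,b]}}$ sit in disjoint open $\alpha/3$-neighborhoods of $D_{x_0}$ and of $\varphi(p_0)$ respectively, so they are never parallel. Applying a rigid motion that makes the chord horizontal followed by a homothety rescaling $[a, b]$ to $[-1, 1]$, the sub-arc becomes $\Gamma_g$ with $g \in C(\mathbb{D}^1)$ and $g(-1) = g(1)$, while the transported line bundle is transverse to $\mathbb{R} \times \{0\}$ over all of $\mathbb{D}^1 \times \{0\}$; Corollary \ref{coro5_7} yields non-empty interior of the codiagonal of the transported bundle, which unwinds to $\operatorname{int}(\Delta_* \mathscr{L}) \neq \varnothing$.

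The main obstacle is the angular-gap argument in Case B, which rests on the fine behavior of one-sided derivatives of convex functions at and near $x_0$. A technical side condition is that the rotated sub-arc is still the graph of a continuous function on $[-1, 1]$; this is automatic for sufficiently small $[a, b]$ because all tangent directions of $\Gamma_f$ on the sub-arc together with the chord direction lie in a small neighborhood of $D_{x_0}$, so after rotation by the chord angle every tangent direction lies near horizontal and, in particular, stays bounded away from vertical.
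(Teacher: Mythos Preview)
Your proposal is correct and follows the same two-case architecture as the paper: split on whether $\varphi(p)$ is always a supporting direction, in the affirmative case deduce $f\in C^1$ and invoke Proposition~\ref{prop3_3}, and in the negative case produce a sub-arc to which Corollary~\ref{coro5_7} applies after a rigid motion.

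The implementation details differ slightly in both cases. In Case~A the paper argues that $g:=\pi_0\circ\varphi$ is continuous with $g=f'$ on the dense set of differentiability points, so $f'_L$ has dense range in an interval and hence no jumps; your sequential argument from left and right is more direct and arrives at the same $f'_+(x_0)=f'_-(x_0)$. In Case~B the paper rotates a supporting line at $x_0$ to horizontal (making $x_0$ a minimum), gets transversality at $x_0$ for free since $\varphi(p_0)$ is not supporting, extends by continuity, and then uses a level set $f^{-1}(\{\varepsilon\})=\{\pm1\}$ to obtain the constant boundary condition; you instead rotate a short chord to horizontal and run an angular $\alpha/3$-gap estimate to ensure transversality on the whole sub-arc. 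Your version requires the extra care you note about the rotated sub-arc remaining a graph, whereas the paper's choice (supporting line to horizontal) makes this automatic since the curve lies on one side. Both routes are valid and essentially equivalent.
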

\begin{proof}
    Let $\mathscr{E}$ be the line bundle over $I$ s.t. $P_{f}^{*} \mathscr{E}=\mathscr{L}$ and denote by $\varphi: I \rightarrow \mathbb{P}^{1}$ the continuous map defines $\mathscr{E}$.

    We consider the following two cases:

    Case 1: $\forall\ x \in I, \varphi(x)$ is a supporting hyperplane of the epigraph of $f$ at $(x, f(x))$. For any $a \in I$, since $\Gamma_{f} \cap\left\{(x, y) \in \mathbb{R}^{2} \mid x>a\right\} \neq \varnothing$ and $\Gamma_{f} \cap\left\{(x, y) \in \mathbb{R}^{2} \mid x<a\right\} \neq \varnothing$, we have $\varphi(a)$ is not the vertical line. Denote by $U_{0}:=\{[x: y] \mid x \neq 0\} \subseteq \mathbb{P}^{1}$ the standard affine chart and $\pi_{0}:=[x:y] \mapsto \dfrac{y}{x}$ the chart map, then $\varphi(I) \subseteq U_{0}$. Define $g: I \rightarrow \mathbb{R}$ by $g:=\pi_{0} \circ \varphi$, and denote by $C$ the set of non-differentiable points of $f$. Since $f$ is convex, we have that $C$ is countable and $I\backslash C$ is dense in $I$. By the continuity of $g, g(I \backslash C)$ is dense in $g(I)$ and $g(I)$ is a connected subset of $\mathbb{R}$, i.e. an (finite or infinite) interval.

    Denote by $f_{L}^{\prime}$ the left derivative of $f$, then $C$ is the set of discontinuity of $f_{L}^{\prime}$ and $\forall\ x \in I \backslash C$ we have $f_{L}^{\prime}(x)=f^{\prime}(x)$. Recall that for any $x \in I \backslash C$ the supporting hyperplane of the epigraph of $f$ at $(x, f(x))$ is unique and coincides with the tangent space of $\Gamma_{f}$ at $(x, f(x))$, we obtain that $\left.f_{L}^{\prime}\right|_{I \backslash C} \equiv g\mid_{I \backslash C}$. Therefore $f_{L}^{\prime}(I \backslash C)=$ is dense in interval $g(I)$, and hence $f_{L}^{\prime}$ has no jump discontinuity. Since the one-sided derivative of a convex function is monotonous and hence admits only jump discontinuity, we conclude that $f_{L}^{\prime}$ is continuous. Therefore $C=\varnothing, f_{L}^{\prime} \equiv f^{\prime}$, and $f$ is $C^{1}$. Moreover, since $f$ is strictly convex, we have that $f$ is nonlinear. By Proposition \ref{prop3_3} we have $\operatorname{int}\left(\Delta_{*} T \Gamma_{f}\right) \neq \varnothing$.

    By the differentiability of $f, \forall\ x \in I$ we have $\varphi(x)=T_{(x, f(x))} \Gamma_{f}$. Therefore $\mathscr{L}=T \Gamma_{f}$, and hence $\operatorname{int}\left(\Delta_{*} \mathscr{L}\right) \neq \varnothing$.

    Case 2: $\exists\ x_{0} \in I$ s.t. $\varphi\left(x_{0}\right)$ is not a supporting hyperplane of the epigraph of $f$ at $\left(x_{0}, f\left(x_{0}\right)\right)$.

    Up to rigid motion and homothety of $\mathbb{R}^{2} \supseteq \Gamma_{f}$ wlog we assume that $f\left(x_{0}\right)=0$ is the global minima of $f$. Since the horizontal line $\mathbb{R} \times\{0\}$ is a supporting hyperplane of the epigraph of $f$ at $\left(x_{0}, 0\right)$, we have that $\varphi(0) \neq[1: 0]$. By the continuity of $\varphi$ wlog we assume that $\forall\ x \in I$, $\varphi(x) \neq[1: 0]$, i.e. $\mathscr{E} \pitchfork \mathbb{R} \times\{0\}$.

    Since $f$ is strictly convex and attains global minima at $x_{0} \in I$, there exists $\varepsilon>0$, s.t. $\# f^{-1}(\{\varepsilon\})=2$. Wlog assume that $f^{-1}(\{\varepsilon\})=\{\pm 1\}=\partial \mathbb{D}$ where $\mathbb{D}=[-1,1] \stackrel{j}{\subseteq} I$, then $j^{*} \mathscr{E}$ is a line bundle over $\mathbb{D}$ with $\left.f\right|_{\partial \mathbb{D}} \equiv \varepsilon$ and $j^{*} \mathscr{E} \pitchfork \mathbb{R} \times\{0\}$. Therefore by Corollary \ref{coro5_7} $\operatorname{int}\left(\Delta_{*} P_{\left.f\right|_{\mathbb{D}}}^{*} j^{*} \mathscr{E}\right) \neq \varnothing$. Since $\mathbb{D} \stackrel{j}{\subseteq} I$ induces the inclusion $\Gamma_{\left.f\right|_{\mathbb{D}}} \stackrel{i}{\subseteq} \Gamma_{f}$ where $i=P_{f}^{-1} \circ j \circ P_{\left.f\right|_{\mathbb{D}}}$, we conclude that $\mathscr{L}=P_{f}^{*} \mathscr{E} \supseteq i^{*} P_{f}^{*} \mathscr{E}=P_{\left.f\right|_{\mathbb{D}}}^{*} j^{*} \mathscr{E}$. Therefore $\Delta_{*} \mathscr{L} \supseteq \Delta_{*} P_{\left.f\right|_{\mathbb{D}}}^{*} j^{*} \mathscr{E}$ has non-empty interior.
\end{proof}

\begin{theorem}\label{theo5_10}
    Let $\gamma$ be a strictly convex curve then $\forall$ line bundle $\mathscr{L}$ over $\gamma$ we have $\operatorname{int}\left(\Delta_{*} \mathscr{L}\right) \neq \varnothing$.
\end{theorem}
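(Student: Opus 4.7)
The plan is to reduce Theorem \ref{theo5_10} to Theorem \ref{theo5_9} by exhibiting a local subarc of $\gamma$ which, up to a rigid motion and homothety, coincides with the graph of a strictly convex function $f : I \to \mathbb{R}$, and then restricting $\mathscr{L}$ to this subarc.

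First I would pick any point $p_{0} \in \gamma$ and use strict convexity to produce a supporting line at $p_{0}$. After applying a rigid motion sending $p_{0}$ to the origin and rotating this supporting line onto the horizontal axis, strict convexity of $\gamma$ forces a sufficiently short closed subarc containing $p_{0}$ to project homeomorphically onto a closed interval on the $x$-axis: the projection is injective because no two points of $\gamma$ near $p_{0}$ can share an $x$-coordinate (otherwise the chord joining them would be a vertical segment in $\gamma$'s convex hull, contradicting strict convexity combined with the horizontal supporting line), and continuity of the inverse follows from the standard fact that a continuous bijection from a compact set is a homeomorphism. The inverse of this projection is then a continuous function whose graph equals the subarc, and strict convexity of this function is inherited directly from strict convexity of $\gamma$. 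Up to a further homothety we may assume the domain is exactly $I = \left.\right]-\pi,\pi\left[\right.$; call the resulting subarc $\gamma_{0} = \Gamma_{f}$.

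Next I would restrict $\mathscr{L}$ to $\gamma_{0}$ via the inclusion $j : \gamma_{0} \hookrightarrow \gamma$. By the pullback construction described in the Embedded Vector Bundles subsection, $\mathscr{L}_{0} := j^{*}\mathscr{L}$ is a line bundle over $\Gamma_{f}$ satisfying $\mathscr{L}_{0} \subseteq \mathscr{L}$ as subsets of $\mathbb{R}^{2} \times \mathbb{R}^{2}$. Applying Theorem \ref{theo5_9} to $\mathscr{L}_{0}$ yields $\operatorname{int}(\Delta_{*} \mathscr{L}_{0}) \neq \varnothing$, and the monotonicity property $A \subseteq B \Rightarrow \Delta_{*} A \subseteq \Delta_{*} B$ (property i) in the Codiagonal Morphism subsection) gives $\Delta_{*} \mathscr{L} \supseteq \Delta_{*} \mathscr{L}_{0}$, whence $\operatorname{int}(\Delta_{*} \mathscr{L}) \neq \varnothing$.

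The main obstacle is establishing the local graph representation, since strictly convex curves need not be $C^{1}$ a priori and could in principle have corners. The point to emphasize is that strict convexity alone, without any smoothness assumption, already supplies everything needed: it rules out vertical tangencies that would obstruct the projection from being a homeomorphism, and it passes directly from the curve to the parametrizing function. Once this local parametrization is in place, the remaining steps are routine bookkeeping via inclusion of bundles and monotonicity of $\Delta_{*}$.
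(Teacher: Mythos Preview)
Your proposal is correct and follows essentially the same approach as the paper: reduce to Theorem \ref{theo5_9} by exhibiting a subarc of $\gamma$ that is the graph of a strictly convex function $f:I\to\mathbb{R}$, pull back $\mathscr{L}$ along the inclusion, apply Theorem \ref{theo5_9}, and conclude by monotonicity of $\Delta_{*}$. The only cosmetic difference is that the paper invokes the standard upper/lower hull decomposition of a strictly convex curve to obtain $\Gamma_{f}$ in one line, whereas you spell out the local graph representation via a supporting line and projection; both routes accomplish the same reduction.
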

\begin{proof}
    Decompose $\gamma$ into upper and lower hulls then up to rigid motion and homothety $\exists$ strictly convex function $f: I \rightarrow \mathbb{R}$ s.t. the lower hull contains $\Gamma_{f}\stackrel{i}{\subseteq}\gamma$. By Theorem \ref{theo5_9} int $\left(\Delta_{*} i^{*} \mathscr{L}\right) \neq \varnothing$. Since $\mathscr{L} \supseteq i^{*} \mathscr{L}$ we have $\operatorname{int}\left(\Delta_{*} \mathscr{L}\right) \subseteq \operatorname{int}\left(\Delta_{*} i^{*} \mathscr{L}\right) \neq \varnothing$.
\end{proof}

Notice that the strictly convexity condition in the two theorems above cannot be weakened, since picewise-linear curves clearly do not enjoy the desired property.

The following corollaries show that in the two theorems above if instead of a full line bundle we take any arbitrary $1$ Hausdorff dimensional subset of each fiber then the image under the codiagonal morphism still attains full Hausdorff dimension.

\begin{corollary}\label{coro_5_2_4}
    Let $f: I \longrightarrow \mathbb{R}$ be a strictly convex function, $\pi: \mathscr{L} \rightarrow \Gamma_{f}$ be a line bundle and $\mathscr{L}_{0} \stackrel{i}{\subseteq} \mathscr{L}$ be any subset. If $\forall\ p \in \Gamma_{f}$ we have $\operatorname{dim}_{\mathcal{H}}(\pi \circ i)^{-1}(\{p\})=1$, then $\operatorname{dim}_{\mathcal{H}} \Delta_{*} \mathscr{L}_{0}=2$.
\end{corollary}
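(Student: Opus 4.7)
The upper bound $\dim_{\mathcal H}\Delta_{*}\mathscr{L}_{0}\leqslant 2$ is immediate from $\Delta_{*}\mathscr{L}_{0}\subseteq\mathbb{R}^{2}$, so the content is the matching lower bound. The plan is to realise $\Delta_{*}\mathscr{L}_{0}$ as a $1$-parameter family of $1$-dimensional slices and invoke the Fubini-type Hausdorff-dimension inequality used after Theorem \ref{theo3_6} (reference [2]).

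Parametrise $\Gamma_{f}$ by $g(s):=(s,f(s))$ for $s\in I$; local Lipschitz continuity of the strictly convex $f$ makes $g$ bi-Lipschitz on each compact subinterval of $I$. Let $\varphi:\Gamma_{f}\rightarrow\mathbb{P}^{1}$ be the continuous map defining $\mathscr{L}$ and fix a continuous unit lift $u:I\rightarrow\mathbb{S}^{1}$ of $\varphi\circ g$, possible since $I$ is simply connected. For each $s\in I$ the assignment $t\mapsto(g(s),tu(s))$ is an affine isometry from $\mathbb{R}$ onto the total fiber of $\mathscr{L}$ over $g(s)$, so the hypothesis $\dim_{\mathcal H}(\pi\circ i)^{-1}(\{g(s)\})=1$ gives $\dim_{\mathcal H}T_{s}=1$ for $T_{s}:=\{t\in\mathbb{R}:(g(s),tu(s))\in\mathscr{L}_{0}\}$. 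Writing $L_{p}:=\varphi(p)$, the same isometry $t\mapsto g(s)+tu(s)$ pushes $T_{s}$ onto $\Delta_{*}\mathscr{L}_{0}\cap(g(s)+L_{g(s)})$, so every slice of $\Delta_{*}\mathscr{L}_{0}$ by a line of the form $p+L_{p}$, $p\in\Gamma_{f}$, has Hausdorff dimension $1$.

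Thus $\Delta_{*}\mathscr{L}_{0}$ is sliced by a $1$-dimensional family (parametrised by $s\in I$ with $\mathcal H^{1}(I)>0$) of $1$-dimensional subsets of affine lines, and the Fubini/Marstrand slicing inequality for Hausdorff dimension (in the form used in the proof of Theorem \ref{theo3_6}, reference [2]) will give $\dim_{\mathcal H}\Delta_{*}\mathscr{L}_{0}\geqslant 1+1=2$, which combined with the upper bound completes the proof. The main obstacle is that the line family $\{p+L_{p}\}_{p\in\Gamma_{f}}$ is not parallel and the parametrisation $F(s,t):=g(s)+tu(s)$ is only continuous (the unit field $u$ is not a priori Lipschitz), so the Fubini inequality cannot be applied naively as if the slices were honest vertical sections of a product. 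This is where Theorem \ref{theo5_9} intervenes: the non-emptiness of $\operatorname{int}(\Delta_{*}\mathscr{L})=\operatorname{int}F(I\times\mathbb{R})$, combined with invariance of domain, yields an open region $W\subseteq I\times\mathbb{R}$ on which $F$ is a local homeomorphism onto an open $V\subseteq\mathbb{R}^{2}$; choosing a coordinate direction transverse to the lines $p+L_{p}$ within $V$ realises the lines as graphs over that axis, giving a local bi-Lipschitz straightening that reduces the Fubini argument to the classical parallel case. Note that the Falconer-Mattila theorem quoted before Proposition \ref{prop3_8} is insufficient here, since it requires positive $\mathcal L^{1}$-measure on each slice rather than merely Hausdorff dimension $1$; the pure Hausdorff-dimension Fubini inequality of [2] is what is essential.
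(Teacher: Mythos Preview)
Your approach has a genuine gap at the final step. The appeal to invariance of domain is misplaced: invariance of domain takes local injectivity of a continuous map between same-dimensional Euclidean spaces as a \emph{hypothesis} and returns openness of the image; knowing only that $\operatorname{int} F(I\times\mathbb{R})\neq\varnothing$ from Theorem~\ref{theo5_9} does not produce any region on which $F$ is injective. More seriously, even granting a local homeomorphism, homeomorphisms do not preserve Hausdorff dimension, so the asserted ``local bi-Lipschitz straightening'' is unjustified. Since the line bundle is defined by a merely continuous map $\varphi:\Gamma_f\to\mathbb{P}^1$, the direction field $u$ is only continuous, and there is no reason for any change of coordinates sending the lines $g(s)+\mathbb{R}\,u(s)$ to parallel lines to be Lipschitz. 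Consequently the Fubini-type inequality of reference~[2], which concerns genuine parallel sections, cannot be transported back to the original picture.

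The paper sidesteps this completely by invoking not~[2] but Corollary~2.5 of H\'era--Keleti--M\'ath\'e~[9], a result designed precisely for this situation: it bounds $\dim_{\mathcal H} B$ from below once one knows that every line in some family meets $B$ in a set of Hausdorff dimension at least $s$, and that the union of the family has a given Hausdorff dimension. Here the family is $\{p+\varphi(p)\}_{p\in\Gamma_f}$, its union $\Delta_*\mathscr L$ has dimension $2$ by Theorem~\ref{theo5_9}, and each line meets $\Delta_*\mathscr L_0$ in a set of dimension $1$; the cited result yields $\dim_{\mathcal H}\Delta_*\mathscr L_0\geqslant 2\cdot 1-1+\min\{1,\dim_{\mathcal H}\Delta_*\mathscr L-1\}=2$ directly, with no regularity on $\varphi$ beyond continuity. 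Your closing remark that Falconer--Mattila is insufficient is correct, but the right substitute is~[9], not the parallel-slice Fubini inequality of~[2].
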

\begin{proof}
    By Theorem \ref{theo5_10} in particular we have $\operatorname{dim}_{\mathcal{H}} \Delta_{*} \mathscr{L}=2$.

    Take any $q \in \Delta_{*} \mathscr{L}$ then clearly $\exists\ p \in \Gamma_{f}$ s.t. $q \in \ell:=p+\pi^{-1}(\{p\})$. Since $\ell \cap \Delta_{*} \mathscr{L}_{0} \supseteq\left(p+\pi^{-1}(\{p\})\right) \cap \left(p+(\pi \circ i)^{-1}(\{p\})\right)=p+(\pi \circ i)^{-1}(\{p\})$, we obtain that $\operatorname{dim}_{\mathcal{H}}\left(\ell \cap \Delta_{*} \mathscr{L}_{0}\right) \geqslant \operatorname{dim}_{\mathcal{H}}\left(p+(\pi \circ i)^{-1}(\{p\})\right)=\operatorname{dim}_{\mathcal{H}}(\pi \circ i)^{-1}(\{p\})=1$. Therefore by Corollary 2.5 in \cite{9} we have $\operatorname{dim}_{\mathcal{H}} \Delta_{*} \mathscr{L}_{0} \geqslant 2 \cdot 1-1+\min\left\{1,\operatorname{dim}_{\mathcal{H}} \Delta_{*} \mathscr{L}-1\right\}=1+\min \{1,1\}=2$. Since $2 \leqslant \operatorname{dim}_{\mathcal{H}} \Delta_{*} \mathscr{L}_{0} \leqslant \operatorname{dim}_{\mathcal{H}} \mathbb{R}^{2}=2$, we conclude that $\operatorname{dim}_{\mathcal{H}} \Delta_{*} \mathscr{L}_{0}=2$.
\end{proof}

\begin{corollary}\label{coro_5_2_5}
    Let $\pi: \mathscr{L} \rightarrow \gamma$ be a line bundle over a strictly convex curve, and $\mathscr{L}_{0} \stackrel{i}{\subseteq} \mathscr{L}$ be any subset. If $\forall\ p \in \gamma$ we have $\operatorname{dim}_{\mathcal{H}}(\pi \circ i)^{-1}(\{p\})=1$, then $\operatorname{dim}_{\mathcal{H}}\Delta_{*}\mathscr{L}_0=2$.
\end{corollary}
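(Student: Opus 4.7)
The plan is to mirror the proof of Corollary \ref{coro_5_2_4} almost verbatim, replacing Theorem \ref{theo5_9} by Theorem \ref{theo5_10} as the source of the ``full'' size of $\Delta_{*}\mathscr{L}$. The strategy is a classical fiberwise lower bound via a slicing inequality: each affine line of the line bundle sits inside a plane, so if every fiber contributes a $1$-dimensional chunk to $\Delta_{*}\mathscr{L}_{0}$ and the union of lines is already $2$-dimensional, then the union of the chunks must also be $2$-dimensional.

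First, I would invoke Theorem \ref{theo5_10} to get $\operatorname{int}(\Delta_{*}\mathscr{L}) \neq \varnothing$, which in the ambient $\mathbb{R}^{2}$ yields $\operatorname{dim}_{\mathcal{H}}\Delta_{*}\mathscr{L}=2$. Next, pick any $q \in \Delta_{*}\mathscr{L}$ and let $p \in \gamma$ satisfy $q \in \ell := p + \pi^{-1}(\{p\})$, where $\ell$ is a full affine line in $\mathbb{R}^{2}$. The hypothesis gives
\[
\ell \cap \Delta_{*}\mathscr{L}_{0} \supseteq p + (\pi \circ i)^{-1}(\{p\}),
\]
so $\operatorname{dim}_{\mathcal{H}}(\ell \cap \Delta_{*}\mathscr{L}_{0}) \geqslant \operatorname{dim}_{\mathcal{H}}(\pi \circ i)^{-1}(\{p\}) = 1$. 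Hence every affine line $\ell$ through $\Delta_{*}\mathscr{L}$ cuts $\Delta_{*}\mathscr{L}_{0}$ in a set of Hausdorff dimension at least $1$.

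Now I would apply Corollary 2.5 of \cite{9} (the same slicing estimate already used in the proof of Corollary \ref{coro_5_2_4}) to the set $\Delta_{*}\mathscr{L}_{0}$ sliced by this family of affine lines. With line dimension $1$ and $\operatorname{dim}_{\mathcal{H}}\Delta_{*}\mathscr{L} = 2$, the inequality reads
\[
\operatorname{dim}_{\mathcal{H}}\Delta_{*}\mathscr{L}_{0} \geqslant 2\cdot 1 - 1 + \min\{1,\operatorname{dim}_{\mathcal{H}}\Delta_{*}\mathscr{L} - 1\} = 1 + \min\{1,1\} = 2.
\]
Combining with $\Delta_{*}\mathscr{L}_{0} \subseteq \mathbb{R}^{2}$, which forces $\operatorname{dim}_{\mathcal{H}}\Delta_{*}\mathscr{L}_{0} \leqslant 2$, we conclude $\operatorname{dim}_{\mathcal{H}}\Delta_{*}\mathscr{L}_{0} = 2$.

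The only real subtlety -- and the place where I would double-check carefully -- is the verification that the assumptions of the slicing corollary from \cite{9} actually apply: namely, that the family of affine lines $\{\ell\}$ arising from $\pi^{-1}(\{p\})$ covers $\Delta_{*}\mathscr{L}$ (which is automatic) and that $\Delta_{*}\mathscr{L}$ really attains dimension $2$ (supplied by Theorem \ref{theo5_10}). Since $\gamma$ is strictly convex and not a singleton, the correspondence $p \mapsto \ell$ gives a genuine $1$-parameter family of affine lines, so the geometric hypothesis of the slicing inequality is met exactly as in the proof of Corollary \ref{coro_5_2_4}; no new technical obstacle appears relative to that case.
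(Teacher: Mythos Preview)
Your proposal is correct and is exactly the approach the paper takes: its proof reads ``The same as Corollary \ref{coro_5_2_4},'' and you have spelled out precisely that argument with Theorem \ref{theo5_10} in place of Theorem \ref{theo5_9}.
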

\begin{proof}
    The same as Corollary \ref{coro_5_2_4}.
\end{proof}

Finally we discuss the application to spheres and ellipsoids.

We shall first show that similar to the hyperplane case, any non tangent line bundle attains its expected size.

\begin{proposition}\label{prop5_11}
    Let $\mathscr{L}$ be a line bundle over an open subset $U$ of a $n$-sphere in $\mathbb{R}^{n+1}$, if $\mathscr{L}$ is not tangent to $U$ then $\operatorname{int}\left(\Delta_{*} \mathscr{L}\right) \neq \varnothing$.
\end{proposition}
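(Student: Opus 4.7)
The plan is to reduce the statement locally to Corollary \ref{coro5_7}, exploiting the rotational symmetry of the sphere to realize a small cap around a non-tangency point as the graph of a continuous function with constant boundary values.

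Denote by $\varphi: U \to \mathbb{P}^{n}$ the continuous map defining $\mathscr{L}$ and by $S$ the ambient $n$-sphere. Since $\mathscr{L}$ is not tangent to $U$, there exists $p_{0} \in U$ with $\varphi(p_{0}) \not\subseteq T_{p_{0}} S$. Up to a rigid motion of $\mathbb{R}^{n+1}$, I may assume that $S$ is centered at $0$ with radius $r > 0$ and $p_{0} = (0, \ldots, 0, r)$, so that $T_{p_{0}} S = \mathbb{R}^{n} \times \{0\}$ and therefore $\varphi(p_{0}) \pitchfork \mathbb{R}^{n} \times \{0\}$. Continuity of $\varphi$ produces an open neighborhood $V \subseteq U$ of $p_{0}$ on which $\varphi(p) \pitchfork \mathbb{R}^{n} \times \{0\}$ holds for every $p \in V$.

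Next I cut out a closed spherical cap parametrized over a disk. For any sufficiently small $\rho \in (0, r)$, the closed cap $C_{\rho} := \{(x, y) \in \mathbb{R}^{n} \times \mathbb{R} \mid \|x\|^{2} + y^{2} = r^{2},\ y \geqslant \sqrt{r^{2} - \rho^{2}}\}$ is contained in $V$, and the orthogonal projection onto the first $n$ coordinates identifies $C_{\rho}$ with the graph $\Gamma_{g}$ of $g: \overline{B_{\rho}^{n}} \to \mathbb{R}$, $g(x) := \sqrt{r^{2} - \|x\|^{2}}$. On the boundary sphere $\partial B_{\rho}^{n}$ the function $g$ takes the constant value $\sqrt{r^{2} - \rho^{2}}$. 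Applying the homothety $x \mapsto x/\rho$, which preserves both the hypotheses of Corollary \ref{coro5_7} and the property that $\Delta_{*} \mathscr{L}$ has non-empty interior, I may assume the parameter domain is $\mathbb{D}^{n}$ and obtain a continuous $f: \mathbb{D}^{n} \to \mathbb{R}$ with $f|_{\partial \mathbb{D}^{n}}$ constant and $\Gamma_{f}$ equal to the (rescaled) cap.

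Finally, define $\mathscr{E} := (P_{f}^{-1})^{*}(\mathscr{L}|_{\Gamma_{f}})$, so that $P_{f}^{*} \mathscr{E} = \mathscr{L}|_{\Gamma_{f}}$ and the fibre $\mathscr{E}_{(x, 0)}$ equals $\varphi((x, f(x)))$ as a line in $\mathbb{R}^{n+1}$; by the choice of $V$ this line is transverse to $\mathbb{R}^{n} \times \{0\}$, hence $\mathscr{E} \pitchfork \mathbb{R}^{n} \times \{0\}$. Corollary \ref{coro5_7} then yields $\operatorname{int}(\Delta_{*} P_{f}^{*} \mathscr{E}) = \operatorname{int}(\Delta_{*} \mathscr{L}|_{\Gamma_{f}}) \neq \varnothing$, and the inclusion $\mathscr{L}|_{\Gamma_{f}} \subseteq \mathscr{L}$ gives $\Delta_{*} \mathscr{L}|_{\Gamma_{f}} \subseteq \Delta_{*} \mathscr{L}$, which concludes the argument. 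The only point demanding care — that the closed spherical cap can be shrunk inside $V$ and remain a graph over a disk on which the height function is constant on the boundary — is immediate from the rotational symmetry of $S$ around the axis through $p_{0}$, so no real obstacle arises.
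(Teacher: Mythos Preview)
Your argument is correct and follows essentially the same route as the paper's proof: place the non-tangency point at the north pole by a rigid motion, use rotational symmetry so that a small cap is the graph over a disk of a function constant on the boundary, verify transversality to $\mathbb{R}^{n}\times\{0\}$ by continuity, and invoke Corollary~\ref{coro5_7}. Your write-up is in fact somewhat more careful than the paper's in spelling out the cap $C_{\rho}$, the rescaling, and the construction of $\mathscr{E}$.
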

\begin{proof}
    Denote by $S \subseteq \mathbb{R}^{n+1}$ the $n$-sphere, wlog we assume that $S$ is centred at 0 with radius $R>1$, then at the vicinity of the north pole $p:=(0, R)$, $S$ is the graph of $f(x)=\sqrt{R^{2}-\|x\|^{2}},\|x\|<R$. Wlog assume that $U$ is an open neighborhood of $p$ and $\mathscr{L}$ is not tangent to $U$ at $p$.

    Denote by $\varphi: U \rightarrow \mathbb{P}^{n}$ the continuous map defines $\mathscr{L}$. Since $T_{p} S=\mathbb{R}^{n} \times\{0\}$ we have $\varphi(0, R) \pitchfork \mathbb{R}^{n} \times\{0\}$. By continuity, up to a homothety $\forall\ x \in \mathbb{D}^{n}$ we have $(x, f(x)) \in U$ and $\varphi(x, f(x)) \pitchfork \mathbb{R}^{n} \times\{0\}$. It is clear that $\left.f\right|_{\mathbb{D}^{n}} \in C\left(\mathbb{D}^{n}\right)$ and $\left.f\right|_{\partial \mathbb{D}^{n}}\equiv \text{Constant}$. Since $\mathscr{L} \subseteq i^{*} \mathscr{L}$ we conclude that $\operatorname{int}\left(\Delta_{*} \mathscr{L}\right) \supseteq\operatorname{int}\left(\Delta_{*} i^{*} \mathscr{L}\right) \neq \varnothing$.
\end{proof}

As an application of Poincaré-Hopf index formula, we know that there exists no continuous nonsingular tangent vector field over even dimensional spheres. Taking this into account, from Proposition \ref{prop5_11} one deduces that any line bundle over an even dimensional sphere generated by a continuous unit vector field attains its expected size.

Actually more is true, as we have the following proposition:
\begin{proposition}\label{prop5_12}
    Let $n \geqslant 1$ and $\mathscr{D}$ be a continuous line distríbution over a $n$-sphere $S$ in $\mathbb{R}^{n+1}$, then $\Delta_{*} \mathscr{D}=\mathbb{R}^{n+1} \backslash B$ where $B$ is the open ball bounded by $S$.
\end{proposition}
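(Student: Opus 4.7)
The plan is to prove both inclusions, and the easy one will come first. Placing $S$ at the origin with radius $R$, every $v\in\mathscr{D}(p)\subseteq T_pS$ is orthogonal to $p$, so the Pythagorean identity gives $\|p+v\|^{2}=R^{2}+\|v\|^{2}\geqslant R^{2}$, which puts $p+v$ in $\mathbb{R}^{n+1}\setminus B$; this handles $\Delta_*\mathscr{D}\subseteq\mathbb{R}^{n+1}\setminus B$. The genuine task is therefore the reverse inclusion $\mathbb{R}^{n+1}\setminus B\subseteq\Delta_*\mathscr{D}$; the points of $S$ itself are dealt with by taking $v=0$, so everything reduces to showing that each $q$ with $\|q\|>R$ can be written as $p+v$ with $p\in S$ and $v\in\mathscr{D}(p)$.

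For this I would first produce a continuous unit section $u:S\to TS$ of $\mathscr{D}$, which amounts to showing $\mathscr{D}$ is trivial as a line bundle. For $n=1$ a line subbundle of the one-dimensional bundle $TS^{1}$ must coincide with $TS^{1}$ itself, which is trivial; for $n\geqslant 2$ real line bundles over $S^{n}$ are classified by $H^{1}(S^{n};\mathbb{Z}/2)=0$, so $\mathscr{D}$ is trivial here as well. I would note in passing that for even $n\geqslant 2$ any such $\mathscr{D}$ would yield a nowhere vanishing continuous tangent vector field on $S^{n}$ and contradict the hairy ball theorem, making the hypothesis vacuous in that range; for odd $n$ and for $n=1$ the section $u$ genuinely exists and powers the rest of the argument.

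With $u$ in hand the heart of the proof is a degree calculation based on the identity $\|p+tu(p)\|^{2}=R^{2}+t^{2}$, which holds because $u(p)\perp p$. For each $r\geqslant R$ this makes the map
$$
g_{r}:S\longrightarrow\{x\in\mathbb{R}^{n+1}\mid \|x\|=r\},\qquad g_{r}(p):=p+\sqrt{r^{2}-R^{2}}\,u(p),
$$
well defined and continuous. After radially rescaling the codomain I obtain a continuous family $\bar g_{r}:S^{n}\to S^{n}$ depending continuously on $r\in[R,+\infty)$ with $\bar g_{R}=\mathrm{id}$. The family itself supplies a homotopy from the identity to $\bar g_{r}$, so $\deg(\bar g_{r})=1$, which forces $\bar g_{r}$ to be surjective (a non-surjective self-map of $S^{n}$ factors through a contractible space). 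Unwinding, every $q$ with $\|q\|=r$ is of the form $p+\sqrt{r^{2}-R^{2}}\,u(p)\in p+\mathscr{D}(p)$, and letting $r$ sweep over $[R,+\infty)$ exhausts $\mathbb{R}^{n+1}\setminus B$.

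The main obstacle I anticipate is producing $u$: without a globally consistent orientation of $\mathscr{D}$ the map $g_{r}$ is simply not defined, and the degree machinery has nothing to act on. The topological triviality of $\mathscr{D}$ (together with the hairy-ball dichotomy that makes the remaining cases vacuous) is therefore the essential input; once it is in place, the homotopy $\{\bar g_{r}\}_{r\geqslant R}$ from the identity to $\bar g_{r}$ is explicit and the degree conclusion is immediate.
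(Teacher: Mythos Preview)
Your proposal is correct and follows essentially the same route as the paper: trivialize $\mathscr{D}$ via $H^{1}(S^{n};\mathbb{Z}/2)=0$ to obtain a unit tangent section, then use a degree/homotopy argument to show the induced map onto each concentric sphere is surjective, with the easy inclusion handled by orthogonality. Your use of the Pythagorean identity $\|p+tu(p)\|^{2}=R^{2}+t^{2}$ to write down the explicit homotopy $\{\bar g_{r}\}_{r\geqslant R}$ from the identity is slightly cleaner than the paper's version, which instead checks that its map $\psi_{\rho}$ never sends $u$ to $-u$ and concludes $\psi_{\rho}\simeq\mathrm{id}$ from that; and your remark that the hypothesis is vacuous for even $n\geqslant 2$ is a correct observation the paper does not make.
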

\begin{proof}
    If $n=1$ then $S$ is a circle in $\mathbb{R}^{2}$ and $\mathscr{D}=TS$, by observation the statement clearly holds. It remains to proof the statement for $n \geqslant 2$.
    
    Wlog assume that $S:=\mathbb{S}^{n}=\left\{x \in \mathbb{R}^{n} \mid\|x\|=1\right\}\subseteq \mathbb{R}^{n}$, then $B=\{\|x\|<1\}$. 
    
    Since for any $k \geqslant 2$, $H^{1}\left(\mathbb{S}^{k}, \mathbb{Z} / 2 \mathbb{Z}\right)=H^{1}\left(\mathbb{S}^{k}, \mathbb{Z}\right) \otimes \mathbb{Z} / 2 \mathbb{Z}=0 \otimes \mathbb{Z} / 2 \mathbb{Z}=0$, we have that the Stiefel-Whitney class $w_{1}(\mathscr{D})=0$, and hence $\mathscr{D}$ is generated by its non-singular section, i.e. a continuous unit vector field $X: \mathbb{S}^{n} \rightarrow \mathbb{S}^{n}$.

    For any $u \in \mathbb{S}^{n}$ define $\ell_{u}(t):=u+t X(u), t \in \mathbb{R}$, and $\gamma_{u}(t):=u+t X(u), t \in \mathbb{R}_+$, then $\Delta_{*} \mathscr{L}=\bigcup\limits_{u \in \mathbb{S}^{n}} \ell_{u} \supseteq \bigcup\limits_{u \in \mathbb{S}^{n}} \gamma_{u}$. 
    
    Take any $\rho>1$, then by Jordan's curve theorem $\forall\ u \in \mathbb{S}^{n}$, $\gamma_{u} \pitchfork \partial B_{\rho}^{n+1}$ at an unique point $p_{u, \rho}$. Define $\psi_{\rho}: \mathbb{S}^{n} \rightarrow \mathbb{S}^{n}$ by $\psi_{\rho}(u):=\dfrac{p_{u, \rho}}{\| p_{u, \rho \|}}=\dfrac{1}{\rho} p_{u, \rho}$, then it is easy to check that $\psi_{\rho}$ is continuous.

    Take any $u \in \mathbb{S}^{n}$. Since $X(u) \in T_{u} \mathbb{S}^{n}$ we have that $X(u) \perp u$. Therefore for any $t \in \mathbb{R}_{+}$we have $\langle u+t X(u),-u\rangle=-\langle u, u\rangle-t\langle X(u), u\rangle=-1-t \cdot 0=-1$ and hence $\left\langle\dfrac{u+t X(u)}{\|u+t X(u)\|},-u\right\rangle=-\dfrac{1}{\|u+t X(u)\|}<0$, in particular $\dfrac{u+t X(u)}{\|u+t X(u)\|} \neq-u$. Therefore $\forall\ u \in \mathbb{S}^{n}, \forall\ \rho>1$, we have $\psi_{\rho}(u) \neq-u$.

    Take any $\rho>1$. Since $\forall\ u \in \mathbb{S}^{n}$, $\psi_{\rho}(u) \neq-u$ we have that $\psi_{\rho} \simeq I d_{\mathbb{S}^n}$. Since by Brouwer's theorem $I d_{\mathbb{S}^n}$ is not null-homotopic and recall that any non surjective continuous map to a sphere is null-homotopic, we conclude that $\psi_{\rho}$ is surjective. Therefore $\Delta_{*} \mathscr{L} \supseteq \bigcup\limits_{u \in \mathbb{S}^{n}} \gamma_{u} \supseteq\left\{p_{u, \rho}\right\}_{u \in \mathbb{S}^{n}}=\partial B_{\rho}^{n}$.

    Therefore $\Delta_{*} \mathscr{L} \supseteq\bigcup\limits_{\rho>1}\partial B_{\rho}^{n}=\{\|x\|>1\}$.

    Since clearly $\mathbb{S}^{n} \subseteq \Delta_{*} \mathscr{L}$, we have that $\Delta_{*} \mathscr{L} \supseteq\{\|x\| \geqslant 1\}$. Since $\forall\ u \in \mathbb{S}^{n}, \ell_{u}$ is tangent to $\mathbb{S}^{n}$ and hence $\inf\limits_{p \in \ell_{u}}\|p\|=1$, we have that $\forall\ x \in\{\|x\|<1\} x \notin \ell_{u}$. Therefore $\forall\ x \in\{\|x\|<1\}$, we have $x\notin\bigcup\limits_{u\in\mathbb{S}^n}\ell_u=\Delta_{*} \mathscr{L}$.

    This proves that $\Delta_{*} \mathscr{L}=\{\|x\| \geqslant 1\}=\mathbb{R}^{n} \backslash\{\|x\|<1\}$.
\end{proof}

Now we state our conclusive result for spheres.

\begin{theorem}\label{theo5_13}
    Let $\mathscr{L}$ be a line bundle over a $n$-sphere $S$ in $\mathbb{R}^{n+1}$ then $\operatorname{int}\left(\Delta_{*} \mathscr{L}\right) \neq \varnothing$.
\end{theorem}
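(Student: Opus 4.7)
The plan is to dichotomize on whether $\mathscr{L}$ is a distribution, i.e., whether $\mathscr{L}$ is tangent to $S$ at every point, and dispatch each case by one of the two preceding propositions.

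\textbf{Case 1: $\mathscr{L}$ is not tangent to $S$ somewhere.} By definition this means there exists $p_{0} \in S$ such that $\varphi(p_{0}) \not\subseteq T_{p_{0}} S$, where $\varphi: S \to \mathbb{P}^{n}$ is the continuous map defining $\mathscr{L}$. Since the condition of being transverse to $T_p S$ is open in $p$ (both $\varphi$ and $p \mapsto T_p S$ are continuous, and the Grassmannian condition $\varphi(p) \not\subseteq T_p S$ is open), there is an open neighborhood $U \subseteq S$ of $p_{0}$ on which $\mathscr{L}\vert_{U}$ is not tangent to $U$. Applying Proposition \ref{prop5_11} to $U$ and $\mathscr{L}\vert_{U}$ yields $\operatorname{int}(\Delta_{*}\mathscr{L}\vert_{U}) \neq \varnothing$, and since $\Delta_{*}\mathscr{L} \supseteq \Delta_{*}\mathscr{L}\vert_{U}$ we conclude $\operatorname{int}(\Delta_{*}\mathscr{L}) \neq \varnothing$.

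\textbf{Case 2: $\mathscr{L}$ is tangent to $S$ at every point.} Then $\mathscr{L}$ is a continuous line distribution over $S$, and Proposition \ref{prop5_12} applies directly: $\Delta_{*}\mathscr{L} = \mathbb{R}^{n+1} \setminus B$, where $B$ is the open ball bounded by $S$. Since $\mathbb{R}^{n+1} \setminus B$ contains, e.g., the complement of any closed ball strictly larger than $\bar{B}$, it has non-empty interior.

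Combining the two cases exhausts all line bundles over $S$, so $\operatorname{int}(\Delta_{*}\mathscr{L}) \neq \varnothing$ in every case. No step presents a genuine obstacle; the only subtlety is confirming that Proposition \ref{prop5_11} can be applied on a chosen open neighborhood rather than on all of $S$, which is immediate since the proposition is stated for an arbitrary open subset $U$ of the sphere and the non-tangency condition is open.
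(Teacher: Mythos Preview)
Your proof is correct and follows essentially the same dichotomy as the paper: apply Proposition~\ref{prop5_11} when $\mathscr{L}$ is somewhere non-tangent and Proposition~\ref{prop5_12} when it is a distribution. The only difference is that the paper singles out the trivial case $n=0$ (where $\Delta_{*}\mathscr{L}=\mathbb{R}^{1}$) because Proposition~\ref{prop5_12} is stated only for $n\geq 1$; your argument implicitly folds this into Case~1, which is fine since for $n=0$ the tangent space is $\{0\}$ and no line can be tangent, and your restriction-to-a-neighborhood step in Case~1 is harmless but unnecessary, as Proposition~\ref{prop5_11} already applies with $U=S$.
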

\begin{proof}
    If $n=0$ then the ambient space is $\mathbb{R}^{1}$ and clearly $\Delta_{*} \mathscr{L}=\mathbb{R}^{1}$. In particular we have that $\operatorname{int}\left(\Delta_{*} \mathscr{L}\right) \neq \varnothing$.

    If $n \geqslant 1$ and $\mathscr{L}$ is not tangent to $S$ then by Proposition \ref{prop5_11} int $\left(\Delta_{*} \mathscr{L}\right) \neq \varnothing$. 
    
    If $n \geqslant 1$ and $\mathscr{L}$ is tangent to $S$ then by Proposition \ref{prop5_12} $\operatorname{int}\left(\Delta_{*} \mathscr{L}\right) \neq \varnothing$.
\end{proof}

\begin{remark}\label{remark5_14}
    By definition, ellipsoids are regular level sets of positive determined quadratic forms. Since by an affine change of coordinates one can reduce a quadratic form to its canonical form, equivalently we have that ellipsoids are images of the unit sphere under non-degenerated affine transforms. Therefore the statements in Proposition \ref{prop5_11}, \ref{prop5_12} and Theorem \ref{theo5_13} hold also for ellipsoids. In particular, any line bundle over an ellipsoid attains its expected size.
\end{remark}
\begin{remark}\label{remark5_15}
    If the family of lines is not parameterized continuously, then the assertions in this section do not hold in general. For example, see \cite{5} for detailed construction of counter examples for convex curves and high dimensional generalizations.
\end{remark}

\section{Acknowledgements}
I would like to express my sincere gratitude to Professor Keleti who taught me geometric measure theory. I would like to express my special thanks to Professor Keleti and Professor Csíkós for very useful comments and suggestions.

\printbibliography

\quad

\address
{\scshape{University of Victoria
3800 Finnerty Road 
Victoria BC  V8P 5C2 
Canada}}

{\textit{Email address:}} \email{hanwenliu0804@outlook.com}

\end{document}